\documentclass[a4paper,12pt]{article}
\usepackage{amsmath,amsthm,amsfonts,amssymb,bbm}
\usepackage{graphicx,psfrag,subfigure,color,cite}
\usepackage[UKenglish]{babel}
\usepackage{textcmds}
\usepackage{mathtools}
\usepackage{float}
\usepackage{changes}
\usepackage[colorlinks]{hyperref}

\numberwithin{equation}{section}

\usepackage[margin=1.2in]{geometry}

\newcommand{\e}{\varepsilon}
\newcommand{\Pb}{\mathbb{P}}

\newcommand{\I}{\mathrm{i}}

\newcommand{\R}{\mathbb{R}}
\newcommand{\N}{\mathbb{N}}
\newcommand{\Z}{\mathbb{Z}}
\newcommand{\C}{\mathbb{C}}
\newcommand{\Id}{\mathbbm{1}}
\newcommand{\Ai}{\mathrm{Ai}}
\renewcommand{\R}{\mathbb{R}}
\renewcommand{\Im}{\operatorname{Im}}
\newcommand{\Tr}{\operatorname{Tr}}

\newcommand{\Or}{{\cal O}}
\newcommand{\sgn}[2]{{\cal S}^{#1}_{#2}}

\newtheorem{prop}{Proposition}[section]

\newtheorem{thm}[prop]{Theorem}
\newtheorem{lem}[prop]{Lemma}

\newtheorem{cor}[prop]{Corollary}

\newtheorem{cla}[prop]{Claim}

\newtheorem{rem}[prop]{Remark}

\title{Exact decay of the persistence\\ probability in the Airy$_1$ process}

\author{Patrik L.\ Ferrari\thanks{Institute for Applied Mathematics, Bonn University, Endenicher Allee 60, 53115 Bonn, Germany. E-mail: {\tt ferrari@uni-bonn.de}} \and
Min Liu\thanks{Institute for Applied Mathematics, Bonn University, Endenicher Allee 60, 53115 Bonn, Germany. E-mail: {\tt s6miliuu@uni-bonn.de}}
}

\date{14th September 2024}

\begin{document}
\maketitle
\sloppy

\begin{abstract}
We consider the Airy$_1$ process, which is the limit process in KPZ growth models with flat and non-random initial conditions. We study the persistence probability, namely the probability that the process stays below a given threshold $c$ for a time span of length $L$. This is expected to decay as $e^{-\kappa(c) L}$. We determine an analytic expression for $\kappa(c)$ for all $c\geq 3/2$ starting with the continuum statistics formula for the persistence probability. As the formula is analytic only for $c>0$, we determine an analytic continuation of $\kappa(c)$ and numerically verify the validity for $c<0$ as well.
\end{abstract}

\section{Introduction and main results}
For stochastic growth models in the Kardar-Parisi-Zhang (KPZ) universality class, the large time limit process of the interface depends on the initial and boundary conditions. In the one-dimensional case, several processes are known. When the limit shape is curved, the limit process is the Airy$_2$ process~\cite{PS02,Jo03b} (see also~\cite{Dim20,QS20} for non-determinantal models). On the other hand, when the limit shape is flat and the initial condition is non-random one observes the Airy$_1$ process~\cite{Sas05,BFPS06,BFP06} (see also~\cite{QS20,Vir20} for convergence to the KPZ fixed point~\cite{MQR17} for non-determinantal models). This is still the case for random initial conditions, provided that the initial height function under diffusive scaling goes to zero as first discussed by Quastel and Remenik in~\cite{QR16}.

In this paper we focus on the Airy$_1$ process, ${\cal A}_1$, discovered by Sasamoto in~\cite{Sas05}. The one-point distribution is given by~\cite{FS05b,BR99b}
\begin{equation}
    \Pb({\cal A}_1(t)\leq s)=F_1(2s),
\end{equation}
where $F_1$ is GOE Tracy-Widom distribution \cite{TW96}, while the $m$-point joint distribution is given by a Fredholm determinant (see~\cite{Sas05,BFPS06} for explicit expressions). In this paper we study another observable of the Airy$_1$ process, namely the persistence probability, which is the probability that the Airy$_1$ process stays below a given threshold over a time span $[0,L]$. One expects that
\begin{equation}\label{eq1.2}
P(c,L)=\Pb({\cal A}_1(s)\leq s\textrm{ for all }s\in [0,L])\sim C e^{-\kappa(c) L}
\end{equation}
for large $L$. Numerical computations using the method in~\cite{Born08} for small values of $L$ indicates that the exponential form is quite accurate already for small values of $L$~\cite{FF12a}. This is probably due to the fast (super-exponential) decay of the correlation of the Airy$_1$ process as noticed first numerically in~\cite{BFP08} and recently proven in~\cite{BBF22}.

The starting point is of our analysis is the continuum statistics formula of the probability in \eqref{eq1.2} obtained by Quastel and Remenik~\cite{QR12} (see Theorem~\ref{qr12} below). For the Airy$_2$ process such a formulation was obtained by Corwin, Quastel and Remenik in~\cite{CQR11}, where they started by the expression of the joint distribution as a Fredholm determinant on a fixed space as in original paper of Pr\"ahofer and Spohn~\cite{PS02} (this is referred as path integral formula), see also~\cite{BCR13,MQR17} for a general scheme to connect the two representations for other limit processes in the KPZ class. The formula for the joint distribution in terms of an extended kernel follows from a biorthogonalization procedure~\cite{BFPS06}, which could be made explicit in~\cite{MQR17}, see also~\cite{MR23} for extensions. The continuum statistics occurred to be very useful to determine properties of the Airy processes~\cite{QR12,QR12b,QR13b,BKS12}, but also in discrete analogues~\cite{MR23,FV19}.

In 2010, Takeuchi and Sano were able to verify experimentally the KPZ predictions in an experiment with turbulent nematic liquid crystals~\cite{TS10,TSSS11}, in particular for the distribution functions and covariances. In~\cite{TS12}, they also measure the $\kappa(c)$ with respect to the threshold given by the average of the process.  Later, applying the numerical method in~\cite{Born08} on continuum statistics of Airy$_1$, Ferrari and Frings~\cite{FF12a}  numerically computed $\kappa(c)$ for more general $c$, but they could not provide any analytic results for $\kappa(c)$. The main result of this work is an analytic formula for $\kappa(c)$ which is the following theorem.
\begin{thm}\label{main}
    For any $c\geq\frac32$ and $L$ large enough, it holds
    \begin{equation}\label{kappac}
        \Pb\left(\mathcal A_1(s)\leq c,\ s\in[0,L]\right)= Ce^{-\kappa(c)L+\mathcal O(e^{-L})},
    \end{equation}
    where $C$ does not depend on $L$ and
    \begin{equation}\label{k}
        \kappa(c)=-2\sum_{n=1}^\infty n^{-5/3}\Ai'(2n^{2/3}c),
    \end{equation}
where $\Ai'$ is the derivative of the Airy function $\Ai$.
\end{thm}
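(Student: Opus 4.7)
My plan is to start from the Quastel--Remenik continuum statistics formula (Theorem~\ref{qr12}), which expresses $P(c,L)=\Pb(\mathcal A_1(s)\le c,\,s\in[0,L])$ as a Fredholm determinant $\det(I-\mathcal K_{c,L})$ on a suitable $L^2$ space. The operator $\mathcal K_{c,L}$ is built from the Airy$_1$ kernel composed with a propagator (of Brownian-bridge type) that encodes the confinement below the level $c$ over the time interval $[0,L]$. By time-translation invariance and the reflection principle, this propagator has an explicit closed form, making the $L$-dependence of $\mathcal K_{c,L}$ fully explicit.

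The core of the proof is to expand
\begin{equation*}
\log\det(I-\mathcal K_{c,L}) \;=\; -\sum_{n\ge1}\frac{1}{n}\,\Tr\bigl(\mathcal K_{c,L}^{\,n}\bigr)
\end{equation*}
and analyze each trace asymptotically as $L\to\infty$. Using the contour integral representation of the Airy function inside the Airy$_1$ kernel and performing the spatial integrations, $\Tr(\mathcal K_{c,L}^{\,n})$ becomes a complex contour integral with a large parameter $L$. A steepest-descent calculation, after the Airy rescaling $k\mapsto n^{1/3}k$ (whose Jacobian combines with the $1/n$ in the log expansion to produce precisely the factor $n^{-5/3}$), transforms the phase into $\I n(k^3/3+2ck)$. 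The remaining one-dimensional contour integral is exactly the standard representation of $\Ai'(2n^{2/3}c)$, so that the $n$-th term of the log expansion contributes $2Ln^{-5/3}\Ai'(2n^{2/3}c)$ to $\log\det(I-\mathcal K_{c,L})$, plus an $L$-independent piece (absorbed into $C$) and an exponentially small remainder.

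Summing over $n$ then reproduces the series \eqref{k} and gives $\log P(c,L) = -\kappa(c)L + \log C + \mathcal O(e^{-L})$ as claimed. The super-exponential decay of $\Ai'(2n^{2/3}c)$ in $n$ for $c>0$ makes the sum absolutely convergent and justifies the interchange of the summation with the Fredholm expansion and with the remaining contour integral.

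The main technical difficulty I foresee is obtaining the remainder estimate $\mathcal O(e^{-L})$ uniformly in $n$. The steepest-descent argument requires deforming the contour through the relevant critical points without crossing singularities of the integrand, and the admissibility of this deformation depends on $c$. I expect the hypothesis $c\ge 3/2$ to emerge precisely as the threshold at which the required deformation is possible; for smaller $c$ additional singularities must be crossed, changing the nature of the answer and motivating the separate analytic-continuation argument announced in the abstract.
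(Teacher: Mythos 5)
Your overall strategy coincides with the paper's: start from the continuum statistics formula specialized to a constant threshold (Proposition~\ref{ff13}), expand $\ln\det(\Id-K_{L,c})=-\sum_n\Tr(K_{L,c}^n)/n$, extract a term linear in $L$ from each trace via contour manipulations, and absorb the $L$-independent parts into $C$. However, as written the proposal has genuine gaps precisely where the paper's work lies. First, for $n\geq 2$ the trace $\Tr(K_{L,c}^n)$ is \emph{not} a single contour integral to which one can apply steepest descent: the kernel splits as in \eqref{e226} into several operators, so $\Tr(K_{L,c}^n)$ is a sum over $\mathcal O(7^n)$ words, and only one specific family of words (essentially $\Tr(\bar P_0e^{L\tilde\Delta}\hat B_{0,c}^{n-1}\bar P_0e^{-L\Delta}B_{0,c})$, Lemma~\ref{wkey3}) produces the $\mathcal O(L)$ contribution. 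All mixed words must be bounded, uniformly in $n$ and with bounds summable against the combinatorial factor $7^n$, via the conjugations $U_r$ and Hilbert--Schmidt/operator-norm estimates; your sketch gives no mechanism for this, and it is the bulk of the proof. Relatedly, the $L$-independent pieces are not a single harmless constant per $n$: one needs $\big|\sum_n\Psi_n/n\big|<\infty$, which again requires smallness of $\beta$, and the per-$n$ remainders decay only like $e^{-4L^3/(3n^2)}$, so their sum over $n$ must be shown to be $\mathcal O(e^{-\delta L})$ (end of Section~\ref{sec24}); "exponentially small remainder" term by term is not enough.

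Second, two specific mechanisms are misidentified. The factor $L$ in the leading term does not come from a stationary-phase evaluation after rescaling: it comes from the residue at the \emph{double} pole $v=w$ when the two contours in the double integral (e.g.\ \eqref{eq2.18}, \eqref{eq2.22}) are exchanged, since differentiating $e^{Lv^2}$ produces the prefactor $2L$, leaving exactly the integral representation of $\Ai'(2n^{2/3}c)$; the remaining double integral is then super-exponentially small. And the threshold $c\geq\frac32$ does not arise from admissibility of a contour deformation: it arises because the error bounds are of size $7^n\beta^n$ with $\beta=\max\{2e^{r^3/3-2rc},e^{(r-1/7)^3/3-2(r-1/7)c}\}$ and $r=\sqrt{2c}$, and one needs $\beta<1/7$ to sum them, which holds for $c\geq\frac32$ (Remark~\ref{obstica}); what the contour/pole structure obstructs is only going below $c=0$, which is the subject of the separate analytic continuation. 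Without the decomposition, the uniform-in-$n$ operator bounds, and the summation argument, the proposal does not yet constitute a proof.
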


Using the fact that the Airy$_1$ process is a limit of the last passage percolation, where a FKG inequality can be applied, we can also show that $\kappa(c)$ exists for all values of $c$.
\begin{prop}[Existence of $\kappa(c)$]\label{p12}
For any $c\in\R$,
\begin{equation}\label{hatkappa}
    \kappa(c)=-\lim_{L\to\infty}\frac{\ln\left(\Pb\left(\mathcal A_1(t) \leq c, \forall t \in[0, L]\right)\right)}{L}
\end{equation}
exists.
\end{prop}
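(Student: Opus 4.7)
Set $p(L):=\Pb(\mathcal A_1(t)\le c,\forall t\in[0,L])$, so that $f(L):=-\ln p(L)$ is non-negative and non-decreasing in $L$. The plan is to establish the superadditive estimate
\[
p(L+M)\ge p(L)\,p(M)\qquad\text{for all }L,M>0,
\]
after which the subadditivity of $f$ combined with the continuous version of Fekete's lemma yields
\[
\lim_{L\to\infty}\frac{f(L)}{L}=\inf_{L>0}\frac{f(L)}{L}\in[0,\infty],
\]
which is exactly \eqref{hatkappa}. By translation invariance of $\mathcal A_1$, $\Pb(\mathcal A_1(t)\le c,\forall t\in[L,L+M])=p(M)$, so the displayed inequality is equivalent to the positive correlation $\Pb(A\cap B)\ge\Pb(A)\Pb(B)$ of the decreasing events $A=\{\mathcal A_1\le c\text{ on }[0,L]\}$ and $B=\{\mathcal A_1\le c\text{ on }[L,L+M]\}$.

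To produce this FKG-type inequality I would pass through a pre-limit model: exponential last passage percolation from an anti-diagonal line, or equivalently continuous-time TASEP started from the $2\Z$ periodic configuration, whose rescaled last passage times $G_N$ are known to converge in finite-dimensional distributions to $2\mathcal A_1$ on every compact time window (after the usual $N^{1/3}$, $N^{2/3}$ rescaling and centering). The crucial structural point is that $G_N$ is coordinatewise non-decreasing in the i.i.d.\ exponential weights, so every event of the form $\{G_N(t_i)\le c_N,\forall i\}$ is decreasing in the product measure, and the Harris--FKG inequality for independent coordinates immediately gives positive correlation in pairs of such events. Because the limit one-point distribution $F_1(2c)$ has no atom, the finite-dimensional weak convergence $G_N\to\mathcal A_1$ transfers the inequality to $\mathcal A_1$:
\[
\Pb(\mathcal A_1(t_i)\le c,\forall i;\,\mathcal A_1(s_j)\le c,\forall j)\ge \Pb(\mathcal A_1(t_i)\le c,\forall i)\cdot\Pb(\mathcal A_1(s_j)\le c,\forall j)
\]
for arbitrary finite grids $\{t_i\}\subset[0,L]$ and $\{s_j\}\subset[L,L+M]$.

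The final step is to upgrade this to the continuous-time events. Picking countable dense subsets $D_1\subset[0,L]$ and $D_2\subset[L,L+M]$, the a.s.\ continuity of the sample paths of $\mathcal A_1$ gives $A=\bigcap_{t\in D_1}\{\mathcal A_1(t)\le c\}$ a.s.\ (and similarly for $B$). Exhausting $D_1,D_2$ by increasing finite subsets, monotone convergence of probabilities lifts the grid-wise FKG to $\Pb(A\cap B)\ge\Pb(A)\Pb(B)$, closing the argument. The only genuinely non-trivial ingredient is the first one, namely the choice of a discrete KPZ model with flat initial data that simultaneously enjoys (i) joint finite-dimensional convergence to $\mathcal A_1$ on any bounded time window and (ii) strict monotonicity in i.i.d.\ inputs -- both are classical for flat TASEP or the corresponding exponential LPP. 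After that the remaining steps are soft measure-theoretic manipulations, and no use is made of the explicit formula~\eqref{k}.
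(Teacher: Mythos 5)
Your proposal is correct and follows the same overall strategy as the paper: pass to a pre-limit flat LPP/TASEP model, use Harris--FKG for the decreasing events $\{G_N\le c\}$, transfer the resulting superadditivity of $\ln p(L)$ to the Airy$_1$ process via convergence and translation invariance, and conclude with Fekete's lemma. The one genuine difference is in the transfer step: the paper combines finite-dimensional convergence (TASEP plus slow decorrelation) with \emph{tightness} of the rescaled LPP process (citing~\cite{Pim17}) to get convergence of $\max_{u\in[0,L]}L_N(u)$ to $\max_{u\in[0,L]}\mathcal A_1(u)$ directly, and then takes limits of the FKG inequality for the interval events; you instead apply FKG only to finite grids, pass to the limit using finite-dimensional convergence (with the observation that the boundary events are null because the one-point marginals $F_1$ are atomless), and then upgrade from dense grids to the interval events using a.s.\ continuity of the Airy$_1$ sample paths together with continuity from above of the probability measure. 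Your route thus avoids invoking a functional limit theorem for the pre-limit model, at the cost of using path continuity of $\mathcal A_1$ -- which is anyway implicit in the very formulation of the persistence event -- so both arguments are sound and of comparable depth; neither uses the explicit formula \eqref{k}.
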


The lower bound on $c$ in Theorem~\ref{main} is purely technical and it could potentially be slightly improved with the approach of this paper, however not below $c=0$. Thus we did not pursue this aspect. The formula we obtain is analytic for all $c>0$, but not at $0$ or below. Denoting by $\tilde\kappa$ the analytic continuation of \eqref{k}, we have the following result.
\begin{prop}[Analytic continuation of $\kappa(c)$]\label{p13}
The analytic continuation is given by
    \begin{equation}\label{tildekappas}
        \tilde{\kappa}(c)= \begin{cases}\kappa(c), & \text { if } c \geq 0, \\ \kappa(0)-\int_c^0 d x f(x)-6 c-\frac{48}{7} \sum_{n \geq 1}(c-c(n)) \mathbf{1}_{c<c(n)}, & \text { if } c<0,\end{cases}
    \end{equation}
where $c(n)=-(2n\pi/3)^{2/3}$ for any $n\in\Z_{\geq 1}$, $\kappa(c)$ is define in \eqref{k} and
    \begin{equation}
        f(x)=\frac{2}{\pi \I}\int_{\Gamma}dw\frac{w^2e^{\frac{w^3}{3}-2wx}}{1-e^{\frac{w^3}{3}-2wx}}
    \end{equation}
with $\Gamma=\{|r|e^{{\rm sgn}(r)\pi \I/3}| r\in\R\}$ oriented with increasing imaginary part\footnote{In principle one can integrate explicitly $\int_c^0 dx f(x)$ and get a logarithm, see also \eqref{eq4.2}. However, if one is not careful with the branch-cut of the logarithm, a numerical evaluation with Mathematica or similar programs leads to a non-smooth plot, unlike \eqref{tildekappas}.}.
\end{prop}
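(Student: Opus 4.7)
\medskip

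\noindent\textbf{Plan for Proposition~\ref{p13}.}

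The strategy is to derive an explicit contour-integral representation for $\kappa'(c)$ valid for $c>0$ from the convergent series of Theorem~\ref{main}, and then track how this representation changes as $c$ moves across the critical values $0, c(1), c(2),\ldots$ on the real axis, picking up residue contributions each time a pole of the integrand crosses the contour $\Gamma$.

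\medskip

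\noindent\emph{Step 1 (Derivative formula for $c>0$).} Differentiating~\eqref{k} term by term and inserting the contour representation $\Ai''(z)=\frac{1}{2\pi\I}\int_\Gamma w^2 e^{w^3/3-wz}\,dw$, the rescaling $w=n^{1/3}v$ brings the $n$-dependence into the exponent $e^{n(v^3/3-2cv)}$. Since $\mathrm{Re}(v^3/3-2cv)=-|v|^3/3-c|v|<0$ on $\Gamma$ for every $c>0$, the geometric series $\sum_{n\geq 1}e^{n(v^3/3-2cv)}$ converges uniformly on compacts, and one may interchange sum and integral. Summing the geometric series gives
\[
\kappa'(c)\;=\;-f(c),\qquad c>0,
\]
hence $\kappa(c)=\kappa(0)-\int_0^c f(x)\,dx$ for $c>0$.

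\medskip

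\noindent\emph{Step 2 (Singularities).} The integrand $\tfrac{w^2 e^{h(w)}}{1-e^{h(w)}}$ with $h(w):=w^3/3-2cw$ has simple poles on the cubic loci $h(w)=2\pi\I k$, $k\in\Z$. For real $c$, the only configurations in which such a pole lies on $\Gamma$ are: (i) the corner $w=0$ at $c=0$, where the otherwise simple zero of $h$ becomes triple and a genuine simple pole of the integrand, of residue $-3$, appears; and (ii) the conjugate pair $w_0=\sqrt{-3c(n)}\,e^{\pm\I\pi/3}$ at $c=c(n)=-(2n\pi/3)^{2/3}$, corresponding to $h(w_0)=\pm 2\pi\I n$, which cross $\Gamma$ transversally as $c$ decreases through $c(n)$.

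\medskip

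\noindent\emph{Step 3 (Residues and jumps).} A direct calculation gives
\[
\mathrm{Res}_{w=w_0}\frac{w^2 e^{h(w)}}{1-e^{h(w)}}\;=\;-\frac{w_0^2}{w_0^2-2c}.
\]
Substituting $w_0=\sqrt{-3c(n)}\,e^{\I\pi/3}$ and its conjugate yields the pair $(-6\mp 3\sqrt{3}\I)/7$, summing to $-12/7$; multiplied by the prefactor $\tfrac{2}{\pi\I}\cdot 2\pi\I=4$ this gives the universal jump $-\tfrac{48}{7}$ in $f$ at each $c(n)$. Integrated, this accounts for the piecewise-linear term $\tfrac{48}{7}\sum_{n\geq 1}(c-c(n))\mathbf 1_{c<c(n)}$ in $\tilde\kappa$. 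Analogously, deforming $\Gamma$ around the emergent pole at $w=0$, $c=0$, combined with the fact that $\Gamma$ subtends an external angle $2\pi/3$ at the corner and that the residue there is $-3$, produces a constant shift of magnitude $6$ in the analytic continuation of $-\kappa'$ to $c<0$, which yields the $-6c$ summand after integration.

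\medskip

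\noindent\emph{Step 4 (Assembly).} Combining the identity $\kappa(c)=\kappa(0)-\int_0^c f(x)\,dx$ for $c>0$ with the cumulative residue corrections across $c=0$ and each $c=c(n)$ produces exactly the piecewise expression~\eqref{tildekappas}.

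\medskip

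\noindent\emph{Main obstacle.} The technical heart is the degenerate singularity at $c=0$. Unlike the $c(n)$ crossings, where a pole moves smoothly and transversally through a smooth segment of $\Gamma$ and the Sokhotski--Plemelj half-residue is unambiguous, the $c=0$ singularity is doubly degenerate: the relevant pole exists only at the single value $c=0$, and sits at the vertex of the corner of $\Gamma$. Pinning down the precise constant $-6$ requires a careful rescaling $w=|c|^{1/2}u$ to control the limits $\lim_{c\to 0^\pm}f(c)$, a contour deformation around the emergent pole of opening angle matching the corner, and meticulous tracking of signs. A secondary technical point is justifying the interchange of sum and integral in Step~1 uniformly as $c\to 0^+$, where the geometric series loses uniform convergence.
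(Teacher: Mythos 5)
Your overall plan coincides with the paper's: represent $\kappa'$ on $(0,\infty)$ by the fixed-contour integral (termwise Airy representation, rescaling $w\mapsto n^{1/3}w$, geometric series), then continue across $c=0$ and each $c(n)$ by tracking which poles of $w\mapsto w^2e^{w^3/3-2wc}/(1-e^{w^3/3-2wc})$ cross $\Gamma$ and picking up their residues; your residue arithmetic at the $c(n)$ crossings (the conjugate pair summing to $-12/7$ for the bare integrand, i.e.\ $48/7$ after normalization) agrees with the paper's. The genuine gap is the crossing at $c=0$ — exactly the step you yourself flag as the ``main obstacle'' and do not carry out — and the mechanism you sketch for it is quantitatively wrong. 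An indentation of opening $2\pi/3$ around a simple pole of residue $-3$ contributes $\I\cdot\tfrac{2\pi}{3}\cdot(-3)$, which after the prefactor $\tfrac{2}{\pi\I}$ gives $-4$ (a full residue gives $-12$), not $\pm6$; so ``residue $-3$ at the corner weighted by the corner angle'' cannot produce the $-6c$ term. The reason it miscounts is that the corner residue $-3$ is the merger of the two poles $\pm\sqrt{6c}$ of the $k=0$ family, each of residue $-3/2$, and only the one at $+\sqrt{6c}$ (in $R_\Gamma$ for $c>0$, continuing to $\I\sqrt{6|c|}\in L_\Gamma$ for $c<0$) actually crosses $\Gamma$. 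The paper's proof exploits precisely this: for $c>0$ there is a genuine simple pole at $w=\sqrt{6c}$, strictly to the right of $\Gamma$, whose residue is \emph{independent of $c$} and yields the constant $6$ in the paper's normalization; deforming $\Gamma$ rightwards to $\Gamma_0$ so that the trajectory $W_0=\{\sqrt{6c}\,|\,0\le c\le 3/2\}$ (and no other pole family, by Lemma~\ref{distance}) lies between the contours makes the deformed integral analytic across $c=0$ down to $c(1)$. No rescaling $w=|c|^{1/2}u$, no limits $c\to0^{\pm}$, no fractional residues are needed; your ``doubly degenerate singularity'' simply never has to be analyzed.

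A second, smaller defect is that Step 4 is asserted rather than performed, and as written your intermediate claims do not visibly combine into \eqref{tildekappas}: you state $\kappa'=-f$ on $(0,\infty)$, a jump ``$-48/7$ in $f$'' at each $c(n)$, and then the linear term with the sign opposite to the one in the statement, without ever reconciling these conventions with the signs of $-\int_c^0 f$, $-6c$ and $-\tfrac{48}{7}\sum_n(c-c(n))\mathbf{1}_{c<c(n)}$. The paper fixes all conventions at the level of a single normalized integrand for $\kappa'$ (equation \eqref{c11}), computes the residues $6$ and $48/7$ in that normalization, and only then integrates; you must carry out this bookkeeping explicitly to land on \eqref{tildekappas}. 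Finally, your Step 2 assertion that the listed configurations are the \emph{only} ones in which a pole meets $\Gamma$, that each pair crosses transversally exactly once at $c=c(n)$, and that the deformed contours can be chosen to capture one crossing at a time, is the content of Lemmas~\ref{pb3} and~\ref{distance} (monotonicity of the crossing angle and disjointness of the trajectories $W_n$); some argument is needed there as well, though this is routine compared with the $c=0$ issue.
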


Since we do not have an analytic proof that $\kappa(c)$ is analytic, we test whether the analytic continuation $\tilde\kappa(c)$ fits with the data obtained by numerical computations. The persistence probability is given by a Fredholm determinant with a kernel $K_{L,c}$, see Proposition~\ref{ff13} below. As mentioned in~\cite{FF12a} the kernel $K_{L,c}$  does not behaves well for large $L$: there are some off-diagonal entries which diverges super-exponentially in $L$. On top of it, some parts of the entries are highly oscillating. These two effects restrict very much the numerical implementation of the Fredholm determinant computation of~\cite{Born08}, namely the values of $L$ which can be simulated is (depending on the values of $c$) usually not more than $L=3$. On the other hand, probably due to the fast decorrelation decay of the Airy$_1$ process~\cite{BBF22}, already for small values of $L$ the logarithm of the persistence probability is already almost a perfect straight line, see for example Figure~\ref{numeric}.
\begin{figure}[h!]
    \centering
    \includegraphics[width=0.5\textwidth]{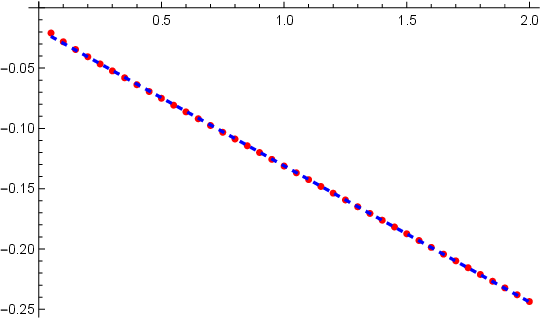}
    \caption{The red points are $(L_n,\log P(c,L_n))$ with $c=1$ and $L_n=0.05n$ with $n\in\{1,2,\ldots,40\}$, where $P(c,L_n)$ is calculated numerically. The dashed blue line is the reference line with slope -0.112, we refer the slope obtained in this way as $\hat\kappa(c)$.}
    \label{numeric}
\end{figure}

In~\cite{FF12a} it was derived that, for $c\in\R$,
\begin{equation}
  P(c,L)=\det\left(\Id-B_0+\Lambda_{L, c} e^{-L \Delta} B_0\right)_{L^2(\R)},
\end{equation}
see Proposition~\ref{ff13} below for details. We numerically compute $P(c,L_n)$ for $L_n=0.05n$ with $n\in\{1,2,\ldots,40\}$. Interpolating the obtained data $(L_n,\log P(c,L_n))$, we get a numerical estimate $\hat\kappa(c)$ for the persistence exponent $\kappa(c)$, see Figure~\ref{numeric} for $c=1$. For more values of $\hat\kappa(c)$ and details on the numerical issues regarding to the calculation of persistence probability, we refer the reader to Section~4 in~\cite{FF12a}.

In Figure~\ref{minus2} we compare $\hat\kappa(c)$ and analytic continuation of the persistence exponent $\tilde\kappa(c)$. The result indicates that the analytic continuation is likely to be the correct function. The numerical data are available as the \emph{bonndata} repository~\cite{FK2/ANX3PQ_2024}.
\begin{figure}[h!]
    \centering
    \includegraphics[width=0.5\textwidth]{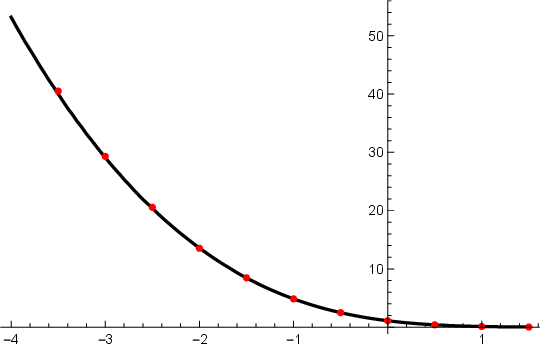}
    \caption{Comparison between numerical and theoretical exponent. The black line is the graph of $(c,\tilde\kappa(c))$ with $c\in(-4,1.5)$ and the red points are the $(c,\hat\kappa(c))$ with $c\in\{-3.5,-3,\ldots,1.5\}$ obtained by numerical calculation. We can see that the experimental data fits the theoretic data very well.}
    \label{minus2}
\end{figure}

\paragraph{Acknowledgments:} We are grateful to A. Borodin for discussions on the analytic continuation. We would like to thank the anonymous referee for the careful reading and the valuable feedback. The work was partly funded by the Deutsche Forschungsgemeinschaft (DFG, German Research Foundation) under Germany’s Excellence Strategy - GZ 2047/1, projekt-id 390685813 and by the Deutsche Forschungsgemeinschaft (DFG, German Research Foundation) - Projektnummer 211504053 - SFB 1060.

\section{Strategy and proof of Theorem~\ref{main}}
The starting point of the analysis is the result on the continuum statistics by Quastel and Remenik~\cite{QR12}.
\begin{thm}[Theorem~4 of~\cite{QR12}]\label{qr12}
It holds
\begin{equation}
\Pb\left(\mathcal{A}_1(t) \leq g(t), 0 \leq t \leq L\right)=\det\left(\Id-B_0+\Lambda_{L, g} e^{-L \Delta} B_0\right)_{L^2(\R)}
\end{equation}
where $g$ is a function in $H^1([0, L])$ (that is, both $g$ and its derivative are in $L^2([0,L])$), $\Delta$ is the Laplacian, $B_0(x, y)=\mathrm{Ai}(x+y)$, and
\begin{equation}
\Lambda_{L, g}(x, y)=\frac{e^{-(x-y)^2 /(4 L)}}{\sqrt{4 \pi L}} \Pb_{b(0)=x, b(L)=y}(b(s) \leq g(s), 0 \leq s \leq L)
\end{equation}
with $b$ a Brownian Bridge from $x$ at time 0 to $y$ at time $L$ and with diffusion coefficient 2.
\end{thm}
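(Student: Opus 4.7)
The plan is to approximate the continuous-time persistence event by its $m$-point analogue on a finite mesh $0=t_0<t_1<\cdots<t_m=L$ and then pass to the limit $m\to\infty$. For fixed $m$, the joint distribution of $(\mathcal A_1(t_0),\ldots,\mathcal A_1(t_m))$ is given by a Fredholm determinant built from the extended Airy$_1$ kernel (see~\cite{Sas05,BFPS06}). The first step is to convert this ``extended kernel'' determinant into a determinant on the fixed space $L^2(\R)$ via the biorthogonalization of~\cite{BFPS06}, following the scheme developed for Airy$_2$ in~\cite{CQR11} and adapted to Airy$_1$. The target identity at the discrete level is
\begin{equation*}
\Pb(\mathcal A_1(t_k)\leq g(t_k),\ k=0,\ldots,m)=\det\bigl(\Id-B_0+\Theta^{(m)}_{L,g}\,e^{-L\Delta}B_0\bigr)_{L^2(\R)},
\end{equation*}
where $\Theta^{(m)}_{L,g}$ is the alternating product of multiplications by $\mathbf{1}_{(-\infty,g(t_k)]}$ and heat semigroups $e^{-(t_{k+1}-t_k)\Delta}$. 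A direct Chapman--Kolmogorov computation identifies the kernel of $\Theta^{(m)}_{L,g}$ with the transition density of a Brownian motion with diffusion $2$, started at $x$, ending at $y$ at time $L$, and forced to lie below $g(t_k)$ at each mesh point.

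Step two is the continuum limit. As the mesh shrinks, a Feynman--Kac argument for absorbed Brownian motion yields
\begin{equation*}
\Theta^{(m)}_{L,g}(x,y)\longrightarrow \Lambda_{L,g}(x,y)=\frac{e^{-(x-y)^2/(4L)}}{\sqrt{4\pi L}}\,\Pb_{b(0)=x,b(L)=y}(b(s)\leq g(s),\ 0\leq s\leq L),
\end{equation*}
pointwise in $(x,y)$, because the Brownian-bridge probability of avoiding a continuous barrier is the monotone limit of the corresponding finite-mesh probabilities. Simultaneously, the left-hand side converges to the continuous-time persistence probability by monotone convergence, using sample-path continuity of $\mathcal A_1$ and continuity of $g$ (guaranteed by the Sobolev embedding $H^1([0,L])\hookrightarrow C([0,L])$).

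The third step is to upgrade pointwise convergence of the kernels to trace-norm convergence of $\Theta^{(m)}_{L,g}\,e^{-L\Delta}B_0$, so one can pass to the limit inside the Fredholm determinant. I would bound $\Theta^{(m)}_{L,g}$ uniformly in $m$ by the free Gaussian kernel (the Brownian-bridge probability is at most $1$), exploit the super-exponential decay of $B_0(x,y)=\Ai(x+y)$ in both arguments, and thereby obtain trace-class estimates on the composition that are uniform in $m$. Dominated convergence then delivers convergence of the determinants, which combined with steps one and two proves the claim.

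The main obstacle I expect is step three: neither $\Lambda_{L,g}$ nor $e^{-L\Delta}$ is trace class on $L^2(\R)$ in isolation, so one genuinely needs the Airy decay of $B_0$ to close the estimate, and the bound must be robust under mesh refinement. The $H^1$-regularity hypothesis on $g$ enters precisely here: it makes $g$ continuous, ensures the Brownian-bridge hitting probability is a well-behaved measurable function of $(x,y)$, and controls the approximation by step-function barriers. A further technical hurdle is the biorthogonalization itself for Airy$_1$, which is less transparent than for Airy$_2$ because of the Gaussian factors $e^{(x-y)^2/(4(t_i-t_j))}$ appearing in the extended kernel; I would rely on the contour-deformation manipulations of~\cite{QR12} to handle this part, as it seems difficult to bypass them.
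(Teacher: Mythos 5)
This statement is not proved in the paper at all: it is imported verbatim as Theorem~4 of~\cite{QR12} and serves only as the starting point of the analysis, so there is no in-paper argument to compare yours against. Measured against the original proof of Quastel and Remenik, your outline does follow the same general route (finite-mesh path-integral formula for the joint distributions, identification of the alternating product of projections and heat semigroups with a discretely killed Brownian motion, then a continuum limit of the Fredholm determinants, i.e.\ the scheme of~\cite{CQR11} adapted to the Airy$_1$ case with inputs from~\cite{Sas05,BFPS06,BCR13}).

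However, your step three contains a genuine gap as written. The operator $B_0$ with kernel $\Ai(x+y)$ is not trace class, indeed not even compact: $\int_\R \Ai(x+z)\Ai(z+y)\,dz=\delta(x-y)$, so $B_0^2=\Id$. Moreover $\Ai(x+y)$ decays super-exponentially only in the direction $x+y\to+\infty$ and merely oscillates with slow polynomial decay as $x+y\to-\infty$, so the proposed mechanism (dominate $\Theta^{(m)}_{L,g}$ by the free Gaussian kernel, use ``decay of $B_0$ in both arguments'', conclude trace-norm convergence by dominated convergence) cannot work as stated. One has to rewrite $\Id-B_0+\Theta^{(m)}_{L,g}e^{-L\Delta}B_0=\Id-(\Id-\Theta^{(m)}_{L,g}e^{-L\Delta})B_0$, conjugate by exponential multiplication operators (the analogue of the weights $U_r$ used throughout the present paper), and prove Hilbert--Schmidt/trace bounds and convergence for the conjugated operators; this is where the real work in~\cite{QR12} lies. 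Relatedly, the $H^1$ hypothesis on $g$ is not there merely to make $g$ continuous: in the original argument it is what controls the approximation of the barrier (via Cameron--Martin-type shifts of the bridge) when passing to the limit. Finally, your step one (the extended-kernel-to-path-integral identity for Airy$_1$) is itself quoted rather than derived, which is acceptable as a citation but is an input of essentially the same depth as the statement being proved.
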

In order to state the persistence probability of a constant threshold $c$, we denote by $\bar P_0$ the projection onto the interval $(-\infty,0]$ and $P_0=\Id-\bar P_0$, furthermore, we define a new operator $e^{L\tilde\Delta}$ with kernel
\begin{equation}\label{eldeltat}
	e^{L\tilde\Delta}(x,y)=e^{L\Delta}(-x,y).
\end{equation}
 For a constant threshold $c$, this was computed in~\cite{FF12a} with the following result.
\begin{prop}[Proposition 2.1 of~\cite{FF12a}]\label{ff13}
   For $c\in\R$ and $L>0$, it holds
   \begin{equation}\label{fredholm}
       \Pb(\mathcal A_1(s)\leq c,\ s\in[0,L])=\det\left(\Id-K_{L,c}\right)_{L^2(\R)},
   \end{equation}
where
\begin{equation}\label{eq2.5}
	K_{L,c}=P_0B_{0,c}+\tilde K_{L,c}+\hat K_{L,c}
\end{equation}
 with
 \begin{equation}
 	B_{0,c}(x,y)=\Ai(x+y+2c)
 \end{equation}
 and
\begin{equation}\label{klcdef}
    \begin{aligned}
    &\tilde K_{L,c}=\bar P_0e^{L\Delta}P_0e^{-L\Delta}B_{0,c},\quad\hat K_{L,c}=\bar P_0e^{L\tilde\Delta}\bar P_0e^{-L\Delta}B_{0,c}.
\end{aligned}
\end{equation}
\end{prop}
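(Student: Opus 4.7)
The strategy is to specialise Theorem~\ref{qr12} to the constant function $g\equiv c$, compute the Brownian bridge probability explicitly via the reflection principle, and then perform a translation by $c$ so that the relevant projection is onto the half-line $(-\infty,0]$ used in the definitions of $\bar P_0$ and $P_0$.

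First, I would apply the classical reflection principle for Brownian bridge with diffusion coefficient $2$: for $x,y\leq c$,
\begin{equation*}
\Pb_{b(0)=x,b(L)=y}(b(s)\leq c,\ 0\leq s\leq L)=1-e^{-(c-x)(c-y)/L}.
\end{equation*}
Combining this with the algebraic identity $(x-y)^2+4(c-x)(c-y)=(x+y-2c)^2$, the kernel $\Lambda_{L,c}$ in Theorem~\ref{qr12} can be rewritten as
\begin{equation*}
\Lambda_{L,c}(x,y)=\bigl[e^{L\Delta}(x,y)-e^{L\Delta}(x,2c-y)\bigr]\mathbf{1}_{x\leq c}\mathbf{1}_{y\leq c},
\end{equation*}
i.e.\ as a difference between the heat kernel and its spatial reflection, restricted to the quadrant $\{x,y\leq c\}$.

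Next I would perform the translation $x\mapsto x+c$, $y\mapsto y+c$, which does not change the Fredholm determinant since it is only a conjugation by translation on $L^2(\R)$. Under this shift: the condition $x\leq c$ becomes $x\leq 0$, so the indicators turn into the projection $\bar P_0$; the heat kernel $e^{\pm L\Delta}$ is translation invariant; the reflected heat kernel $e^{L\Delta}(x,2c-y)$ becomes $e^{L\Delta}(x,-y)$, which by the definition \eqref{eldeltat} equals $e^{L\tilde\Delta}(x,y)$; and $B_0(x,y)=\Ai(x+y)$ becomes $\Ai(x+y+2c)=B_{0,c}(x,y)$. Therefore after the shift,
\begin{equation*}
\Lambda_{L,c}e^{-L\Delta}B_0\ \longrightarrow\ \bar P_0\bigl(e^{L\Delta}-e^{L\tilde\Delta}\bigr)\bar P_0\,e^{-L\Delta}B_{0,c}.
\end{equation*}

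Finally, I would expand using $\bar P_0=\Id-P_0$ inside $\bar P_0 e^{L\Delta}\bar P_0 e^{-L\Delta}B_{0,c}$, giving
\begin{equation*}
\bar P_0 e^{L\Delta}\bar P_0 e^{-L\Delta}B_{0,c}=\bar P_0 B_{0,c}-\bar P_0 e^{L\Delta}P_0 e^{-L\Delta}B_{0,c}=\bar P_0 B_{0,c}-\tilde K_{L,c}.
\end{equation*}
Combined with the untouched $-B_{0,c}$ term and the $-\hat K_{L,c}$ term coming from $e^{L\tilde\Delta}$, the operator inside the determinant becomes
\begin{equation*}
\Id-B_{0,c}+\bar P_0 B_{0,c}-\tilde K_{L,c}-\hat K_{L,c}=\Id-P_0 B_{0,c}-\tilde K_{L,c}-\hat K_{L,c}=\Id-K_{L,c},
\end{equation*}
which is exactly \eqref{fredholm}. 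The argument is essentially bookkeeping, and the only genuine input is the reflection-principle formula for the Brownian bridge; the mildly delicate point is to keep track of which half-line each projector acts on before and after the translation, and to verify that the translation is legal at the level of trace-class operators so that the Fredholm determinant is preserved.
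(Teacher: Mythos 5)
Your proof is correct and is essentially the argument behind Proposition~2.1 of~\cite{FF12a}, which the present paper cites without reproving: specialise Theorem~\ref{qr12} to $g\equiv c$, use the reflection principle to write $\Lambda_{L,c}$ as the heat kernel minus its reflection restricted to $\{x,y\leq c\}$, conjugate by the shift by $c$ (turning the indicators into $\bar P_0$, $B_0$ into $B_{0,c}$, and the reflected kernel into $e^{L\tilde\Delta}$ via \eqref{eldeltat}), and finish with $e^{L\Delta}e^{-L\Delta}B_{0,c}=B_{0,c}$. The sign bookkeeping correctly yields $\Id-P_0B_{0,c}-\tilde K_{L,c}-\hat K_{L,c}=\Id-K_{L,c}$, so nothing is missing.
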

Note that although the coefficient in front of Laplacian is negative, the operator $e^{-L\Delta}B_{0,c}$ is well-defined with kernel given by (see for instance \cite{Sas07,QR12,BFPS06,BFP06})
\begin{gather}\label{eldelta}
e^{-L\Delta}B_{0,c}(x,y)=e^{-2L^3/3}e^{-L(x+y+2c)}\Ai(L^2+x+y+2c).
\end{gather}
For later use, we also set
\begin{equation}
\tilde B_{0,c}(x,y)=\Ai(y-x+2c),\quad \hat B_{0,c}(x,y)=\Ai(x-y+2c).
\end{equation}

Since the probability we are interested in goes to $0$ as $L\to\infty$, the Fredholm determinant goes to $0$ and as usual in these cases is the Fredholm series expansion not a good representation for the analysis. Instead, we are considering directly the logarithm of the persistence probability and use the trace expansion, namely
\begin{equation}\label{logdet}
    \ln(\det(\Id-K_{L,c}))=-\sum_{n=1}^\infty\frac{\Tr(K_{L,c}^n)}{n}.
\end{equation}

The strategy is to single out the terms in \eqref{logdet} which are linear in $L$ (their sum will give $-\kappa(c)$) and control all other terms. These terms will be bounded by using different norms after having multiplied by appropriate conjugations. These will be given by multiplication operators $U_r: L^2(\R)\to L^2(\R)$ with $U_r f(x)=e^{rx}f(x)$ for $r>0$. The following standard results, see e.g.~\cite{Sim00}, will be constantly used throughout this work.
\begin{thm}\label{simon}\label{operatorinequality}
    Let $A,B$ be two operators. Then it holds
    \begin{enumerate}
    \item\label{ts1} $\Tr(U_r^{-1}AU_r)=\Tr(A)$,
    \item\label{ts2} $|\Tr(A)|\leq\|A\|_1$,
    \item\label{ts3}$ \max\{\|AB\|_1,\|AB\|_{\rm HS}\}\leq\|A\|_{\rm HS}\|B\|_{\rm HS}$,
    \item $\|AB\|_{\rm HS}\leq \|A\|_{\rm HS}\|B\|_{\rm op}$,
    \item \label{ts4} $\|A\|_{\rm op}\leq\|A\|_{\rm HS}\leq\|A\|_{1}$,
    \item $\Tr(AB)=\Tr(BA)$,
    \end{enumerate}
    whenever the r.h.s.\ are well-defined.
\end{thm}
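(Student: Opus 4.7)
The plan is to derive all six statements from the singular value decomposition (SVD) of compact operators, supplemented by one additional kernel-level calculation for the conjugation identity in item~1, where the multiplier $U_r$ is unbounded on $L^2(\R)$. I would start by recalling that any compact operator $A$ admits a representation $A = \sum_n s_n \langle \cdot, e_n\rangle f_n$ with singular values $s_1 \ge s_2 \ge \cdots \ge 0$ and orthonormal systems $\{e_n\}, \{f_n\}$, so that $\|A\|_{\rm op} = s_1$, $\|A\|_{\rm HS}^2 = \sum_n s_n^2$, and $\|A\|_1 = \sum_n s_n$. With this in hand, item~5 is immediate from the elementary inequalities $s_1^2 \le \sum_n s_n^2 \le \bigl(\sum_n s_n\bigr)^2$ for nonnegative reals, and item~2 follows by evaluating $\Tr(A) = \sum_n \langle A e_n, e_n\rangle$ on any ONB extending $\{e_n\}$ and applying $|\langle f_n, e_n\rangle| \le 1$.

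For the composition bounds I would first prove the operator-left version of item~4, namely $\|AB\|_{\rm HS} \le \|A\|_{\rm op}\|B\|_{\rm HS}$; expanding in an ONB gives $\|AB\|_{\rm HS}^2 = \sum_n \|ABe_n\|^2 \le \|A\|_{\rm op}^2 \sum_n \|Be_n\|^2$, and the stated form $\|AB\|_{\rm HS}\le\|A\|_{\rm HS}\|B\|_{\rm op}$ then follows by passing to adjoints, since both norms are adjoint-invariant. The Hilbert--Schmidt half of item~3 is a direct consequence of item~4 chained with item~5, while the trace-class half $\|AB\|_1 \le \|A\|_{\rm HS}\|B\|_{\rm HS}$ is the Hölder inequality for Schatten classes; I would derive it from the duality formula $\|T\|_1 = \sup_{\|C\|_{\rm op}\le 1} |\Tr(CT)|$ combined with Cauchy--Schwarz in the Hilbert--Schmidt inner product $\langle X,Y\rangle_{\rm HS} = \Tr(X^*Y)$. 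Cyclicity in item~6 reduces to interchanging the order of summation in $\Tr(AB) = \sum_{m,n} \langle Be_n, e_m\rangle \langle A^*e_m, e_n\rangle$, the absolute summability needed for Fubini being ensured by the Hilbert--Schmidt Cauchy--Schwarz bound whenever both sides are well-defined.

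The only genuinely delicate step is item~1, because $U_r f(x) = e^{rx}f(x)$ is unbounded on $L^2(\R)$, so $U_r^{-1}AU_r$ is not a similarity transformation in the usual sense. My plan is to interpret the composition at the integral-kernel level: if $A$ has kernel $A(x,y)$, then $U_r^{-1}AU_r$ has kernel $e^{-rx}A(x,y)e^{ry}$, and when this operator is trace class its trace equals the diagonal integral $\int e^{-rx}A(x,x)e^{rx}\,dx = \int A(x,x)\,dx = \Tr(A)$, the exponential factors cancelling pointwise. The main obstacle, and the step where care is really needed, is to verify in the concrete applications later in the paper that $U_r^{-1}AU_r$ is itself trace class; I would do this by factoring $A = XY$ as a product of two Hilbert--Schmidt operators chosen so that both $U_r^{-1}X$ and $YU_r$ are Hilbert--Schmidt — a balancing of exponential decay between the two factors that is precisely the purpose for which this conjugation trick is introduced — and then apply items~3 and~6 to conclude that the conjugated operator is trace class with the same trace as $A$.
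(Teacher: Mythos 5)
The paper does not prove this theorem at all --- it is stated without proof as a collection of standard facts and the reader is referred to Simon's \emph{Trace Ideals and their Applications}~\cite{Sim00}. So there is no internal proof to compare against; what follows is an assessment of your argument on its own terms.

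Your SVD-based derivation of items 2--6 is correct and is the standard textbook route: item~5 from the elementary inequalities on the singular values, item~2 by evaluating the trace in an orthonormal basis extending $\{e_n\}$, the $\|A\|_{\rm op}\|B\|_{\rm HS}$-version of item~4 by a direct ONB computation and then passing to adjoints for the stated form, item~3 by chaining item~4 with item~5 for the Hilbert--Schmidt half and by Schatten--H\"older duality plus Cauchy--Schwarz in the HS inner product for the trace-class half, and item~6 by Fubini justified through Cauchy--Schwarz.

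For item~1, which is the only non-textbook claim because $U_r$ is unbounded, your second argument (factor $A=XY$ so that $U_r^{-1}X$ and $YU_r$ are both Hilbert--Schmidt, then $U_r^{-1}AU_r=(U_r^{-1}X)(YU_r)$ is trace class by item~3 and $\Tr\bigl((U_r^{-1}X)(YU_r)\bigr)=\Tr\bigl((YU_r)(U_r^{-1}X)\bigr)=\Tr(YX)=\Tr(XY)=\Tr(A)$ by item~6) is the clean and correct one, and it matches exactly how the conjugation is deployed in the body of the paper --- every application first exhibits such a factorization. Your first argument via the kernel-diagonal formula $\Tr(A)=\int A(x,x)\,dx$ is less robust: that formula is not valid for an arbitrary trace-class operator on $L^2(\R)$ without a regularity hypothesis on the kernel (continuity on the diagonal, or appeal to Brislawn's averaged-diagonal theorem), so the HS-factorization argument should be treated as the primary proof and the kernel heuristic only as motivation. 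With that caveat, the proposal is sound.
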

\paragraph{Strategy of the proof.} The main idea of the proof of Theorem~\ref{main} is as follows. For each $n\geq 1$, we calculate $\Tr(K_{L,c}^n)$ with $K_{L,c}$ given in Proposition~\ref{ff13}. The resulting terms can be divided into three categories: the ones independent of $L$ (for instance $\Tr((P_0B_{0,c})^n$), the ones providing $\mathcal O(L)$ term (which comes from $\Tr(\hat K_{L,c}^n)$) and the rest error terms.

To illustrate this idea, we will first consider the $n=1$ case, see Section~\ref{n=1}. In this case, the trace is given as a sum of three terms according to the decomposition \eqref{eq2.5}. The contribution of the first two terms is easy to control and they are not growing in $L$. The last term, $\Tr(\hat K_{L,c})$ can be written as a double integral in the complex plane, see~\eqref{eq2.18}. To analyze the large $L$ behavior, we would like to do steep descent on this double integral. This is possibly only after exchanging the position of the contours, which generates a residue term. This is a single integral which can be explicitly computed and it is proportional to $L$. The double integral from the steep descent analysis is very small. This exchange of contours is equivalent to replace a $\bar P_0$ with a $1-P_0$.

For the general $n\geq 2$, we will mostly do manipulations directly on the operators. We will constantly replace $\bar P_0$ appearing in $K_{L,c}^n$ by $1-P_0$ so that we can use the following ingredients:
	\begin{enumerate}
		\item various identities involving Airy function, for instance $e^{L\Delta}e^{-L\Delta}B_{0,c}=B_{0,c}$ for any $L>0$ (see Lemma~\ref{identity} for more) and invariance of trace under circular shifts to simplify the kernels;
		\item Cauchy's residue theorem to deduce the $\mathcal O(L)$ term from the simplified kernels, see Lemma~\ref{wkey3};
		\item\label{int2} super-exponential decay of Airy function on positive line (which gives us Lemma~\ref{bound},~\ref{cobound}) and inequalities of Theorem~\ref{operatorinequality} to control the $L-$independent terms and rest error terms (the main part is given in Appendix Section~\ref{upperbounds}).
	\end{enumerate}
In Section~\ref{sec23} we consider the general case for $n\geq 2$. The kernel $K_{L,c}$ is further decomposed, see \eqref{e226}, as
\begin{equation}
	K_{L,c}=K_u+K_v+K_w+K_d-K_e.
\end{equation}
Thus $K_{L,c}^n$ can be written as a sum of products of those five kernels. Essentially, there are two types of those products: single terms, that is, those of the form $K_i^n$ with $i\in\{u,v,w,d,e\}$ and mixed terms. Using Cauchy's residue theorem, we will see that only $\Tr(K_w^n)$ provides $\Or(L)$ term and all other terms can be controlled by  ingredient~\ref{int2} mentioned above.

Combining all the results, we prove Theorem~\ref{main} in Section~\ref{sec24}. The analytic continuation of $\kappa(c)$ is obtained in Section~\ref{pp13}.

Throughout this work we will consider $1\leq r^2\leq 2c$ with $r>0$ and also define
\begin{equation}\label{beta}
	\beta= \max\{2e^{r^3/3-2rc},e^{(r-1/7)^3/3-2(r-1/7)c}\}.
\end{equation}
\begin{rem}\label{obstica}
	We will see that the absolute value of sum of all error terms is bounded by $\sum_{n=1}^\infty\frac{7^n\beta^n}{n}$. If we set $r=\sqrt{2c}$, then $\beta\leq 2e^{-\frac{4\sqrt{2}c^{3/2}}{3}}<\frac{1}{7}$ for $c\geq \frac32$, which explains why we choose $c\geq\frac32$ in Theorem~\ref{main}.
	
	The term $e^{(r-1/7)^3/3-2(r-1/7)c}$ and the prefactor 2 are purely technical. With a more delicate method, one can improve slightly this term, but it will not reduce the lower bound $c\geq \frac32$ significantly, so we will not pursue in this aspect. For the estimate in the proof, the term $e^{r^3/3-2rc}$ is and the restriction $0<r^2\leq 2c$ are essentially. They are the main obstacle to generalizing our method to negative real number.
\end{rem}

\subsection{Case $n=1$}\label{n=1}
In order to illustrate the idea, let's first consider the $\Tr(K_{L,c}^n)$ with $n=1$ and $L\geq 1$. We have
\begin{equation}\label{eq2.13}
\Tr(K_{L,c})= \Tr(P_0B_{0,c})+\Tr(\tilde K_{L,c})+\Tr(\hat K_{L,c})
\end{equation}
with $\tilde K_{L,c}$ and $\hat K_{L,c}$ given in \eqref{klcdef}. Clearly, $\Tr(P_0B_{0,c})$ does not depend on $L$ and is finite by $0\leq\Ai(x)\leq e^{-\frac{2x^{3/2}}{3}}$ for $x\geq 0$. Now let's consider $\Tr(\tilde K_{L,c})$, by definition \eqref{klcdef}, we have
\begin{equation}\label{tildeklc}
	\begin{aligned}
		&\Tr(\tilde K_{L,c})=\Tr(\bar P_0e^{L\Delta}P_0e^{-L\Delta}B_{0,c})\\
		=&\Tr(e^{L\Delta}P_0e^{-L\Delta}B_{0,c})-\Tr(P_0e^{L\Delta}P_0e^{-L\Delta}B_{0,c})\\
		=&\Tr(P_0B_{0,c})-\Tr( P_0e^{L\Delta}P_0e^{-L\Delta}B_{0,c}),
	\end{aligned}
\end{equation}
where in the last step we use cyclic property and the fact $e^{-L\Delta}B_{0,c}e^{L\Delta}=B_{0,c}$. The error term is then given by the second term on the right hand side of \eqref{tildeklc}. In order to bound the error term, we define $A_1=P_0e^{L\Delta}P_0$ and $A_2=P_0e^{-L\Delta}B_{0,c}P_0$. Applying now $\|U_r^{-1}A_1U_r^{-1}\|_{\rm HS}\leq \tfrac{1}{\sqrt{ L}}\leq 1$ (by~\eqref{4b3}), $\|U_rA_2U_r\|_{\rm HS}\leq \beta e^{-\frac{4L^3}{3}}e^{-2Lc}$ (by~\eqref{1b4}) and ~\ref{ts3} of Theorem~\ref{simon}, we have
    \begin{equation}\label{s212}
    \begin{aligned}
     &|\Tr( P_0e^{L\Delta}P_0e^{-L\Delta}B_{0,c})|=|\Tr(U_r^{-1}A_1U_r^{-1}U_rA_2U_r)|\\
     \leq&\|U_r^{-1}A_1U_r^{-1}\|_{\rm HS}\|U_rA_2U_r\|_{\rm HS}\leq \beta e^{-\frac{4L^3}{3}}.
    \end{aligned}
    \end{equation}
    It remains to deal with $\Tr(\hat K_{L,c})$. Recall that heat kernel has the following integral representation: for any $L>0$,
    \begin{equation}\label{s215}
        e^{L\Delta}(x,y)=\frac{1}{2\pi i}\int_{i\R+\sigma}dv e^{Lv^2+v(x-y)}
    \end{equation}
    where $\sigma\in\R$ can arbitrarily be chosen. Furthermore, using the integral representation of Airy function on \eqref{eldelta}, we obtain
\begin{equation}\label{111}
\begin{aligned}
&e^{-L\Delta}B_{0,c}(x,y)=\frac{1}{2\pi i}\int_{i\R+\mu_1}dw e^{\frac{w^3}{3}-Lw^2-w(x+y+2c)}
\end{aligned}
\end{equation}
under the condition $\mu_1>L$. Using \eqref{eldeltat}, \eqref{s215} and \eqref{111}, we have
\begin{equation}\label{eq2.18}
    \begin{aligned}
    	&\Tr(\bar P_0e^{L\tilde\Delta}\bar P_0e^{-L\Delta}B_{0,c})\\
    	=&\int_{-\infty}^0dx\int_{-\infty}^0dye^{L\tilde\Delta}(x,y)e^{-L\Delta}B_{0,c}(y,x)\\
    	=&\frac{1}{(2\pi i)^2}\int_{i\R+\mu_1}dw\int_{i\R+\mu_2}dv e^{\frac{w^3}{3}-Lw^2+Lv^2-2wc}\int_{-\infty}^0dx\int_{-\infty}^0dy e^{(v-w)x}e^{(v-w)y}\\
    	=&\frac{1}{(2\pi i)^2}\int_{i\R+\mu_1}dw\int_{i\R+\mu_2}dv\frac{e^{\frac{w^3}{3}-Lw^2+Lv^2-2wc}}{(v-w)^2}
    \end{aligned}
\end{equation}
provided $\mu_2>\mu_1>L$. We then deform the contour $v$ to $\I\R$ and taking care of the pole at $v=w$ by Cauchy's residue theorem, we obtain
\begin{equation}
\begin{aligned}
      \Tr(\bar P_0e^{L\tilde\Delta}\bar P_0e^{-L\Delta}B_{0,c})&=\frac{1}{2\pi i}\int_{i\R+2L}dw{\rm Res}\bigg(\frac{e^{\frac{w^3}{3}-Lw^2+Lv^2-2wc}}{(v-w)^2}\Big|_{v=w}\bigg)  \\
      & \quad+\frac{1}{(2\pi i)^2}\int_{i\R+2L}dw\int_{i\R}dv\frac{e^{\frac{w^3}{3}-Lw^2+Lv^2-2wc}}{(v-w)^2}\\
      &=\frac{2L}{2\pi i}\int_{i\R+2L}dw e^{w^3/3-2 c w} w-\Tr(P_0e^{L\tilde\Delta}P_0e^{-L\Delta}B_{0,c})\\
      &=-2L\Ai'(2c)-\Tr(P_0e^{L\tilde\Delta}P_0e^{-L\Delta}B_{0,c}).
\end{aligned}
\end{equation}
For the error term $\Tr(P_0e^{L\tilde\Delta}P_0e^{-L\Delta}B_{0,c})$, similarly as~\eqref{s212}, we get
\begin{equation}
\begin{aligned}
    &
\end{aligned}
    |\Tr(P_0e^{L\tilde\Delta}P_0e^{-L\Delta}B_{0,c})|\leq e^{-\frac{4L^3}{3}}.
\end{equation}
Summarizing, for $n=1$ we have obtained the following result.
\begin{prop}\label{klc1}
    For any $c,r>0$ with $1\leq r^2\leq2c$, we have
        \begin{equation}
            |\Tr(K_{L,c})-2\Tr(P_0B_{0,c})+2L\Ai'(2c)|\leq e^{-\frac{4L^3}{3}},\quad\forall L\geq 1.
        \end{equation}
\end{prop}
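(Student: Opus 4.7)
The plan is to use the decomposition $\Tr(K_{L,c})= \Tr(P_0B_{0,c})+\Tr(\tilde K_{L,c})+\Tr(\hat K_{L,c})$ from \eqref{eq2.13} and treat the three pieces separately. The first term already contributes directly to the claimed $L$-independent constant. For $\Tr(\tilde K_{L,c})$, I would expand $\bar P_0 = \Id - P_0$, apply cyclicity of the trace together with the identity $e^{-L\Delta}B_{0,c}e^{L\Delta}=B_{0,c}$, and obtain $\Tr(P_0 B_{0,c}) - \Tr(P_0 e^{L\Delta} P_0 e^{-L\Delta} B_{0,c})$. The remainder is bounded by inserting $U_r^{-1}U_r$ in the middle, applying the submultiplicativity of Hilbert--Schmidt norms from Theorem~\ref{operatorinequality}, and quoting the two HS bounds already indicated in the excerpt, yielding an upper bound of the form $\beta e^{-4L^3/3}$.

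For $\Tr(\hat K_{L,c})$ the key step is to substitute the integral representations \eqref{s215} and \eqref{111} into the definition and carry out the $x,y$ integrals over $(-\infty,0]$, producing the iterated contour integral
\[
\frac{1}{(2\pi\I)^2}\int_{\I\R+\mu_1}dw\int_{\I\R+\mu_2}dv\,\frac{e^{w^3/3-Lw^2+Lv^2-2cw}}{(v-w)^2},\qquad \mu_2>\mu_1>L.
\]
I would then push the inner $v$-contour leftward onto $\I\R$. By Cauchy's theorem this picks up a residue at the double pole $v=w$, equal to $2Lw\,e^{Lw^2}$ (the $v$-derivative of $e^{Lv^2}$ at $v=w$); after cancellation with $e^{-Lw^2}$ the $w$-integrand reduces to $w\,e^{w^3/3-2cw}$, and comparison with the standard contour representation of $\Ai$ identifies the residue contribution as $-2L\Ai'(2c)$. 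The remaining double integral, now with $v$-contour on $\I\R$, reassembles by the same manipulations in reverse into $-\Tr(P_0 e^{L\tilde\Delta} P_0 e^{-L\Delta} B_{0,c})$, which is bounded by the same HS/conjugation argument as above.

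Summing the three contributions, the $L$-linear part is $-2L\Ai'(2c)$, the $L$-independent part is $2\Tr(P_0B_{0,c})$, and the aggregated remainder is controlled by the sum of the two error traces, which under $1\le r^2\le 2c$ and $L\ge 1$ fits inside the claimed $e^{-4L^3/3}$ envelope. Essentially no new idea is needed beyond what the subsection has set up; the two points to execute carefully are the computation of the second-order residue (a one-line derivative) and the contour-deformation step, where one must verify that the integrand decays along the closing arcs so that only the residue at $v=w$ contributes. The allocation of the conjugation factors $U_r^{\pm 1}$ between the two factors in each remainder trace, which is precisely where the range $1\le r^2\le 2c$ enters, is the main technical bookkeeping item.
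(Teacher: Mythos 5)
Your proposal is correct and follows essentially the same route as the paper: the decomposition \eqref{eq2.13}, the identity $e^{-L\Delta}B_{0,c}e^{L\Delta}=B_{0,c}$ plus cyclicity for $\Tr(\tilde K_{L,c})$, the double contour integral \eqref{eq2.18} with the residue at the double pole $v=w$ giving $-2L\Ai'(2c)$, and the $U_r$-conjugated Hilbert--Schmidt bounds for the two remainder traces. No substantive difference from the paper's argument.
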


\subsection{Leading term for $n\geq 2$ case}
In the decomposition that we will do below of $\Tr(K_{L,c}^n)$ with general $n$, there will be one term given by $\Tr(\bar P_0e^{L\tilde\Delta}\hat B_{0,c}^n\bar P_0e^{-L\Delta}B_{0,c})$, which is up to error terms the term appearing in $\kappa(c)$. Since the decomposition is a bit lengthly, we first get a control on this term.
\begin{lem}\label{wkey3}
Let $n\in\mathbb Z_{\geq 1},L,r\geq 1$ and $r^2\leq 2c$, it holds
    \begin{equation}
    \big|\Tr(\bar P_0e^{L\tilde\Delta}\hat B_{0,c}^n\bar P_0e^{-L\Delta}B_{0,c})+2(n+1)^{-2/3}L\Ai'(2(n+1)^{2/3}c)\big|\leq\beta^{n+1}e^{-\frac{4L^3}{3}}.
    \end{equation}
\end{lem}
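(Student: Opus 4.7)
The plan is to mimic the $n=1$ analysis of Proposition~\ref{klc1}: reduce the trace to a double contour integral with a $(v-w)^{-2}$ pole, deform one contour past this pole to extract the $\Or(L)$ leading term via Cauchy's residue theorem, and then identify and bound the remaining double integral. For the first step I would use the integral representations~\eqref{s215} and~\eqref{111} together with the iterated Airy convolution identity
\begin{equation*}
\hat B_{0,c}^n(z,y) = \frac{1}{2\pi i}\int_{i\R+\mu_0}dw_0\,e^{nw_0^3/3 - w_0(z-y+2nc)},
\end{equation*}
which follows by induction on $n$ because each intermediate integration $\int_\R dz\,e^{(w_i-w_{i+1})z}$ in an $n$-fold composition of Airy kernels identifies consecutive contour variables. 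Performing the $z$-integration that composes $e^{L\tilde\Delta}$ with $\hat B_{0,c}^n$ and then the boundary integrations $\int_{-\infty}^0\int_{-\infty}^0 e^{(v-w)(x+y)}dx\,dy=(v-w)^{-2}$ (valid when $\mathrm{Re}(v-w)>0$) yields
\begin{equation*}
\Tr(\bar P_0 e^{L\tilde\Delta}\hat B_{0,c}^n\bar P_0 e^{-L\Delta}B_{0,c}) = \frac{1}{(2\pi i)^2}\int_{i\R+\mu_1}dw\int_{i\R+\mu_2}dv\,\frac{e^{Lv^2+nv^3/3-2nvc+w^3/3-Lw^2-2wc}}{(v-w)^2},
\end{equation*}
with $\mu_2>\mu_1>L$, generalizing the $n=0$ expression in~\eqref{eq2.18}.

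Second, I would deform the $v$-contour from $i\R+\mu_2$ to $i\R$. The integrand decays like $e^{-(L+n\mu_v)s^2}$ along any vertical line with $\mu_v>-L/n$, so the arcs at infinity drop and Cauchy's theorem extracts the residue at the double pole $v=w$, equal to $(2Lw+nw^2-2nc)\,e^{(n+1)w^3/3-2(n+1)wc}$. Rescaling $u=(n+1)^{1/3}w$ turns the three resulting $w$-integrals into Airy integrals $\frac{1}{2\pi i}\int du\,u^k e^{u^3/3-uz}$ with $k\in\{1,2,0\}$ at $z=2(n+1)^{2/3}c$: the $u^1$ term gives exactly $-2(n+1)^{-2/3}L\,\Ai'(2(n+1)^{2/3}c)$, while the $u^2$ and $u^0$ terms combine to zero by the Airy ODE $\Ai''(z)=z\Ai(z)$. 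On the deformed contours $\mathrm{Re}(v-w)<0$, so the reverse identity $(v-w)^{-2}=\int_0^\infty\int_0^\infty e^{(v-w)(x+y)}dx\,dy$ folds the remainder back into $\Tr(P_0 e^{L\tilde\Delta}\hat B_{0,c}^n P_0 e^{-L\Delta}B_{0,c})$, and the lemma reduces to the estimate
\begin{equation*}
\bigl|\Tr(P_0 e^{L\tilde\Delta}\hat B_{0,c}^n P_0 e^{-L\Delta}B_{0,c})\bigr|\leq\beta^{n+1}e^{-4L^3/3}.
\end{equation*}

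For this final estimate I would insert conjugations by $U_r^{\pm 1}$ and apply $|\Tr(XYZ)|\leq\|X\|_{\rm HS}\,\|Y\|_{\rm op}\,\|Z\|_{\rm HS}$ from Theorem~\ref{operatorinequality} with $X=P_0 e^{L\tilde\Delta}U_r^{-1}$, $Y=U_r\hat B_{0,c}^n U_r^{-1}$, and $Z=U_r P_0 e^{-L\Delta}B_{0,c}$. The middle factor is a convolution operator whose Fourier symbol equals $e^{(r-i\xi)^3/3-2c(r-i\xi)}$ and has modulus bounded by $e^{r^3/3-2rc}\leq\beta/2$, giving $\|Y\|_{\rm op}\leq(\beta/2)^n$; the two outer Hilbert--Schmidt norms are bounded by the super-exponential Airy decay on $[0,\infty)$ via the positive-half-line analogues of the Appendix Lemmas~\ref{bound},~\ref{cobound}, contributing the factor $\beta\,e^{-4L^3/3-2Lc}/\sqrt L\leq\beta\,e^{-4L^3/3}$ familiar from~\eqref{s212}. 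The main obstacle I anticipate is this middle estimate: because $\hat B_{0,c}$ is a unitary-like Fourier multiplier and therefore neither trace-class nor Hilbert--Schmidt on $L^2(\R)$, one cannot split $\hat B_{0,c}^n$ into $n$ HS factors but must control the whole $n$-fold product in operator norm, while simultaneously matching the conjugation parameter with that needed for the outer HS estimates---this dual constraint is the reason the definition of $\beta$ contains two branches with slightly different exponents.
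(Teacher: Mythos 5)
Your contour-integral part is correct and is essentially the paper's own argument: the same double-integral representation with the $(v-w)^{-2}$ factor, the same deformation across the double pole at $v=w$, and the same identification of the leftover double integral with $\Tr(P_0e^{L\tilde\Delta}\hat B_{0,c}^nP_0e^{-L\Delta}B_{0,c})$; your explicit residue $(2Lw+nw^2-2nc)e^{(n+1)w^3/3-2(n+1)wc}$ and the cancellation of the $L$-independent pieces via $\Ai''(z)=z\Ai(z)$ make precise a step the paper only states. The gap is in the final trace estimate. In your factorization $X=P_0e^{L\tilde\Delta}U_r^{-1}$, $Y=U_r\hat B_{0,c}^nU_r^{-1}$, $Z=U_rP_0e^{-L\Delta}B_{0,c}$, neither $X$ nor $Z$ is Hilbert--Schmidt: the kernel of $X$ is $\Id_{x>0}(4\pi L)^{-1/2}e^{-(x+z)^2/(4L)}e^{-rz}$, which along $z\approx -x$ is of size $e^{rx}$, so $\|X\|_{\rm HS}=\infty$; and the kernel of $Z$ is $e^{rx}\Id_{x>0}e^{-2L^3/3}e^{-L(x+y+2c)}\Ai(L^2+x+y+2c)$, which blows up as $y\to-\infty$ because $e^{-Ly}$ explodes while the Airy factor only oscillates with polynomial decay, so $\|Z\|_{\rm HS}=\infty$. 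The missing right projection in $Z$ could be restored for free by cyclicity, $\Tr(P_0AP_0B)=\Tr(P_0AP_0BP_0)$, but there is no projection between $e^{L\tilde\Delta}$ and $\hat B_{0,c}^n$, so the heat flow cannot be split off from the Airy convolution without changing the operator. Note also that the obstacle is not where you locate it: your Fourier-symbol bound for the middle factor is fine (it reproduces \eqref{3ab3}), while the quoted outer factor $\beta e^{-4L^3/3-2Lc}/\sqrt L$ borrows \eqref{4b3}, which requires $P_0$ on \emph{both} sides of the heat kernel and is not available here.

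The repair is exactly the paper's choice of grouping: keep $e^{L\tilde\Delta}\hat B_{0,c}^n$ together as one composite kernel (identity \eqref{a101}), whose Airy decay compensates the heat kernel, take $A_1=P_0e^{L\tilde\Delta}\hat B_{0,c}^nP_0$ and $A_2=P_0e^{-L\Delta}B_{0,c}P_0$, and bound $\|U_r^{-1}A_1U_r^{-1}\|_{\rm HS}\leq\beta^ne^{Lr^2}$ by \eqref{4bb4} and $\|U_rA_2U_r\|_{\rm HS}\leq\beta e^{-4L^3/3}e^{-2Lc}$ by \eqref{1b4}; the hypothesis $r^2\leq 2c$ is then used precisely to absorb the factor $e^{Lr^2}$ into $e^{-2Lc}$, yielding $\beta^{n+1}e^{-4L^3/3}$. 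As written, your concluding estimate does not go through without this regrouping.
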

\begin{proof}
Using the integral representation of Airy function, we have (see \eqref{equal})
    \begin{equation}
        e^{L\tilde\Delta}\hat B_{0,c}^n(x,y)=\frac{1}{2\pi i}\int_{i\R+\mu_2}dwe^{\frac{w^3}{3}+wn^{-1/3}(x+y)+n^{-2/3}Lw^2-2n^{2/3}c}
    \end{equation}
with arbitrary $\mu_2>0$. Together with \eqref{111}, we obtain
\begin{equation}\label{eq2.22}
    \begin{aligned}
        &\Tr(\bar P_0e^{L\tilde\Delta}\hat B_{0,c}^n\bar P_0e^{-L\Delta}B_{0,c})\\
        =&\frac{1}{(2\pi i)^2}\int_{i\R+\mu_1} dw_1\int _{i\R+\mu_2}dw_2\frac{e^{\frac{nw_1^3+w_2^3}{3}+L(w_1^2-w_2^2)-2w_1(n+1)c-2w_2c}}{(w_1-w_2)^2},
    \end{aligned}
\end{equation}
    with $\mu_1>\mu_2>0$. Deforming the contours to satisfy $\mu_2=2L$ and $\mu_1=0$ we get
\begin{multline}
    \Tr(\bar P_0e^{L\tilde\Delta}(\hat B_{0,c})^n\bar P_0e^{-L\Delta}B_{0,c})\\
       = -2(n+1)^{-2/3}\Ai'(2(n+1)^{2/3}c) L+\Tr(P_0e^{L\tilde\Delta}\hat B_{0,c}^nP_0e^{-L\Delta}B_{0,c}),
\end{multline}
where the first term is coming from the residue at $w_1=w_2$ (in form of an integral representation of the derivative of the Airy function). Similarly as~\eqref{s212}, we define $A_1=P_0e^{L\tilde\Delta}\hat B_{0,c}^nP_0$ and $A_2=P_0e^{-L\Delta}B_{0,c}P_0$. Applying now $\|U_r^{-1}A_1U_r^{-1}\|_{\rm HS}\leq\beta^ne^{Lr^2}$ (by~\eqref{4bb4}), $\|U_rA_2U_r\|_{\rm HS}\leq\beta e^{-\frac{4L^3}{3}}$ (by~\eqref{1b4}) and $r^2\leq 2c$, we have
\begin{equation}
\begin{aligned}
   & |\Tr(P_0e^{L\tilde\Delta}\hat B_{0,c}^nP_0e^{-L\Delta}B_{0,c})|
   \leq \|U_r^{-1}A_1U_r^{-1}\|_{\rm HS} \|U_rA_2U_r\|_{\rm HS} \leq\beta^{n+1}e^{-\frac{4L^3}{3}}.
\end{aligned}
\end{equation}
\end{proof}

\subsection{Full expansion of $\Tr(K_{L,c}^n)$ for $n\geq 2$}\label{sec23}
Using the same idea, we can also deduce the asymptotic behavior of $\Tr(K_{L,c}^n)$ with $n\geq 2$. Using $e^{L\tilde\Delta}e^{-L\Delta}B_{0,c}=\tilde B_{0,c}$ (by~\eqref{a112}), we can decompose
\begin{equation}\label{e226}
    K_{L,c}=K_u+K_v+K_w+K_d-K_e
\end{equation}
where
\begin{equation}\label{allnotation}
    \begin{aligned}
        &K_u=P_0B_{0,c},\quad K_v=e^{L\Delta}P_0e^{-L\Delta}B_{0,c},\\
        &K_w=\tilde B_{0,c}-P_0\tilde B_{0,c}-e^{L\tilde\Delta}P_0e^{-L\Delta}B_{0,c},\\
        &K_d=P_0e^{L\tilde\Delta}P_0e^{-L\Delta}B_{0,c},\ K_e=P_0e^{L\Delta}P_0e^{-L\Delta}B_{0,c}.
    \end{aligned}
\end{equation}
For a word $\sigma_n$ of length $n$, we say that $\alpha\in\sigma_n$ if it exists $i\in\{1,\ldots,n\}$ such that $\sigma_n(i)=\alpha$. Also we introduce the notation
\begin{equation}
\sgn{A}{\sigma_n}=(-1)^{\#\{i|\sigma_n(i)\in A\}},
\end{equation}
where $A$ is a subset of the letters of $\sigma_n$. With the above definitions we can rewrite
\begin{equation}\label{kln}
    \begin{aligned}
        &\Tr(K_{L,c}^n)=\sum_{\sigma_n\in\{u,v,w,d,e\}^n}\sgn{e}{\sigma_n}\Tr\bigg(\prod_{i=1}^nK_{\sigma_n(i)}\bigg)\\
        =&\sum_{\sigma_n\in\{u,v,w\}^n}\Tr\bigg(\prod_{i=1}^nK_{\sigma_n(i)}\bigg)+\sum_{\substack{\sigma_n\in\{u,v,w,d,e\}^n\\d\ or\ e\in\sigma_n}
        }\sgn{e}{\sigma_n}\Tr\bigg(\prod_{i=1}^nK_{\sigma_n(i)}\bigg).
    \end{aligned}
\end{equation}
Besides $K_u,K_v,K_w,K_d$ and $K_e$, we introduce further the following operators:
\begin{equation}
K_a=\tilde B_{0,c},\ K_b=P_0\tilde B_{0,c},\ K_c=e^{L\tilde\Delta}P_0e^{-L\Delta}B_{0,c}
\end{equation}
so that $K_w=K_a-K_b-K_c$. First we control the last term in \eqref{kln} as follows.
\begin{lem}\label{forsimple}
Let $n\geq 2$, $L\geq 1, r^2\leq 2c$ with $r\geq 1$ and $\beta$ given in~\eqref{beta}, it holds
\begin{equation}\label{simpl}
    \sum_{\substack{\sigma_n\in\{u,v,w,d,e\}^n\\d\ or\ e\in\sigma_n}
        }\bigg|\Tr\bigg(\prod_{i=1}^nK_{\sigma_n(i)}\bigg)\bigg|\leq 7^ne^{-\frac{4L^3}{3n^2}}\beta^n.
\end{equation}
\end{lem}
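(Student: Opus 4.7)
The plan is to work word-by-word. I would first expand each $K_w = K_a - K_b - K_c$ inside the product, so that (after passing to absolute values) the sum over $\sigma_n \in \{u,v,w,d,e\}^n$ becomes a sum over at most $7^n$ sub-words $\tilde\sigma_n$ on the extended alphabet $\{u,v,a,b,c,d,e\}$, each still containing at least one occurrence of $d$ or $e$. This expansion is the origin of the $7^n$ factor in the claimed bound; it reduces matters to establishing the per-sub-word estimate
\begin{equation*}
\left|\Tr\!\left(\prod_{i=1}^n \tilde K_{\tilde\sigma_n(i)}\right)\right| \leq \beta^n e^{-4L^3/(3n^2)}.
\end{equation*}

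For a fixed sub-word I would use the cyclic invariance of the trace (Theorem~\ref{simon}) to place a distinguished $K_d$ or $K_e$ factor at the last position of the product. Both $K_d$ and $K_e$ contain the block $P_0\, e^{-L\Delta} B_{0,c}\, P_0$, whose conjugated Hilbert--Schmidt norm is bounded by $\beta e^{-4L^3/3}$ via~\eqref{1b4}; since $n \geq 1$, this is also bounded by $\beta e^{-4L^3/(3n^2)}$, and it is this weaker form that I would propagate. Next, I would insert telescoping conjugations $\Id = U_r^{\varepsilon_i} U_r^{-\varepsilon_i}$ at each of the remaining junctions, with signs $\varepsilon_i$ chosen so that each sandwich $U_r^{\varepsilon_{i-1}} \tilde K_{\tilde\sigma_n(i)} U_r^{-\varepsilon_i}$ matches one of the appendix bounds (analogues of~\eqref{4b3} and~\eqref{4bb4} for the letters $a, b, c$, obtained by the same Airy-function tail estimates). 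The letters $u, a, b$ produce sandwiches with HS norm $\leq \beta$ independent of $L$, while $v, c$ give $\beta$ times an $e^{O(L)}$ growth coming from the unflanked $e^{\pm L\Delta}$ or $e^{L\tilde\Delta}$. Theorem~\ref{simon}~\ref{ts2}--\ref{ts4}, applied by pairing adjacent sandwiches as Hilbert--Schmidt partners exactly as in~\eqref{s212} and in the proof of Lemma~\ref{wkey3}, then yields the per-sub-word estimate $\beta^n e^{-4L^3/3 + O(nL)}$, which the algebraic inequality $-\tfrac{4L^3}{3} + O(nL) \leq -\tfrac{4L^3}{3n^2}$ (valid in the regime $L \geq 1$, $1 \leq r^2 \leq 2c$) converts into the claimed $\beta^n e^{-4L^3/(3n^2)}$.

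The main obstacle I anticipate is the combinatorial bookkeeping of the conjugation signs. Each of the seven letters admits one ``correct'' sign pattern for its sandwich to have finite Hilbert--Schmidt norm, and these must be threaded consistently through a word of arbitrary length and composition: the distinguished $d/e$ block requires $+$ on both sides (as in~\eqref{1b4}), the $v$ block $P_0 e^{L\Delta} P_0$ requires $-$ on both sides (as in~\eqref{4b3}), and the $c$ block requires the mixed pattern of~\eqref{4bb4}. Ensuring global consistency forces a case-by-case choice of $\varepsilon_i$ depending on the neighbours of each letter. A secondary point is the algebraic inequality mentioned above, which reduces to $(n-1) L r^2 \lesssim \tfrac{4L^3(n^2-1)}{3n^2}$; it does hold under $L \geq 1$ and $r^2 \leq 2c$ after a routine computation, but it is crucial that the $L$-growth from the $v, c$ factors is bookkept as $O(nL)$ (rather than with a large hidden constant depending on $n$) so as not to overwhelm the cubic decay budget coming from the single $d/e$ block.
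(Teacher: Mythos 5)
Your opening moves match the paper: expanding $K_w=K_a-K_b-K_c$ to get at most $7^n$ words over $\{u,v,a,b,c,d,e\}$, cyclically shifting a distinguished $d$/$e$ letter, and estimating each word by conjugating with $U_r^{\pm1}$ and pairing Hilbert--Schmidt norms. But the quantitative core of your argument has a genuine gap. First, the isolated block $P_0e^{-L\Delta}B_{0,c}P_0$ that you want to bound by $\beta e^{-4L^3/3}$ via~\eqref{1b4} is not in general available: the letter following $d$ or $e$ need not begin with a projection (e.g.\ $K_a=\tilde B_{0,c}$, $K_v=e^{L\Delta}P_0e^{-L\Delta}B_{0,c}$), so the trailing $e^{-L\Delta}B_{0,c}$ must be commuted through the subsequent Airy factors via identities such as~\eqref{4a1}; when $k$ such factors pile up one is forced to use~\eqref{1b4} for $P_0\hat B_{0,c}^{k}e^{-L\Delta}B_{0,c}P_0$, whose decay is only $e^{-4L^3/(3(k+1)^2)}$. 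This is the true origin of the $n^{-2}$ in the exponent of~\eqref{simpl}; it is not produced by trading cubic decay against linear growth.

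Second, and decisively, your absorption step fails: the inequality $-\tfrac{4L^3}{3}+O(nL)\leq-\tfrac{4L^3}{3n^2}$ reduces (already with $r^2=1$) to $(n-1)L\lesssim \tfrac{4L^3}{3}$, which is false once $n\gtrsim L^2$ at fixed $L\geq1$. The lemma must hold for \emph{all} $n\geq2$ uniformly, because in the proof of Theorem~\ref{main} the bound $7^n\beta^n e^{-4L^3/(3n^2)}$ is summed over all $n$ at fixed $L$, including $n\gg L^{3/2}$ where $e^{-4L^3/(3n^2)}$ is close to $1$ and the decay comes entirely from $(7\beta)^n$. The paper avoids any $e^{O(nL)}$ accrual altogether: in the appendix induction behind Corollary~\ref{ca17}, every factor $e^{Lr^2}$ generated by an unflanked $e^{L\Delta}$ or $e^{L\tilde\Delta}$ (letters $v$, $c$, and the blocks in~\eqref{4bb4}) is cancelled \emph{locally} by an $e^{-2Lc}$ coming from an adjacent block containing $e^{-L\Delta}B_{0,c}$, using $r^2\leq2c$, so the per-word bound is $\beta^n e^{-4L^3/(3n^2)}e^{-2Lc}$ with no growing factor left over; the proof of the lemma then only needs $\|U_r^{-1}P_0e^{L\Delta}P_0U_r^{-1}\|_{\rm HS}\leq L^{-1/2}$ from~\eqref{4b3} for the leading piece of $K_e$ (resp.\ $K_d$). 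To repair your argument you would have to implement this letter-by-letter cancellation (i.e.\ essentially reprove the appendix estimates), rather than collect the growth globally and absorb it at the end.
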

\begin{proof}
Since $K_w=K_a-K_b-K_c$, we have then
\begin{gather}
   \sum_{\substack{\sigma_n\in\{u,v,w,d,e\}^n\\d\ or\ e\in\sigma_n}
        }\Tr\bigg(\prod_{i=1}^nK_{\sigma_n(i)}\bigg)=\sum_{\substack{\sigma_n\in\{u,v,a,b,c,d,e\}^n\\d\ or\ e\in\sigma_n}
        }\sgn{b,c}{\sigma_n}\Tr\bigg(\prod_{i=1}^nK_{\sigma_n(i)}\bigg).
\end{gather}
Note that there are in total $7^n-5^n$ many summations appearing on the right hand side, that is, the cardinality of the set $\{a,b,c,d,e,u,v\}^n\setminus\{a,b,c,u,v\}^n$. Hence, in order to prove the claim, we only need to bound the summation on the right hand side, to this end, we choose arbitrary $\sigma_n\in\{u,v,a,b,c,d,e\}^n$ with $e\in\sigma_n$. Using the cyclic property of trace, we can assume $\sigma_n(1)=e$. Define now
\begin{equation}    \Phi=\underbrace{P_0e^{L\Delta}P_0}_{=:\varphi_1}\cdot\underbrace{ P_0e^{-L\Delta}B_{0,c}\bigg[\prod_{i=2}^nK_{\sigma_n(i)}\bigg]P_0}_{=:\varphi_2}.
\end{equation}
Applying $\|U_r^{-1}\varphi_1 U_r^{-1}\|_{\rm HS}\leq\tfrac{1}{\sqrt{L}}\leq1$ (by~\eqref{4b3}) and $\|U_r \varphi_2 U_r\|_{\rm HS}\leq \beta^{n}e^{-\frac{4L^3}{3n^2}}e^{-2Lc}$ (by Corollary~\ref{ca17}), we have
\begin{equation}
   |\Tr(\Phi)|=|\Tr(U_r^{-1}\Phi U_r)|\leq\|U_r^{-1}\varphi_1 U_r^{-1}\|_{\rm HS}\|U_r \varphi_2 U_r\|_{\rm HS}\leq\beta^{n}e^{-\frac{4L^3}{3n^2}} .
\end{equation}
Similarly, we can also show the result for $d\in\sigma_n$, we only need to apply the transformation $P_0e^{L\tilde \Delta}P_0\mapsto U_r^{-1}P_0e^{L\tilde \Delta}P_0U_r^{-1}$ and use~\eqref{4b3}.
\end{proof}

Next we consider the first term of \eqref{kln}, namely
\begin{equation}\label{127}
    \begin{aligned}
        &\sum_{\sigma_n\in\{u,v,w\}^n}\Tr\bigg(\prod_{i=1}^nK_{\sigma_n(i)}\bigg)=\Tr(K_u^n)+\Tr(K_v^n)+\Tr(K_w^n)\\
        &+\sum_{\substack{\sigma_n\in\{u,v\}^n\\
        u,v\in\sigma_n
        }}\Tr\bigg(\prod_{i=1}^nK_{\sigma_n(i)}\bigg)+\sum_{\substack{\sigma_n\in\{u,w\}^n\\
        u,w\in\sigma_n
        }}\bigg(\prod_{i=1}^nK_{\sigma_n(i)}\bigg)+\sum_{\substack{\sigma_n\in\{v,w\}^n\\
        v,w\in\sigma_n
        }}\Tr\bigg(\prod_{i=1}^nK_{\sigma_n(i)}\bigg)\\
        &+\Id_{n\geq 3}\sum_{\substack{\sigma_n\in\{u,v,w\}^n\\
        u,v,w\in\sigma_n
        }}\Tr\bigg(\prod_{i=1}^nK_{\sigma_n(i)}\bigg).
    \end{aligned}
\end{equation}
In the next sections we analyze the terms in \eqref{127} one after the other.

\subsubsection{Single terms}
The first two terms in \eqref{127} do not depend on $L$ and are easy to bound.
\begin{lem}\label{lemEst1}
For any $n\in\mathbb Z_{\geq 2}$ we have
$\Tr(K_u^n)=\Tr(K_v^n)=\Tr((P_0B_{0,c})^n)$
and
\begin{equation}\label{242}
|\Tr((P_0B_{0,c})^n)|\leq \beta^n.
\end{equation}
\end{lem}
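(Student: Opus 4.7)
The plan is to use the commutation $[B_{0,c}, e^{\pm L\Delta}] = 0$ to handle both equalities, then reduce the bound to a Hilbert--Schmidt estimate on a self-adjoint operator. The first equality is immediate from the definition of $K_u$. For the second, the key point, already used in~\eqref{tildeklc}, is that $e^{-L\Delta} B_{0,c} e^{L\Delta} = B_{0,c}$, which follows from $\partial_x^2 \Ai(x+y+2c) = \partial_y^2 \Ai(x+y+2c)$ so that $\Delta B_{0,c} = B_{0,c} \Delta$. Consequently
\begin{equation}
K_v = e^{L\Delta} P_0 e^{-L\Delta} B_{0,c} = e^{L\Delta} (P_0 B_{0,c}) e^{-L\Delta}
\end{equation}
is a similarity transform of $P_0 B_{0,c}$, so $K_v^n = e^{L\Delta} (P_0 B_{0,c})^n e^{-L\Delta}$ and cyclicity of the trace gives $\Tr(K_v^n) = \Tr((P_0 B_{0,c})^n)$.

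For \eqref{242}, I would first symmetrise via $P_0^2 = P_0$ and cyclicity, obtaining $\Tr((P_0 B_{0,c})^n) = \Tr(T^n)$ with $T := P_0 B_{0,c} P_0$. Since $T$ has the real symmetric kernel $\mathbf{1}_{x,y>0}\,\Ai(x+y+2c)$ it is self-adjoint, and denoting its eigenvalues by $\{\lambda_i\}$, for $n \geq 2$ one has
\begin{equation}
|\Tr(T^n)| \leq \sum_i |\lambda_i|^n \leq \|T\|_{\rm op}^{n-2}\|T\|_{\rm HS}^2 \leq \|T\|_{\rm HS}^n
\end{equation}
by item~\ref{ts4} of Theorem~\ref{simon}. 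Hence it suffices to prove $\|T\|_{\rm HS} \leq \beta$.

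Finally, I would bound $\|T\|_{\rm HS}$ via the contour estimate $|\Ai(z)| \leq \frac{1}{2\sqrt{\pi r}} e^{r^3/3 - rz}$, obtained from $\Ai(z) = \frac{1}{2\pi\I}\int_{\I\R + r} e^{w^3/3 - wz}\, dw$ by parametrising $w = r + \I s$ and using $\Re(w^3) = r^3 - 3r s^2$. Applied to $z = x + y + 2c \geq 0$ this yields
\begin{equation}
\|T\|_{\rm HS}^2 \leq \frac{e^{2r^3/3 - 4rc}}{4 \pi r}\left(\int_0^\infty e^{-2rx}\, dx\right)^2 = \frac{e^{2r^3/3 - 4rc}}{16 \pi r^3} \leq e^{2r^3/3 - 4rc}
\end{equation}
for $r \geq 1$, so $\|T\|_{\rm HS} \leq e^{r^3/3 - 2rc} \leq \beta/2$ by the definition~\eqref{beta} of $\beta$. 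The only conceptually non-trivial step is spotting the commutation $[B_{0,c},e^{\pm L\Delta}]=0$; once that is in hand both equalities are automatic and the bound reduces to a routine Gaussian-type integral estimate coming from the Airy contour representation.
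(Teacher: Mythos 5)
Your proof is correct and follows the paper's proof in its essentials: the identity $\Tr(K_u^n)=\Tr((P_0B_{0,c})^n)$ is definitional, and the identity for $K_v$ is obtained exactly as in the paper from the commutation $e^{-L\Delta}B_{0,c}e^{L\Delta}=B_{0,c}$ plus cyclicity, while the bound \eqref{242} is reduced in both cases to the single estimate $\|P_0B_{0,c}P_0\|_{\rm HS}\leq\beta$ raised to the $n$-th power. The only divergence is in how that Hilbert--Schmidt bound is obtained: the paper conjugates by $U_r$ and invokes \eqref{1ab3}, which is computed exactly via the identity \eqref{ol26e}, and then applies the norm inequalities of Theorem~\ref{simon} directly to $|\Tr(A^n)|\leq\|A\|_{\rm HS}^n$; you instead work with the unconjugated operator $T=P_0B_{0,c}P_0$ (legitimate here, since the kernel is supported on $x,y>0$ with $x+y+2c\geq 2c>0$, so no conjugation is needed to make the HS norm finite), use self-adjointness and an eigenvalue argument in place of the norm inequalities, and bound the HS norm through the pointwise estimate $|\Ai(z)|\leq \tfrac{1}{2\sqrt{\pi r}}e^{r^3/3-rz}$ rather than the exact integral. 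Both routes give $\|T\|_{\rm HS}\leq\beta$ (yours even gives $\beta/2$, consistent with the definition \eqref{beta}); the paper's version has the advantage of fitting the uniform $U_r$-conjugation framework used for all the other, genuinely $L$-dependent terms, while yours is more self-contained for this particular lemma. The self-adjointness step is not actually needed: $|\Tr(T^n)|\leq\|T\|_{\rm HS}\|T^{n-1}\|_{\rm HS}\leq\|T\|_{\rm HS}^n$ follows for any Hilbert--Schmidt $T$ from items \ref{ts2}, \ref{ts3} and \ref{ts4} of Theorem~\ref{simon}, which is what the paper uses.
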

\begin{proof}
By definition of $K_u$, we have $\Tr(K_u^n)=\Tr((P_0B_{0,c})^n)$. As for $\Tr(K_v^n)$, using the definition of $K_v=e^{L\Delta}P_0e^{-L\Delta}B_{0,c}$ and the fact that $B_{0,c}$ commute with $e^{L\Delta}$, we obtain $\Tr(K_v^n)=\Tr((P_0B_{0,c})^n)$. It remains to prove the upper bound. By~\eqref{1ab3}, we have $\|U_rP_0B_{0,c}P_0U_r^{-1}\|_{\rm HS}\leq\beta$. Since $n\geq 2,$ we can apply Theorem~\ref{simon} to deduce
\begin{equation}
|\Tr((P_0B_{0,c})^n)|=|\Tr((U_rP_0B_{0,c}P_0U_r^{-1})^n)|\leq\|U_rP_0B_{0,c}P_0U_r^{-1}\|_{\rm HS}^n\leq\beta^n.
\end{equation}
\end{proof}

Now we need to consider $\Tr(K_w^n)$ with $n\geq 2$, which gives some terms of order $1$ and some terms linear in $L$ plus error terms as we will show in Proposition~\ref{kwkey}. Recall that
\begin{equation}\label{kw}
    K_w=\tilde B_{0,c}-P_0\tilde B_{0,c}-e^{L\tilde\Delta}P_0e^{-L\Delta}B_{0,c}.
\end{equation}
Using the cyclic property of the trace we get, for $n\geq 2$
\begin{equation}\label{tkwn}
\Tr(K_w^{n})=\Tr(\bar P_0\tilde B_{0,c}K_w^{n-1})-\Tr(P_0e^{-L\Delta}B_{0,c}K_w^{n-1}e^{L\tilde\Delta}).
\end{equation}
We will start with the easy term, that is, the second term on the right hand side:
\begin{lem}\label{kw2p}
  Let $n\in\mathbb Z_{\geq 2},L,r\geq 1$ and $r^2\leq 2c$, it holds
    \begin{equation}\label{kw2pin}
        |\Tr(P_0e^{-L\Delta}B_{0,c}K_w^{n-1}e^{L\tilde\Delta})-\Tr(P_0\hat B_{0,c}(\bar P_0\hat B_{0,c})^{n-1})|\leq 3^{n-1}\beta^ne^{-\frac{4L^3}{3n^2}}
    \end{equation}
and
\begin{equation}\label{214}
    |\Tr(P_0\hat B_{0,c}(\bar P_0\hat B_{0,c})^{n-1})|\leq\beta^n.
\end{equation}
\end{lem}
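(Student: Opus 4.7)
The plan is to first obtain a cleaner two-term representation of $K_w$ that makes the main term transparent. Starting from the identity $\tilde B_{0,c}=e^{L\tilde\Delta}e^{-L\Delta}B_{0,c}$ (an Airy identity from Lemma~\ref{identity}) together with $\Id=P_0+\bar P_0$, I would substitute into the definition \eqref{kw} of $K_w$ to obtain
\[
K_w=e^{L\tilde\Delta}\bar P_0\,e^{-L\Delta}B_{0,c}\;-\;P_0\tilde B_{0,c}.
\]
Expanding $K_w^{n-1}$ as a non-commutative binomial using this two-term decomposition produces $2^{n-1}\le 3^{n-1}$ signed words.

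For the word consisting entirely of the first summand, consecutive factors telescope via the transpose identity $\hat B_{0,c}=e^{-L\Delta}B_{0,c}\,e^{L\tilde\Delta}$ (which follows from $\hat B_{0,c}=\tilde B_{0,c}^{T}$ and the symmetry of $e^{L\tilde\Delta}$, $e^{-L\Delta}$, and $B_{0,c}$), yielding
\[
\bigl(e^{L\tilde\Delta}\bar P_0\, e^{-L\Delta}B_{0,c}\bigr)^{n-1}=e^{L\tilde\Delta}\bar P_0(\hat B_{0,c}\bar P_0)^{n-2}e^{-L\Delta}B_{0,c}.
\]
Recombining $e^{-L\Delta}B_{0,c}\,e^{L\tilde\Delta}=\hat B_{0,c}$ at the two ends of the trace then produces exactly $\Tr(P_0\hat B_{0,c}(\bar P_0\hat B_{0,c})^{n-1})$, identifying the claimed main term.

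Each of the remaining $2^{n-1}-1$ error words contains at least one factor $P_0\tilde B_{0,c}$, and I would bound them exactly in the spirit of Lemma~\ref{forsimple}: use cyclicity of the trace to place a $P_0$ on both sides of the outer factor $e^{-L\Delta}B_{0,c}$, split the resulting product into two Hilbert–Schmidt pieces conjugated by $U_r$ with $1\le r^2\le 2c$, and apply Theorem~\ref{operatorinequality}\ref{ts3}. The piece containing $U_r P_0 e^{-L\Delta}B_{0,c}\,P_0 U_r$ supplies the super-exponential decay $e^{-4L^3/(3n^2)}$ via Corollary~\ref{ca17}, whereas each of the $n-1$ remaining inner factors together with the $U_r$-conjugates of $e^{L\tilde\Delta}$ and $P_0\tilde B_{0,c}$ contribute a $\beta$ each, so that summing over the $\le 3^{n-1}$ words yields \eqref{kw2pin}.

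For \eqref{214}, I would apply Theorem~\ref{operatorinequality}\ref{ts3} directly after conjugation by $U_r$: each HS factor $U_r P_0\hat B_{0,c}U_r^{-1}$ and $U_r\bar P_0\hat B_{0,c}U_r^{-1}$ is bounded by $\beta$ from the appendix estimates on $\hat B_{0,c}$, so their product is $\le\beta^n$. The main obstacle I foresee is not the algebraic identification, which is clean, but the careful bookkeeping needed to extract the sharper decay rate $e^{-4L^3/(3n^2)}$ rather than the naive $e^{-4L^3/3}$: the factor $n^{-2}$ signals that the decay must be distributed across all $n$ factors through the refined HS bounds underlying Corollary~\ref{ca17}, rather than being charged to any single factor.
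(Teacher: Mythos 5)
Your argument is correct and follows essentially the same route as the paper: the same two-term rewriting $K_w=e^{L\tilde\Delta}\bar P_0e^{-L\Delta}B_{0,c}-P_0\tilde B_{0,c}$, the same telescoping via $e^{-L\Delta}B_{0,c}e^{L\tilde\Delta}=\hat B_{0,c}$ to identify the main term $\Tr(P_0\hat B_{0,c}(\bar P_0\hat B_{0,c})^{n-1})$, and the same treatment of the error words, splitting at a $P_0\tilde B_{0,c}$ factor so that Corollary~\ref{ca17} supplies the $e^{-\frac{4L^3}{3n^2}}e^{-2Lc}$ decay while the condition $r^2\le 2c$ absorbs the $e^{Lr^2}$ from the other piece. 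One small correction for \eqref{214}: the middle factors $U_r\bar P_0\hat B_{0,c}U_r^{-1}$ are not Hilbert--Schmidt (their kernel depends only on $x-y$, so the diagonal direction makes the HS norm diverge), so they must be bounded in operator norm via \eqref{3ab3}, with Hilbert--Schmidt bounds reserved for the two end factors carrying the mixed projections $P_0\cdots\bar P_0$ and $\bar P_0\cdots P_0$ — which is exactly what the paper does and still yields $\beta^n$.
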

\begin{proof}
Let us start by deriving the bound \eqref{214}. Applying~\eqref{3ab3} and Theorem~\eqref{simon}, we have
\begin{equation}
    \begin{aligned}
        &|\Tr(P_0\hat B_{0,c}(\bar P_0\hat B_{0,c})^{n-1})|\\
        \leq&\|U_rP_0\hat B_{0,c}\bar P_0U_r^{-1}\|_{\rm HS}\|U_r\bar P_0\hat B_{0,c}U_r^{-1}\|_{\rm op}^{n-2}\|U_r\bar P_0\hat B_{0,c}P_0U_r^{-1}\|_{\rm HS}\leq\beta^n.
    \end{aligned}
\end{equation}
Next we show \eqref{kw2pin}. Recall that $K_w=K_a-K_b-K_c$ and $K_a-K_c=e^{L\tilde\Delta}\bar P_0e^{-L\Delta}B_{0,c}$. Hence,
\begin{equation}\label{22backA}
        \begin{aligned}
        &\Tr(P_0e^{-L\Delta}B_{0,c}K_w^{n-1}e^{L\tilde\Delta}P_0) = \Tr(P_0e^{-L\Delta}B_{0,c}(e^{L\tilde\Delta}\bar P_0e^{-L\Delta}B_{0,c}-P_0\tilde B_{0,c})^{n-1}e^{L\tilde\Delta}P_0)\\
         =&\Tr(P_0e^{-L\Delta}B_{0,c}(e^{L\tilde\Delta}\bar P_0e^{-L\Delta} B_{0,c})^{n-1}e^{L\tilde\Delta}P_0)
        \\&+\sum_{\substack{\sigma_{n-1}\in\{a,b,c\}^{n-1}\\
        b\in\sigma_{n-1}}}\sgn{b,c}{\sigma_{n-1}}\Tr\bigg(P_0e^{-L\Delta}B_{0,c}\bigg[\prod_{i=1}^{n-1}K_{\sigma_{n-1}(i)}\bigg]e^{L\tilde\Delta}P_0\bigg).
    \end{aligned}
    \end{equation}
Applying $
e^{-L\Delta}B_{0,c}e^{L\tilde\Delta}=e^{-L\Delta}e^{L\Delta}\hat B_{0,c}=\hat B_{0,c}$ (by~\eqref{a102}), we have
\begin{equation}\label{22back}
\Tr(P_0e^{-L\Delta}B_{0,c}(e^{L\tilde\Delta}\bar P_0e^{-L\Delta} B_{0,c})^{n-1}e^{L\tilde\Delta}P_0)=\Tr(P_0\hat B_{0,c}(\bar P_0\hat B_{0,c})^{n-1}).
    \end{equation}
Now it remains to bound the sum in \eqref{22backA}. Let now $\sigma_{n-1}\in\{a,b,c\}^n$ with $b\in\sigma_{n-1}$ and define
\begin{equation}
\begin{aligned}
      \Phi=&P_0e^{-L\Delta}B_{0,c}\bigg[\prod_{i=1}^{n-1}K_{\sigma_{n-1}(i)}\bigg]e^{L\tilde\Delta}P_0\\
     =&\underbrace{P_0e^{-L\Delta}B_{0,c}\bigg[\prod_{i=1}^{\ell_b-1}K_{\sigma_{n-1}(i)}\bigg]P_0}_{=:\varphi_1}\cdot\underbrace{ P_0\tilde B_{0,c}\bigg[\prod_{i=\ell_b+1}^{n-1}K_{\sigma_{n-1}(i)}\bigg]e^{L\tilde\Delta}P_0}_{=:\varphi_2},
\end{aligned}
\end{equation}
where $\ell_b=\max\{i|\sigma_{n-1}(i)=b\}\geq 1$. Applying $\|U_r\varphi_1U_r\|_{\rm HS}\leq \beta^{\ell_b}e^{-\frac{4L^3}{3\ell_b^2}}e^{-2Lc}$ (by Corollary~\ref{ca17}), $\|U_r^{-1}\varphi_2U_r^{-1}\|_{\rm HS}\leq \beta^{n-\ell_b}e^{Lr^2}$ (by Lemma~\ref{wkey}) and $r^2\leq 2c$, we have
\begin{equation}
    |\Tr(\Phi)|\leq\|U_r\varphi_1U_r\|_{\rm HS}\|U_r^{-1}\varphi_2U_r^{-1}\|_{\rm HS}\leq  \beta^ne^{-\frac{4L^3}{3n^2}}.
\end{equation}
Applying this and triangle inequality on \eqref{22backA}, the result follows from the fact that there are in total $3^{n-1}-2^{n-1}$ many summations.
\end{proof}

It remains to consider $\Tr(\bar P_0\tilde B_{0,c}K_w^{n-1})$ in \eqref{tkwn}. Similarly as  \eqref{22backA}, we deduce
\begin{equation}\label{3rdterm}
    \begin{aligned}
        \Tr(\bar P_0\tilde B_{0,c}K_w^{n-1})&=\Tr(\bar P_0\tilde B_{0,c}e^{L\tilde\Delta}(\bar P_0\hat B_{0,c})^{n-2}\bar P_0e^{-L\Delta}B_{0,c})\\
        &+\sum_{\substack{\sigma_{n-1}\in\{a,b\}^{n-1}\\
        b\in\sigma_{n-1}
        }}\sgn{b}{\sigma_{n-1}}\Tr\bigg(\bar P_0\tilde B_{0,c}\prod_{i=1}^{n-1}K_{\sigma_{n-1}(i)}\bigg)\\
        &+\sum_{\substack{\sigma_{n-1}\in\{a,b,c\}^{n-1}\\
        b,c\in\sigma_{n-1}
        }}\sgn{b,c}{\sigma_{n-1}}\Tr\bigg(\bar P_0\tilde B_{0,c}\prod_{i=1}^{n-1}K_{\sigma_{n-1}(i)}\bigg).
    \end{aligned}
\end{equation}
By definition, $K_a=\tilde B_{0,c}$ and $K_b=P_0\tilde B_{0,c}$, the term on the second line does not depend on $L$, hence for this term, it is enough to get an upper bound which is summable for $n\geq 1$. In Lemma~\ref{lem2.12} we get a bound for the terms in the second line, in Lemma~\ref{kwiii} we bound the terms in the third line. Finally, in Lemma~\ref{kwi}, we will show that the first term on the right hand side will provide $\mathcal O(L)$ term.

\begin{lem}\label{lem2.12}
    Let $n\in\mathbb Z_{\geq 2}$, $L,r\geq 1, r^2\leq 2c$ and $\sigma_{n-1}\in\{a,b\}^{n-1}$ with $b\in\sigma_{n-1}$, then
    \begin{equation}\label{234}
        \bigg|\Tr\bigg(\bar P_0\tilde B_{0,c}\bigg[\prod_{i=1}^{n-1}K_{\sigma_{n-1}(i)}\bigg]\bigg)\bigg|\leq \beta^n.
    \end{equation}
\end{lem}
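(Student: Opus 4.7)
The plan is to adapt the splitting strategy from the proof of Lemma~\ref{kw2p}. Since $b\in\sigma_{n-1}$, define $\ell_b=\max\{i:\sigma_{n-1}(i)=b\}$ so that $\sigma_{n-1}(i)=a$ for all $i>\ell_b$. Writing $K_a=\tilde B_{0,c}$ and $K_b=P_0\tilde B_{0,c}$ explicitly, the operator inside the trace becomes
\begin{equation*}
\bar P_0\tilde B_{0,c}\bigg[\prod_{i=1}^{\ell_b-1}K_{\sigma_{n-1}(i)}\bigg]P_0\,\tilde B_{0,c}^{\,n-\ell_b}.
\end{equation*}
Using the cyclic property of the trace together with $P_0^2=P_0$ and $\bar P_0^2=\bar P_0$, I would then rewrite this trace as $\Tr(\varphi_1\varphi_2)$ with
\begin{equation*}
\varphi_1=P_0\tilde B_{0,c}^{\,n-\ell_b}\bar P_0,\qquad \varphi_2=\bar P_0\tilde B_{0,c}\bigg[\prod_{i=1}^{\ell_b-1}K_{\sigma_{n-1}(i)}\bigg]P_0.
\end{equation*}
Both factors are sandwiched between the two projections, and crucially neither contains any of the heat kernels $e^{\pm L\Delta}$ or $e^{L\tilde\Delta}$, so no $L$-dependent factor should enter the final bound, consistent with the target~\eqref{234}, which is of order $\beta^n$ alone.

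Next, I would conjugate by $U_r$ and apply items~\ref{ts2} and~\ref{ts3} of Theorem~\ref{simon} to get
\begin{equation*}
|\Tr(\varphi_1\varphi_2)|\leq\|U_r^{-1}\varphi_1 U_r\|_{\rm HS}\,\|U_r^{-1}\varphi_2 U_r\|_{\rm HS},
\end{equation*}
where the placement of $U_r^{-1}$ on the $P_0$-side and $U_r$ on the $\bar P_0$-side is dictated by integrability of the outer exponential weights (exactly as in the proof of Lemma~\ref{kw2p}). By the Laplace-type estimates in Section~\ref{upperbounds}, each of the $n-\ell_b$ copies of $\tilde B_{0,c}$ composing $\varphi_1$ contributes a factor $\beta$ in HS norm, and similarly each of the $\ell_b$ copies of $\tilde B_{0,c}$ in $\varphi_2$ contributes a factor $\beta$, so that multiplying the two HS norms yields the desired bound $\beta^n$.

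The main technical point is the estimate on $\varphi_1=P_0\tilde B_{0,c}^{\,n-\ell_b}\bar P_0$, a block of $n-\ell_b$ consecutive $\tilde B_{0,c}$'s with no intervening projection. To deal with this cleanly I would exploit the semigroup property of the Airy kernel: from the integral representation of $\Ai$ one obtains
\begin{equation*}
\tilde B_{0,c}^{\,k}(x,y)=\frac{1}{k^{1/3}}\Ai\!\left(\frac{y-x}{k^{1/3}}+2k^{2/3}c\right),
\end{equation*}
so that $\varphi_1$ reduces to a single rescaled Airy kernel at shifted threshold $2k^{2/3}c$ whose Laplace tail under $U_r^{\pm1}$ is precisely of order $\beta^k$ (the condition $r^2\leq2c$ guarantees that the shifted threshold still dominates the exponential weights). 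The same identity applied to the maximal runs of consecutive $K_a$ inside $\varphi_2$ reduces the remaining blocks to single Airy kernels, and the bound on $\varphi_2$ then follows from the same single-kernel HS estimates already used throughout Section~\ref{sec23}. An alternative would be to insert resolutions $\Id=P_0+\bar P_0$ between consecutive $\tilde B_{0,c}$'s and treat each resulting word by the standard single-kernel appendix bounds; the combinatorial factor of $2$ per insertion is harmless because it is already built into the definition~\eqref{beta} of $\beta$. Either way, modulo this identification and the routine HS computation, the claim~\eqref{234} follows.
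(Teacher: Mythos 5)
Your argument is correct and is essentially the paper's own proof: you split at $\ell_b=\max\{i\,:\,\sigma_{n-1}(i)=b\}$, pair the two factors via $|\Tr(\varphi_1\varphi_2)|\leq\|U_r^{-1}\varphi_1U_r\|_{\rm HS}\|U_r^{-1}\varphi_2U_r\|_{\rm HS}$, bound the pure block $P_0\tilde B_{0,c}^{\,n-\ell_b}\bar P_0$ by \eqref{2ab3}, and bound the mixed block by $\beta^{\ell_b}$ — the latter being exactly Lemma~\ref{l25}, which the paper cites and whose inductive proof is the block decomposition you sketch. One small correction to your wording: the interior blocks $P_0\tilde B_{0,c}^{k}P_0$ are not Hilbert--Schmidt after conjugation (their kernel depends only on $y-x$), so they must be controlled in operator norm via $\|U_r^{-1}\tilde B_{0,c}^{k}U_r\|_{\rm op}\leq\beta^{k}$ from \eqref{2ab3} with one single HS factor coming from a block with opposite projections, and your "alternative" of inserting $\Id=P_0+\bar P_0$ would produce a $2^{k}$ combinatorial factor that the prefactor $2$ in \eqref{beta} does not absorb, so the main route is the one to keep.
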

\begin{proof}
    Since $b\in\sigma_{n-1}$ we have $1\leq \ell_b=\max\{i|\sigma_{n-1}=b\}\leq n-1$. This implies
    \begin{equation}
        \Phi=\bar P_0\tilde B_{0,c}\bigg[\prod_{i=1}^{n-1}K_{\sigma_{n-1}(i)}\bigg] \bar P_0=\underbrace{\bar P_0\tilde B_{0,c}\bigg[\prod_{i=1}^{\ell_b-1}K_{\sigma_{n-1}(i)}\bigg]P_0}_{=:\varphi_1}\cdot\underbrace{ P_0\tilde B_{0,c}^{n-\ell_b} \bar P_0}_{=:\varphi_2}.
    \end{equation}
Applying $\|U_r^{-1}\varphi_1U_r\|_{\rm HS}\leq\beta^{\ell_b}$ (by Lemma~\ref{l25}) and $\|U_r^{-1}\varphi_2U_r\|_{\rm HS}\leq\beta^{n-\ell_b}$ (by~\eqref{2ab3}), we have
\begin{equation}
    |\Tr(\Phi)|\leq\|U_r^{-1}\varphi_1U_r\|_{\rm HS}\|U_r^{-1}\varphi_2U_r\|_{\rm HS}\leq \beta^{n}.
\end{equation}
\end{proof}

Next we bound the traces of the terms on the last line of \eqref{3rdterm}.
\begin{lem}\label{kwiii}
    Let $n\in\mathbb Z_{\geq 2}$, $L,r\geq 1,r^2\leq 2c$ and $\sigma_{n-1}\in\{a,b,c\}^{n-1}$ with $b,c\in\sigma_{n-1}$, then
    \begin{equation}
        \bigg|\Tr\bigg(\bar P_0\tilde B_{0,c}\bigg[\prod_{i=1}^{n-1}K_{\sigma_{n-1}(i)}\bigg]\bigg)\bigg|\leq2\beta^ne^{-\frac{4L^3}{3n^2}}.
    \end{equation}
\end{lem}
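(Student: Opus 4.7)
The plan is to mirror the proofs of Lemmas~\ref{kw2p} and~\ref{lem2.12}, now exploiting the letter $c\in\sigma_{n-1}$ to extract the super-exponential decay $e^{-4L^3/(3n^2)}$. Set $\ell_c:=\max\{i:\sigma_{n-1}(i)=c\}$, which exists by hypothesis. The key observation is that $K_c=e^{L\tilde\Delta}P_0e^{-L\Delta}B_{0,c}$ contains an internal projection $P_0$, providing a natural splitting point at position $\ell_c$. Concretely, write
\begin{equation*}
\Phi:=\bar P_0\tilde B_{0,c}\prod_{i=1}^{n-1}K_{\sigma_{n-1}(i)}=\varphi_1\cdot\varphi_2,
\end{equation*}
with
\begin{equation*}
\varphi_1:=\bar P_0\tilde B_{0,c}\bigg[\prod_{i=1}^{\ell_c-1}K_{\sigma_{n-1}(i)}\bigg]e^{L\tilde\Delta}P_0,\qquad \varphi_2:=P_0e^{-L\Delta}B_{0,c}\bigg[\prod_{i=\ell_c+1}^{n-1}K_{\sigma_{n-1}(i)}\bigg].
\end{equation*}
By maximality of $\ell_c$, the factors appearing in $\varphi_2$ are drawn from $\{K_a,K_b\}$ only, while $\varphi_1$ may still contain any of $\{K_a,K_b,K_c\}$.

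Next, apply part~\ref{ts1} of Theorem~\ref{simon} together with cyclicity to write $\Tr(\Phi)=\Tr((U_r^{-1}\varphi_1U_r^{-1})(U_r\varphi_2U_r))$, and then parts~\ref{ts2}--\ref{ts3} to obtain
\begin{equation*}
|\Tr(\Phi)|\leq\|U_r^{-1}\varphi_1U_r^{-1}\|_{\rm HS}\,\|U_r\varphi_2U_r\|_{\rm HS}.
\end{equation*}
I would then bound $\|U_r^{-1}\varphi_1U_r^{-1}\|_{\rm HS}\leq\beta^{\ell_c}e^{Lr^2}$ via Lemma~\ref{wkey} (the growth $e^{Lr^2}$ coming from conjugating $e^{L\tilde\Delta}$ by $U_r^{-1}$, and each of the $\ell_c-1$ factors of $K_a,K_b,K_c$ contributing a factor $\beta$, plus one factor from the leading $\bar P_0\tilde B_{0,c}$); and $\|U_r\varphi_2U_r\|_{\rm HS}\leq\beta^{n-\ell_c}e^{-4L^3/(3(n-\ell_c)^2)}e^{-2Lc}$ via Corollary~\ref{ca17} (the super-exponential decay coming from the leading $P_0e^{-L\Delta}B_{0,c}$ and each remaining $K_a,K_b$ letter contributing one factor $\beta$). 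Multiplying, using $r^2\leq 2c$ to absorb $e^{Lr^2-2Lc}\leq 1$ and $(n-\ell_c)^2\leq n^2$ to weaken the decay, yields a bound of the form $\beta^ne^{-4L^3/(3n^2)}$.

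The extra factor $2$ in the stated bound reflects slack that must be absorbed into the HS estimates, for instance the prefactor $2$ built into $\beta$ in~\eqref{beta}, or the boundary cases $\ell_c\in\{1,n-1\}$ in which one of $\varphi_1,\varphi_2$ degenerates and must be handled via the elementary estimate~\eqref{1b4} instead of the iterated versions. The main obstacle I anticipate is the bookkeeping on $\varphi_1$: since further $K_c$ factors can appear at positions $i<\ell_c$ (each contributing its own $e^{L\tilde\Delta}$-$e^{-L\Delta}B_{0,c}$ pair), one must verify that Lemma~\ref{wkey} applies verbatim to arbitrary words on $\{a,b,c\}$ and indeed yields the claimed $\beta^{\ell_c}e^{Lr^2}$ bound — in effect propagating the HS estimate along the product without losing more than a factor $\beta$ per letter and without compounding the growth coming from repeated $e^{L\tilde\Delta}$'s.
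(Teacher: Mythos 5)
Your splitting step is where the argument breaks. You keep the leading $\bar P_0$ inside $\varphi_1$ and then invoke Lemma~\ref{wkey} to get $\|U_r^{-1}\varphi_1U_r^{-1}\|_{\rm HS}\leq\beta^{\ell_c}e^{Lr^2}$, but Lemma~\ref{wkey} is stated (and proved) only for operators with a \emph{leading} $P_0$, i.e.\ $U_r^{-1}P_0\tilde B_{0,c}[\cdots]e^{L\tilde\Delta}P_0U_r^{-1}$, and the distinction is not cosmetic. Take the admissible word $\sigma_{n-1}=(c,b)$ (so $n=3$ and $\ell_c=1$ in your notation): then $\varphi_1=\bar P_0\tilde B_{0,c}e^{L\tilde\Delta}P_0$. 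The kernel of $\tilde B_{0,c}e^{L\tilde\Delta}$ depends on $x$ and $y$ only through $x+y$, because $\tilde B_{0,c}(x,z)=\Ai(z-x+2c)$ is a function of $z-x$ while $e^{L\tilde\Delta}(z,y)$ is a function of $z+y$ (compare \eqref{a101}). On the region $\{x\leq 0\leq y\}$ cut out by $\bar P_0$ and $P_0$, the level sets $\{x+y=\const\}$ are infinite rays, and your conjugation contributes the factor $e^{-r(x+y)}$, which is also constant along them. Hence the kernel of $U_r^{-1}\varphi_1U_r^{-1}$ is constant and nonzero along infinite rays, so this operator is not Hilbert--Schmidt at all: the estimate you need is not just unproven but false. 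Equal weights $U_r$ on both sides are only compatible with a leading $P_0$, which is exactly what the appendix bounds are engineered around. (A smaller related slip: your $\varphi_2$ has no trailing projection, and the variable that wraps around in the trace carries $\bar P_0$, not the trailing $P_0$ appearing in Corollary~\ref{ca17}.)

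The paper removes the leading $\bar P_0$ \emph{before} splitting, and this is precisely where the hypothesis $b\in\sigma_{n-1}$ — which your proposal never uses — enters. Writing $\bar P_0=\Id-P_0$ as in \eqref{wer}, the second term already starts with $P_0\tilde B_{0,c}$, and in the first term the presence of a factor $K_b=P_0\tilde B_{0,c}$ somewhere in the word allows a cyclic rotation bringing $P_0\tilde B_{0,c}$ to the front; both terms are then of the form $\Tr(P_0\tilde B_{0,c}[\prod_i K_{\sigma(i)}]P_0)$ with $c$ still present in the word. One then splits at the first occurrence of $c$ and applies Lemma~\ref{wkey} (now legitimately, with leading $P_0$) to the first factor and Corollary~\ref{ca17} to the second, obtaining $\beta^n e^{-\frac{4L^3}{3n^2}}$ for each of the two terms. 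In particular the factor $2$ in the statement is the triangle inequality over these two terms, not slack in $\beta$ or the boundary cases $\ell_c\in\{1,n-1\}$ as you guessed. To salvage your route you would have to prove a new $\bar P_0$-analogue of Lemma~\ref{wkey} with asymmetric weights $U_r^{-1}\cdots U_{r'}^{-1}$, $r'>r$, and correspondingly rebalance the bound on $\varphi_2$; no such estimate is available in the paper.
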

\begin{proof}
    Replacing $\bar P_0=\Id-P_0$ we have the decomposition
\begin{gather}\label{wer}
     \Tr\bigg(\bar P_0\tilde B_{0,c}\prod_{i=1}^{n-1}K_{\sigma_{n-1}(i)}\bigg)=\Tr\bigg(\tilde B_{0,c}\prod_{i=1}^{n-1}K_{\sigma_{n-1}(i)}\bigg)
     -\Tr\bigg( P_0\tilde B_{0,c}\prod_{i=1}^{n-1}K_{\sigma_{n-1}(i)}\bigg).
\end{gather}
By assumption, there exists $i\in\{1,\ldots,n-1\}$ such that $K_{\sigma_{n-1}(i)}=P_0\tilde B_{0,c}$, hence, due to the cyclic property of trace, it is enough to show that
\begin{gather}
    \bigg|\Tr\bigg(P_0\tilde B_{0,c}\bigg[\prod_{i=1}^{n-1}K_{\sigma_{n-1}(i)}\bigg]P_0\bigg)\bigg|\leq\beta^ne^{-\frac{4L^3}{3n^2}}
\end{gather}
for any $\sigma_{n-1}\in\{a,b,c\}^{n-1}$ with $c\in\sigma_{n-1}$. Define $\ell_c=\min\{i|\sigma_{n-1}=c\}\leq n-1$ and
\begin{equation}
    \begin{aligned}
        \Phi=& P_0\tilde B_{0,c}\bigg[\prod_{i=1}^{n-1}K_{\sigma_{n-1}(i)}\bigg]P_0\\
        =& \underbrace{P_0\tilde B_{0,c}\bigg[\prod_{i=1}^{\ell_c-1}K_{\sigma_{n-1}(i)}\bigg]e^{L\tilde\Delta}P_0}_{=:\varphi_1}\cdot \underbrace{P_0e^{-L\Delta}B_{0,c}\bigg[\prod_{i=\ell_c+1}^{n-1}K_{\sigma_{n-1}(i)}\bigg]P_0}_{=:\varphi_2}.
    \end{aligned}
\end{equation}
Applying $\|U_r^{-1}\varphi_1U_r^{-1}\|_{\rm HS}\leq\beta^{\ell_c}e^{Lr^2}$ (by Lemma~\ref{wkey}), $\|U_r\varphi_2U_r\|_{\rm HS}\leq\beta^{n-\ell_c}e^{-\frac{4L^3}{3(n-\ell_c)^2}}e^{-2Lc}$ (by Corollary~\ref{ca17}) and $r^2\leq 2c$, we have
\begin{equation}
    |\Tr\left(\Phi\right)|\leq\|U_r^{-1}\varphi_1U_r^{-1}\|_{\rm HS}\|U_r\varphi_2U_r\|_{\rm HS}\leq\beta^ne^{-\frac{4L^3}{3n^2}}.
\end{equation}
Applying triangle inequality on \eqref{wer}, we then obtain the claimed result.
\end{proof}
It remains to consider the first term appearing on the right hand side of \eqref{3rdterm}.
\begin{lem}\label{kwi}
Let $n\in\mathbb Z_{\geq 2},L,r\geq 1$ and $r^2\leq 2c$, denote \begin{equation}
\tilde \Phi=\bar P_0\tilde B_{0,c}e^{L\tilde\Delta}(\bar P_0\hat  B_{0,c})^{n-2}\bar P_0e^{-L\Delta}B_{0,c}.
\end{equation}
Then it holds
\begin{equation}\label{eq2.73}
|\Tr(\tilde \Phi)-\kappa_n(c)L+\Psi^2_n|\leq n^2\beta^{n}e^{-\frac{4L^3}{3n^2}},
\end{equation}
  where  $\kappa_n(c)=-2n^{-2/3}\Ai'(2n^{2/3}c)$ and
    \begin{equation}\label{psi2n}
        \Psi^2_n=\Id_{n\geq 3}\sum_{j=2}^{n-1}\Tr(P_0\hat B_{0,c}(\bar P_0\hat B_{0,c})^{n-1-j}\bar P_0\hat B_{0,c}^j)\quad\text{with}\quad |\Psi^2_n|\leq n\beta^n.
    \end{equation}
\end{lem}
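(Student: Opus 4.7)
The plan is to reduce $\tilde\Phi$ to the form already handled by Lemma~\ref{wkey3} (with $n-1$ in place of $n$), while collecting the $\Psi^2_n$ contribution along the way. The first key tool is the operator identity
\begin{equation}
\tilde B_{0,c}\,e^{L\tilde\Delta}\;=\;e^{L\tilde\Delta}\,\hat B_{0,c},
\end{equation}
which I would verify by the same contour-integral computation used in the proof of Lemma~\ref{wkey3}: using $\Ai(z)=\frac{1}{2\pi\I}\int dw\,e^{w^3/3-wz}$ and the action of $e^{L\Delta}$ on exponentials $e^{\pm wy}$, both sides reduce to the single integral $\frac{1}{2\pi\I}\int dw\,e^{w^3/3+Lw^2+w(x+z)-2wc}$. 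This rewrites
\begin{equation}
\tilde\Phi\;=\;\bar P_0\,e^{L\tilde\Delta}(\hat B_{0,c}\bar P_0)^{n-1}e^{-L\Delta}B_{0,c}.
\end{equation}

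Next I would expand the $n-2$ \emph{interior} projections $\bar P_0=\Id-P_0$, labelling each insertion pattern by a subset $S\subseteq\{1,\ldots,n-2\}$. The $S=\emptyset$ piece is exactly $\bar P_0\,e^{L\tilde\Delta}\hat B_{0,c}^{n-1}\bar P_0\,e^{-L\Delta}B_{0,c}$, whose trace by Lemma~\ref{wkey3} applied with $n-1$ equals $\kappa_n(c)L+O(\beta^n e^{-4L^3/3})$. For each $S\neq\emptyset$ with $S=\{i_1<\cdots<i_k\}$, after trace-cyclicity the corresponding contribution contains the block $\bar P_0\,e^{-L\Delta}B_{0,c}\bar P_0\,e^{L\tilde\Delta}$, which I split via the identity $e^{-L\Delta}B_{0,c}\,e^{L\tilde\Delta}=\hat B_{0,c}$ (from~\eqref{a102}) together with $\bar P_0=\Id-P_0$ as
\begin{equation}
\bar P_0\hat B_{0,c}\;-\;\bar P_0\,e^{-L\Delta}B_{0,c}\,P_0\,e^{L\tilde\Delta}.
\end{equation}
The first (``main'') piece collapses the two Gaussian kernels into a single $\hat B_{0,c}$, yielding an $L$-independent trace. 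A combinatorial bookkeeping (verified by expanding $\bar P_0=\Id-P_0$ inside the definition of $\Psi^2_n$ as well, cyclically rearranging, and matching by the block-sizes between consecutive $P_0$'s; I have checked the matching directly for $n=3,4$) identifies the total of these main pieces over $S\neq\emptyset$ with $-\Psi^2_n$.

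The ``correction'' pieces still contain the Gaussian kernels, and each is bounded by the same HS-factorization technique used in Lemmas~\ref{forsimple}--\ref{kwiii}: split at the explicit $P_0$ into $\varphi_1=\bar P_0 e^{-L\Delta}B_{0,c}P_0$ and $\varphi_2=P_0\,e^{L\tilde\Delta}\hat B_{0,c}^{i_1+1}P_0\cdots P_0\hat B_{0,c}^{n-1-i_k}$; then $\|U_r\varphi_1 U_r\|_{\rm HS}\leq\beta e^{-4L^3/3}e^{-2Lc}$ by~\eqref{1b4}, while Lemma~\ref{wkey} together with the sandwich bounds of Corollary~\ref{ca17} gives $\|U_r^{-1}\varphi_2 U_r^{-1}\|_{\rm HS}\leq\beta^{n-1}e^{Lr^2}$, and the assumption $r^2\leq 2c$ absorbs $e^{Lr^2}$ into $e^{-2Lc}$. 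Each additional interior $P_0$ in $\varphi_2$ contributes an extra factor of $\beta$, so the sum over $S\neq\emptyset$ is geometric in $|S|$ rather than a full $2^{n-2}$ count; after optimally redistributing the cubic $L^3$-decay across the $P_0$-splittings, the total is bounded by $n^2\beta^n e^{-4L^3/(3n^2)}$. The bound $|\Psi^2_n|\leq n\beta^n$ follows analogously: each of the $n-2$ summands of $\Psi^2_n$ is factored at an interior $P_0$ or $\bar P_0$ and estimated by $\beta^n$ using~\eqref{3ab3} and its analogues together with Theorem~\ref{operatorinequality}.

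The main obstacle is the precise combinatorial cancellation identifying the sum of main pieces from $S\neq\emptyset$ with $-\Psi^2_n$; a direct term-by-term matching requires expanding $\bar P_0$ in $\Psi^2_n$ as well and carefully pairing block-patterns, and is not evident from the definitions alone. A secondary difficulty is ensuring that the residual correction terms are controlled with only polynomial (not exponential) growth in $n$, which requires exploiting the extra factor $\beta^{|S|}$ from the interior $P_0$'s together with the geometric smallness of $\beta$ guaranteed by $r^2\leq 2c$.
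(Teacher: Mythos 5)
Your skeleton is broadly the same as the paper's: reduce the leading piece to Lemma~\ref{wkey3} with $n-1$ in place of $n$, isolate an $L$-independent family identified with $-\Psi_n^2$, and kill the Gaussian-containing remainders by conjugated Hilbert--Schmidt factorizations with $r^2\leq 2c$. However, there is a genuine gap at the heart of the argument: the identification of the alternating sum of your ``main pieces'' over $S\neq\emptyset$ with $-\Psi_n^2$ is only asserted (checked for $n=3,4$) and you yourself flag it as not evident. That identification \emph{is} the content of the lemma. The paper gets it in closed form from the operator identity \eqref{obtainthen} (proved by a short induction) applied with $A=\tilde B_{0,c}e^{L\tilde\Delta}$ and $B=\hat B_{0,c}$, which directly produces the traces constituting $\Psi_n^2$ plus $O(n)$ explicit remainder terms; your binomial expansion of all interior $\bar P_0$'s is the expanded form of that identity, and without proving the resummation the proof is incomplete at its key step.

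Second, your error estimate does not give the stated bound. The claim that ``each additional interior $P_0$ contributes an extra factor of $\beta$'' is not supported by the bounds of Appendix~\ref{sectAppB}: in \eqref{3ab3}, \eqref{4bb4}, \eqref{1b4} the power of $\beta$ counts Airy factors, and every correction term carries exactly $n$ of them regardless of $|S|$, while projections only enter with operator norm $1$. So each correction is $O(\beta^n e^{-\frac{4L^3}{3n^2}})$ and you are summing over $2^{n-2}-1$ subsets, yielding a bound of order $2^n\beta^n e^{-\frac{4L^3}{3n^2}}$ rather than the claimed $n^2\beta^n e^{-\frac{4L^3}{3n^2}}$; ``redistributing the cubic decay'' cannot reduce the number of terms. (A $2^n\beta^n$ bound would still be harmless for Theorem~\ref{main}, but it does not prove the lemma as stated.) The paper avoids the exponential count precisely by keeping the $\bar P_0$'s packaged: \eqref{obtainthen} leaves only about $n$ Gaussian-containing traces, each of which, after the second induction identity \eqref{barp0}, splits into at most $n$ pieces bounded by $\beta^n e^{-\frac{4L^3}{3n^2}}$, which is where the $n^2$ comes from. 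To repair your route you would need to prove the combinatorial resummation and regroup the corrections back into $\bar P_0$-form, i.e.\ essentially re-derive \eqref{obtainthen} and \eqref{barp0}.
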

\begin{proof}
We first claim that for any two operators $A,B$ and $n\geq 0$, we have
\begin{equation}\label{obtainthen}
    \bar P_0A(\bar P_0B)^n=\bar P_0AB^n-\sum_{j=0}^{n-1}AB^jP_0B(\bar P_0B)^{n-(j+1)}+\sum_{j=0}^{n-1}P_0AB^jP_0B(\bar P_0B)^{n-(j+1)}.
\end{equation}
We show this via induction on $n$. The case $n=0$ is trivial. For $n=1$, we have
 \begin{equation}\label{barbarminus}
    \bar P_0A\bar P_0B=P_0AP_0B-AP_0B+\bar P_0AB.
\end{equation}
For induction step $n-1\mapsto n$, we have then
\begin{equation}\label{backtoas}
    \begin{aligned}
         \bar P_0A(\bar P_0B)^n=& P_0AP_0B(\bar P_0B)^{n-1}-AP_0B(\bar P_0B)^{n-1}+\bar P_0AB(\bar P_0B)^{n-1}.
    \end{aligned}
\end{equation}
Applying induction assumption on the last term, we obtain
\begin{equation}
    \begin{aligned}
        &\bar P_0AB(\bar P_0B)^{n-1}\\
        =&\bar P_0ABB^{n-1}-\sum_{j=0}^{n-2}ABB^jP_0B(\bar P_0B)^{n-1-(j+1)}+\sum_{j=0}^{n-2}P_0ABB^jP_0B(\bar P_0B)^{n-1-(j+1)}\\
        =&\bar P_0AB^{n}-\sum_{j=1}^{n-1}AB^jP_0B(\bar P_0B)^{n-1-j}+\sum_{j=1}^{n-1}P_0AB^jP_0B(\bar P_0B)^{n-1-j}.
    \end{aligned}
\end{equation}
Plugging this back to \eqref{backtoas}, we obtain then \eqref{obtainthen}. Applying now \eqref{obtainthen} with $A=\tilde B_{0,c}e^{L\tilde\Delta}$ and $B=\hat B_{0,c}$, we then obtain
\begin{equation}
    \begin{aligned}
        &\Tr(\tilde\Phi)=\Tr(\bar P_0\tilde B_{0,c}e^{L\tilde\Delta}(\bar P_0\hat  B_{0,c})^{n-2}\bar P_0e^{-L\Delta}B_{0,c})\\
        =&\Tr(\bar P_0\tilde B_{0,c}e^{L\tilde\Delta}\hat  B_{0,c}^{n-2}\bar P_0e^{-L\Delta}B_{0,c})-\sum_{j=0}^{n-3}\Tr(\tilde B_{0,c}e^{L\tilde\Delta}\hat B_{0,c}^jP_0\hat B_{0,c}(\bar P_0\hat B_{0,c})^{n-2-(j+1)}\bar P_0e^{-L\Delta}B_{0,c})\\
        +&\sum_{j=0}^{n-3}\Tr(P_0\tilde B_{0,c}e^{L\tilde\Delta}\hat B_{0,c}^jP_0\hat B_{0,c}(\bar P_0\hat B_{0,c})^{n-2-(j+1)}\bar P_0e^{-L\Delta}B_{0,c})
    \end{aligned}
\end{equation}
Using the identity $e^{L\tilde\Delta}\hat B_{0,c}=\tilde B_{0,c}e^{L\tilde\Delta}$ (by~\eqref{a101}), $e^{-L\Delta}B_{0,c}\tilde B_{0,c}e^{L\tilde\Delta}=\hat B_{0,c}^2$ (by~\eqref{a103}) and cyclic property of trace we obtain
    \begin{equation}\label{trphin}
        \begin{aligned}
            \Tr(\tilde \Phi)&=\Tr(\bar P_0\tilde B_{0,c}^{n-1}e^{L\tilde\Delta}\bar P_0e^{-L\Delta}B_{0,c}) -\sum_{j=2}^{n-1}\Tr(P_0\hat B_{0,c}(\bar P_0\hat B_{0,c})^{n-1-j}\bar P_0\hat B_{0,c}^j)\\
            &+\sum_{j=1}^{n-2}\Tr(P_0\tilde B_{0,c}^{j}e^{L\tilde\Delta}P_0\hat B_{0,c}(\bar P_0\hat B_{0,c})^{n-2-j}\bar P_0e^{-L\Delta}B_{0,c}).
        \end{aligned}
    \end{equation}
By Lemma~\ref{wkey3}, we have
\begin{equation}\label{eq2.79}
    |\Tr(\bar P_0\tilde B_{0,c}^{n-1}e^{L\tilde\Delta}\bar P_0e^{-L\Delta}B_{0,c})-2n^{-2/3}\Ai'(2n^{2/3}c) L|\leq \beta^ne^{-\frac{4L^3}{3}}.
\end{equation}
The first sum in \eqref{trphin} is just $\Psi_n^2$ in \eqref{psi2n}, which is independent of $L$. In order to prove $|\Psi_n^2|\leq n\beta^n$, we apply bounds~\eqref{3ab3} for each $j\in\{2,3,\ldots,n-1\}$ to deduce
\begin{equation}\label{253}
\begin{aligned}
    &|\Tr(P_0\hat B_{0,c}(\bar P_0\hat B_{0,c})^{n-1-j}\bar P_0\hat B_{0,c}^j)| \\
    &\leq\|U_rP_0\hat B_{0,c}\bar P_0U_r^{-1}\|_{\rm HS}\|U_r\bar P_0\hat B_{0,c}\bar P_0U_r^{-1}\|_{\rm op}^{n-1-j}\| U_r\bar P_0\hat B_{0,c}^{j} P_0U_r^{-1}\|_{\rm HS}\leq\beta^n.
\end{aligned}
\end{equation}
The bound on $|\Psi_n^2|$ follows directly from \eqref{253} and triangle inequality. It remains to bound the last sum in \eqref{trphin}. Let $j\in\{1,\ldots,n-2\}$ and define
\begin{equation}\label{secondline}
\begin{aligned}
       \Phi=&P_0\tilde B_{0,c}^{j}e^{L\tilde\Delta}P_0\hat B_{0,c}(\bar P_0\hat B_{0,c})^{n-2-j}\bar P_0e^{-L\Delta}B_{0,c}\\
       =&P_0\tilde B_{0,c}^{j}e^{L\tilde\Delta}P_0\hat B_{0,c}(\bar P_0\hat B_{0,c})^{n-2-j}e^{-L\Delta}B_{0,c}\\
       -&P_0\tilde B_{0,c}^{j}e^{L\tilde\Delta}P_0\hat B_{0,c}(\bar P_0\hat B_{0,c})^{n-2-j}P_0e^{-L\Delta}B_{0,c}.
\end{aligned}
\end{equation}
Now we claim that for any $n\geq 1$, we have
\begin{equation}\label{barp0}
    (\bar P_0\hat B_{0,c})^n=\hat B_{0,c}^n-\sum_{i=1}^{n}(\bar P_0\hat B_{0,c})^{n-i}P_0\hat B_{0,c}^i.
\end{equation}
We show this via induction on $n$. For $n=1$, we have $\bar P_0\hat B_{0,c}=\hat B_{0,c}-P_0\hat B_{0,c}$. For induction step $n-1\mapsto n$, we have then
\begin{equation}
    \begin{aligned}
        (\bar P_0\hat B_{0,c})^n=&(\bar P_0\hat B_{0,c})^{n-1}\hat B_{0,c}-(\bar P_0\hat B_{0,c})^{n-1}P_0\hat B_{0,c}\\
        =&\left(\hat B_{0,c}^{n-1}-\sum_{i=1}^{n-1}(\bar P_0\hat B_{0,c})^{n-1-i}P_0\hat B_{0,c}^{i}\right)\hat B_{0,c}-(\bar P_0\hat B_{0,c})^{n-1}P_0\hat B_{0,c}\\
        =&\hat B_{0,c}^n-\sum_{i=1}^{n}(\bar P_0\hat B_{0,c})^{n-i}P_0\hat B_{0,c}^i.
    \end{aligned}
\end{equation}
Applying now \eqref{barp0} on the second line of \eqref{secondline}, we have then
\begin{equation}\label{2711}
    \begin{aligned}
       \Phi=& P_0\tilde B_{0,c}^{j}e^{L\tilde\Delta}P_0\hat B_{0,c}^{n-1-j}e^{-L\Delta}B_{0,c}\\
       -&\sum_{i=1}^{n-2-j} P_0\tilde B_{0,c}^{j}e^{L\tilde\Delta}P_0\hat B_{0,c}(\bar P_0\hat B_{0,c})^{n-2-j-i}P_0\hat B_{0,c}^ie^{-L\Delta}B_{0,c}\\
       -&P_0\tilde B_{0,c}^{j}e^{L\tilde\Delta}P_0\hat B_{0,c}(\bar P_0\hat B_{0,c})^{n-2-j}P_0e^{-L\Delta}B_{0,c}\\
       =&P_0\tilde B_{0,c}^{j}e^{L\tilde\Delta}P_0\hat B_{0,c}^{n-1-j}e^{-L\Delta}B_{0,c}\\
       -&\sum_{i=0}^{n-2-j} P_0\tilde B_{0,c}^{j}e^{L\tilde\Delta}P_0\hat B_{0,c}(\bar P_0\hat B_{0,c})^{n-2-j-i}P_0\hat B_{0,c}^ie^{-L\Delta}B_{0,c}.
    \end{aligned}
\end{equation}
    Note that for any $p\geq 1,q\geq 0$ we have $\|U_r^{-1}P_0\tilde B_{0,c}^{p}e^{L\tilde\Delta}P_0U_r^{-1}\|_{\rm HS}\leq \beta^pe^{Lr^2}$ by~\eqref{4bb4}, $\|U_rP_0\hat B_{0,c}^{q}e^{-L\Delta}B_{0,c}U_r\|_{\rm HS}\leq \beta^{q+1}e^{-\frac{4L^3}{3(q+1)^2}}e^{-2Lc}$ by~\eqref{1b4} and $\|\hat U_r B_{0,c}U_r^{-1}\|_{\rm op}\leq\beta$ by~\eqref{3ab3}. Applying those upper bounds, Theorem~\ref{simon} and assumption $r^2\leq 2c$ on \eqref{2711}, we have then
    \begin{equation}
        \begin{aligned}
        |\Tr(\Phi)|=|\Tr(U_r^{-1}\Phi U_r)|\leq \beta^{n}e^{-\frac{4L^3}{3n^2}}+\sum_{i=0}^{n-2-j}\beta^{n}e^{-\frac{4L^3}{3n^2}}\leq n\beta^ne^{-\frac{4L^3}{3n^2}}
        \end{aligned}
    \end{equation}
Plugging this back to \eqref{trphin}, we obtain the claimed results.
\end{proof}

Hence, we have the following result for $\Tr(K_w^n)$:
\begin{prop}\label{kwkey}
    Let $n\in\mathbb Z_{\geq 2},L,r\geq 1$ and $r^2\leq 2c$, it holds
    \begin{equation}
        |\Tr(K_w^n)-(\kappa_n(c)L+\Psi_n^1-\Psi_n^2-\Psi_n^3)|\leq 3^{n+1}\beta^ne^{-\frac{4L^3}{3n^2}},
    \end{equation}
    where  $\kappa_n(c)=-2n^{-2/3}\Ai'(2n^{2/3}c)$ and
\begin{equation}
    \begin{aligned}
            &\Psi_n^1=\sum_{\substack{\sigma_{n-1}\in\{a,b\}^{n-1}\\
        b\in\sigma_{n-1}
        }}\sgn{b}{\sigma_{n-1}}\Tr\bigg(\bar P_0\tilde B_{0,c}\bigg[\prod_{i=1}^{n-1}K_{\sigma_{n-1}(i)}\bigg]\bigg),\\
        &\Psi_n^2=\Id_{n\geq 3}\sum_{j=2}^{n-1}\Tr(P_0\hat B_{0,c}(\bar P_0\hat B_{0,c})^{n-1-j}\bar P_0\hat B_{0,c}^j),\\
        &\Psi_n^3=\Tr(P_0\hat B_{0,c}(\bar P_0\hat B_{0,c})^{n-1}).
        \end{aligned}
\end{equation}
In particular,
    \begin{equation}\label{263}
        \sum_{n=1}^\infty\frac{|\Psi_n^1|+|\Psi_n^2|+|\Psi_n^3|}{n}\leq \sum_{n=1}^{\infty} \frac{2^n \beta^n+n \beta^n+\beta^n}{n}<\infty.
    \end{equation}
\end{prop}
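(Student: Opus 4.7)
My plan is to assemble Proposition~\ref{kwkey} directly from the preceding Lemmas~\ref{kw2p}, \ref{lem2.12}, \ref{kwiii}, and \ref{kwi}, using the two operator-level decompositions \eqref{tkwn} and \eqref{3rdterm} that are already in place. The statement is essentially a synthesis rather than a new computation, so the main work is careful bookkeeping of signs and error constants.

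I would begin from \eqref{tkwn}, which splits $\Tr(K_w^n) = \Tr(\bar P_0 \tilde B_{0,c} K_w^{n-1}) - \Tr(P_0 e^{-L\Delta} B_{0,c} K_w^{n-1} e^{L\tilde\Delta})$. Lemma~\ref{kw2p} identifies the second trace with $\Tr(P_0 \hat B_{0,c} (\bar P_0 \hat B_{0,c})^{n-1}) = \Psi_n^3$ up to an error of size $3^{n-1}\beta^n e^{-4L^3/(3n^2)}$; because of the overall minus sign this contributes $-\Psi_n^3$ to the main part. For the first trace I would use \eqref{3rdterm}, which decomposes it into three pieces: (i) $\Tr(\tilde\Phi)$ in the notation of Lemma~\ref{kwi}; (ii) the sum over $\sigma_{n-1}\in\{a,b\}^{n-1}$ with $b\in\sigma_{n-1}$, which by definition is exactly $\Psi_n^1$; and (iii) the sum over $\sigma_{n-1}\in\{a,b,c\}^{n-1}$ with both $b,c\in\sigma_{n-1}$.

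Piece (i) is handled by Lemma~\ref{kwi}, which gives $\Tr(\tilde\Phi) = \kappa_n(c) L - \Psi_n^2$ plus an error of size $n^2\beta^n e^{-4L^3/(3n^2)}$. Piece (iii) consists of at most $3^{n-1}$ words, each of whose corresponding trace is bounded by $2\beta^n e^{-4L^3/(3n^2)}$ via Lemma~\ref{kwiii}, so the triangle inequality yields $2\cdot 3^{n-1}\beta^n e^{-4L^3/(3n^2)}$. Summing the three error contributions I obtain $(3^{n-1} + n^2 + 2\cdot 3^{n-1})\beta^n e^{-4L^3/(3n^2)} = (3^n + n^2)\beta^n e^{-4L^3/(3n^2)}$, and since $n^2 \leq 3^n$ for $n\geq 2$ this is at most $2\cdot 3^n \beta^n e^{-4L^3/(3n^2)} \leq 3^{n+1}\beta^n e^{-4L^3/(3n^2)}$, which establishes the first estimate.

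For the summability claim \eqref{263}, I would bound each $\Psi_n^i$ using the per-term estimates already available: Lemma~\ref{lem2.12} gives $|\Psi_n^1|\leq (2^{n-1}-1)\beta^n \leq 2^n\beta^n$ by counting the binary words of length $n-1$ that contain at least one $b$; Lemma~\ref{kwi} directly gives $|\Psi_n^2|\leq n\beta^n$; and bound \eqref{214} of Lemma~\ref{kw2p} gives $|\Psi_n^3|\leq \beta^n$. The degenerate small-$n$ cases ($\Psi_1^1=\Psi_1^2=\Psi_2^2=0$ by the ranges of summation and the indicator) cause no trouble. Substitution into $\sum_{n\geq 1}(|\Psi_n^1|+|\Psi_n^2|+|\Psi_n^3|)/n$ yields precisely the stated upper bound, which is finite because $\beta<1/7$ by Remark~\ref{obstica}. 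I do not foresee a serious obstacle in this proof; the only delicate points are the sign tracking between \eqref{tkwn}, \eqref{3rdterm} and the definitions of the $\Psi_n^i$, and the cardinality counts for the word families of lengths $n-1$.
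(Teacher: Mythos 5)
Your proposal is correct and follows essentially the same route as the paper: the paper's proof of Proposition~\ref{kwkey} is precisely the combination of Lemmas~\ref{kw2p}, \ref{kwiii} and \ref{kwi} through the decompositions \eqref{tkwn} and \eqref{3rdterm}, with the error count $3^{n-1}+2\cdot 3^{n-1}+n^2\leq 3^{n+1}$ and the bound \eqref{263} drawn from \eqref{214}, \eqref{234} and \eqref{psi2n}. Your sign bookkeeping and word-counting match the paper's argument.
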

\begin{proof}
    Combining results in Lemma~\ref{kw2p},~\ref{kwiii} and~\ref{kwi} we have
    \begin{equation}
        \begin{aligned}
            &|\Tr(K_w^n)-(\kappa_n(c)L+\Psi_n^1-\Psi_n^2-\Psi_n^3)|\\
            &\leq 3^{n-1}\beta^ne^{-\frac{4L^3}{3n^2}}+2\cdot 3^{n-1}\beta^ne^{-\frac{4L^3}{3n^2}}+n^2\beta^{n}e^{-\frac{4L^3}{3n^2}}
            \leq 3^{n+1}\beta^ne^{-\frac{4L^3}{3n^2}}.
        \end{aligned}
    \end{equation}
\eqref{263} follows from \eqref{214}, \eqref{234} and \eqref{psi2n}.
\end{proof}

\subsubsection{Mixed $u,v$ Terms}
\begin{lem}\label{uvresult}
     Let $n\in\mathbb Z_{\geq 2},L,r\geq 1$, $r^2\leq 2c$ and $\sigma_n\in\{u,v\}^n$ with $u,v\in\sigma_n$, then
    \begin{equation}
        \bigg|\Tr\bigg(\prod_{i=1}^nK_{\sigma_n(i)}\bigg)\bigg|\leq\beta^ne^{-\frac{4L^3}{3}}.
    \end{equation}
\end{lem}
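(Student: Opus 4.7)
The plan is to mirror Lemma~\ref{forsimple}: use cyclic invariance to isolate a single super-exponentially small factor $P_0 e^{-L\Delta}B_{0,c}P_0$ (controlled by \eqref{1b4}), and bound the remaining $n-1$ operators by Hilbert--Schmidt estimates from the appendix.

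First, I would exploit the commutativity $[B_{0,c},e^{L\Delta}]=0$ (already used in the proof of Lemma~\ref{lemEst1}) to write $K_v=e^{L\Delta}K_u e^{-L\Delta}$. Setting $e_i:=\mathbf{1}_{\sigma_n(i)=v}$, the inner exponentials telescope as $e^{-e_i L\Delta}e^{e_{i+1}L\Delta}=e^{(e_{i+1}-e_i)L\Delta}$, and one cyclic rotation of the trace merges the outermost $e^{e_1 L\Delta}$ with $e^{-e_n L\Delta}$. Pushing each $B_{0,c}$ through the remaining exponentials then reduces the trace to
\begin{equation*}
\Tr\Bigl(\prod_{i=1}^n K_{\sigma_n(i)}\Bigr) = \Tr\Bigl(\prod_{i=1}^n P_0\, e^{\delta_i L\Delta}\, B_{0,c}\Bigr),
\end{equation*}
where $\delta_i:=e_{i+1}-e_i \pmod n\in\{-1,0,1\}$ and $\sum_i\delta_i=0$. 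The hypothesis that both $u$ and $v$ appear in $\sigma_n$ then forces at least one $\delta_i=-1$ (and symmetrically one $\delta_i=+1$).

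Second, by a further cyclic rotation I would place $\delta_1=-1$, so the product begins with $P_0 e^{-L\Delta}B_{0,c}$ and is immediately followed by the $P_0$ of the next block. Using $P_0^2=P_0$, I split the trace as $\Tr(AB)$ with $A=P_0 e^{-L\Delta}B_{0,c}P_0$ and $B$ the remaining product of $n-1$ operators of the form $e^{\delta_i L\Delta}B_{0,c}P_0$ (ending with $e^{\delta_n L\Delta}B_{0,c}$). Estimate \eqref{1b4} immediately gives $\|U_r A U_r\|_{\rm HS}\leq \beta e^{-4L^3/3}e^{-2Lc}$, and the same style of block-wise HS and operator-norm estimates as in Lemma~\ref{forsimple} and Lemma~\ref{wkey} should yield $\|U_r^{-1}B U_r^{-1}\|_{\rm HS}\leq \beta^{n-1}e^{Lr^2}$. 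Applying Theorem~\ref{operatorinequality} and using $r^2\leq 2c$ to absorb $e^{Lr^2-2Lc}\leq 1$ then gives the claimed $|\Tr(\prod_i K_{\sigma_n(i)})|\leq \beta^n e^{-4L^3/3}$.

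The hard part will be the bound on $B$ when some $\delta_i=+1$: the corresponding factor $e^{+L\Delta}B_{0,c}$ is unbounded in $L$ on its own, and closing the estimate requires careful use of the $U_r^{\pm 1}$-conjugations (together with the hypothesis $1\leq r^2\leq 2c$) to convert the $L$-growth into an $e^{Lr^2}$ that is then killed by the $e^{-2Lc}$ coming from $A$.
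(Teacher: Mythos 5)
Your proposal is correct and follows essentially the same route as the paper: the trace is reduced to a product in which one factor $P_0e^{-L\Delta}B_{0,c}P_0$ is isolated and bounded by $\beta e^{-\frac{4L^3}{3}}e^{-2Lc}$ via \eqref{1b4}, the remaining $n-1$ factors are bounded by $\beta^{n-1}e^{Lr^2}$, and $r^2\leq 2c$ closes the estimate. The paper organizes this by splitting at the last occurrence of $v$, bounding the prefix $P_0B_{0,c}[\cdots]e^{L\Delta}P_0$ in one stroke by Corollary~\ref{uvwkey33} and the tail $(P_0B_{0,c})^{n-\ell_v}$ by \eqref{1ab3} --- which is precisely the bound you defer to ``block-wise estimates'' --- so your conjugation/telescoping rewriting of $K_v=e^{L\Delta}K_ue^{-L\Delta}$ is just a different bookkeeping of the same factors and the same appendix inequalities.
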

\begin{proof}\
Choose $\sigma_n\in\{u,v\}^n$ with $u,v\in\sigma_n$ . Using the cyclic property, we can assume without loss of generality $\sigma_n(1)=u$. Since $v\in\sigma_n$, it holds $2\leq \ell_v=\max\{i|\sigma_n(i)=v\}\leq n$.
    Then we have
    \begin{equation}
        \begin{aligned}
        \Phi=& P_0B_{0,c}\bigg[\prod_{i=2}^nK_{\sigma_n(i)}\bigg]P_0\\
        =& \underbrace{P_0B_{0,c}\bigg[\prod_{i=2}^{\ell_v-1}K_{\sigma_n(i)}\bigg]e^{L\Delta}P_0}_{=:\varphi_1}\cdot \underbrace{P_0e^{-L\Delta}B_{0,c}P_0}_{=:\varphi_2}\underbrace{(P_0B_{0,c})^{n-\ell_v}P_0}_{=:\varphi_3}.
        \end{aligned}
    \end{equation}
    Applying $\|U_r^{-1}\varphi_1U_r^{-1}\|_{\rm HS}\leq \beta^{\ell_v-1}e^{Lr^2}$ (by Lemma~\ref{uvwkey33}), $\|U_r\varphi_2U_r\|_{\rm HS}\leq\beta e^{-\frac{4L^3}{3}}$ (by~\eqref{1b4}), $\|U_r^{-1}\varphi_3U_r\|_{\rm HS}\leq\beta^{n-\ell_v}$ (by~\eqref{1ab3}) and $r^2\leq 2c$, we have
    \begin{equation}
        |\Tr(\Phi)|\leq\|U_r^{-1}\varphi_1U_r^{-1}\|_{\rm HS} \|U_r\varphi_2U_r\|_{\rm HS}\|U_r^{-1}\varphi_3U_r\|_{\rm HS}\leq\beta^ne^{-\frac{4L^3}{3}}
    \end{equation}
\end{proof}

\subsubsection{Mixed $u,w$ Terms}
First, we define a new operator $K_{\tilde w}=\bar P_0\tilde B_{0,c}$. For a word $\sigma_n\in\{u,w\}^n$, we define
\begin{equation}\label{wtotildew}
    \sigma_n^{w\mapsto\tilde w}(i)=\begin{cases}
        u,\quad&\textrm{if }\sigma_n(i)=u,\\
        \tilde w,&\textrm{otherwise}.
    \end{cases}
\end{equation}

\begin{lem}\label{uwresult}
  Let $n\in\mathbb Z_{\geq 2},L,r\geq 1$, $r^2\leq 2c$ and $\sigma_n\in\{u,w\}^n$ with $u,w\in\sigma_n$, it holds
    \begin{equation}\label{271}
        \bigg|\Tr\bigg(\prod_{i=1}^nK_{\sigma_n^{w\mapsto\tilde w}(i)}\bigg)\bigg|\leq \beta^n
    \end{equation}
    and
    \begin{equation}\label{272}
    \bigg|\Tr\bigg(\prod_{i=1}^nK_{\sigma_n(i)}\bigg)-\Tr\bigg(\prod_{i=1}^nK_{\sigma_n^{w\mapsto\tilde w}(i)}\bigg)\bigg|\leq3^n\beta^ne^{-\frac{4L^3}{3n^2}}.
    \end{equation}
\end{lem}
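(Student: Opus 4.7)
The key structural observation is that $K_w - K_{\tilde w} = -K_c$ where $K_c = e^{L\tilde\Delta}P_0 e^{-L\Delta}B_{0,c}$ is the one piece of $K_w$ that carries the super-exponential decay $e^{-4L^3/3}$ (via the $e^{-L\Delta}B_{0,c}$ factor, bound~\eqref{1b4}), while $K_{\tilde w}=\bar P_0\tilde B_{0,c}$ is $L$-independent. So the plan is to prove \eqref{271} directly as a bounded, $L$-independent trace, and then to prove \eqref{272} by expanding the difference so that every surviving term contains at least one $K_c$ factor.

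For \eqref{271} I would follow the same template as Lemma~\ref{uvresult}. Using the cyclic invariance of the trace, reorder so that $\sigma_n(1)=u$, which is possible because $u\in\sigma_n$. Let $\ell=\min\{i\geq 2:\sigma_n(i)=w\}$, which exists since $w\in\sigma_n$. Split
\begin{equation}
\Phi=\underbrace{P_0B_{0,c}\Bigl[\prod_{i=2}^{\ell-1}K_u\Bigr]P_0}_{=:\varphi_1}\cdot\underbrace{\bar P_0\tilde B_{0,c}\Bigl[\prod_{i=\ell+1}^{n}K_{\sigma_n^{w\mapsto\tilde w}(i)}\Bigr]P_0}_{=:\varphi_2}
\end{equation}
(inserting a $P_0$ where possible using $P_0^2=P_0$). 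Conjugate with $U_r$ so that $\|U_r\varphi_1 U_r^{-1}\|_{\rm HS}\leq\beta^{\ell-1}$ using~\eqref{1ab3} for the first factor together with op-norm bounds of $\beta$ for each additional $K_u$; likewise bound $\|U_r\varphi_2 U_r^{-1}\|_{\rm HS}\leq\beta^{n-\ell+1}$ using a HS-type bound on $\bar P_0\tilde B_{0,c}P_0$ (analogous to~\eqref{2ab3}/\eqref{3ab3}) and op-norm factors of $\beta$ for the remaining $K_u$ and $K_{\tilde w}$ pieces. Applying property~\ref{ts3} of Theorem~\ref{simon} gives $|\Tr(\Phi)|\leq\beta^n$.

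For \eqref{272} I would decompose $K_w=K_a-K_b-K_c$ and $K_{\tilde w}=K_a-K_b$ and expand both products in the $K_a,K_b,K_c$ basis. Every term in the expansion of $\prod_{i=1}^nK_{\sigma_n(i)}$ that contains no $K_c$ appears identically in the expansion of $\prod_{i=1}^nK_{\sigma_n^{w\mapsto\tilde w}(i)}$ and therefore cancels in the difference. The difference is thus a signed sum of at most $3^{n_w}-2^{n_w}\leq 3^n$ terms, each of which contains at least one factor $K_c=e^{L\tilde\Delta}P_0e^{-L\Delta}B_{0,c}$. For each such term, pick one occurrence of $K_c$ (say at position $\ell_c$) and split the operator at that point as
\begin{equation}
\Phi=\underbrace{[\cdots]\,e^{L\tilde\Delta}P_0}_{=:\varphi_1}\cdot\underbrace{P_0e^{-L\Delta}B_{0,c}\,[\cdots]}_{=:\varphi_2},
\end{equation}
where the left bracket contains at most $\ell_c-1$ kernels from $\{K_u,K_a,K_b\}$ and the right bracket contains the remaining kernels. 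Bound $\|U_r^{-1}\varphi_1 U_r^{-1}\|_{\rm HS}\leq\beta^{\ell_c}e^{Lr^2}$ via Lemma~\ref{wkey} (with op-norm factors of $\beta$ from $K_u$, $K_a$, $K_b$), and $\|U_r\varphi_2 U_r\|_{\rm HS}\leq\beta^{n-\ell_c+1}e^{-4L^3/(3(n-\ell_c+1)^2)}e^{-2Lc}$ via Corollary~\ref{ca17}. The condition $r^2\leq 2c$ absorbs the $e^{Lr^2}$ against $e^{-2Lc}$. Summing $3^n$ such terms yields the claimed bound $3^n\beta^n e^{-4L^3/(3n^2)}$.

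The main technical obstacle is the bookkeeping for the $K_c$-containing terms in \eqref{272}: each such term has a different shape depending on where the $K_c$'s sit, so one must verify that for every admissible splitting the op-norm/HS-norm factors collapse to the uniform $\beta^n$ prefactor, and that the $L^3$ exponent in the worst case is no smaller than $4L^3/(3n^2)$. Apart from this combinatorial accounting, the argument is a direct adaptation of Lemmas~\ref{forsimple}, \ref{kwiii}, and \ref{uvresult}, using only identities that have already been established in the paper.
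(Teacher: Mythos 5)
Your treatment of \eqref{272} is essentially the paper's argument: write $K_w=K_{\tilde w}-K_c$, cancel the $K_c$-free terms in the difference, and bound each of the at most $3^n$ surviving terms by splitting at an occurrence of $K_c$, so that the right piece starts with $P_0e^{-L\Delta}B_{0,c}$ and carries the factor $e^{-\frac{4L^3}{3(\cdot)^2}}e^{-2Lc}$ (Corollary~\ref{ca17}), while $r^2\leq 2c$ absorbs the $e^{Lr^2}$ from the left piece. The one slip is the reference for the left piece: since $K_u$ factors may sit to the left of the chosen $K_c$ and, after the cyclic shift, that piece begins with $P_0B_{0,c}$, the relevant bound is Corollary~\ref{ca15} (or~\ref{ca16}), not Lemma~\ref{wkey}, whose words are restricted to $\{a,b,c\}$ and which starts from $P_0\tilde B_{0,c}$; with that correction this part coincides with the paper's proof.

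The genuine gap is in your proof of \eqref{271}. Having fixed the conjugation $U_r(\cdot)U_r^{-1}$ on $\varphi_1$ (to invoke the second bound in \eqref{1ab3}), the same conjugation is forced on $\varphi_2$, and your claim that the remaining $K_u$ and $K_{\tilde w}$ pieces then have operator norm at most $\beta$ is false: whenever a $w$ follows inside $\varphi_2$ (patterns $u\,w$ or $w\,w$), the peeled factors are of the form $U_rP_0B_{0,c}\bar P_0U_r^{-1}$ or $U_r\bar P_0\tilde B_{0,c}\bar P_0U_r^{-1}$, whose kernels carry the weight $e^{-ry}$ on $y<0$ against an Airy factor decaying only like $|y|^{-1/4}$ there; testing on bumps near $y\approx -M$ shows these operators are unbounded, so the chain of norm estimates collapses. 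Moreover, the Hilbert--Schmidt factor cannot in general come from the leading $\bar P_0\tilde B_{0,c}$: if the next letter is again $w$ that factor is $\bar P_0\tilde B_{0,c}\bar P_0$, whose HS norm is infinite even under the good conjugation (its kernel depends essentially on $y-x$ on a region of infinite diagonal extent). The argument is salvageable, and then in fact simpler than the paper's: conjugate everything the other way, $U_r^{-1}(\cdot)U_r$, so every $K_u$ factor is HS-bounded by $\beta$ via \eqref{1ab3} and every $K_{\tilde w}$ factor is op-bounded by $\beta$ via \eqref{2ab3}, and take the second HS factor from the last factor of $\varphi_2$, which ends with the wrap-around $P_0$ (so it is either $U_r^{-1}\bar P_0\tilde B_{0,c}P_0U_r$, HS-bounded by \eqref{2ab3}, or a $K_u$ factor). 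The paper instead proves \eqref{271} by induction on $n$, strengthening the claim to a bound on $\max\{|\Tr(\Phi)|,\|U_r^{-1}\Phi U_r\|_{\rm HS}\}$ and splitting at occurrences of $u$; either route works, but as written your step fails.
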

\begin{proof}
We first show~\eqref{271}, let $\hat\sigma_n\in\{u,\tilde w\}^n$ with $u,\tilde w\in\hat\sigma_n$, using cyclyc property, we assume without loss of generality that $\hat\sigma_n(1)=u$. We define
\begin{equation}
    \Phi_{\hat\sigma_n}=P_0B_{0,c}\bigg[\prod_{i=2}^n K_{\hat\sigma_n(i)}\bigg]P_0.
\end{equation}
Instead of \eqref{271}, we will use induction to show $\max\{|\Tr(\Phi_{\hat\sigma_n})|,\|U_r^{-1}\Phi_{\hat\sigma_n}U_r\|_{\rm HS}\}\leq\beta^n.$ For $n=2$, we have then $\Phi_{\hat\sigma_2}=P_0B_{0,c}\bar P_0\tilde B_{0,c}P_0$, we can then apply the bounds~\eqref{1ab3} and~\eqref{2ab3} to deduce
\begin{equation}
\max\{|\Tr(\Phi_{\hat\sigma_2})|,\|U_r^{-1}\Phi_{\hat\sigma_2}U_r\|_{\rm HS}\}\leq \|U_r^{-1}P_0B_{0,c}\bar P_0U_r\|_{\rm HS}\|U_r^{-1}\bar P_0\tilde B_{0,c}P_0U_r\|_{\rm HS}\leq \beta^2.
\end{equation}
For induction step $n-1\mapsto n$. Let us start with the case $\hat\sigma_n(i)=\tilde w$ for all $i\in\{2,\ldots,n\}$, then $\Phi_{\hat\sigma_n}=P_0B_{0,c}\bar P_0\cdot (\bar P_0\tilde B_{0,c}\bar P_0)^{n-2}\bar P_0\tilde B_{0,c}P_0$, then we can apply the bounds~\eqref{1ab3},~\eqref{2ab3} respectively.
Then we have
\begin{equation}
    |\Tr(\Phi_{\hat\sigma_n})|\leq\|U_r^{-1}P_0B_{0,c}\bar P_0U_r\|_{\rm HS}\|U_r^{-1}\bar P_0\tilde B_{0,c}\bar P_0U_r\|_{\rm op}^{n-2}\|U_r^{-1}\bar P_0\tilde B_{0,c} P_0U_r\|_{\rm HS}\leq\beta^n.
\end{equation}
Next suppose now there exists $i\in\{2,\ldots,n\}$ such that $\sigma_n(i)=u$. Then we have $2\leq \ell_u=\min\{i|\sigma_n(i)=u\}\leq n$ and hence
\begin{equation}
    \Phi_{\hat\sigma_n}=\underbrace{P_0B_{0,c}\bigg[\prod_{i=2}^{\ell_u-1}K_{\sigma_n(i)}\bigg]P_0}_{=:\varphi_1}\cdot\underbrace{ P_0B_{0,c}\bigg[\prod_{i=\ell_u+1}^{n}K_{\sigma_n(i)}\bigg]P_0}_{=:\varphi_2}.
\end{equation}
By induction assumption, we have $\|U_r^{-1}\varphi_1U_r\|_{\rm HS}\leq\beta^{\ell_u-1}$, $\|U_r^{-1}\varphi_2U_r\|_{\rm HS}\leq\beta^{n-\ell_u}$, the claim follows.

It remains to show \eqref{272}, let now $\sigma_n\in\{u,w\}^n$, using cyclic property, we assume $\sigma_n(1)=u$. Then there exists $m\geq 1,\ q_m\geq 1$ such that $\sigma_n$ is given as following:
\begin{equation}\label{sigma}
    \underbrace{u,\ldots,u}_{p_1 \textrm{times}},\underbrace{w,\ldots,w}_{q_1 \textrm{times}},\cdots,\underbrace{u,\ldots,u}_{p_m \textrm{times}},\underbrace{w,\ldots,w}_{q_m \textrm{times}}.
\end{equation}
Denote
\begin{equation}\label{e274}
\begin{aligned}
    &\Phi=P_0B_{0,c}\bigg[\prod_{i=2}^nK_{\sigma_n(i)}\bigg]P_0=\prod_{i=1}^m\left[(P_0B_{0,c})^{p_i}(\bar P_0\tilde B_{0,c}-e^{L\tilde\Delta}P_0e^{-L\Delta}B_{0,c})^{q_i}\right].
\end{aligned}
\end{equation}
Recall that $K_c=e^{L\tilde\Delta}P_0e^{-L\Delta}B_{0,c}$, together with \eqref{e274} and cyclic property, we have then
\begin{equation}\label{31back}
    \begin{aligned}
&\bigg|\Tr(\Phi)-\Tr\bigg(\prod_{i=1}^m(P_0B_{0,c})^{p_i}(\bar P_0\tilde B_{0,c})^{q_i}P_0\bigg)\bigg|\\
        \leq&\sum_{\sigma_{q_1}\in\{a,b,c\}^{q_1}}\cdots\sum_{\sigma_{q_m}\in\{a,b,c\}^{q_m}}\Id_{\{\exists\ i\textrm{ s.t. } c\in\sigma_{q_i}\}}|\Tr\left(\Phi_m\right)|,
    \end{aligned}
\end{equation}
where
\begin{equation}
    \Phi_m=\prod_{i=1}^m\bigg((P_0B_{0,c})^{p_i}\bigg[\prod_{j=1}^{q_i}K_{\sigma_{q_i}(j)}\bigg]P_0\bigg).
\end{equation}
Since there exists $i$ such that $c\in\sigma_{q_i}$, we can rewrite
\begin{equation}
    \Phi_m=(P_0B_{0,c})^{p_1-1}\underbrace{P_0B_{0,c}\bigg[\prod_{i=1}^{\ell_c-1}K_{i}\bigg]e^{L\tilde\Delta}P_0}_{=:\varphi_1}\cdot\underbrace{ P_0e^{-L\Delta}B_{0,c}\bigg[\prod^{n-p_1}_{i=\ell_c+1}K_{i}\bigg]P_0}_{=:\varphi_2}
\end{equation}
with $K_i\in\{K_u,K_a,K_b,K_c\}$ and
\begin{equation}
    \ell_c=q_1+\cdots +p_i+\min\{j|\sigma_{q_i}(j)=c\}.
\end{equation}
Applying the bounds $\|U_r^{-1}P_0B_{0,c}U_r\|_{\rm HS}\leq\beta$ (by~\eqref{1ab3}), $\|U_r^{-1}\varphi_1U_r^{-1}\|_{\rm HS}\leq\beta^{\ell_c}e^{Lr^2}$ (Corollary~\ref{ca15}), $\|U_r\varphi_2U_r\|_{\rm HS}\leq\beta^{n-p_1-\ell_c+1}e^{-\frac{4L^3}{3(n-p_1-\ell_c+1)^2}}e^{-2Lc}$ (Corollary~\ref{ca17}) and $r^2\leq 2c$, we then have
\begin{equation}
    |\Tr(U_r^{-1}\Phi_mU_r)|\leq\|U_r^{-1}P_0B_{0,c}U_r\|_{\rm HS}^{p_1-1} \|U_r^{-1}\varphi_1U_r^{-1}\|_{\rm HS}\|U_r\varphi_2U_r\|_{\rm HS}\leq\beta^{n}e^{-\frac{4L^3}{3n^2}},
\end{equation}
Plugging this back to \eqref{31back} and using the fact there are in total $3^{q_1+\cdots+q_m}-2^{q_1+\cdots+q_m}\leq 3^n$ summations, the proof is completed.
\end{proof}

\subsubsection{Mixed $v,w$ Terms}
Let $\sigma_n\in\{v,w\}^n$ and without loss of generality we can set $\sigma_n(1)=v$. Denote
    $K_\alpha=P_0\hat B_{0,c}$, $K_{\tilde\beta}=\bar P_0\hat B_{0,c}$ and $K_\gamma=\bar P_0B_{0,c}$. For a fixed $\sigma_n\in\{v,w\}^n$, we define its transformed word as
\begin{equation}\label{wtobeta}
    \sigma_n^{w\to\tilde\beta}(i)=\begin{cases}
        u,\quad&\textrm{if }\sigma_n(i)=\sigma_n(i+1)=v,\\
        \tilde\beta,&\textrm{if }\sigma_n(i)=\sigma_n(i+1)=w,\\
        \gamma,&\textrm{if }\sigma_n(i)=w,\sigma_n(i+1)=v,\\
        \alpha,&\textrm{if }\sigma_n(i)=v,\sigma_n(i+1)=w,\\
    \end{cases}
\end{equation}
where we set $\sigma_n(n+1)=\sigma_n(1)$.
\begin{lem}\label{vwresults}
   Let $n\in\mathbb Z_{\geq 2},L,r\geq 1$, $r^2\leq 2c$ and $\sigma_n\in\{v,w\}^n$ with $v,w\in\sigma_n$, it holds
    \begin{equation}\label{287}
        \bigg|\Tr\bigg(\prod_{i=1}^nK_{\sigma_n^{w\mapsto\tilde \beta}(i)}\bigg)\bigg|\leq\beta^n
    \end{equation}
    and
    \begin{equation}
        \bigg|\Tr\bigg(\prod_{i=1}^nK_{\sigma_n(i)}\bigg)-\Tr\bigg(\prod_{i=1}^nK_{\sigma_n^{w\mapsto\tilde \beta}(i)}\bigg)\bigg|\leq3^n\beta^ne^{-\frac{4L^3}{3n^2}}.
    \end{equation}
\end{lem}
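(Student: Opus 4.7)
The plan is to mirror the two-step structure used in the proof of Lemma~\ref{uwresult}: first establish the $L$-independent bound~\eqref{287} by induction, and then expand each $K_w$ as $K_a-K_b-K_c$ to separate out a ``main piece'' that reduces, via the Airy identities of Lemma~\ref{identity} together with cyclic invariance of the trace, to $\Tr(\prod_i K_{\sigma_n^{w\mapsto\tilde\beta}(i)})$, while every other piece carries the super-exponential factor $e^{-4L^3/(3n^2)}$.

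For~\eqref{287} I would argue by induction on $n$ in direct analogy with Lemma~\ref{uwresult}. All four operators $K_u=P_0B_{0,c}$, $K_\alpha=P_0\hat B_{0,c}$, $K_{\tilde\beta}=\bar P_0\hat B_{0,c}$ and $K_\gamma=\bar P_0 B_{0,c}$ are $L$-independent. By the appendix estimates~\eqref{1ab3} and~\eqref{3ab3}, together with their symmetric analogues, each of $\|U_r K_u U_r^{-1}\|_{\rm HS}$, $\|U_r K_\alpha U_r^{-1}\|_{\rm HS}$, $\|U_r K_\gamma U_r^{-1}\|_{\rm HS}$ and the HS norms that arise when $K_{\tilde\beta}$ is sandwiched by a $P_0$ are bounded by $\beta$, while $\|U_r\bar P_0\hat B_{0,c}\bar P_0 U_r^{-1}\|_{\rm op}\leq\beta$ controls internal runs of $K_{\tilde\beta}$'s. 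Cutting the cyclic product at a convenient spot and invoking Theorem~\ref{simon} gives $\beta^n$.

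For the difference estimate I would expand $K_w=K_a-K_b-K_c$ at each of the $w$-positions while keeping $K_v$ intact, producing $3^{n_w}$ signed terms indexed by words in $\{v,a,b,c\}^n$. Any term containing at least one factor $K_c=e^{L\tilde\Delta}P_0 e^{-L\Delta}B_{0,c}$ is split at that location into a left piece ending with $P_0 e^{L\tilde\Delta}$ and a right piece beginning with $P_0 e^{-L\Delta}B_{0,c}$; bounding these respectively by $\beta^p e^{Lr^2}$ and $\beta^{n-p}e^{-4L^3/(3n^2)}e^{-2Lc}$ via Lemma~\ref{wkey} and Corollary~\ref{ca17}, and using $r^2\leq 2c$, yields a total contribution bounded by $3^n\beta^n e^{-4L^3/(3n^2)}$ after counting the at most $3^{n_w}-2^{n_w}\leq 3^n$ such terms. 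The remaining ``main piece'', summed over words in $\{v,a\}^n$, has to be identified with $\Tr(\prod_i K_{\sigma_n^{w\mapsto\tilde\beta}(i)})$ modulo further errors of the same size. To achieve this I would combine the identities $B_{0,c}e^{L\Delta}=e^{L\Delta}B_{0,c}$, $\tilde B_{0,c}e^{L\Delta}=e^{L\tilde\Delta}B_{0,c}$ and $e^{-L\Delta}B_{0,c}e^{L\tilde\Delta}=\hat B_{0,c}$ of Lemma~\ref{identity} with cyclicity of the trace to bring adjacent heat-kernel factors into contact, then resolve each residual $\bar P_0$ by $\bar P_0=\Id-P_0$ and collect the cross terms. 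Each junction $(\sigma_n(i),\sigma_n(i+1))$ then collapses into the operator prescribed by~\eqref{wtobeta}: the projector is $P_0$ or $\bar P_0$ according to $\sigma_n(i)$, and the kernel is $B_{0,c}$ or $\hat B_{0,c}$ according to $\sigma_n(i+1)$. Any surplus $P_0$ generated by the $\bar P_0=\Id-P_0$ resolution sits between an $e^{-L\Delta}B_{0,c}$ and an $e^{L\tilde\Delta}$ that would otherwise have collapsed to $\hat B_{0,c}$, and the same HS-splitting then bounds each such correction by $\beta^n e^{-4L^3/(3n^2)}$.

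The main obstacle is the bookkeeping of this last reduction. Unlike in Lemma~\ref{uwresult}, where only $K_w$ carries heat-kernel factors, here both $K_v$ and $K_w$ do, so the simplifications take place only between adjacent pairs and all four junction types $vv,\,ww,\,vw,\,wv$ must be checked case by case. Verifying that each such junction collapses to exactly the operator dictated by~\eqref{wtobeta}, and that every surplus $P_0$-insertion admits an HS-splitting yielding the super-exponential factor, is the technical core of the proof.
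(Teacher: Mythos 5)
Your argument for \eqref{287} is essentially the paper's (which simply reuses the induction behind \eqref{271}), so that half is fine. The second half, however, contains a genuine gap: you declare every word containing a factor $K_c=e^{L\tilde\Delta}P_0e^{-L\Delta}B_{0,c}$ to be an error of size $\beta^ne^{-4L^3/(3n^2)}$ and take the words in $\{v,a\}^n$ as the main piece. For mixed $v,w$ words this classification is inverted. Unlike in Lemma~\ref{uwresult}, here the tail $e^{-L\Delta}B_{0,c}$ of a $K_v$ (or of a $K_c$) is immediately followed, possibly cyclically, by the head $e^{L\tilde\Delta}$ or $e^{L\Delta}$ of the next factor, and the identities $e^{-L\Delta}B_{0,c}e^{L\tilde\Delta}=\hat B_{0,c}$ and $e^{-L\Delta}B_{0,c}e^{L\Delta}=B_{0,c}$ remove all $L$-dependence, so there is no place to make the second cut of the Hilbert--Schmidt splitting that would produce a small factor. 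Concretely, for $n=2$ and $\sigma_2=(v,w)$ one has $\Tr(K_vK_c)=\Tr\big(e^{L\Delta}P_0e^{-L\Delta}B_{0,c}\,e^{L\tilde\Delta}P_0e^{-L\Delta}B_{0,c}\big)=\Tr(P_0\hat B_{0,c}P_0B_{0,c})$, an $L$-independent, strictly positive quantity, which cannot obey a bound of the form $3^2\beta^2e^{-L^3/3}$. These $c$-terms are in fact exactly what creates the $\bar P_0$'s of $K_{\tilde\beta}=\bar P_0\hat B_{0,c}$ and $K_\gamma=\bar P_0B_{0,c}$: in the same example $\Tr(K_vK_a)-\Tr(K_vK_c)=\Tr(P_0\hat B_{0,c}\bar P_0B_{0,c})=\Tr(K_\alpha K_\gamma)$, so your main piece $\Tr(K_vK_a)$ misses the target by precisely the non-small term you discarded; in addition, the words containing $b$ but no $c$ are left unaccounted for, since they lie neither in your error set nor in $\{v,a\}^n$.

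The paper's proof does the opposite split: it keeps $K_a-K_c=e^{L\tilde\Delta}\bar P_0e^{-L\Delta}B_{0,c}$ together and expands $K_w$ only in the $K_b=P_0\tilde B_{0,c}$ direction. Writing $K_v^{p_i}=e^{L\Delta}(P_0B_{0,c})^{p_i-1}P_0e^{-L\Delta}B_{0,c}$ and cycling the heat kernels, the $b$-free part collapses exactly, with no error at all, to $\Tr\big(\prod_iK_{\sigma_n^{w\mapsto\tilde\beta}(i)}\big)$, while every word containing at least one $K_b$ is cut at that $b$ into a factor of the form $P_0e^{-L\Delta}B_{0,c}[\cdots]P_0$ (Corollary~\ref{ca17}, contributing $e^{-4L^3/(3\ell^2)}e^{-2Lc}$) and a factor of the form $P_0\tilde B_{0,c}[\cdots]e^{L\Delta}P_0$ (Corollary~\ref{ca16}, contributing $e^{Lr^2}$), with at most $3^n$ such words. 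Any repair of your argument must make the $b$-containing words, not the $c$-containing ones, carry the super-exponential factor.
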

\begin{proof}
One can obtain \eqref{287} using the same method for \eqref{271}, so we omit the proof here. Let $\sigma_n\in\{v,w\}^n$ with $v,w\in\sigma_n$. By cyclic property, we can assume $\sigma_n(1)=v$. In particular, there exists $m\geq 1,q_m\geq 1$ such that $\sigma_n$ is given as
\begin{equation}\label{vwsigma}
    \underbrace{v,\ldots,v}_{p_1\ many},\underbrace{w,\ldots,w}_{q_1\ many},\cdots,\underbrace{v,\ldots,v}_{p_m\ many},\underbrace{w,\ldots,w}_{q_m\ many}
\end{equation}
In this case we have
\begin{equation}
    \begin{aligned}
&\Tr\bigg(\prod_{i=1}^m\left[e^{L\Delta}(P_0B_{0,c})^{p_i-1}P_0e^{-L\Delta}B_{0,c}(\tilde B_{0,c}-P_0\tilde B_{0,c}-e^{L\tilde\Delta}P_0e^{-L\Delta}B_{0,c})^{q_i}\right]\bigg)\\      =&\Tr\bigg(\prod_{i=1}^m\left[(P_0B_{0,c})^{p_i-1}P_0e^{-L\Delta}B_{0,c}(e^{L\tilde\Delta}\bar P_0e^{-L\Delta}B_{0,c}-P_0\tilde B_{0,c})^{q_i}e^{L\Delta}P_0\right]\bigg),
    \end{aligned}
\end{equation}
where we use $e^{-L\Delta}B_{0,c}e^{L\Delta}=B_{0,c}$ to deduce $K_v^{p_i}=e^{L\Delta}(P_0B_{0,c})^{p_i-1}P_0e^{-L\Delta}B_{0,c}.$ Applying the identity $e^{-L\Delta}B_{0,c}e^{L\tilde\Delta}=\hat B_{0,c}$ (see~\eqref{a102}), we have then
\begin{equation}\label{33back}
    \begin{aligned}
        &\bigg|\Tr\bigg(\prod_{i=1}^nK_{\sigma_n(i)}\bigg)-\Tr\bigg(\prod_{i=1}^nK_{\sigma_n^{w\mapsto\tilde \beta}(i)}\bigg)\bigg|\\
        \leq&\sum_{
        \substack{ \sigma_{q_1}\in\{a,b,c\}^{q_1}}
       }\cdots \sum_{
        \substack{ \sigma_{q_m}\in\{a,b,c\}^{q_m}
       }
       }\Id_{\{\exists\ j\textrm{ s.t. }b\in\sigma_{q_j}\}}|\Tr(\Phi)|
\end{aligned}
\end{equation}
where
\begin{equation}
    \Phi=\prod_{i=1}^m\bigg((P_0B_{0,c})^{p_i-1}P_0e^{-L\Delta}B_{0,c}\bigg[\prod_{k=1}^{q_i}K_{\sigma_{q_i}(k)}\bigg]e^{L\Delta}P_0\bigg).
\end{equation}
Let $j\in\{1,\ldots,m\}$ such that $b\in\sigma_{q_j}$ and define
\begin{equation}
    \ell_b=q_1+\cdots+p_j+\min\{i|\sigma_{q_j(i)}=b\}.
\end{equation}
Then we have
\begin{equation}
    \Phi=\underbrace{(P_0B_{0,c})^{p_1-1}}_{=:\varphi_1}\underbrace{P_0e^{-L\Delta}B_{0,c}\left[\prod_{i=1}^{\ell_b-1}K_{\sigma_{i}(k)}\right]P_0}_{=:\varphi_2}\cdot \underbrace{P_0\tilde B_{0,c}\left[\prod_{i=\ell_b+1}^{n-p_1}K_i\right]e^{L\Delta}P_0}_{=:\varphi_3}
\end{equation}
with $K_i\in\{K_a,K_b,K_c,K_u\}$. Applying the bounds $\|U_r\varphi_1U_r^{-1}\|_{\rm HS}\leq\beta^{p_1-1}$ (by~\eqref{1ab3}), $\|U_r\varphi_2U_r\|_{\rm HS}\leq \beta^{\ell_b}e^{-\frac{4L^3}{3\ell_b^2}}e^{-2Lc}$ (by Corollary~\ref{ca17}), $\|U_r^{-1}\varphi_3U_r^{-1}\|_{\rm HS}\leq\beta^{n-p_1-\ell_b+1}e^{Lr^2}$ (by Corollary~\ref{ca16}) and $r^2\leq 2c$, we have
\begin{equation}
\begin{aligned}
    &|\Tr(\Phi)|\leq\|U_r\varphi_1U_r^{-1}\|_{\rm HS}\|U_r\varphi_2U_r\|_{\rm HS}\|U_r^{-1}\varphi_3U_r^{-1}\|_{\rm HS}\leq\beta^{n}e^{-\frac{4L^3}{3n^2}}.
\end{aligned}
\end{equation}
Plugging this back to \eqref{33back} and noticing that the number of summations is smaller than $3^n$, we obtain the result.
\end{proof}

\subsubsection{Mixed $u,v,w$ terms}
\begin{lem}\label{uvwresults}
    Let $n\in\mathbb Z_{\geq 3},L,r\geq 1$, $r^2\leq 2c$ and $\sigma_n\in\{u,v,w\}^n$ with $u,v,w\in\sigma_n$. Then
    \begin{equation}
        \bigg|\Tr\bigg(\prod_{i=1}^nK_{\sigma_n}(i)\bigg)\bigg|\leq 5^n\beta^ne^{-\frac{4L^3}{3n^2}}.
    \end{equation}
\end{lem}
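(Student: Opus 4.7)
The plan is to reduce the three-letter mixed case to the structural estimates already used in Lemmas~\ref{uvresult},~\ref{uwresult} and~\ref{vwresults}. By the cyclic property of the trace, since $u\in\sigma_n$ we may assume without loss of generality that $\sigma_n(1)=u$, so that the product begins with $K_u=P_0 B_{0,c}$. Inserting an idempotent $P_0$ at the end via cyclic invariance, the trace equals $\Tr(P_0 B_{0,c}K_{\sigma_n(2)}\cdots K_{\sigma_n(n)}P_0)$.

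Next I would expand every occurrence of $K_w$ using $K_w=K_a-K_b-K_c$. This produces at most $3^{\#w}\leq 3^n$ signed terms, each indexed by a word $\hat\sigma_n\in\{u,v,a,b,c\}^n$ with $\hat\sigma_n(1)=u$ and in which the letter $v$ still appears at least once (the positions originally labelled $v$ are untouched by the decomposition). For each such $\hat\sigma_n$, let $\ell_v=\max\{i\mid \hat\sigma_n(i)=v\}\in\{2,\dots,n\}$ and split the product at the central projector of $K_v=e^{L\Delta}P_0 e^{-L\Delta}B_{0,c}$:
\begin{equation}
\Phi_{\hat\sigma_n}=\underbrace{P_0 B_{0,c}\Big[\prod_{i=2}^{\ell_v-1}K_{\hat\sigma_n(i)}\Big]e^{L\Delta}P_0}_{=:\varphi_1}\cdot\underbrace{P_0 e^{-L\Delta}B_{0,c}\Big[\prod_{i=\ell_v+1}^{n}K_{\hat\sigma_n(i)}\Big]P_0}_{=:\varphi_2}.
\end{equation}

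The key step is then to bound the two pieces by the appendix estimates. The left factor $\varphi_1$ is a product of operators from $\{K_u,K_v,K_a,K_b,K_c\}$ sandwiched between $P_0$ and $e^{L\Delta}P_0$; the compound estimate (Corollary~\ref{ca15} together with Lemma~\ref{uvwkey33}) will give $\|U_r^{-1}\varphi_1 U_r^{-1}\|_{\rm HS}\leq \beta^{\ell_v-1}e^{Lr^2}$. The right factor $\varphi_2$ starts with $P_0 e^{-L\Delta}B_{0,c}$, which by Corollary~\ref{ca17} gives $\|U_r\varphi_2 U_r\|_{\rm HS}\leq \beta^{n-\ell_v+1}e^{-\frac{4L^3}{3(n-\ell_v+1)^2}}e^{-2Lc}$. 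Applying parts~\ref{ts1}--\ref{ts3} of Theorem~\ref{simon} and the hypothesis $r^2\leq 2c$ to cancel $e^{Lr^2}e^{-2Lc}\leq 1$, I obtain $|\Tr(\Phi_{\hat\sigma_n})|\leq \beta^n e^{-\frac{4L^3}{3n^2}}$, where I used $(\ell_v-1)+(n-\ell_v+1)=n$ and monotonicity of the exponent in the denominator.

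Summing over the at most $3^n\leq 5^n$ words $\hat\sigma_n$ generated by the expansion of the $K_w$'s (with signs $\sgn{b,c}{\hat\sigma_n}$ absorbed into the triangle inequality), the claimed bound follows. The main obstacle I anticipate is verifying that the appendix bounds apply uniformly to \emph{arbitrary} subwords over $\{u,v,a,b,c\}$ appearing inside $\varphi_1$ and $\varphi_2$: the left factor must be controlled irrespective of how $v,a,b,c$ interleave before the splitting point, and the right factor must produce the super-exponential $e^{-4L^3/(3n^2)}$ decay despite the extra letters between $e^{-L\Delta}B_{0,c}$ and the final $P_0$. Both requirements have been prepared in the appendix (Corollaries~\ref{ca15},~\ref{ca16},~\ref{ca17} and Lemma~\ref{uvwkey33}); choosing $\ell_v$ as the \emph{last} $v$-position ensures the super-exponential weight is delivered by the operator $e^{-L\Delta}B_{0,c}$ and that $\varphi_2$ does not contain another $K_v$, which would otherwise complicate the decay estimate.
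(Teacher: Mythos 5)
Your proposal is correct and follows essentially the same route as the paper: cyclic shift to start with $K_u$, expand each $K_w=K_a-K_b-K_c$, split the resulting word at a $v$-position into $\varphi_1=P_0B_{0,c}[\cdots]e^{L\Delta}P_0$ and $\varphi_2=P_0e^{-L\Delta}B_{0,c}[\cdots]P_0$, and invoke Corollary~\ref{uvwkey33} and Corollary~\ref{ca17} together with $r^2\leq 2c$, then count at most $5^n$ terms. The only (immaterial) differences are that the paper splits at the \emph{first} occurrence of $v$ while you split at the last one — both are covered by the same appendix bounds — and your extra citation of Corollary~\ref{ca15} is not needed.
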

\begin{proof}
    Without loos of generality we assume $\sigma_n(1)=u$. Then we have
    \begin{equation}
        \begin{aligned}
&\bigg|\Tr\bigg(\prod_{i=1}^nK_{\sigma_n}(i)\bigg)\bigg|=\bigg|\Tr\bigg(P_0B_{0,c}\prod_{i=2}^nK_{\sigma_n}(i)P_0\bigg)\bigg|\\
\leq&\sum_{\substack{\sigma_n\in \Sigma_n
}}\bigg|\Tr\bigg(P_0B_{0,c}\prod_{i=2}^nK_{\sigma_n}(i)P_0\bigg)\bigg|,
        \end{aligned}
    \end{equation}
    where
    \begin{equation}
        \Sigma_n=\left\{\sigma_n\in\{a,b,c,u,v\}^n|
\sigma_n\not\in\{a,b,c\}^n,
\sigma_n\not\in\{a,b,c,u\}^n,
\sigma_n\not\in\{a,b,c,v\}^n\right\}.
    \end{equation}
For $\sigma_n\in\Sigma_n$, we define
    \begin{equation}
    \begin{aligned}
        &        \Phi=P_0B_{0,c}\prod_{i=2}^nK_{\sigma_n}(i)P_0\\
=&\underbrace{P_0B_{0,c}\bigg[\prod_{i=2}^{\ell_v-1}K_{\sigma_n}(i)\bigg] e^{L\Delta}P_0}_{=:\varphi_1}\cdot\underbrace{ P_0e^{-L\Delta}B_{0,c}\bigg[\prod_{i=\ell_v+1}^{n}K_{\sigma_n}(i)\bigg] P_0}_{=:\varphi_2},
    \end{aligned}
    \end{equation}
where $2\leq\ell_v=\min\{i\mid\sigma_n(i)=v\}\leq n.$ Applying the bounds $\|U_r^{-1}\varphi_1U_r^{-1}\|_{\rm HS}\leq\beta^{\ell_v-1}e^{Lr^2}$ (by Corollary~\ref{uvwkey33}), $\|U_r\varphi_2U_r\|_{\rm HS}\leq\beta^{n-\ell_v+1}e^{-\frac{4L^3}{3(n-\ell_v+1)^2}}e^{-2Lc}$ (by Corollary~\ref{ca17}) and $r^2\leq 2c$, we have
\begin{equation}
    |\Tr(\Phi)|\leq\|U_r^{-1}\varphi_1U_r^{-1}\|_{\rm HS}\|U_r\varphi_2U_r\|_{\rm HS}\leq\beta^{n}e^{-\frac{4L^3}{3n^2}}.
\end{equation}
The result follows by the fact that
    $|\Sigma_n|\leq 5^n$.
\end{proof}

\subsection{Proof of Theorem~\ref{main}}\label{sec24}
Now we are able to prove Theorem~\ref{main}. Before going to the main part, we need to control the upper bound of error terms, that is,
\begin{equation}
    \beta=\max\{2e^{r^3/3-2rc},e^{(r-1/7)^3/3-2(r-1/7)c}\}
\end{equation}
with $r^2\leq 2c, r>1$. For fixed $c$, we set $r=\sqrt{2c}$, then $\beta\leq 2e^{-\frac{4\sqrt{2}}{3}c^{3/2}}.$ In particular, for $c\geq 3/2$, $\beta<1/7$. Recall that
        \begin{equation}\label{mainback}
            \ln(\Pb(\mathcal A_1(s)\leq c,\ s\in[0,L]))=\ln(\det(\Id-K_{L,c}))=-\sum_{n=1}^\infty\frac{1}{n}\Tr(K_{L,c}^n).
        \end{equation}
In Proposition~\ref{klc1} we have obtained
        \begin{equation}
            |\Tr(K_{L,c})-2\Tr(P_0B_{0,c})+2L\Ai'(2c)|\leq e^{-\frac{4L^3}{3}}.
        \end{equation}
For $n\geq 2$, applying Lemma~\ref{forsimple} to \eqref{kln} we get
\begin{equation}\label{eq2.128}
    \bigg|\Tr(K_{L,c}^n)-\sum_{\sigma_n\in\{u,v,w\}^n}\Tr\bigg(\prod_{i=1}^nK_{\sigma_n(i)}\bigg)\bigg|\leq 7^n e^{-\frac{4L^3}{3n^2}}\beta^n.
\end{equation}
Furthermore, by Lemma~\ref{lemEst1}, Proposition~\ref{kwkey}, Lemma~\ref{uvresult}, Lemma~\ref{uwresult}, Lemma~\ref{vwresults} and Lemma~\ref{uvwresults} we get
\begin{equation}
\sum_{\sigma_n\in\{u,v,w\}^n}\Tr\bigg(\prod_{i=1}^nK_{\sigma_n(i)}\bigg) = \kappa_n(c) L + \Psi_n + R_n
\end{equation}
where
    \begin{equation}
        \begin{aligned}
            \Psi_n=&2\Tr((P_0B_{0,c})^n)+\Id_{n\geq 2}\bigg[\Psi_n^1-\Psi_n^2-\Psi_n^3\\
            &+\sum_{\substack{\sigma_n\in\{u,w\}^n\\
        u,w\in\sigma_n
        }}\Tr\bigg(\prod_{i=1}^nK_{\sigma_n^{w\mapsto\tilde w}(i)}\bigg)+\sum_{\substack{\sigma_n\in\{v,w\}^n\\
        v,w\in\sigma_n
        }}\Tr\bigg(\prod_{i=1}^nK_{\sigma_n^{w\mapsto\tilde\beta}(i)}\bigg)\bigg],
        \end{aligned}
    \end{equation}
    $\Psi_n^1$, $\Psi_n^2$ and $\Psi_n^3$ are defined in Proposition~\ref{kwkey}, $\kappa_n(c)=-2n^{-2/3}\Ai'(2n^{2/3}c)$ and
\begin{equation}
|R_n|\leq (5^n + 4\cdot 3^n) \beta^n e^{-\frac{4 L^3}{3n^2}}+\beta^n e^{-\frac{4 L^3}{3}}\leq 4\cdot 5^n \beta^n e^{-\frac{4 L^3}{3 n^2}}.
\end{equation}
Together with \eqref{eq2.128} we have
\begin{equation}
\Tr(K_{L,c}^n) = \kappa_n(c) L+ \Psi_n+ \tilde R_n
\end{equation}
where $|\tilde R_n|\leq (7^n+4\cdot 5^n) \beta^n e^{-\frac{4 L^3}{3 n^2}}\leq7^{n+1} \beta^n e^{-\frac{4 L^3}{3 n^2}}$. By \eqref{242}, \eqref{263}, \eqref{271} and \eqref{287} we have the following bound for the $L$-independent terms
\begin{equation}\label{21110}
\bigg|\sum_{n\geq 1}\frac{\Psi_n}{n}\bigg|\leq 6\sum_{n=1}^\infty\frac{2^n\beta^n}{n}=-6 \ln(1-2\beta)<\infty,
\end{equation}
where we use $\beta<1/7.$ Next, we want to show $\sum_{n\geq 1}|\tilde R_n|\to 0$ as $L\to\infty$. Set $\alpha=7\beta<1$ and notice that
    \begin{equation}
        \begin{aligned}
&\sum_{n=1}^\infty\frac1n\alpha^ne^{-\frac{4L^3}{3n^2}}\leq \sum_{n=1}^{\infty}\alpha^n e^{-\frac{4 L^3}{3 n^2}}.
        \end{aligned}
    \end{equation}
The function $f_{L,\alpha}:\ \N\to\R$ given by
    \begin{equation}
        f_{L,\alpha}(n):=\alpha^n e^{-\frac{4 L^3}{3 n^2}}
    \end{equation}
is increasing on $(1,n_0)$ and decreasing on $[n_0,\infty)$ with
    \begin{equation}
        n_0=2 \cdot 3^{-1 / 3}  \ln(\alpha^{-1})^{-1/3} L.
    \end{equation}
In particular, $f_{L, \alpha}^{\prime}(n)>0$ for all $n<n_0$ and $f_{L, \alpha}^{\prime}(n)<0$ for all $n>n_0$. Together with Riemann approximation, we have then
\begin{equation}\label{2139}
    \begin{aligned}
& \sum_{n=1}^{\infty} \alpha^n e^{-\frac{4 L^3}{3 n^2}}=\sum_{n=1}^{\left\lceil n_0\right\rceil} \alpha^n e^{-\frac{4 L^3}{3 n^2}}+\sum_{n=\left\lceil n_0\right\rceil+1}^{\infty} \alpha^n e^{-\frac{4 L^3}{3 n^2}} \\
\leq & \left\lceil n_0\right\rceil f_{L, \alpha}\left(n_0\right)+\int_{\left\lceil n_0\right\rceil}^{\infty} d n f_{L, \alpha}(n) \leq\left\lceil n_0\right\rceil f_{L, \alpha}\left(n_0\right)+\int_1^{\infty} d n f_{L, \alpha}(n) .
\end{aligned}
\end{equation}

We define $C_\alpha=\left(-\frac{1}{\ln (\alpha)}\right)^{1 / 3}>0$ for $\alpha \in(0,1)$. Then we have
\begin{equation}
    \left\lceil n_0\right\rceil f_{L, \alpha}\left(n_0\right) \leq\left(n_0+1\right) f_{L, \alpha}\left(n_0\right)=\alpha^{3^{2 / 3} L C_\alpha}\left(\frac{2 L C_\alpha}{3^{1 / 3}}+1\right)
\end{equation}
Since $C_\alpha>0$ and $\alpha<1$, for $L$ large, we have then
\begin{equation}
    \left\lceil n_0\right\rceil f_{L, \alpha}\left(\left\lceil n_0\right\rceil\right) \leq \tilde{C}_\alpha e^{-\delta L},
\end{equation}
where $\delta, \tilde{C}_\alpha>0$ independent of $L$. As for the integral term in \eqref{2139} note that
        \begin{equation}
            \int_1^\infty dx\alpha^xe^{-\frac{4L^3}{3x^2}}=\int_1^{L^\gamma}dx\alpha^xe^{-\frac{4L^3}{3x^2}}+\int_{L^\gamma}^\infty dx\alpha^xe^{-\frac{4L^3}{3x^2}}
        \end{equation}
with arbitrary $\gamma\geq1$. Since $\alpha<1$, the first integral is bounded by $(L^\gamma-1)e^{-\frac{4L^3}{3L^{2\gamma}}}$. On the other hand, the second integral is bounded by
    \begin{equation}
        \int_{L^\gamma}^\infty dx \alpha^x=\ln(\alpha^{-1})^{-1}\alpha^{L^\gamma}
    \end{equation}
Choosing now $\gamma=1$, we then see that
\begin{equation}
    0\leq \sum_{n=1}^\infty\frac1n\alpha^ne^{-\frac{4L^3}{3n^2}}\leq Le^{-\frac{4L}{3}}+\ln(\alpha^{-1})^{-1}e^{L\ln\alpha}\leq e^{-\delta L}
\end{equation}
with some $\delta>0$, which then finishes the proof.

\section{Proof of Proposition~\ref{p12}}
In Theorem~\ref{main}, we get the persistence exponent for $c\geq\frac32$, in this section, we try to extend our result to the whole real line via analytic continuation. To this end, we need first show the existence of the following quantities
    \begin{equation}
        \lim_{L\to\infty }\frac{\ln(\Pb\left(A_1(t)\leq c,\ \forall t\in[0,L]\right))}{L}.
    \end{equation}
Recall that a function is called super-additive if
    \begin{equation}
        f(x+y)\geq f(x)+f(y),\quad\forall x,y\in\R.
    \end{equation}
A nice property of supperadditive function is given by Fekete's Lemma.
\begin{thm}[Theorem 16.2.9 of~\cite{HP48}]\label{thmSuperadd}
    Let $f:(0,\infty)\to\R$ be a super-additive function. Then $\lim_{t\to\infty}f(t)/t$ exists.
\end{thm}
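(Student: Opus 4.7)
The plan is to prove that $\lim_{t\to\infty} f(t)/t$ exists and in fact equals $L := \sup_{t>0} f(t)/t \in \R\cup\{+\infty\}$. The inequality $\limsup_{t\to\infty} f(t)/t \leq L$ is immediate from the definition of the supremum, so the substantive content is the matching lower bound $\liminf_{t\to\infty} f(t)/t \geq L$.

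First I would fix $s>0$ and, for arbitrary $t>s$, perform Euclidean division $t = ns + r$ with $n = \lfloor t/s\rfloor\in\N$ and $r\in[0,s)$. Iterating super-additivity gives $f(ns)\geq n f(s)$, and one further application (when $r>0$) gives $f(t)\geq n f(s)+f(r)$, while for $r=0$ one already has $f(t)\geq n f(s)$. Dividing by $t$ and using $n/t \to 1/s$ as $t\to\infty$ yields
\begin{equation*}
\liminf_{t\to\infty}\frac{f(t)}{t} \;\geq\; \frac{f(s)}{s} + \liminf_{t\to\infty}\frac{f(r)}{t}.
\end{equation*}
Provided $f$ is bounded below on $[0,s]$, the last term is $0$. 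Taking the supremum over $s>0$ then gives $\liminf_{t\to\infty} f(t)/t \geq L$, completing the argument.

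The hard part, and the reason the continuous Fekete statement requires a side hypothesis not spelled out in the one-line formulation above, is the control of $f(r)$ as $r$ ranges in the bounded interval $[0,s)$: a purely super-additive function can be extremely pathological on compact sets (e.g.\ unbounded below via a Hamel basis construction), so \emph{some} regularity — measurability or local boundedness being the standard choices — is genuinely needed. The formulation in~\cite{HP48} implicitly bundles such a hypothesis into its statement. In the concrete application of this lemma to Proposition~\ref{p12}, the relevant function $f(L) = \ln\Pb(\mathcal{A}_1(t)\leq c,\,\forall t\in[0,L])$ is continuous in $L$ and bounded on every compact subset of $(0,\infty)$, so the required regularity holds automatically and the argument above applies directly.
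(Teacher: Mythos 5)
The paper does not actually prove this statement; it is imported as a black box from Hille--Phillips~\cite{HP48}, so there is no in-paper argument to compare against. Your argument is the standard proof of the continuous Fekete lemma for super-additive functions, and it is correct. More importantly, you have put your finger on the genuine subtlety: the bare hypothesis as quoted in the paper does not suffice. A $\Q$-linear, non-$\R$-linear additive function built from a Hamel basis is super-additive (indeed additive) but has $f(t)/t$ failing to converge, so some regularity hypothesis --- measurability, or local boundedness on one side --- is necessary, and the Hille--Phillips theorem does carry such a hypothesis, which the paper's restatement elides. For the intended application in Proposition~\ref{p12} one does not even need to argue continuity: the function $f(L)=\ln\Pb(\mathcal A_1(t)\leq c\ \forall t\in[0,L])$ is monotone decreasing in $L$ since the events are nested, hence measurable, and (granted the persistence probabilities are strictly positive, as the proposition implicitly assumes) finite and therefore bounded on every compact sub-interval of $(0,\infty)$, which is all your chopping argument requires. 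One cosmetic remark: since $f$ is defined only on $(0,\infty)$, the remainder term $f(r)$ is undefined when $t$ is an exact multiple of $s$; this is harmless --- for such $t$ you already have $f(t)/t\geq f(s)/s$ directly --- but the $\liminf$ should be split accordingly rather than written as a single sum.
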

To apply Fekete's lemma in our case, we define
\begin{equation}
    f(L)=\ln(\Pb\left(\mathcal A_1(t)\leq c,\ \forall t\in[0,L]\right)).
\end{equation}
We have
\begin{equation}\label{eqsuper}
\begin{aligned}
\Pb\left(\max_{0\leq u\leq L_1+L_2}\mathcal A_1(u)\leq c\right) &= \Pb\left(\max_{0\leq u\leq L_1}\mathcal A_1(u)\leq c,\max_{L_1\leq u\leq L_1+L_2}\mathcal A_1(u)\leq c\right)\\
&\geq \Pb\left(\max_{0\leq u\leq L_1} \mathcal A_1(u)\leq c\right)\Pb\left(\max_{L_1\leq u\leq L_1+L_2}\mathcal A_1(u)\leq c\right)\\
&=\Pb\left(\max_{0\leq u\leq L_1} \mathcal A_1(u)\leq c\right)\Pb\left(\max_{0\leq u\leq L_2}\mathcal A_1(u)\leq c\right)
\end{aligned}
\end{equation}
for all $c\in\R$, $L_1,L_2>0$. For the last step we use translation-invariance of the law of the Airy$_1$ process.

To prove \eqref{eqsuper} we follow the proof of Lemma~3.2 of~\cite{BBF22}: we start with the line-to-point last passage percolation (LPP) model. Consider now the rescaled LPP (see (2.4) in \cite{BBF22} for precise definition)
\begin{equation}
	L_N(u)=\max_{v} \frac{L_{I(u), J(v)}-4 N}{2^{4/3} N^{1/3}}
\end{equation}
with $I(u)=u(2N)^{2/3}(1,-1)$ and $J(v)=(N,N)+v(2N)^{2/3}(1,-1)$.
It was known that
\begin{equation}
	\lim_{N \to \infty}2^{-1/3} L_N(2^{2/3}u)=\mathcal{A}_1\left( u\right),
\end{equation}
(in TASEP finite-dimensional distribution is proven in~\cite{Sas07,BFPS06} and by slow-decorrelation~\cite{Fer08,CFP10b} the result is translated to the LPP setting.) Together with the tightness~\cite{Pim17}, this implies that
\begin{equation}
	\lim_{N \to \infty}\max_{u\in[0,L]} L_N(u)=\max_{u\in[0,L]}\mathcal{A}_1\left( u\right),\quad\forall L\geq 0.
\end{equation}
Now \eqref{eqsuper} follows from FKG inequality (see Lemma 2.1 of \cite{Kesten03}) and the fact that the events $\{\max_{u\in[0,L_1]} L_N(u)\leq c\}$ and $\{\max_{u\in[L_1,L_1+L_2]} L_N(u)\leq c\}$ are both decreasing in the randomness.

To show that the Airy$_1$ process is positively correlated (also called associated in the language of~\cite{Li85b}), a similar argument, but using more involved results as input (the convergence of the KPZ equation to the KPZ fixed point~\cite{QS20}), was presented in~\cite{Pu23}.

Taking the logarithms in \eqref{eqsuper} we get that $f$ is super-additive and by Theorem~\ref{thmSuperadd}, the proof of Proposition~\ref{p12} is completed.

\section{Proof of Proposition~\ref{p13}}\label{pp13}
 In Theorem~\ref{main}, we have already showed that
\begin{equation}
\kappa(c)=-2\sum_{n=1}^\infty n^{-5/3}\Ai'(2n^{2/3}c),\quad\forall c\geq\frac32.
\end{equation}
This function is analytic for all $c>0$ as well. For instance, applying Fubini's theorem and integral representation of Airy function, we obtain
    \begin{equation}\label{eq4.2}
        \begin{aligned}
            &\kappa(c)=-2\sum_{n=1}^\infty n^{-5/3}\frac{1}{2\pi \I}\int_{e^{-\pi \I/3}\infty }^{e^{\pi \I/3}\infty }dw\,w e^{\frac{w^3}{3}-2n^{2/3}wc}\\
            \stackrel{w\mapsto n^{1/3}w}{=}&\frac{-1}{\pi \I}\int_{e^{-\pi \I/3}\infty}^{e^{\pi \I/3}\infty}dw\, w\sum_{n=1}^\infty \frac{(e^{w^3/3-2wc})^n}{n}
            =\frac{-1}{\pi \I}\int_{e^{-\pi \I/3}\infty}^{e^{\pi \I/3}\infty}dw\, w\ln\left(1-e^{w^3/3-2wc}\right).
        \end{aligned}
    \end{equation}

\begin{figure}[h!]
    \centering
    \includegraphics[width=0.6\textwidth]{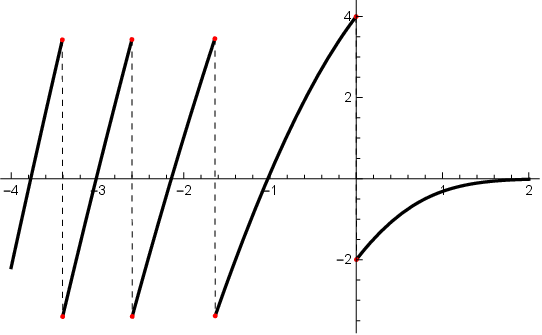}
    \caption{The black solid line is the graph of $(c,\kappa'(c))$. The red point is the jump points in $\mathcal J,$ that is, $c(0),c(1),\ldots$ defined in Lemma~\ref{pb3}. In particular, function $\kappa'(c)$ is analytic on each interval $(c(i+1),c(i)).$ The analytic continuation of $\kappa'(c)|_{(0,\infty)}$ is obtained by gluing $\kappa'(c)|_{(c(i+1),c(i))}$ together in a smooth way.}
    \label{discontinuity}
\end{figure}

To avoid dealing the branch cut of $\ln$ function, we consider $\kappa'(c)$ instead of $\kappa(c)$,
\begin{equation}\label{c11}
    \begin{aligned}
        \kappa'(c)=\frac{2}{\pi \I}\int_{\Gamma}dw\frac{w^2e^{\frac{w^3}{3}-2wc}}{1-e^{\frac{w^3}{3}-2wc}},
    \end{aligned}
\end{equation}
where from now on we \emph{fix} the integration contour as follows
\begin{equation}
\Gamma=\{|r|e^{{\rm sgn}(r)\pi \I/3}\textrm{ s.t. } r\in\R\}
\end{equation}
oriented by increasing imaginary part. Different choices of $\Gamma$ gives rise to different (equivalent) formulas for the analytic continuation. The reason is that when decreasing $c$, there are zeroes of the denominator crossing the contour $\Gamma$.

Define
\begin{equation}
f(w,c)=1-e^{\frac{w^3}{3}-2wc}\quad\textrm{and}\quad g(w,c)=w^2e^{\frac{w^3}{3}-2wc}.
\end{equation}
First we determine the values of $c$ where the denominator of \eqref{c11} vanishes (the poles). It turns out that these values are exactly the discontinuity points appearing in Figure~\ref{discontinuity} of function $\kappa'(c)$. Define the set
    \begin{equation}
        \mathcal J=\{c\in\R_-| f(w,c)=0\textrm{ for some }w\in\Gamma\}.
    \end{equation}
\begin{lem}\label{pb3}
We have
\begin{equation}
\mathcal J=\left\{-(2n\pi/3)^{2/3}| n\in\Z_{\geq0}\right\}
\end{equation}
Moreover, for $c(n)=-(2n\pi/3)^{2/3}$, $f(c(n),w(n))=f(c(n),\bar w(n))=0$ for $w(n)=3^{1/2} (2\pi n/3)^{1/3} e^{\pi \I/3}$.
\end{lem}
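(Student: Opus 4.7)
The plan is to reduce the transcendental equation $f(w,c)=1-e^{w^3/3-2wc}=0$ to two explicit real equations by exploiting a special feature of $\Gamma$, and then solve them directly.

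I would first parametrize $\Gamma$ as the union of two rays, writing $w=\rho e^{\epsilon\pi\I/3}$ with $\rho\geq 0$ and $\epsilon\in\{+1,-1\}$. The key geometric observation is that $w^3=\rho^3 e^{\epsilon\pi\I}=-\rho^3\in\R$ on $\Gamma$, so the exponent decomposes cleanly as
\begin{equation}
\frac{w^3}{3}-2wc=-\frac{\rho^3}{3}-c\rho-\I\,\epsilon\sqrt{3}\,c\rho.
\end{equation}
Writing $f(w,c)=0$ as $w^3/3-2wc=2\pi\I k$ for some $k\in\Z$ and separating real and imaginary parts then reduces the problem to the pair of real equations
\begin{equation}
\rho\bigl(\rho^2/3+c\bigr)=0\qquad\text{and}\qquad \epsilon\sqrt{3}\,c\rho=-2\pi k.
\end{equation}

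In the nondegenerate case $\rho>0$, the first equation forces $c\leq 0$ and $\rho=\sqrt{-3c}$. Substituting $c=-\rho^2/3$ into the second gives $\epsilon\rho^3=2\sqrt{3}\pi k$; the sign constraint $\rho>0$ then requires $k\geq 1$ when $\epsilon=+1$ and $k\leq -1$ when $\epsilon=-1$. Setting $n=|k|\in\Z_{\geq 1}$, both branches give the same value $\rho=(2\sqrt{3}\pi n)^{1/3}=3^{1/2}(2\pi n/3)^{1/3}$, hence $c=-\rho^2/3=-(2\pi n/3)^{2/3}=c(n)$, and the two corresponding points of $\Gamma$ are exactly $w(n)=\rho\,e^{\pi\I/3}$ and its complex conjugate $\bar w(n)$.

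The degenerate case $\rho=0$ corresponds to $w=0\in\Gamma$, at which $f(0,c)=0$ for every $c$; however, the $w^2$ factor of $g$ in the numerator of the integrand cancels this zero whenever $c<0$, so that it produces a genuine pole only at $c=c(0)=0$, at which point the order of the zero of $f$ at $w=0$ jumps from one to three. This accounts for the $n=0$ element of $\mathcal J$ (which is also the sense in which the preceding text refers to ``the poles''). The argument is essentially computational; the main obstacle is purely conceptual, namely the correct treatment of $w=0$ so as not to spuriously enlarge $\mathcal J$ to all of $\R_{\leq 0}$. Everything else is careful algebra directly from the real/imaginary decomposition above.
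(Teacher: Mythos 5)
Your proof is correct and follows essentially the same route as the paper: parametrize $\Gamma$ by $w=\rho e^{\pm\pi\I/3}$, use $w^3=-\rho^3$ to split $w^3/3-2wc=2\pi\I k$ into the real system $\rho(\rho^2/3+c)=0$, $\pm\sqrt{3}c\rho=-2\pi k$, and solve to get $c(n)$, $w(n)$, $\bar w(n)$. The one genuine difference is in your favor: you explicitly dispose of the degenerate zero $w=0\in\Gamma$ (where $f(0,c)=0$ for every $c$) by noting that the $w^2$ factor of $g$ cancels it unless $c=0$, where the zero of $f$ has order three; the paper's proof silently ignores this point, which is needed to keep $\mathcal J$ from being read as all of $\R_-$ under the literal definition, so your treatment is the more careful one.
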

\begin{figure}[h!]
    \centering
    \includegraphics[width=0.3\textwidth]{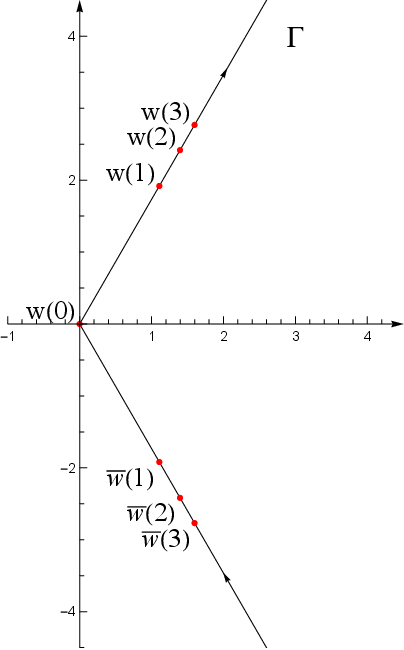}
    \caption{The black solid line is the integral contour $\Gamma$, the red points are $w(n)$ and $\bar w(n)$, $n\geq 0$. }
    \label{w}
\end{figure}
\begin{proof}
By symmetry with respect to the real axis, it is clear that there are complex-conjugated zeroes of the denominator. So parameterize $\Gamma$ on the upper half plane by $w=re^{\pi i /3}$ with $r\geq 0$. We have $f(w,c)=0$ if and only if both the real and the imaginary parts are zero. This happens if, for some $n\in\Z$, $w^3/3-2wc=2\pi \I n$, that is,
\begin{equation}\label{c15}
       \frac{r^3}{3}+cr=0\quad \textrm{and}\quad  \sqrt{3}cr=2\pi n.
\end{equation}
We can restrict to $n\geq 0$ since the other gives the complex conjugate solutions. The solution of \eqref{c15} are precisely the pairs given by $(c(n),w(n))$ of the lemma.
\end{proof}

For $c\in\R$ and $n\in\Z_{\geq 0}$, we denote by $w_{n,c}\in \C$ with $\Re(w)\geq 0$ such that $w_{n,c}^3/3-2w_{n,c}c=2\pi \I n$. Let $W_0=\{w_{0,c}| 0\leq c\leq 3/2\}$ and $W_n=\{w_{n,c}| c(n)\leq c\leq 0\}$ for $n\geq 1$. We also define $\overline{W}_{n}$ as the conjugate set of $W_n.$ Furthermore, we denote $L_\Gamma$ (resp. $R_\Gamma$) as the set of points that are to the left (resp. right) of contour $\Gamma$, that is, $L_\Gamma=\{z\in\C| \rm{arg}(z)\in (\pi/3,5\pi/3)\}$.
\begin{figure}[h!]
    \centering
    \includegraphics[width=0.3\textwidth]{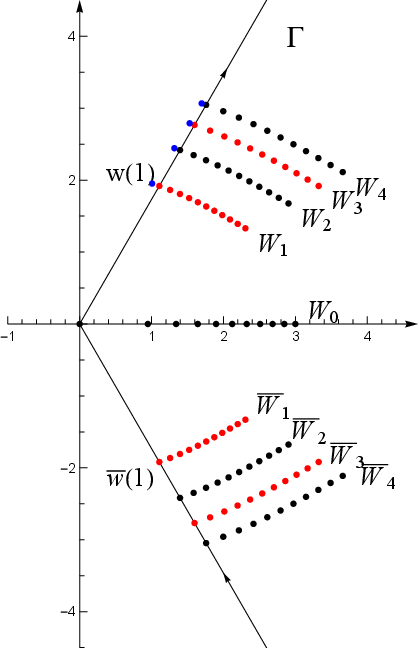}
    \caption{An Illustration for set $W_{n}$ and $\overline W_{n}.$ The black solid line is the original integral contour $\Gamma$. The black points for $W_0$ is $w_{0,c}$ for $c=3/2-3m/20$ with $m\in\{0,1,\ldots,10\}$. The red and black points are the set $W_n,\overline{W}_n$ with $n\in \{1,2,3,4\}$ are $w_{n,c}$ for $c=c(n)-m c(n)/10$ with $m\in\{1,2,\ldots,10\}$. The four blue points from bottom left to top right is $w_{n,c(n)-1/10}$ with $n=1,2,3,4$. In particular, note that $w_{n,c(n)-1/10}\in L_\Gamma$ and $d(W_m,W_n)=d(\overline{W}_m,\overline{W}_n)>0$. }\label{discont}
\end{figure}

\begin{lem}\label{distance}$ $\\
(a) For $n\in\N$,
$w_{n,c}\in L_{\Gamma}$ for any $c<c(n)$ and $w_{n,c}\in R_{\Gamma}$ for any $0\geq c>c(n).$

\noindent (b) For any $m,n\in\N$ with $m\neq n$, it holds $W_n\cap W_m=\varnothing.$ See Figure~\ref{discont} for an illustration.
\end{lem}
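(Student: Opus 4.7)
My plan is to handle part (a) by tracking $c \mapsto w_{n,c}$ as a continuous branch of the cubic $w^3 - 6cw = 6\pi\I n$ and using Lemma~\ref{pb3} to pin down its only $\Gamma$-crossing; part (b) then follows from a one-line subtraction argument combined with part (a).

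\textbf{Plan for part (a).} For $n \geq 1$, I view $w_{n,c}$ as the unique continuous branch of the cubic with $w_{n, c(n)} = w(n)$; the discriminant $864 c^3 + 972\pi^2 n^2$ vanishes only at the isolated real value $c_* = -(9\pi^2 n^2/8)^{1/3}$, so such a branch exists on all of $\R$. The first step is to show that $w_{n,c} \in \Gamma$ only at $c = c(n)$. Indeed, $w_{n,c_0}\in\Gamma$ forces $f(w_{n,c_0}, c_0) = 1 - e^{2\pi\I n} = 0$. For $c_0 > 0$, parametrizing $\Gamma$ by $w = r e^{\pm\I\pi/3}$ gives $\Re(w^3/3 - 2wc_0) = -r^3/3 - r c_0 < 0$ for $r > 0$, excluding any solution for $n \geq 1$. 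For $c_0 \leq 0$, Lemma~\ref{pb3} restricts $c_0$ to $\{c(m) : m \in \Z_{\geq 0}\}$, with zeros $\{w(m), \overline{w(m)}\}$ on $\Gamma$; plugging these into the defining equation for $w_{n, c(m)}$ forces $m = n$ (the conjugate candidate would require $m = -n$, impossible for $n \geq 1$).

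Anchoring the branch at $c = 0$ gives $w_{n, 0} = (6\pi n)^{1/3} e^{\I\pi/6} \in R_\Gamma$, whence continuity together with the first step yields $w_{n,c} \in R_\Gamma$ throughout $(c(n), 0]$. To determine the side reached for $c < c(n)$, I differentiate the defining relation implicitly to get
\begin{equation*}
\frac{d w_{n,c}}{dc}\bigg|_{c = c(n)} = \frac{2 w(n)}{w(n)^2 - 2 c(n)} = \frac{5\sqrt{3} - 3\I}{7(2\pi n/3)^{1/3}},
\end{equation*}
where the second equality follows from $w(n) = \sqrt{3}(2\pi n/3)^{1/3} e^{\I\pi/3}$, $c(n) = -(2\pi n/3)^{2/3}$, and the simplification $w(n)^2 - 2c(n) = (2\pi n/3)^{2/3}(1/2 + 3\I\sqrt{3}/2)$. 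The inner product of $-d w_{n,c}/dc|_{c(n)}$ (the direction of motion as $c$ decreases) with the inward normal $e^{\I 5\pi/6}$ to $\Gamma$ at $w(n)$ equals $9/(7(2\pi n/3)^{1/3}) > 0$, so $w_{n,c}$ crosses $\Gamma$ transversally into $L_\Gamma$. Uniqueness of crossings propagates this to all $c < c(n)$. The case $n = 0$ is immediate from the explicit solution $w_{0,c} = \I\sqrt{-6c} \in L_\Gamma$ for $c < 0$.

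\textbf{Plan for part (b).} If $w := w_{n,c} = w_{m, c'}$ with $n \neq m$, subtracting the two defining cubic relations gives $-2w(c - c') = 2\pi\I(n-m)$. If $c = c'$, then $n = m$, a contradiction; otherwise $w = -\pi\I(n-m)/(c-c')$ is nonzero and purely imaginary, so $\arg(w) = \pm \pi/2$, placing $w$ in $L_\Gamma$. But part (a) (together with $W_0 \subset \R_{\geq 0}$ in the $n = 0$ variant) forces $w \in R_\Gamma \cup \Gamma$, a contradiction.

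\textbf{Main obstacle.} The only genuinely nontrivial piece is the transversality calculation in part (a): one must simplify $w(n)^2 - 2c(n)$ to isolate the sign of the projection of $-dw_{n,c}/dc|_{c(n)}$ onto the inward normal at $w(n)$. Everything else reduces to Lemma~\ref{pb3} or the one-line subtraction used in part (b).
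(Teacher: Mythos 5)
Your proposal is correct, but it takes a genuinely different route from the paper. The paper's proof is a monotone-parametrization argument: writing $w=re^{\I\phi}$, the reality constraint $\Im(h(r,\phi))=0$ forces $r=r_{n,\phi}$ as in \eqref{b19}, and then $c=k(\phi)$ with $k'(\phi)<0$, $k(\pi/3)=c(n)$ and $\arg(w_{n,0})=\pi/6$; part (a) is then read off from $c<c(n)\Leftrightarrow\phi>\pi/3$, and part (b) follows because each $W_n$ is the graph $\{r_{n,\phi}e^{\I\phi}:\phi\in(\pi/6,\pi/3)\}$ with $r_{n,\phi}$ strictly increasing in $n$ at fixed $\phi$. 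You instead argue topologically: Lemma~\ref{pb3} pins the only possible $\Gamma$-contact of the branch at $c=c(n)$, implicit differentiation gives the transversal crossing (your values $dw_{n,c}/dc|_{c(n)}=(5\sqrt3-3\I)/(7(2\pi n/3)^{1/3})$ and the normal component $9/(7(2\pi n/3)^{1/3})$ both check out), and connectedness of $\C\setminus\Gamma$ propagates the side; your one-line subtraction for (b), showing that a common point of $W_m$ and $W_n$ would be nonzero purely imaginary and hence in $L_\Gamma$, contradicting (a), is slicker than the paper's disjoint-graphs count. The paper's parametrization buys a global, quantitative picture of the trajectories; your route avoids the trigonometric derivative computation and gets (b) essentially for free from (a).

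Two points to tighten, neither fatal. First, "the unique continuous branch exists on all of $\R$" glosses over the discriminant zero $c_*=-(9\pi^2 n^2/8)^{1/3}$, where two roots collide and uniqueness of the continuation fails; you should note $c_*<c(n)$ (since $(9/8)^{1/3}>(2/3)^{2/3}$), so the interval $[c(n),0]$ is unaffected, and for $c\le c_*$ you can bypass the branch entirely by observing that all three roots of the cubic are then purely imaginary, hence lie in $L_\Gamma$ for any selection (for $c\in(c_*,c(n))$ any continuous selection works in your connectedness argument). Second, the identification $w_{n,0}=(6\pi n)^{1/3}e^{\I\pi/6}$ for the branch anchored at $w(n)$ is not independent input: it follows from your own transversality and no-crossing steps (the branch stays in $R_\Gamma$ on $(c(n),0]$, and this is the only root at $c=0$ in $R_\Gamma$), so the write-up should be ordered accordingly to avoid any appearance of circularity. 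Note the paper's own proof, which only parametrizes $\phi\in(0,\pi/2)$, has the same blind spot below $c_*$, so this is a refinement rather than a defect relative to the paper.
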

\begin{proof}
Let us prove (a). Parameterize $w=re^{\I\phi}$ with $(r,\phi)\in\R_+\times (0,\pi/2)$. The condition $\frac{w^3}{3}-2wc=2\pi \I n$ for some $c<0$ and $n\in\Z_{\geq 0}$ is then equivalent to
    \begin{equation}
       c=h(r,\phi)=\tfrac{1}{6} r^2 \cos (2 \phi )-\frac{n\pi \sin (\phi )}{r}+\tfrac{1}{6} \I r^2 \sin (2 \phi )-\frac{\I n \pi  \cos (\phi )}{r}.
    \end{equation}
Since $c\in\R$, we need to have $\Im(h(r,\phi))=0$. This and the condition $r>0$ leads to the following relation between $r$ and $\phi$:
    \begin{equation}\label{b19}
        r_{n,\phi}=\left(6 n\pi {\cos (\phi )} {\sin(2 \phi )^{-1}}\right)^{1/3},
    \end{equation}
now we define
    \begin{equation}
        k(\phi)=h(r_{n,\phi},\phi)=3^{-1/3}n^{2/3}\pi ^{2/3}\sin(\phi)^{-4/3}(\cot(\phi)\cot(2\phi)-1) .
    \end{equation}
Its derivative is given by
\begin{equation}\label{412}
        k'(\phi)=3^{-4/3}n^{2/3}\pi ^{2/3} \cos(\phi)\sin(\phi)^{-5/3}(4\cos(2\phi)-5)<0,
    \end{equation}
where we use the fact $\cos(\phi),\sin(\phi)>0$ for $\phi\in(0,\pi/2)$ and $\cos(2x)\leq 1$ for all $x.$ In particular, this implies that $k(\phi)$ is monotone decreasing. On the other hand, we have $k(\pi/3)=c(n),$ this implies that for any $c<c(n)$, ${\rm arg}(w_{n,c})>\frac\pi 3$ for any $w_{n,c}\in W_{n}$, which shows the first claim.

Next we show (b). Choose now $m,n\in\N$ with $m\neq n$, it is enough to show that the trajectory of $W_n$ and $W_m$ will not intersect with each other. Note that by definition, $w_{n,0}$ is the solution of $w^3/3=2\pi \I n$ for $n\in\N$, this implies that ${\rm arg}(w_{n,0})=\pi/6$, together with \eqref{412}, we have
 \begin{equation}
        W_n=\left\{r_{n,\phi}e^{i\phi}| \phi\in(\pi/6,\pi/3)\right\},\quad W_m= \left\{r_{m,\phi}e^{i\phi}| \phi\in(\pi/6,\pi/3)\right\},
    \end{equation}
where $r_{n,\phi}$ is defined as in \eqref{b19}.

For a fixed $\phi\in[\pi/6,\pi/3]$, it is clear that $r_{m,\phi}\not=r_{n,\phi}$ when $m\neq n,$ which then implies $W_n\cap W_m=\varnothing$ for $m,n\in\mathbb Z_{\geq 1}$ with $m\neq n$. Note that $W_0=\{\sqrt{6c}| 0\leq c\leq 3/2\}$ and hence $W_0\cap W_n=\varnothing$ for any $n\geq 1$, since $r_{n,\phi}>0$ and $\pi/6\leq\phi\leq \pi/3.$ This completes then the proof.
\end{proof}
With this in hand, we are able to do the analytic continuation of $\kappa'(c)$ from $c>0$ to the whole real line. Denote now $\tilde\kappa'(c)$ as the function obtained by extending $\kappa'(c)$ analytically from $(0,\infty)$ to all $\R$.
\begin{figure}[h!]
    \centering
    \includegraphics[width=0.3\textwidth]{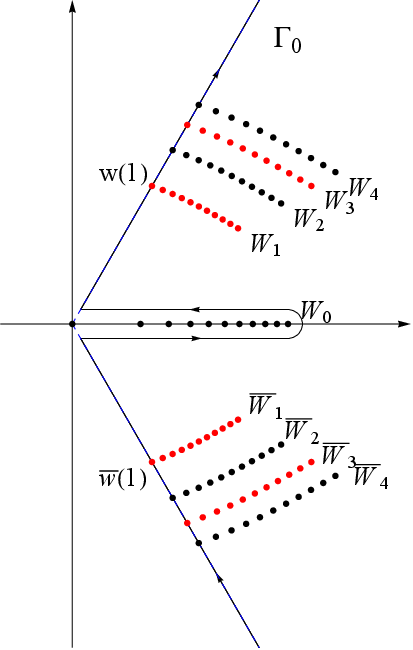}
    \caption{The blue dashed line is the original contour $\Gamma$ and the black line is the deformed contour $\Gamma_0$. In particular, the region between $\Gamma$ and $\Gamma_0$ contain only $W_0$ but not $W_n,\ \overline{W}_n$ for any $n\geq 1.$}\label{contour}
\end{figure}
We first consider the extension from $(0,\infty)$ to $(c(1),\infty)$. Now we deform the contour $\Gamma$ to the contour $\Gamma_0$ (see also Figure~\ref{contour} for an illustration) such that the region between $\Gamma$ and $\Gamma_0$ contains only $W_0$ but not $W_n,\overline{W}_n$ for any $n\geq 1$, this is possible by Lemma~\ref{distance}. Denote the integrand in \eqref{c11} as
\begin{equation}
    Q(w,c)=\frac{g(w,c)}{f(w,c)}.
\end{equation}
By Cauchy residue theorem, we know that
    \begin{equation}
    \begin{aligned}
       &   \kappa'(c)=\frac{1}{2\pi \I}\int_{\Gamma} dw Q(w,c)\\
       =&\frac{1}{2\pi \I}\int_{\Gamma_0}dw Q(w,c)-{\rm Res}\left(Q(w,c)| w=\sqrt{6c}\right),\quad\forall c\in(0,1).
    \end{aligned}
    \end{equation}
For $c>0$, $\sqrt{6c}$ is a simple pole for $Q(w,c),$ we then have
\begin{equation}
    {\rm Res}\left(Q(w,c)| w=\sqrt{6c}\right)=\lim_{w\to\sqrt{6c}}\frac{(w-\sqrt{6c})g(w,c)}{f(w,c)}=6.
\end{equation}
By the definition of $\Gamma_0$, there exists no $w\in\Gamma_0$ such that $f(w,c)=0$ for some $c\in(c(1),0)$, moreover, $Q(w,c)$ is bounded on $(w,c)\in \Gamma_0\times (c(1),0)$, hence (see for instance Lemma~B.2 of~\cite{BCFV14}), the function
    \begin{equation}
        c\mapsto \frac{1}{2\pi \I} \int_{\Gamma_0}dwQ(w,c)
    \end{equation}
is analytic on $(c(1),1)$. On the other hand, by the choice of $\Gamma_0,$ the region between $\Gamma$ and $\Gamma_0$ does not contain any $w_{n,c}$ for $n\in\N,\ c\in(c(1),0)$, we have
\begin{equation}
    \kappa'(c)=\frac{1}{2\pi \I}\int_{\Gamma}dw Q(w,c)=\frac{1}{2\pi \I}\int_{\Gamma_0}dwQ(w,c),\quad\forall c\in(c(1),0).
\end{equation}

\begin{figure}[h!]
    \centering
    \includegraphics[width=0.3\textwidth]{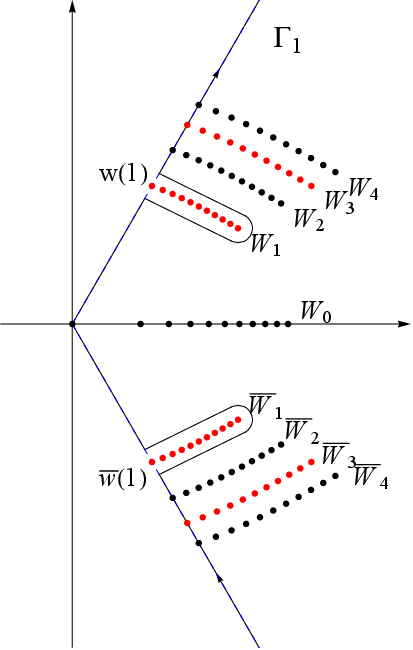}
    \caption{Illustration of $\Gamma_1.$ The blue dashed line is the original integral contour $\Gamma$, while the black solid line is the new integral contour $\Gamma_1$. In particular, the region between $\Gamma$ and $\Gamma_1$ should only contain $W_{1}$ and $\overline{W}_{1}$ but not $W_{n}\cup\overline{W}_{n}$ for $n\geq 2.$}
    \label{gamma1}
\end{figure}
The value of $\tilde\kappa'(c)$ at $c=0$ is then given by $\lim_{c\downarrow 0}\kappa'(c)$. In conclusion, the analytic extension $\tilde\kappa'(c)$ on $(c(1),\infty)$ is given by
\begin{equation}
    \tilde \kappa'(c)=\begin{cases}
        \kappa'(c),&\textrm{for }c>0,\\
        \lim_{c\downarrow 0}\kappa'(c),&\textrm{for } c=0,\\
        \kappa'(c)-6,&\textrm{for } c\in(c(1),0).
    \end{cases}
\end{equation}
Using the same method, we can also extend the result to $(c(2),\infty)$, namely, we choose the contour $\Gamma_1$ such that the region between $\Gamma_1$ and $\Gamma$ contains only $W_1$ and $\overline{W}_1$ but not $W_n,\overline{W}_n$ for any $n\neq 1$ (see Figure~\ref{gamma1}). Similar as above, the function
\begin{equation}
    c\mapsto \frac{1}{2\pi \I}\int_{\Gamma_1}dw Q(w,c)
\end{equation}
is analytic on $(c(2),0)$. And for $c\in (c(1),0)$, we have
\begin{equation}
\begin{aligned}
   &    \tilde\kappa'(c)=\int_{\Gamma}dw Q(w,c)-6\\
   =&\int_{\Gamma_1}dw Q(w,c)-6-{\rm Res}\left(Q(w,c)| w=w_{1,c}\right)-{\rm Res}\left(Q(w,c)| w=\bar w_{1,c}\right).
\end{aligned}
\end{equation}
Note that $w_{1,c}$ and $\bar w_{1,c}$ are poles of $c$ of order 1, hence, we have
\begin{equation}
    \begin{aligned}
        &\lim_{c\downarrow c(1)}\left({\rm Res}\left(Q(w,c)| w=w_{1,c}\right)+{\rm Res}\left(Q(w,c)| w=\bar w_{1,c}\right)\right)\\
        =&\lim_{c\downarrow c(1)}\left(\frac{[(w-w_{1,c})g(w,c)]'|_{w=w_{1,c}}}{f'(w,c)|_{w=w_{1,c}}}+\frac{[(w-w_{1,c})g(w,c)]|_{w=\bar w_{1,c}}}{f'(w,c)|_{w=\bar w_{1,c}}}\right)\\
        =&{\rm Res}\left(Q(w,c(1))| w=w(1)\right)+{\rm Res}\left(Q(w,c(1))| w=\bar w(1)\right)=\frac{48}{7}.
    \end{aligned}
\end{equation}

\begin{figure}[h!]
    \centering
    \includegraphics[width=0.6\textwidth]{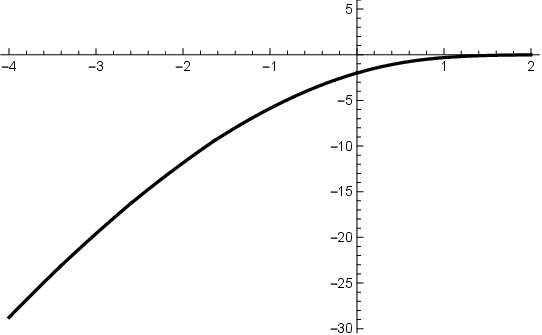}
    \caption{The black line is the graph $(c,\tilde \kappa'(c))$, where $\tilde\kappa'(c)$ is defined in \eqref{kprime}. Comparing to the graph in Figure~\ref{discontinuity}, we notice that we really glue the analytical part together.}
    \label{ana}
\end{figure}

Hence, we can then extend $\kappa'(c)$ to $(c(2),c(1))$ as the following function:
\begin{equation}
    \tilde\kappa'(c)=\begin{cases}
        \kappa'(c),&\textrm{for }c>0,\\
        \lim_{c\downarrow 0}\kappa'(c)-6,&\textrm{for }c=0,\\
        \kappa'(c)-6,&\textrm{for }c\in(c(1),0),\\
        \lim_{c\downarrow c(1)}\kappa'(c)-6-48/7,&\textrm{for }c=c(1),\\
        \kappa'(c)-6-48/7,&\textrm{for }c\in (c(2),c(1)).
    \end{cases}
\end{equation}

Using the similar method, we can also extend the above function to the interval $(c(3),c(2))$ and eventually to the whole real line. It turns out that for $n\geq1$, we have
\begin{equation}
\begin{aligned}
    &\lim_{c\downarrow c(n)}\left({\rm Res}\left(Q(w,c)| w=w_{1,c}\right)+{\rm Res}\left(Q(w,c)| w=\bar w_{1,c}\right)\right)\\
    =&{\rm Res}\left(Q(w,c(n))| w=w(n)\right)+{\rm Res}\left(Q(w,c)| w=\bar w(n)\right)=\frac{48}{7},\quad\forall n\geq1.
\end{aligned}
\end{equation}Hence, we obtain the following analytical continuation of $\kappa'(c)$ (see also Figure~\ref{ana})
\begin{equation}\label{kprime}
    \tilde\kappa'(c)=\begin{cases}
        \kappa'(c)-6\cdot \Id_{c<0}-\frac{48}{7}\sum_{n=1}^\infty\Id_{c<c(n)},&\textrm{for }c\not\in\mathcal J,\\
\lim_{\epsilon\downarrow 0}\tilde\kappa'(c+\epsilon),&\textrm{for }c\in\mathcal J.
    \end{cases}
\end{equation}

As a consequence, we then obtain the full solution of the exponent:
\begin{lem}
The analytic continuation of $\kappa|_{(0,\infty)}$ is given by
    \begin{equation}\label{tildekappa}
        \tilde\kappa(c)=\begin{cases}
            \kappa(c),&\textrm{for }c\geq 0,\\
            \kappa(0)-\int_{c}^0dx\kappa'(x)-6c-\frac{48}{7}\sum_{n\geq 1}(c-c(n))\Id_{c<c(n)},&\textrm{for }c<0,
        \end{cases}
    \end{equation}
where $\kappa'(c)$ is defined in \eqref{c11} and $c(n)$ is defined as in Lemma~\ref{pb3}.
\end{lem}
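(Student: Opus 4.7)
The plan is to recover $\tilde\kappa$ from the already-constructed analytic continuation $\tilde\kappa'$ of the derivative by a single antiderivative computation, using the fundamental theorem of calculus on the open intervals of analyticity and then gluing the constants through continuity. Concretely, for $c<0$ I will write
\begin{equation*}
\tilde\kappa(c)=\tilde\kappa(0)+\int_0^c\tilde\kappa'(x)\,dx=\tilde\kappa(0)-\int_c^0\tilde\kappa'(x)\,dx,
\end{equation*}
where the integral is well-defined because $\tilde\kappa'$, although not continuous on $\R$, is locally integrable: by \eqref{kprime} it equals $\kappa'(x)$ minus a piecewise constant function which jumps by $6$ at $0$ and by $48/7$ at each $c(n)$, and $\kappa'$ itself is smooth away from the discrete set $\mathcal J$ of poles that were removed by the contour deformation.

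First I would substitute the formula \eqref{kprime} for $\tilde\kappa'$ into the integral and split into two contributions. The smooth piece gives $-\int_c^0\kappa'(x)\,dx$, which is precisely the term $-\int_c^0 dx\,f(x)$ in the statement of the proposition (after identifying $\kappa'(x)=f(x)$ from \eqref{c11} and the definition of $f$ in Proposition~\ref{p13}). The step-function piece gives
\begin{equation*}
-\int_c^0\Big(-6\,\Id_{x<0}-\tfrac{48}{7}\sum_{n\geq1}\Id_{x<c(n)}\Big)\,dx=-6c-\tfrac{48}{7}\sum_{n\geq1}(c(n)\wedge0-c)\Id_{c<c(n)},
\end{equation*}
which, since $c(n)<0$ for all $n\geq1$, simplifies to $-6c-\tfrac{48}{7}\sum_{n\geq1}(c-c(n))\Id_{c<c(n)}$ after a sign rearrangement. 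The interchange of sum and integral is justified by monotone convergence because for each fixed $c$ only finitely many indicators $\Id_{x<c(n)}$ are nonzero on $[c,0]$ (since $c(n)\to-\infty$).

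Next I have to identify the constant $\tilde\kappa(0)$. Because $\tilde\kappa$ is defined as \emph{the} analytic continuation of $\kappa|_{(0,\infty)}$, and $\kappa$ is continuous up to $0$ from the right (the series \eqref{k} converges at $c=0$ by the asymptotic decay of $\Ai'$), we have $\tilde\kappa(0)=\lim_{c\downarrow0}\kappa(c)=\kappa(0)$. Putting the three pieces together yields exactly the formula \eqref{tildekappa}, with $\tilde\kappa=\kappa$ on $[0,\infty)$ by construction.

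The only delicate point, and the one I would verify most carefully, is that the antiderivative of $\tilde\kappa'$ obtained in this way really equals the analytic continuation constructed piecewise in Section~\ref{pp13}: on each maximal interval $(c(n+1),c(n))$ both functions are smooth antiderivatives of the same smooth function $\kappa'(x)-6-\frac{48}{7}n$, so they differ by a constant, and this constant must vanish at each endpoint $c(n)$ because $\tilde\kappa$, being the analytic continuation, is continuous across $c(n)$ (the jumps in $\tilde\kappa'$ do not produce jumps in $\tilde\kappa$, only corners). Checking this continuity across every $c(n)$ inductively is the main—and essentially only—technical step; once it is done, the matching of constants propagates from $c=0$ down through all intervals and the claimed formula follows.
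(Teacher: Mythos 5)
Your proposal is correct and follows essentially the same route as the paper, which obtains \eqref{tildekappa} simply by integrating the analytically continued derivative $\tilde\kappa'$ from \eqref{kprime} between $0$ and $c$ and fixing the constant through $\tilde\kappa(0)=\kappa(0)$. The only blemish is a sign slip in your intermediate display for the step-function contribution (it should read $+\tfrac{48}{7}\sum_{n\geq1}\bigl(c(n)-c\bigr)\Id_{c<c(n)}$, which indeed equals $-\tfrac{48}{7}\sum_{n\geq1}\bigl(c-c(n)\bigr)\Id_{c<c(n)}$), but your final formula agrees with \eqref{tildekappa}.
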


\appendix
\section{Preliminary Upper Bounds}\label{sectAppB}
In this section, we deduce some preliminary upper bounds for later use. To this end, we first deduce some identities regarding to Airy function. Recall the definitions
\begin{equation}
\begin{aligned}
   &B_{0,c}(x,y)=\Ai(x+y+2c),\\
   &\tilde B_{0,c}(x,y)=\Ai(y-x+2c),\\
   &\hat B_{0,c}(x,y)=\Ai(x-y+2c),
\end{aligned}
\end{equation}
the heat kernel $e^{L\Delta}(x,y)=\frac{1}{\sqrt{4\pi L}}e^{-\frac{(x-y)^2}{4L}}$, a variant of it, that is, $e^{L\tilde\Delta}(x,y)=e^{L\Delta}(-x,y)$. For $L>0$, $e^{-L\Delta}B_{0,c}$ is still well-defined with
\begin{equation}\label{e-lap}
\begin{aligned}
e^{-L\Delta}B_{0,c}(x,y)&=e^{-2L^3/3-L(x+y+2c)}\Ai(L^2+x+y+2c) .
\end{aligned}
\end{equation}
We have the following identities.
\begin{lem}\label{identity}
Let $n\in\mathbb Z_{\geq 1},L>0$ and $c\in\R$, then
        \begin{align}
         \hat B_{0,c}^n(x,y)&=n^{-1/3}\Ai(n^{-1/3}(x-y+2nc))\label{06a2}\\
            \tilde B_{0,c}^n(x,y)&=n^{-1/3}\Ai(n^{-1/3}(y-x+2nc)).\label{06a1}\\
            \hat B_{0,c}^{n-1}B_{0,c}(x,y)&=B_{0,c}\tilde B^{n-1}_{0,c}(x,y)=n^{-1/3}\Ai(n^{-1/3}(x+y+2nc))\label{09a1}\\
         e^{-L\Delta}B_{0,c}\tilde B_{0,c}^{n-1}e^{L\tilde\Delta}&=\hat B_{0,c}^n\label{a103},\\
         e^{-L\Delta}B_{0,c}\tilde B_{0,c}&=\hat B_{0,c}e^{-L\Delta}B_{0,c},\label{4a1},\\
         e^{L\tilde\Delta}e^{-L\Delta}B_{0,c}&=\tilde B_{0,c}\label{a112},\\
          e^{L\Delta}B_{0,c}(x,y)&=e^{2L^3/3+L(x+y+2c)}\Ai(L^2+x+y+2c),\label{a105},\\
          e^{L\Delta}\hat B_{0,c}(x,y)&=B_{0,c}e^{L\tilde\Delta}(x,y)=e^{\tfrac{2L^3}{3}+L(x-y+2 c)} \Ai\left(L^2+x-y+2 c\right)\label{a102},\\
         \tilde B_{0,c}^ne^{L\Delta}(x,y)&=e^{L\tilde\Delta} B_{0,c}^n(x,y)=\tfrac{e^{2Lc+\frac{2L^3}{3n^2}}}{n^{1/3}}e^{\frac{L(y-
     x)}{n}}\Ai\left(\tfrac{L^2}{n^{4/3}}+\tfrac{y-x+2nc}{n^{1/3}}\right),\label{a108}\\
         e^{L\tilde\Delta}\hat B_{0,c}^n(x,y)&=\tilde B_{0,c}^ne^{L\tilde\Delta}(x,y)=\tfrac{e^{2Lc+\frac{2 L^3}{3 n^2}}}{n^{1/3}}e^{-\frac{L(x+y)}{n}}\Ai\left(\tfrac{L^2}{n^{4/3}}-\tfrac{x+y+2nc}{n^{1/3}}\right),\label{a101}\\
         \hat B_{0,c}^{n-1}e^{-L\Delta}B_0(x,y)&=\tfrac{e^{-2Lc-\frac{2L^3}{3n^2}}}{n^{1/3}}e^{-\frac{L(x+y)}{n}}\Ai\left(\tfrac{L^2}{n^{4/3}}+\tfrac{x+y+2nc}{n^{1/3}}\right)\label{a107},\\
      B_{0,c}\tilde B^{n-1}_{0,c}e^{L\tilde\Delta}(x,y)&=\tfrac{e^{2Lc+\frac{2L^3}{3n^2}}}{n^{1/3}}e^{\frac{L (x-y)}{n}}\Ai\left(\tfrac{L^2}{n^{4/3}}+\tfrac{x-y+2nc}{n^{1/3}} \right).\label{a110}
     \end{align}
\end{lem}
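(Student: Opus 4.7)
The plan is to prove all identities uniformly using the integral representation
\begin{equation*}
\Ai(x) = \frac{1}{2\pi\I}\int_\Gamma dw\, e^{w^3/3 - wx}
\end{equation*}
together with the Fourier-type representation of the heat kernel $e^{L\Delta}(x,y) = \frac{1}{2\pi\I}\int dv\, e^{Lv^2 + v(x-y)}$ and its variant $e^{L\tilde\Delta}(x,y) = e^{L\Delta}(-x,y)$. Each of the Airy-type kernels $B_{0,c}, \tilde B_{0,c}, \hat B_{0,c}$ admits a single-integral representation of the form $\frac{1}{2\pi\I}\int dw\, e^{w^3/3 - 2cw - w \cdot\ell(x,y)}$ with $\ell$ a linear form, and $e^{-L\Delta}B_{0,c}$ additionally carries the factor $e^{-Lw^2}$, as already recorded in \eqref{111}. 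Once every factor in a composition is written in this form, the intermediate spatial integrations produce delta functions in the contour variables, and the remaining single integral is recognised as an Airy function (possibly shifted and rescaled) by an appropriate cube-root substitution.

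For the first group \eqref{06a2}, \eqref{06a1}, \eqref{09a1}, I would substitute the integral representation into the $n$-fold convolution; the $n-1$ intermediate spatial integrations produce $n-1$ delta functions that force $w_1=\cdots=w_n=:w$, leaving $\frac{1}{2\pi\I}\int dw\, e^{nw^3/3 - 2ncw - w(x\pm y)}$, which after the substitution $w\mapsto n^{-1/3}w$ becomes $n^{-1/3}\Ai(n^{-1/3}(\cdot))$. The three identities differ only by sign conventions in the linear term.

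For the operator-level identities \eqref{a103}, \eqref{4a1}, \eqref{a112} the same strategy applies: write every factor in integral form, carry out all spatial integrations to obtain delta constraints on the contour variables, then compare the residual cubic exponentials on both sides. In \eqref{a112} the constraint $v+w=0$ makes $Lv^2$ and $-Lw^2$ cancel, leaving $\frac{1}{2\pi\I}\int dw\, e^{w^3/3 - 2cw - w(y-x)} = \tilde B_{0,c}(x,y)$; in \eqref{a103} the delta functions collapse the triple integral to a single one with exponent $nw^3/3 - 2ncw - w(x-y)$, which by the first group equals $\hat B_{0,c}^n$; in \eqref{4a1} both sides reduce to the same single integral $\frac{1}{2\pi\I}\int dw\, e^{2w^3/3 - Lw^2 - w(x+y+4c)}$.

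For the Gaussian-weighted closed forms \eqref{a105}, \eqref{a102}, \eqref{a108}, \eqref{a101}, \eqref{a107}, \eqref{a110}, the identical reduction yields a single integral of the shape $\frac{1}{2\pi\I}\int dw\, e^{aw^3/3 + bLw^2 - w\cdot(\cdot)}$; completing the cube by the shift $w\mapsto w + \lambda$ with $\lambda = \pm L/a^{2/3}$ chosen to eliminate the quadratic term produces exactly the stated Gaussian prefactor $e^{\pm 2L^3/(3a^2)+\cdots}$ multiplying a shifted Airy function after a final $w\mapsto a^{-1/3}w$ rescaling. The main obstacle is not any single identity but rather the bookkeeping of contour placements: the Airy contour sits in the wedges $|\arg w|=\pi/3$, while the heat-kernel contour is a vertical line whose real shift must be taken large enough to make the intermediate spatial integrals absolutely convergent (exactly as was done in \eqref{eq2.18}). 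One must therefore order the contour shifts consistently across all factors so that the spatial Fubini interchanges and the resulting delta-function collapses are justified; the contour deformations used to carry out the final shift $w\mapsto w\pm\lambda$ are legitimate because the cubic decay of $e^{w^3/3}$ on $\Gamma$ dominates the quadratic and linear terms uniformly on bounded shifts.
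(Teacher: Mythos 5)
Your computational skeleton is the same as the paper's (contour-integral representations for every factor, collapse of the intermediate spatial integrations onto a single $w$-variable, then a shift of $w$ to absorb the quadratic term for the heat-kernel-weighted identities -- the paper does exactly this shift $w\mapsto w-n^{-2/3}L$ after \eqref{equal}), but the step on which everything rests is not justified as you state it, and the justification you offer is actually false. For compositions such as $\hat B_{0,c}^n$ the intermediate variables $z_1,\dots,z_{n-1}$ run over \emph{all} of $\R$, and the $z_j$-dependence of the modulus of the integrand is $e^{z_j(\e_j-\e_{j+1})}$ when the $w$-contours are the vertical lines $\I\R+\e_j$: for $\e_j\neq\e_{j+1}$ this blows up on one half-line, and for $\e_j=\e_{j+1}$ it is constant, so no choice of real shifts makes the iterated integral absolutely convergent and Fubini cannot be invoked to produce your ``delta functions''. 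Your appeal to \eqref{eq2.18} does not help, because there the spatial integrals are over the half-line $(-\infty,0]$, where a strict ordering $\mu_2>\mu_1$ does give absolute convergence and produces rational factors $(v-w)^{-2}$, not delta constraints; the same phenomenon is what your full-line integrals would produce on each half-line separately. (A second, smaller inconsistency: the delta-collapse heuristic needs constant-real-part contours, whereas you place the Airy representation on the wedge $|\arg w|=\pi/3$.)

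The paper closes precisely this gap: in the proof of \eqref{06a2} each intermediate integral is split into $z_j\in\R_+$ and $z_j\in\R_-$, with the orderings $\e_{j+1}>\e_j$ resp.\ $\e_{j+1}<\e_j$ chosen so that each half is absolutely convergent and yields $\pm(w_{j+1}-w_j)^{-1}$; recombining the two halves turns the $w_{j+1}$-integral into a closed contour encircling $w_j$, and Cauchy's residue theorem performs the collapse $w_{j+1}=w_j$ rigorously. You could instead rescue your delta-function step by a genuine Fourier-inversion argument (carry out the Gaussian-decaying $t$-integrals first and identify the $z_j$-integral as an inverse Fourier transform of an $L^1\cap L^2$ function), but as written the proposal asserts the collapse without a valid justification, so the proof of \eqref{06a2}, \eqref{06a1}, \eqref{09a1}, \eqref{a103}, \eqref{4a1}, \eqref{a112} (and hence of the identities built on them) is incomplete. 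The remaining ingredients -- the Gaussian integral against the heat kernel as in \eqref{equal}, completing the cube, and reading off the prefactors such as $e^{2L^3/(3n^2)}$ in \eqref{a101} -- are correct and match the paper.
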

\begin{proof}
Let us show in detail how to derive~\eqref{06a2} and ~\eqref{a101} only, since the others follows using similar computations. Recall that
    \begin{equation}
        \Ai(x)=\frac{1}{2\pi\I}\int_{\I\R+\e}dw e^{\frac{w^3}{3}-wx},\quad \e>0.
    \end{equation}
Then we have
\begin{equation}
\begin{aligned}
\hat B_{0,c}^n(x,y) =&\int_{\R^{n-1}} dz_1\cdots dz_{n-1} \int_{\I\R+\e_1}dw_1\cdots\int_{\I\R+\e_n} dw_n \frac{1}{(2\pi\I)^n}\bigg(\prod_{k=1}^n e^{-w_k^3/3-2c w_k}\bigg)\\
&\times  e^{-w_1(x-z_1)}\bigg(\prod_{\ell=2}^{n-1} e^{-w_\ell (z_{\ell-1}-z_\ell)}\bigg) e^{-z_{n-1}(w_n-w_{n-1})}.
\end{aligned}
\end{equation}
We can take the integral over $z_{n-1}$ separately for $z_{n-1}\in\R_+$ and $z_{n-1}\in\R_-$. In the first case we need to assume $\e_{n}>\e_{n-1}$, while in the second case $\e_n<\e_{n-1}$. Then the integral over $z_{n-1}\in\R_+$ gives a factor $\frac1{w_n-w_{n-1}}$ while the integral over $z_{n-1}\in\R_-$ gives a factor $-\frac1{w_n-w_{n-1}}$. For fixed $w_{n-1}$, putting the two integrals together we get that the integral over $w_n$ is a simple anticlockwise oriented path enclosing $w_{n-1}$, which has a simple pole at $w_{n-1}$. Doing the same for the integrals over $z_{n-2}$ until $z_1$ we obtain
\begin{equation}
\hat B_{0,c}^n(x,y) = \frac{1}{2\pi\I}\int_{\I\R+\e_1} e^{n w_1^3/3- w_1(x-y+2 n c)} =n^{-1/3}\Ai(n^{-1/3}(x-y+2nc)).
\end{equation}
    Calculating the Gaussian integral, we then obtain
    \begin{equation}\label{equal}
        \begin{aligned}
            e^{L\tilde \Delta}\hat B_{0,c}^n(x,y)&=\frac{1}{2\pi \I}\int_{\I\R+\epsilon}dw\int_\R dz\frac{1}{\sqrt{4\pi L}} e^{-\frac{(x+z)^2}{4L}}e^{\frac{w^3}{3}-wn^{-1/3}(z-y+2nc)}\\
            &=\frac{1}{2\pi \I}\int_{\I\R+\epsilon}dwe^{\frac{w^3}{3}+wn^{-1/3}(x+y)+n^{-2/3}Lw^2-2n^{2/3}cw},\quad\forall\epsilon>0.
        \end{aligned}
    \end{equation}
Clearly we have $\tilde B_{0,c}^n(x,y)=\hat B_{0,c}^n(y,x)=n^{-1/3}\Ai(n^{-1/3}(y-x+2nc))$. Using this we get $\tilde B_{0,c}^ne^{L\tilde\Delta}(x,y)=\textrm{r.h.s.\ of }\eqref{equal}$. By the change of variable $w\mapsto w-n^{-2/3}L$ we get the claimed expression \eqref{a101}.
\end{proof}
For $r>0$, let $U_r: L^{2}(\R)\to L^2(\R)$ be as $U_rf(x)=e^{rx}f(x)$ and
\begin{equation}
\beta= \max\{2e^{r^3/3-2rc},e^{(r-1/7)^3/3-2(r-1/7)c}\}.
\end{equation}
\begin{lem}\label{bound}
Let $n\in\mathbb Z_{\geq 1}$ and $1\leq r^2\leq 2c$ with $r>0$, then
        \begin{align}
             \max\{\left\|U_r^{-1} P_0 B_{0, c} U_r\right\|_{\rm HS},\left\|U_r P_0 B_{0, c} P_0U_r^{-1} \right\|_{\rm HS}\}&\leq \beta\label{1ab3},\\
             \|U_rP_0\hat B_{0,c}^nB_{0,c}P_0U_r^{-1}\|_{\rm HS}=\|U_r^{-1}P_0B_{0,c}\tilde B_{0,c}^nP_0U_r\|_{\rm HS}&\leq\beta^{n+1}\label{1cb3},\\
             \max\{\|U_r^{-1}\tilde B_{0,c}^nU_r\|_{\rm op},\|U_r^{-1}\bar P_0\tilde B_{0,c}^nP_0U_r\|_{\rm HS},\|U_r^{-1} P_0\tilde B_{0,c}^n\bar P_0U_r\|_{\rm HS}\}&\leq\beta^n,\label{2ab3}\\
             \max\{\|U_r\hat B_{0,c}^nU_r^{-1}\|_{\rm op},\|U_r P_0\hat B_{0,c}^n\bar P_0U_r^{-1}\|_{\rm HS},\|U_r\bar P_0\hat B_{0,c}^nP_0U_r^{-1}\|_{\rm HS}\}&\leq\beta^n,\label{3ab3}\\
            \max\{\|U_r^{-1}P_0e^{L\tilde\Delta}P_0U_r^{-1}\|_{\rm HS},\|U_r^{-1}P_0e^{L\Delta}P_0U_r^{-1}\|_{\rm HS}\}&\leq\tfrac{1}{\sqrt{L}}\label{4b3}.
        \end{align}
\end{lem}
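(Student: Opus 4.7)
The plan is to verify each inequality by explicit computation based on the integral representation $\Ai(z)=\tfrac{1}{2\pi\I}\int_{\I\R+\sigma}dw\,e^{w^3/3-wz}$ (valid for all $\sigma\in\R$) together with the identities of Lemma~\ref{identity}, which rewrite each operator in the statement as a single rescaled Airy function in a linear combination of $x$ and $y$. Conjugation by $U_r$ multiplies the kernel by $e^{\pm r(x\pm y)}$, and this factor will be absorbed by translating the contour of the integral representation. The two alternatives in the definition~\eqref{beta} of $\beta$ match the two styles of bound that arise: the factor $2$ in $2e^{r^3/3-2rc}$ absorbs polynomial prefactors in the ``pure-quadrant'' Hilbert--Schmidt bounds, while the shift by $1/7$ in the second alternative absorbs a factor $\epsilon^{-2}$ arising from a regularizing contour shift in the ``mixed-quadrant'' Hilbert--Schmidt bounds.

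\textbf{Hilbert--Schmidt bounds for $B$-type kernels, \eqref{1ab3} and \eqref{1cb3}.} By~\eqref{09a1}, the relevant restricted kernel is $\nu^{-1/3}\Ai(\nu^{-1/3}(x+y+2\nu c))$ on $[0,\infty)^2$, with $\nu=1$ for \eqref{1ab3} and $\nu=n+1$ for \eqref{1cb3}. I pass to coordinates $s=x+y$, $t=x-y$, so that the region $\{x,y\geq 0\}$ becomes $\{s\geq|t|\}$ and the integrand factorizes into an Airy term in $s$ times $e^{2rt}$. Using $\int_{-s}^{s}e^{2rt}dt\leq e^{2rs}/(2r)$ and substituting $u=\nu^{-1/3}(s+2\nu c)$, the squared HS norm reduces to
\begin{equation*}
\frac{e^{-4r\nu c}}{4r\,\nu^{1/3}}\int_\R\Ai(u)^2\, e^{2\nu^{1/3}ru}\,du=\frac{e^{2\nu(r^3/3-2rc)}}{4r\,\nu^{1/3}\sqrt{8\pi r\nu^{1/3}}},
\end{equation*}
via the standard identity $\int_\R\Ai(u)^2 e^{2\sigma u}du=e^{2\sigma^3/3}/\sqrt{8\pi\sigma}$ evaluated at $\sigma=\nu^{1/3}r$. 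The resulting prefactor is bounded by $2^\nu$ for $r\geq 1$ and $\nu\geq 1$, so the whole expression is $\leq\beta^\nu$.

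\textbf{Operator-norm and mixed-quadrant bounds, \eqref{2ab3} and \eqref{3ab3}.} By~\eqref{06a2}, $\tilde B_{0,c}^n(x,y)$ depends only on $u=y-x$, so $U_r^{-1}\tilde B_{0,c}^n U_r$ is convolution with $k(u)=n^{-1/3}\Ai(n^{-1/3}(u+2nc))e^{ru}$, whose $L^2(\R)$ operator norm equals $\|\widehat k\|_\infty$. Computing $\widehat k$ via $\int\Ai(v)e^{sv}dv=e^{s^3/3}$ at $s=n^{1/3}(r-\I\xi)$ gives $|\widehat k(\xi)|=e^{n(r^3/3-2rc)-nr\xi^2}$, whose supremum is attained at $\xi=0$ and equals $e^{n(r^3/3-2rc)}\leq\beta^n$; the analogous computation handles $\hat B_{0,c}^n$. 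For the two remaining HS bounds $\|\bar P_0\tilde B_{0,c}^n P_0\|_{\rm HS}$, $\|P_0\tilde B_{0,c}^n\bar P_0\|_{\rm HS}$, and their $\hat B$-analogues, the substitution $u=y-x$ on the relevant quadrant produces an $L^1$-length factor $|u|$ from the integration over the remaining variable; I then bound the Airy factor using the integral representation with contour shifted to $\Re w=r-1/7$, producing a Gaussian factor in the imaginary direction together with $e^{-(1/7)|u|}$. Integrating $|u|\,e^{-(2/7)|u|}$ yields the constant $49/4$, which gets absorbed into the exponent $n((r-1/7)^3/3-2(r-1/7)c)$, giving the second alternative of~\eqref{beta}.

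\textbf{Heat-kernel bounds \eqref{4b3} and main obstacle.} From $e^{L\Delta}(x,y),\,e^{L\tilde\Delta}(x,y)\leq 1/\sqrt{4\pi L}$ one has
\begin{equation*}
\|U_r^{-1}P_0 e^{L\Delta}P_0 U_r^{-1}\|_{\rm HS}^2\leq\frac{1}{4\pi L}\int_0^\infty\!\!\int_0^\infty e^{-2r(x+y)}\,dx\,dy=\frac{1}{16\pi Lr^2}\leq\frac{1}{L}
\end{equation*}
for $r\geq 1$, and identically for $e^{L\tilde\Delta}$. The main obstacle throughout is the explicit constant tracking: each HS computation yields a polynomial prefactor in $r$ and $n$ that must be absorbed either by $2^\nu$ in the first alternative of~\eqref{beta}, or else by the positive exponential gain $n\big((r-1/7)^3/3-2(r-1/7)c-(r^3/3-2rc)\big)=n(2c-r^2)/7+O(1)$ supplied by the $1/7$-shift in the second alternative (positive because of the assumption $r^2\leq 2c$). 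Beyond this bookkeeping, every bound reduces to the same Gaussian--Airy identity recalled above.
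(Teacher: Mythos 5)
You handle \eqref{1ab3}, \eqref{1cb3}, \eqref{4b3} and the operator-norm parts of \eqref{2ab3}--\eqref{3ab3} correctly; your Fourier-multiplier computation of $\|U_r^{-1}\tilde B_{0,c}^nU_r\|_{\rm op}$ is in fact cleaner than the paper's route (Young's inequality applied to the $L^1$-norm of the conjugated Airy kernel, which is where the factor $2$ in \eqref{beta} actually originates). The only slip in that part is cosmetic: the first member of \eqref{1ab3} carries no projection on the right, so its kernel lives on $\{x\geq 0\}\times\R$ rather than on the quadrant, but your change of variables adapts verbatim.

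The mixed-quadrant Hilbert--Schmidt bounds, however, contain a genuine gap, and it hits exactly the terms the paper works hardest on, namely $\|U_r^{-1}\bar P_0\tilde B_{0,c}^nP_0U_r\|_{\rm HS}$ in \eqref{2ab3} and $\|U_rP_0\hat B_{0,c}^n\bar P_0U_r^{-1}\|_{\rm HS}$ in \eqref{3ab3}. There the substitution $u=y-x>0$ (resp.\ $u=x-y>0$) gives $\int_0^\infty u\,e^{+2ru}\,n^{-2/3}\Ai(n^{-1/3}(u+2nc))^2\,du$, i.e.\ the exponential weight \emph{grows} in $u$. Bounding the Airy factors by the contour representation with $\Re w=r-1/7$, as you propose, produces $e^{-2(r-1/7)u}$, so the combined factor is $e^{+2u/7}$ and the $u$-integral diverges: the advertised $|u|e^{-(2/7)|u|}$ is simply not there. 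To get decay the contour must lie to the right of $r$, but then the exponent becomes $2n\big((r+1/7)^3/3-2(r+1/7)c\big)$; since $s\mapsto s^3/3-2sc$ has its minimum at $s=\sqrt{2c}$ and the application takes $r=\sqrt{2c}$, a fixed shift of $1/7$ overshoots $2n(r^3/3-2rc)$ by $2n(r/49+1/1029)$, which $\beta^{2n}$ cannot absorb uniformly in $c$ (already for $r$ of order a few tens the factor $e^{2nr/49}$ beats $4^n$). So neither alternative in \eqref{beta} covers this term under your scheme; the $(r-1/7)$ alternative is reserved, in the paper, for the heat-kernel bounds of Lemma~\ref{cobound} such as \eqref{4bb4}, where the weight $e^{-ry}$ on a positive half-line is legitimately relaxed to $e^{-(r-1/7)y}$. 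What makes the growing-weight term work in the paper is the exact contour formula \eqref{airysquare} for $\Ai^2$ together with an $n$- and $r$-dependent contour, $\Re w=2n^{1/3}r+r^{-1/2}n^{-1/6}$, whose exponential overshoot is $n^{1/2}r^{-1/2}(r^2-2c)+\mathcal O(1)\leq \mathcal O(1)$ precisely because $r^2\leq 2c$; the analogous fix in your setup is to take the shift of size $(nr)^{-1/2}$ rather than $1/7$. As written, your proof of \eqref{2ab3}--\eqref{3ab3} does not go through.
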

\begin{proof}
Let $f\in L^2(\R)$. For~\eqref{1ab3}, by symmetry of $B_{0,c}$, we have
\begin{equation}
    \left\|U_r P_0 B_{0, c} P_0U_r^{-1} \right\|_{\rm HS}=\left\|U_r^{-1} P_0 B_{0, c} P_0U_r \right\|_{\rm HS}\leq \left\|U_r^{-1} P_0 B_{0, c} U_r\right\|_{\rm HS}.
\end{equation}
It is enough to show $\left\|U_r^{-1} P_0 B_{0, c} U_r\right\|_{\rm HS}\leq\beta$. By definition of Hilbert-Schmidt norm, we have
    \begin{equation}\label{na26}
        \begin{aligned}
        &\|U_r^{-1} P_0 B_{0, c} U_r\|_{\rm HS}^2=\int_0^\infty dx\int_{-\infty}^\infty dy e^{-2r(x-y)}\Ai(x+y+2c)^2=\frac{e^{\frac{2 r^3}{3}-4 c r}}{8 \sqrt{2 \pi } r^{3/2}},
    \end{aligned}
    \end{equation}
    where in the last step we use (Lemma 2.6 of~\cite{Ok02})
    \begin{equation}\label{ol26e}
        \int_\R dye^{Ly}\Ai(y)^2=\frac{e^{L^3/12}}{\sqrt{4L\pi }},\quad\forall L>0.
    \end{equation}
    \eqref{1ab3} follows then from $r\geq 1.$ The first equality of~\eqref{1cb3} follows from~\eqref{09a1} and symmetry. Similar as~\eqref{na26}, we have
    \begin{equation}
        |U_r\hat B_{0,c}^nB_{0,c}P_0U_r^{-1}\|_{\rm HS}^2=\frac{e^{(n+1)(2r^3/3-4cr)}}{8\sqrt{2\pi (n+1)}r^{3/2}}\leq \beta^{2(n+1)}.
    \end{equation}
    \eqref{1cb3} follows from $\|U_rP_0\hat B_{0,c}^nB_{0,c}P_0U_r^{-1}\|_{\rm HS}\leq |U_r\hat B_{0,c}^nB_{0,c}P_0U_r^{-1}\|_{\rm HS}.$ For~\eqref{2ab3}, we first show $\|U_r^{-1}\tilde B_{0,c}^nU_r\|_{\rm op}\leq\beta$. Define $h(x)=e^{-rx}\Ai(-x+2c)$, then
    \begin{equation}
        \begin{aligned}
            &\|U_r^{-1}\tilde B_{0,c}U_r f\|_{L^2(\R)}=\|h*f\|_{L^2(\R)}\leq\|f\|_{L^2(\R)}\|h\|_{L^1(\R)},
        \end{aligned}
    \end{equation}
      where we apply Young's inequality for convolution in the last step. It is enough to bound $\|h\|_{L^1(\R)}.$ Note that
      \begin{equation}\label{na30}
        \begin{aligned}
            \|h\|_{L^1(\R)}=e^{-2rc}\int_{\R}dx |e^{rx}\Ai(x)|\leq e^{-2rc}\left(\int_{\R} dx e^{rx}\Ai(x)+2\int_{-\infty}^0 dxe^{rx}|\Ai(x)|\right).\\
        \end{aligned}
    \end{equation}
    By $\max|\Ai(x)|\leq 3/5$, we have
    \begin{equation}
        2\int_{-\infty}^0 dxe^{rx}|\Ai(x)|\leq\frac{6}{5r}\leq e^{r^3/3},\quad\forall r\geq 1.
    \end{equation}
    Together with the identity (see (9.10.13) in \cite{NIST:DLMF})
\begin{equation}
   \int_\R dx e^{rx}\Ai(x)=e^{r^3/3},\quad\forall r>0.
\end{equation}
we have $\|h\|_{L^1(\R)}\leq 2e^{r^3/3-2rc}\leq\beta.$ For the rest two quantities in~\eqref{2ab3}, it is enough to show $\|U_r^{-1}\bar P_0\tilde B_{0,c}^nP_0U_r\|_{\rm HS}\leq\beta$. Using~\eqref{06a1} and the definition of Hilbert-Schmidt norm, we have
        \begin{equation}\label{wehaverhd}
        \begin{aligned}
            &\|U_r^{-1}\bar P_0\tilde B_{0,c}^nP_0U_r\|_{\rm HS}^2=\int_{-\infty}^0dxn^{-2/3}\int_0^\infty dy e^{2r(y-x)}\Ai\left(\tfrac{y-x}{n^{1/3}}+2n^{2/3}c\right)^2\\
            =&\int_0^\infty du e^{2rn^{1/3}u}\Ai(u+2n^{2/3}c)^2u,
        \end{aligned}
        \end{equation}
        where we made the change of variable $u=(y-x)/n^{1/3}$ and $v=(y+x)/n^{1/3}$ and integrated over $v$. Using the identity ((2.4) of~\cite{Rei95})
        \begin{equation}\label{airysquare}
        \Ai(y)^2=\frac{1}{4 \pi^{3 / 2} \I} \int_{\I\R+\e} dw w^{-1 / 2}e^{\frac{1}{12} w^3-wy},\quad \e>0,y\in\R,
    \end{equation}
    the last integral in~\eqref{wehaverhd} is equal to
        \begin{equation}
            \frac{1}{4\pi^{3/2}\I}\int_{\I\R+\e} dw \frac{e^{\frac{w^3}{12}-2wn^{2/3}c}}{\sqrt{w}(w-2n^{1/3}r)^2}
        \end{equation}
    for any $\e>2n^{1/3}r$. Choosing $\e=2n^{1/3}r+1/(r^{1/2} n^{1/6})$ we get that the absolute value of the last integral is, for $1\leq r^2\leq 2c$, bounded by
    \begin{equation}
        \frac{e^{2n r^3/3-4n c}}{\sqrt{2}\pi r^{1/2}}\leq \beta^{2n}.
    \end{equation}
    Applying~\eqref{06a2} and same method as the one for~\eqref{2ab3}, we can obtain~\eqref{3ab3}. For~\eqref{4b3}, by definition, we have
    \begin{equation}
    \begin{aligned}
        &            \|U_r^{-1}P_0e^{L\tilde\Delta}P_0U_r^{-1}\|_{\rm HS}^2=\frac{1}{4 \pi L}\int_{\R_+^2} dxdy e^{-2r x-2ry}e^{-\frac{(x+y)^2}{4L}}\leq\frac{1}{16\pi L r ^2}\leq\frac{1}{L},
    \end{aligned}
    \end{equation}
    since $r\geq 1.$ Similarly, we can also show the case for $e^{L\Delta}.$
\end{proof}

Next, we deduce upper bounds for operators involving heat kernel.
\begin{lem}\label{cobound}
  Let $n\in\mathbb Z_{\geq 1}, L,r\geq 1$ and $r^2\leq 2c$, then
  \begin{align}
        \|U_rP_0\hat B_{0,c}^{n-1}e^{-L\Delta}B_0P_0U_{r}\|_{\rm HS}&\leq \beta^{n}e^{-\frac{4L^3}{3n^2}}e^{-2Lc},\label{1b4},\\
        \|U_r^{-1}P_0e^{L\Delta}B_{0,c}P_0U_r^{-1}\|_{\rm HS}&\leq \beta e^{Lr^2}\label{2b4},\\
       \|U_r^{-1} P_0 B_{0, c} e^{L \tilde{\Delta}}P_0 U_r^{-1}\|_{\rm HS}&\leq\beta e^{Lr^2}\label{3b4},\\
       \|U_r^{-1}P_0e^{L\tilde\Delta}\hat B_{0,c}^nP_0U_r^{-1}\|_{\rm HS}&=\|U_r^{-1}P_0\tilde B_{0,c}^ne^{L\tilde\Delta}P_0U_r^{-1}\|_{\rm HS}\leq\beta^ne^{Lr^2}\label{4bb4},\\
       \|U_r^{-1}P_0\tilde B_{0,c}^ne^{L\Delta}P_0U_r^{-1}\|_{\rm HS}^2&\leq \beta^ne^{Lr^2}\label{5b4},\\
       \|U_r^{-1}P_0B_{0,c}\tilde B_{0,c}^{n-1}e^{L\tilde\Delta}P_0U_r^{-1}\|_{\rm HS}&\leq \beta^{n}e^{Lr^2}\label{6b4}.
  \end{align}
\end{lem}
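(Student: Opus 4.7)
Each of the six inequalities has the same structure: the operator inside the norm admits an explicit kernel via Lemma~\ref{identity}, and the bound reduces to estimating an integral of $\Ai(\cdots)^2$ against an exponential weight over a quadrant of $\R^2$. My plan is to handle them uniformly by substituting the closed form of the kernel, performing the substitution $u=(x+y)/n^{1/3}$, $v=(y-x)/n^{1/3}$ so that the argument of the Airy function becomes linear in $u$ alone, and integrating the trivial $v$-variable over $[-u,u]$.

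Concretely, for \eqref{1b4} I would insert \eqref{a107} to write the HS norm squared as
\begin{equation*}
e^{-4Lc-\tfrac{4L^3}{3n^2}}\int_0^\infty du\,u\,e^{2(rn^{1/3}-Ln^{-2/3})u}\Ai\!\left(\tfrac{L^2}{n^{4/3}}+u+2n^{2/3}c\right)^{\!2},
\end{equation*}
then insert the representation \eqref{airysquare} of $\Ai^2$ and shift the $w$-contour past the pole at $w=2(rn^{1/3}-Ln^{-2/3})$ by choosing $\e=2(rn^{1/3}-Ln^{-2/3})+\delta$ with a small $\delta$ (tuned to pick up the secondary factor $e^{(r-1/7)^3/3-2(r-1/7)c}$ that enters the definition of $\beta$). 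The remaining $w$-integral is explicit and produces $\beta^{2n}e^{-8Lc-\tfrac{8L^3}{3n^2}}\cdot e^{4Lc}$ up to polynomial prefactors, giving the claimed estimate after taking square roots. The bounds \eqref{2b4}, \eqref{3b4}, \eqref{4bb4}, \eqref{5b4}, \eqref{6b4} are treated identically but with the kernels from \eqref{a105}, \eqref{a108}, \eqref{a101}, \eqref{a108}, and \eqref{a110} respectively; the sign of the linear factor $e^{Lx/n}$ or $e^{-Lx/n}$ and the sign of the $L^2/n^{4/3}$ term inside the Airy function determine whether the exponential prefactor comes out as $e^{-4L^3/(3n^2)}$ or $e^{Lr^2}$. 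The equality $\|U_r^{-1}P_0e^{L\tilde\Delta}\hat B_{0,c}^nP_0U_r^{-1}\|_{\rm HS}=\|U_r^{-1}P_0\tilde B_{0,c}^ne^{L\tilde\Delta}P_0U_r^{-1}\|_{\rm HS}$ is immediate from \eqref{a101}.

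The main obstacle, and the only place the argument is delicate, is calibrating the contour shift so that the final exponent matches $\beta$ exactly rather than just $2e^{r^3/3-2rc}$. A naive choice $\e=2rn^{1/3}+1$ produces a prefactor like $(2rn^{1/3})^{-1/2}$ times $e^{n r^3/3-2nrc}$, which controls the first factor in $\beta$ but not the second. To gain the slack required when multiplying $n$ copies of these norms (as happens in the proofs of Lemmas~\ref{kw2p}, \ref{kwi}, etc.), one shifts the contour a bit further, by $1/7$ on the $r$-scale, absorbing the polynomial prefactor $1/\sqrt{r-1/7}$ into the gap between $r^3/3-2rc$ and $(r-1/7)^3/3-2(r-1/7)c$. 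This is exactly why $\beta$ is defined as the maximum of the two exponentials, and keeping careful track of the $n$-dependence in the shift is the book-keeping part of the proof.

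Finally, for \eqref{2b4} and \eqref{3b4} the factor $e^{Lr^2}$ arises from completing the square in the Gaussian-type contribution from the heat kernel: after substitution of \eqref{a105} or the analogous identity for $e^{L\tilde\Delta}$, the quadratic-in-$L$ exponent $e^{2L^3/3}\Ai(L^2+\cdots)^2$ becomes, via \eqref{airysquare} and the shift $w\mapsto w+\text{const}\cdot L$, an integrand whose stationary exponent is $Lr^2$ up to the $\beta$-factor. I expect all six bounds to fall out with essentially the same two-line calculation once the correct contour shift has been identified.
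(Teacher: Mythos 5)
Your reduction of \eqref{1b4} to a weighted integral of $\Ai(\cdot)^2$ is exactly the paper's starting point (compare the displayed computation \eqref{eqB16}), but the step you yourself call delicate is carried out incorrectly, and it is the step on which everything hinges. Placing the contour at $\Re w=2(rn^{1/3}-Ln^{-2/3})+\delta$ with $\delta$ small and evaluating the exponent $\tfrac{w^3}{12}-w\bigl(\tfrac{L^2}{n^{4/3}}+2n^{2/3}c\bigr)$ there gives, after multiplying by the external prefactor $e^{-4Lc-4L^3/(3n^2)}$, a total exponent $n\bigl(\tfrac{2r^3}{3}-4cr\bigr)-2Lr^2$ up to $O(\delta)$ corrections; since $r^2\le 2c$ this is a bound of the form $(\beta/2)^{2n}e^{-2Lr^2}\ge(\beta/2)^{2n}e^{-4Lc}$ for the squared norm, so the factor $e^{-8L^3/(3n^2)}$ you announce never appears. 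That factor is not cosmetic: without it the products estimated in Lemmas~\ref{wkey3}, \ref{kw2p}, \ref{kwi}, where \eqref{1b4} is paired with bounds of size $\beta^n e^{Lr^2}$, would only be $O(\beta^n)$ rather than $O(\beta^n e^{-4L^3/(3n^2)})$, and the error terms in the proof of Theorem~\ref{main} would no longer vanish as $L\to\infty$. The paper obtains the superexponential factor by pushing the contour a distance $4L/n^{2/3}$ beyond the double pole, to $\Re w=2(L+nr)/n^{2/3}$, which produces $e^{-8L^3/(3n^2)}e^{2Lr^2-8Lc}\,(e^{2r^3/3-4cr})^{n}$ times a prefactor of order $n^{5/3}/(L^2\sqrt{L+nr})$, and then uses $2Lr^2-8Lc\le-4Lc$ and $n^{5/3}\le 4^n$ (this is where the factor $2$ in $\beta$ is spent). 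No $O(1)$ shift past the pole can generate the $L^3/n^2$ term.

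You also misattribute the $r-1/7$ entry of $\beta$, and the claim that the remaining five bounds follow ``identically'' conceals a genuinely different mechanism. For the bounds with target $e^{Lr^2}$ (\eqref{3b4}--\eqref{6b4}) the Airy argument of the relevant kernel is decreasing in the integration variable (see \eqref{a101}; note also that \eqref{3b4} rests on \eqref{a102}, not \eqref{a108}), so the Laplace transform forces any contour to lie to the \emph{left} of the pole and no superexponential gain is available, nor needed; the paper instead drops one projection and evaluates the remaining integral exactly via \eqref{ol26e}. For \eqref{4bb4} this requires first relaxing one weight from $U_r^{-1}$ to $U_{r-1/7}^{-1}$ (legitimate on the positive half-line since $r\ge1$) so that the transverse integral converges; this produces the constant $1/(r-s)=7\le 4\sqrt{2\pi}$ and an exponential with $r-1/7$, which is precisely why $\beta$ contains the second term $e^{(r-1/7)^3/3-2(r-1/7)c}$ --- it has nothing to do with calibrating a contour shift. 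So your overall plan (explicit kernels plus Airy identities) is the right one, but as written the proposal neither establishes \eqref{1b4} with the required factor $e^{-4L^3/(3n^2)}$ nor gives a correct account of how the $e^{Lr^2}$-type bounds and the constant $\beta$ are actually controlled.
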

\begin{proof}
For~\eqref{1b4}: applying~\eqref{a107} and definition of Hilbert-Schmidt norm, we have
\begin{equation}\label{eqB16}
    \begin{aligned}
            &\|U_rP_0 \hat B_{0,c}^{n-1}e^{-L\Delta}B_0P_0U_{r}\|_{\rm HS}^2\\
            =&\frac{e^{-4Lc}e^{-\frac{4L^3}{3n^2}}}{n^{2/3}}\int_{\R_+^2} dx dy e^{2rx+2ry-\frac{2L(x+y)}{n}}\Ai\left(\tfrac{L^2}{n^{4/3}}+\tfrac{x+y}{n^{1/3}}+2n^{2/3}c\right)^2\\
           =&e^{-4Lc-\frac{4L^3}{3n^2}}\int_0^\infty du u e^{-u\big(\frac{2L}{n^{2/3}}-2n^{1/3}r\big)}\Ai\left(\tfrac{L^2}{n^{4/3}}+u+2n^{2/3}c\right)^2\\
            \stackrel{\eqref{airysquare}}{=}&\frac{e^{-4Lc}e^{-\frac{4L^3}{3n^2}}}{4\pi^{3/2}\I}\int_{\I\R+\alpha} dw \frac{e^{\frac{w^3}{12}-w\big(\frac{L^2}{n^{4/3}}+2n^{2/3}c\big)}}{\left(w-\frac{2(-L+nr)}{n^{2/3}}\right)^2 w^{-1/2}}
        \end{aligned}
\end{equation}
 with $\alpha=\frac{2(-L+nr)}{n^{2/3}}+\e$ for arbitrary $\e>0$. With the choice $\alpha=\frac{2(L+nr)}{n^{2/3}}$ we get
    \begin{equation}
|\eqref{eqB16}|\leq \frac{n^{5/3}}{32\sqrt{2}L^2\pi \sqrt{L+nr}}e^{2L r^2-8cL}e^{-\frac{8L^3}{3n^2}}(e^{2r^3/3-4c r})^{n},
    \end{equation}
    which, for $1\leq r^2\leq 2c$, will be dominated by the claimed bound (we used that $(e^{2r^3/3-4c r})^{n}\leq (\beta/2)^{2n}$ and $n^{5/3}\leq 4^{n}$ for $n\geq 1$). Applying~\eqref{a105} and similar method as above, we will get~\eqref{2b4}. For~\eqref{3b4}, applying~\eqref{a102}, definition of Hilbert-Schmidt norm,~\eqref{ol26e} and $L,r\geq 1$,  we have
    \begin{equation}\label{a27}
        \begin{aligned}
          &\|U_r^{-1} P_0 B_{0, c} e^{L \tilde{\Delta}} P_0U_r^{-1}\|_{\rm HS}^2 \leq \|U_r^{-1} P_0 B_{0, c} e^{L \tilde{\Delta}} U_r^{-1}\|_{\rm HS}^2 =\frac{e^{\frac{2r^3}{3}-4rc+2Lr^2}}{8\sqrt{2\pi}r\sqrt{L+r}}\leq \beta^2e^{2Lr^2}.
        \end{aligned}
    \end{equation}
    For~\eqref{4bb4}, the first equality follows from~\eqref{a101}. For the inequality, since $r\geq 1$, we have
        \begin{equation}\label{b35}
            \|U_r^{-1}P_0\tilde B_{0,c}^ne^{L\tilde\Delta}P_0U_r^{-1}\|_{\rm HS}\leq  \|U_r^{-1}P_0\tilde B_{0,c}^ne^{L\tilde\Delta}P_0U_{r-1/7}^{-1}\|_{\rm HS}
                \leq \|U_r^{-1}P_0\tilde B_{0,c}^ne^{L\tilde\Delta}U_{r-1/7}^{-1}\|_{\rm HS}.
        \end{equation}
    Similarly as~\eqref{a27}, applying~\eqref{ol26e}, we have
\begin{equation}
    \|U_r^{-1} P_0 \tilde{B}_{0, c}^n e^{L \tilde{\Delta}} U_s^{-1}\|_{\rm HS}^2=\frac{e^{(2 s^3 / 3-4 c s) n+2 L s^2}}{4 \sqrt{2 \pi}(r-s) \sqrt{L+n s}},\quad\forall r>s>0.
\end{equation}
Applying this with $s=r-1/7$, $7\leq{4\sqrt{2\pi}}$ and plugging back to~\eqref{b35}, we obtain~\eqref{4bb4}. For~\eqref{5b4}, applying~\eqref{a108}, $r\geq 1$ and definition of Hilbert-Schmidt norm, we have
   \begin{equation}
       \begin{aligned}
           &\|U_r^{-1}P_0\tilde B_{0,c}^ne^{L\Delta}P_0U_r^{-1}\|_{\rm HS}^2\\
           &=  n^{-2/3}e^{4Lc+\frac{4L^3}{3n^2}}\int_{\R_+^2}dxdy e^{\frac{2L(y-x)}{n}-2r(x+y)}\Ai\left(\tfrac{L^2}{n^{4/3}}+\tfrac{y-x}{n^{1/3}}+2n^{2/3}c\right)^2\\
           &=  \frac{e^{4Lc+\frac{4L^3}{3n^2}}}{4n^{1/3}r}\left(\int_{-\infty}^0du e^{\frac{2Lu}{n^{2/3}}+2n^{1/3}ru}\Ai(L^2n^{-4/3}+u+2n^{2/3}c)\right.\\
           & \quad\qquad\qquad+\left.\int_{0}^{\infty}du e^{\frac{2Lu}{n^{2/3}}-2n^{1/3}ru}\Ai(L^2n^{-4/3}+u+2n^{2/3}c)\right)\\
           &\leq \frac{e^{4Lc+\frac{4L^3}{3n^2}}}{4n^{1/3}r}\int_\R du e^{\frac{2Lu}{n^{2/3}}+2n^{1/3}ru}\Ai(L^2n^{-4/3}+u+2n^{2/3}c)\\&
           \stackrel{\eqref{ol26e}}{=}\frac{e^{\frac{2nr^3}{3}-4cnr+2Lr^2}}{8\sqrt{2\pi}r\sqrt{L+nr}}\leq \beta^{2n}e^{2Lr^2}
       \end{aligned}
   \end{equation}
For~\eqref{6b4}, applying~\eqref{a110}, definition of Hilbert-Schmidt norm,~\eqref{ol26e} and $r\geq 1$, we have
   6. By~\ref{a110} in Lemma~\ref{identity} and
   \begin{equation}
       \|U_r^{-1}P_0B_{0,c}\tilde B_{0,c}^ne^{L\tilde\Delta}P_0U_r^{-1}\|_{\rm HS}^2\leq \|U_r^{-1}P_0B_{0,c}\tilde B_{0,c}^ne^{L\tilde\Delta}U_r^{-1}\|_{\rm HS}^2\leq\beta^{2(n+1)}e^{2Lr^2}.
   \end{equation}
\end{proof}

\section{Upper bounds}\label{upperbounds}

In this section, we provide some useful upper bounds. Let's first recall that
\begin{equation}
\begin{aligned}
        &K_a=\tilde B_{0,c},\quad K_b=P_0\tilde B_{0,c},\quad K_c=e^{L\tilde\Delta}P_0e^{-L\Delta}B_{0,c},\quad K_d=P_0e^{L\tilde\Delta}P_0e^{-L\Delta}B_{0,c},\\
        &K_e=P_0e^{L\Delta}P_0e^{-L\Delta}B_{0,c},\quad K_u=P_0B_{0,c},\quad K_v=e^{L\Delta}P_0e^{-L\Delta}B_{0,c}.
    \end{aligned}
\end{equation}
From here, we will use the convention
    \begin{equation}
        \prod_{i=1}^0 A_i=1,\quad\text{for any operators $A_1,\ldots,A_n.$}
    \end{equation}
Also, we will always assume $L,r\geq 1$ and $2c\geq r^2$ so that we can apply the bounds of Section~\ref{sectAppB}.

\begin{lem}\label{la121}
    Let $n\in\mathbb Z_{\geq 0}$, $\sigma_n\in\{a,b\}^n$, then $\|\hat \Phi_{\sigma_n}\|_{\rm op}\leq \beta^{n+1}$, where
    \begin{equation}
        \hat\Phi_{\sigma_n}=U_r^{-1}P_0B_{0,c}\bigg[\prod_{i=1}^nK_{\sigma_n(i)}\bigg]P_0U_r.
    \end{equation}
\end{lem}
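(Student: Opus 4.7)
My plan is to reduce the bound to a product of simpler factors, each of which is controlled by an inequality already established in Section~\ref{sectAppB}. The idea is that $K_a=\tilde B_{0,c}$ and $K_b=P_0\tilde B_{0,c}$, so the word $\sigma_n$ determines how many $\tilde B_{0,c}$ factors appear (namely $n$) and where the projections $P_0$ are inserted.

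Concretely, I would write $\sigma_n$ as a block pattern: let $k$ be the number of $b$'s in $\sigma_n$, and let $p_0,p_1,\ldots,p_k\ge 0$ be the sizes of the consecutive runs of $a$'s separated by the $b$'s, so that $p_0+p_1+\cdots+p_k+k=n$. Collapsing each $b$ with the run of $a$'s following it gives
\begin{equation*}
\prod_{i=1}^n K_{\sigma_n(i)}=\tilde B_{0,c}^{p_0}\,P_0\tilde B_{0,c}^{p_1+1}\,P_0\tilde B_{0,c}^{p_2+1}\cdots P_0\tilde B_{0,c}^{p_k+1}.
\end{equation*}
Inserting the identity $U_r U_r^{-1}$ between the blocks and using that $U_r$ and $P_0$ are both multiplication operators (so they commute), I would rewrite
\begin{equation*}
\hat\Phi_{\sigma_n}=\bigl[U_r^{-1}P_0 B_{0,c}\tilde B_{0,c}^{p_0}P_0 U_r\bigr]\prod_{j=1}^k\bigl[U_r^{-1}P_0\tilde B_{0,c}^{p_j+1}P_0 U_r\bigr].
\end{equation*}

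Now the bound follows from submultiplicativity of $\|\cdot\|_{\rm op}$ together with the estimate $\|\cdot\|_{\rm op}\le\|\cdot\|_{\rm HS}$ (item~\ref{ts4} of Theorem~\ref{simon}). For the leading factor, \eqref{1cb3} gives $\|U_r^{-1}P_0 B_{0,c}\tilde B_{0,c}^{p_0}P_0 U_r\|_{\rm HS}\le\beta^{p_0+1}$ when $p_0\ge 1$, while the case $p_0=0$ reduces to \eqref{1ab3} by the symmetry $\|U_r^{-1}P_0 B_{0,c}P_0 U_r\|_{\rm HS}=\|U_r P_0 B_{0,c}P_0 U_r^{-1}\|_{\rm HS}\le\beta$. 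For each of the remaining factors, bounding the two $P_0$'s trivially by $1$ in operator norm and using \eqref{2ab3} yields $\|U_r^{-1}P_0\tilde B_{0,c}^{p_j+1}P_0 U_r\|_{\rm op}\le\|U_r^{-1}\tilde B_{0,c}^{p_j+1}U_r\|_{\rm op}\le\beta^{p_j+1}$. Multiplying these estimates gives
\begin{equation*}
\|\hat\Phi_{\sigma_n}\|_{\rm op}\le\beta^{p_0+1}\prod_{j=1}^k\beta^{p_j+1}=\beta^{\,1+\sum_{j=0}^k p_j+k}=\beta^{n+1},
\end{equation*}
which is the claimed inequality. The only delicate point is purely bookkeeping—making sure the exponents sum correctly—since every analytic input (in particular the case $p_0=0$) is already covered by Lemmas~\ref{bound} and~\ref{cobound}; I do not foresee any genuine obstacle.
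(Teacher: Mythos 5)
Your proof is correct and is essentially the paper's argument with the induction unrolled: the paper splits off the block after the last $b$ and recurses, which iterated is exactly your explicit factorization of $\hat\Phi_{\sigma_n}$ into conjugated blocks between the projections $P_0$. The analytic inputs are the same ones the paper uses, namely \eqref{1ab3} (via the symmetry of $B_{0,c}$) or \eqref{1cb3} for the factor containing $B_{0,c}$, the operator-norm bound of \eqref{2ab3} for the pure $\tilde B_{0,c}$ blocks, and $\|\cdot\|_{\rm op}\leq\|\cdot\|_{\rm HS}$, so there is no gap.
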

\begin{proof}
We show this via induction on $n$. The case $n=0$ follows from~\eqref{1ab3}. Consider now the induction step $n-1\mapsto n.$\\
\textbf{Case I: $\sigma_n\equiv a$.} Then $\hat\Phi_{\sigma_n}=U_r^{-1}P_0B_{0,c}\tilde B_{0,c}^nP_0U_r$, the result follows from~\eqref{1cb3}.\\
\textbf{Case II: $\sigma_n\in\{a,b\}^n$ with $b\in\sigma_n$.} Define $\ell_b=\max\{i|\sigma_n(i)=b\}\geq 1$, then
\begin{equation}
    \hat \Phi_{\sigma_n}=\underbrace{U_r^{-1}P_0B_{0,c}\bigg[\prod_{i=1}^{\ell_b-1}K_{\sigma_n(i)}\bigg]P_0U_r}_{=:\hat\varphi_1}\underbrace{U_r^{-1} P_0\tilde B_{0,c}^{n-\ell_b+1}P_0U_r}_{=:\hat\varphi_2}.
\end{equation}
Applying $\|\hat\varphi_1\|_{\rm op}\leq\beta^{\ell_b}$ (induction assumption) and $\|\hat\varphi_2\|_{\rm op}\leq\beta^{n-\ell_b+1}$ (by~\eqref{2ab3}), we have $\|\hat \Phi_{\sigma_n}\|_{\rm op}\leq \|\hat\varphi_1\|_{\rm op}\|\hat\varphi_2\|_{\rm op}\leq\beta^{n+1}.$
\end{proof}

\begin{lem}\label{l25}
    Let $n\in\mathbb Z_{\geq 0}$, $\sigma_n\in \{a,b\}^n$, then $\|\Phi_{\sigma_n}\|_{\rm HS}\leq\beta^{n+1}$, where
    \begin{equation}
        \Phi_{\sigma_n}=U_r^{-1}\bar P_0\tilde B_{0,c}\bigg[\prod_{i=1}^nK_{\sigma_n(i)}\bigg]P_0U_r.
    \end{equation}
\end{lem}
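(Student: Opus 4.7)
The statement closely parallels Lemma~\ref{la121}, the only difference being that the left-most factor is $\bar P_0 \tilde B_{0,c}$ instead of $P_0 B_{0,c}$. The plan is to mimic that proof: proceed by induction on $n$, splitting on the rightmost occurrence of the letter $b$.

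For the base case $n=0$ we have $\Phi_{\sigma_0} = U_r^{-1}\bar P_0\tilde B_{0,c}P_0U_r$, which is bounded by $\beta$ directly from~\eqref{2ab3} applied with exponent $n=1$.

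For the inductive step $n-1\mapsto n$, I would distinguish two cases. If $\sigma_n\equiv a$, then $\Phi_{\sigma_n} = U_r^{-1}\bar P_0\tilde B_{0,c}^{n+1}P_0U_r$, and~\eqref{2ab3} immediately yields the bound $\beta^{n+1}$. Otherwise, set $\ell_b=\max\{i\mid\sigma_n(i)=b\}\geq 1$. Since $K_b=P_0\tilde B_{0,c}$ and the letters after position $\ell_b$ are all $a$ (so $K_a=\tilde B_{0,c}$), one can factor
\begin{equation}
\Phi_{\sigma_n}=\underbrace{U_r^{-1}\bar P_0\tilde B_{0,c}\bigg[\prod_{i=1}^{\ell_b-1}K_{\sigma_n(i)}\bigg]P_0U_r}_{=:\varphi_1}\cdot\underbrace{U_r^{-1}P_0\tilde B_{0,c}^{n-\ell_b+1}P_0U_r}_{=:\varphi_2}.
\end{equation}
The first factor $\varphi_1$ is of the same form as $\Phi_{\sigma'}$ for a word $\sigma'\in\{a,b\}^{\ell_b-1}$ of strictly smaller length, so by the induction hypothesis $\|\varphi_1\|_{\rm HS}\leq\beta^{\ell_b}$. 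For the second factor, I would bound the operator norm rather than the Hilbert--Schmidt norm, using $\|\varphi_2\|_{\rm op}\leq\|U_r^{-1}\tilde B_{0,c}^{n-\ell_b+1}U_r\|_{\rm op}\leq\beta^{n-\ell_b+1}$ from~\eqref{2ab3}. Combining these with the inequality $\|AB\|_{\rm HS}\leq\|A\|_{\rm HS}\|B\|_{\rm op}$ from Theorem~\ref{simon} yields
\begin{equation}
\|\Phi_{\sigma_n}\|_{\rm HS}\leq\|\varphi_1\|_{\rm HS}\|\varphi_2\|_{\rm op}\leq\beta^{\ell_b}\cdot\beta^{n-\ell_b+1}=\beta^{n+1}.
\end{equation}

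There is no serious obstacle here: all the analytic work is already contained in the bounds collected in Lemma~\ref{bound} (specifically~\eqref{2ab3} in three guises: operator norm of $\tilde B_{0,c}^n$ and Hilbert--Schmidt norm of the flanked versions), and the induction is purely combinatorial. The only small care needed is to make sure the factorization is valid when $\ell_b=1$ (in which case $\varphi_1$ collapses to the $n=0$ base case) and to pick the right mix of HS and operator norms in the product so the exponents of $\beta$ add up correctly.
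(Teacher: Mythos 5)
Your proposal is correct and matches the paper's own argument essentially verbatim: the same base case via \eqref{2ab3}, the same induction splitting at the last occurrence of $b$, and the same combination of the Hilbert--Schmidt bound on $\varphi_1$ (induction hypothesis) with the operator-norm bound on $\varphi_2$ from \eqref{2ab3}. No gaps.
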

\begin{proof}
For $n=0$, $\Phi_{\sigma_0}=U_r^{-1}\bar P_0\tilde B_{0,c}P_0U_r$, the result follows from \eqref{2ab3}. For $n=1$, there are only two possible $\Phi_{\sigma_1}$, namely $\sigma_1(1)\in\{a,b\}$. For the first case, applying \eqref{2ab3}, we have $ \|\Phi_{\sigma_1}\|_{HS}=\|U_r^{-1}\bar P_0\tilde B_{0,c}^2U_r\|_{HS}\leq\beta^2.$ Suppose $\sigma_1(1)=b$. Applying \eqref{2ab3}, we get
\begin{equation}
    \|\Phi_{\sigma_1}\|_{\rm HS}\leq\|U_r^{-1}\bar P_0\tilde B_{0,c}P_0U_r\|_{\rm HS}\|U_r^{-1}P_0\tilde B_{0,c}P_0U_r\|_{\rm op}\leq\beta^2.
\end{equation}
For $n\geq 2$ we prove it by induction. For the induction step form $n-1$ to $n$ we need to consider the following cases:\\
\textbf{Case I: $\sigma_n(i)=a$ for all $i=1,2,\ldots,n$.} We have $\Phi_{\sigma_n}=U_r^{-1}\bar P_0\tilde B_{0,c}^{n+1}P_0U_r$, the results follows from~\eqref{2ab3}.\\
\textbf{Case II: $b\in\sigma_n$.} In this case, denote by $1\leq \ell_b=\max\{i|\sigma_{n}(i)=b\}\leq n$. Then
    \begin{equation}
        \Phi_{\sigma_n}=\underbrace{U_r^{-1}\bar P_0\tilde B_{0,c}\bigg[\prod_{i=1}^{\ell_b-1}K_{\sigma_n(i)}\bigg]P_0U_r}_{=:\varphi_1}\cdot\underbrace{U_r^{-1} P_0\tilde B_{0,c}^{n-\ell_b+1}P_0U_r}_{=:\varphi_2}.
    \end{equation}
Applying now $\|\varphi_1\|_{\rm HS}\leq\beta^{\ell_b}$ (induction assumption) and $\|\varphi_2\|_{\rm op}\leq\beta^{n-\ell_b+1}$ (by~\eqref{2ab3}), we have $\|\Phi_{\sigma_n}\|_{\rm HS}\leq\|\varphi_1\|_{\rm HS}\|\varphi_2\|_{\rm op}\leq \beta^{n+1}.$
\end{proof}

\begin{lem}\label{wkey}
Let $n\in\mathbb Z_{\geq 0}$, $\sigma_n\in\{a,b,c\}^n$, then $\|\hat \Phi_{\sigma_n}\|_{\rm HS}\leq \beta^{n+1}e^{Lr^2}$ where
\begin{equation}
    \hat \Phi_{\sigma_n}=U_r^{-1} P_0\tilde B_{0,c}\bigg[\prod_{i=1}^{n}K_{\sigma(i)}\bigg]e^{L\tilde\Delta}P_0 U_r^{-1}.
\end{equation}
\end{lem}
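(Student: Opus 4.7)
The plan is to proceed by induction on $n$, mirroring Lemmas~\ref{la121} and~\ref{l25} but accommodating two new features: the right endpoint $e^{L\tilde\Delta}P_0U_r^{-1}$ instead of $P_0U_r$ (which is responsible for the $e^{Lr^2}$ factor), and the letter $c$, whose operator $K_c=e^{L\tilde\Delta}P_0e^{-L\Delta}B_{0,c}$ carries its own heat-kernel pair. The base case $n=0$ reads $\hat\Phi_{\sigma_0}=U_r^{-1}P_0\tilde B_{0,c}e^{L\tilde\Delta}P_0U_r^{-1}$, which is exactly \eqref{4bb4} with exponent one.

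For the inductive step I would split on whether $c$ appears in $\sigma_n$. If $\sigma_n\in\{a,b\}^n$ consists only of $a$'s, then $\hat\Phi_{\sigma_n}=U_r^{-1}P_0\tilde B_{0,c}^{n+1}e^{L\tilde\Delta}P_0U_r^{-1}$ and \eqref{4bb4} finishes. Otherwise set $\ell_b=\max\{i:\sigma_n(i)=b\}$, collapse the trailing $a$'s into $\tilde B_{0,c}^{n-\ell_b}$, and insert $U_rU_r^{-1}$ just after the $P_0$ of $K_b$ at position $\ell_b$ to write
\begin{equation*}
\hat\Phi_{\sigma_n}=\Bigl(U_r^{-1}P_0\tilde B_{0,c}\prod_{i=1}^{\ell_b-1}K_{\sigma_n(i)}P_0U_r\Bigr)\cdot\Bigl(U_r^{-1}\tilde B_{0,c}^{n-\ell_b+1}e^{L\tilde\Delta}P_0U_r^{-1}\Bigr).
\end{equation*}
The first factor has operator norm $\leq\beta^{\ell_b}$ by a straightforward $\tilde B_{0,c}$-analogue of Lemma~\ref{la121} (the same inductive proof, using \eqref{2ab3} in place of \eqref{1ab3}), and the second has Hilbert--Schmidt norm $\leq\beta^{n-\ell_b+1}e^{Lr^2}$ directly from \eqref{4bb4}; Theorem~\ref{simon} closes the case.

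If $c\in\sigma_n$, let $\ell_c=\min\{i:\sigma_n(i)=c\}$ and split across $K_c=e^{L\tilde\Delta}P_0\cdot P_0e^{-L\Delta}B_{0,c}$, inserting $U_rU_r^{-1}$ between the two factors:
\begin{equation*}
\hat\Phi_{\sigma_n}=\underbrace{U_r^{-1}P_0\tilde B_{0,c}\prod_{i=1}^{\ell_c-1}K_{\sigma_n(i)}e^{L\tilde\Delta}P_0U_r^{-1}}_{\psi_1}\cdot\underbrace{U_rP_0e^{-L\Delta}B_{0,c}\prod_{i=\ell_c+1}^n K_{\sigma_n(i)}e^{L\tilde\Delta}P_0U_r^{-1}}_{\psi_2}.
\end{equation*}
Since positions before $\ell_c$ all lie in $\{a,b\}$, $\psi_1$ falls under the no-$c$ case with $\ell_c-1<n$ letters, giving $\|\psi_1\|_{\rm HS}\leq\beta^{\ell_c}e^{Lr^2}$ by the inductive hypothesis. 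The complementary estimate $\|\psi_2\|_{\rm op}\leq\beta^{n-\ell_c+1}$ (crucially with no additional $e^{Lr^2}$) would be obtained by a Corollary~\ref{ca17}-style bound: \eqref{4a1} lets one commute $\tilde B_{0,c}$ across $e^{-L\Delta}B_{0,c}$, \eqref{a102} fuses a trailing $e^{-L\Delta}B_{0,c}\cdot e^{L\tilde\Delta}$ into $\hat B_{0,c}$, and the super-exponential decay of \eqref{1b4} absorbs any residual heat-kernel growth under $r^2\leq 2c$.

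The main obstacle is ensuring the $e^{Lr^2}$ factor appears exactly once in the final bound. The split across $K_c$ produces an $e^{L\tilde\Delta}$ on each side of the cut, but the $P_0e^{-L\Delta}B_{0,c}$ at the head of $\psi_2$ supplies a factor of order $e^{-4L^3/(3m^2)}e^{-2Lc}$ from \eqref{1b4} which, under $r^2\leq 2c$, swallows the growth $e^{Lr^2}$ and transfers the heat-kernel cost from one half to the other without double-counting. Managing this cleanly for words containing several $c$'s likely requires proving Lemma~\ref{wkey} together with Corollaries~\ref{ca15}--\ref{ca17} by a single joint induction on word length.
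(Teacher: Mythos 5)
Your proposal is correct, and it uses the same toolkit as the paper (induction on word length, the bounds \eqref{4bb4}, \eqref{2ab3}, \eqref{3ab3}, \eqref{1b4}, the identities \eqref{4a1}, \eqref{a102}--\eqref{a103}, and the crucial absorption $e^{Lr^2}e^{-2Lc}\leq 1$ from $r^2\leq 2c$), but it organizes the splitting differently. The paper cuts at the \emph{last} $c$ (so the right factor $U_rP_0e^{-L\Delta}B_{0,c}[\cdots]e^{L\tilde\Delta}P_0U_r^{-1}$ carries only an $\{a,b\}$ word, handled either by fusing into $\hat B_{0,c}^{m}$ via \eqref{a103} and \eqref{3ab3}, or by a further cut at the first $b$ using \eqref{1b4} plus the induction hypothesis), and in the pure $\{a,b\}$ case it cuts at the \emph{first} $b$ and recurses on the tail. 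You instead cut at the \emph{first} $c$ and the \emph{last} $b$, so your left factors are handled directly (the no-$c$ case, resp.\ a $\tilde B_{0,c}$-analogue of Lemma~\ref{la121}, which indeed follows from \eqref{2ab3} since the block is a product of pieces $P_0\tilde B_{0,c}^{k}P_0$), while the recursion is pushed into the right factor $\psi_2$. The price is that $\psi_2$ may still contain $b$'s and $c$'s, and the operator-norm bound $\|\psi_2\|_{\rm op}\leq\beta^{n-\ell_c+1}$ you invoke is not literally any of the paper's appendix lemmas (Lemma~\ref{uvwkey11} and Corollary~\ref{ca17} have $e^{L\Delta}P_0U_r^{-1}$, resp.\ $P_0U_r$, as right bracketing, not $e^{L\tilde\Delta}P_0U_r^{-1}$); it does hold, and is proved exactly as you indicate -- split $\psi_2$ at its first $b$, bound the head by \eqref{4a1} and \eqref{1b4} (which supplies the $e^{-2Lc}$ that swallows the $e^{Lr^2}$ of the tail), and note the tail is again of the form of Lemma~\ref{wkey} with a shorter word -- so, as you anticipate, it must be established jointly with the lemma by induction on total word length. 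Two small touch-ups: in your $\{a,b\}$ decomposition the second factor should carry a $P_0$ on its left (insert $P_0=P_0^2$ at the cut) so that \eqref{4bb4} applies verbatim; and the statement of that auxiliary $\psi_2$ bound should be recorded explicitly so the joint induction is well-founded. With those points made precise, your argument is a valid alternative to the paper's proof, trading the paper's ``recurse on the head'' scheme for a ``recurse on the tail'' scheme of essentially equal length.
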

\begin{proof}
The case $n=0$ follows from~\eqref{4bb4}. We show this via induction on $n\geq 1$. For $n=1$, we have the following cases. If $\sigma_1(1)=a$, we have $\hat\Phi_{\sigma_1}=U_r^{-1}P_0\tilde B_{0,c}^2e^{L\tilde\Delta}P_0U_r^{-1}$, the result follows from \eqref{4bb4}. If $\sigma_1(1)=b$, then we have
     $\hat\Phi_{\sigma_1}=U_r^{-1}P_0\tilde B_{0,c}P_0\tilde B_{0,c}e^{L\tilde\Delta}P_0U_r^{-1}$. Applying the bounds $\|U_r^{-1}P_0\tilde B_{0,c}P_0U_r\|_{\rm op}\leq\beta$ by \eqref{2ab3}, $\|U_r^{-1}P_0\tilde B_{0,c}e^{L\tilde\Delta}P_0U_r^{-1}\|_{\rm HS}\leq\beta e^{Lr^2}$ (by \eqref{4bb4}) and Theorem~\ref{simon}, we have
    \begin{equation}
       \|\hat\Phi_{\sigma_1}\|_{\rm HS}\leq \|U_r^{-1}P_0\tilde B_{0,c}P_0U_r\|_{\rm op}\|U_r^{-1}P_0\tilde B_{0,c}e^{L\tilde\Delta}P_0U_r^{-1}\|_{\rm HS}\leq\beta^2e^{Lr^2},
    \end{equation}
If $\sigma_1(1)=c$, using $e^{-L\Delta}B_{0,c}e^{L\tilde\Delta}=\hat B_{0,c}$ (by~\eqref{a102}), we have
\begin{equation}
    \hat \Phi_{\sigma_1}=U_r^{-1}P_0\tilde B_{0,c}e^{L\tilde\Delta}P_0e^{-L\Delta}B_{0,c}e^{L\tilde\Delta}P_0U_r^{-1}=U_r^{-1}P_0\tilde B_{0,c}e^{L\tilde\Delta}P_0\hat B_{0,c}P_0U_r^{-1}.
\end{equation}
Applying $\|U_r^{-1}P_0\tilde B_{0,c}e^{L\tilde\Delta}P_0U_r^{-1}\|_{\rm HS}\leq\beta e^{Lr^2}$ (by \eqref{4bb4}) and $\| U_rP_0\hat B_{0,c}P_0U_r^{-1}\|_{\rm op}\leq\beta$ (by \eqref{3ab3}), we have
\begin{equation}
    \|\hat \Phi_{\sigma_1}\|_{\rm HS}\leq \|U_r^{-1}P_0\tilde B_{0,c}e^{L\tilde\Delta}P_0U_r^{-1}\|_{\rm HS} \| U_rP_0\hat B_{0,c}P_0U_r^{-1}\|_{\rm op}\leq\beta^2e^{Lr^2}.
\end{equation}
Now we consider the induction step $n-1\mapsto n:$\\
\textbf{Case I: $\sigma_n\equiv a$}, then $\hat \Phi_{\sigma_n}=U_r^{-1}P_0\tilde B_{0,c}^{n+1}e^{L\tilde\Delta}P_0U_r^{-1}$, the result follows from \eqref{4bb4}.\\
\textbf{Case II: $\sigma_n\in\{a,b\}^n\setminus\{a\}^n$.} We then have
\begin{equation}
    \hat \Phi_{\sigma_n}=\underbrace{U_r^{-1}P_0\tilde B_{0,c}^{f_b}P_0 U_r}_{=:\hat\varphi_1}\underbrace{U_r^{-1}P_0\tilde B_{0,c}\bigg[\prod_{i=f_b+1}^{n}K_{\sigma_n(i)}\bigg]e^{L\tilde\Delta}P_0U_r^{-1}}_{=:\hat\varphi_2},
\end{equation}
where $f_b=\min\{i|\sigma_n(i)=b\}\geq1$. Applying $\|\hat\varphi_1\|_{\rm op}\leq\beta^{f_b}$ (by~\eqref{2ab3}) and $\|\hat\varphi_2\|_{\rm HS}\leq \beta^{n-f_b+1}e^{Lr^2}$ (induction assumption), we have
\begin{equation}
    \|\hat \Phi_{\sigma_n}\|_{\rm HS}\leq\|\hat\varphi_1\|_{\rm op}\|\hat\varphi_2\|_{\rm HS}\leq \beta^{n+1}e^{Lr^2}.
\end{equation}
\textbf{Case III: $\sigma_n\in\{a,b,c\}^n\setminus\{a,b\}^n$.} Then we have $\ell_c=\max\{i|\sigma_n(i)=c\}\geq 1$
and hence
\begin{equation}
    \hat \Phi_{\sigma_n}=\underbrace{U_r^{-1}P_0\tilde B_{0,c}\bigg[\prod_{i=1}^{\ell_c-1}K_{\sigma_n(i)}\bigg]e^{L\tilde\Delta}P_0U_r^{-1}}_{=:\hat\varphi_1}\cdot\underbrace{ U_r P_0e^{-L\Delta}B_{0,c}\bigg[\prod_{i=\ell_c+1}^{n}K_{\sigma_n(i)}\bigg]e^{L\tilde\Delta}P_0U_r^{-1}}_{=:\hat\varphi_2}.
\end{equation}
By induction assumption, $\|\hat\varphi_1\|_{\rm HS}\leq\beta^{\ell_c}e^{Lr^2}$. As for $\hat\varphi_2$, if $\sigma_n(i)=a$ for all $i\in\{\ell_c+1,\ldots,n\}$, then by~\eqref{a103}, we have
\begin{equation}
    \begin{aligned}
        \hat \varphi_2&=U_r P_0e^{-L\Delta}B_{0,c}\tilde B_{0,c}^{n-\ell_c}e^{L\tilde\Delta}P_0U_r^{-1}=U_r P_0\hat B_{0,c}^{n-\ell_c+1}P_0U_r^{-1}.
    \end{aligned}
\end{equation}
Applying $\|\hat\varphi_2\|_{\rm op}\leq\beta^{n-\ell_c+1}$ (by~\eqref{3ab3}), we have
    $\|\hat \Phi_{\sigma_n}\|_{\rm HS}\leq\|\hat\varphi_1\|_{\rm HS}\|\hat\varphi_2\|_{\rm op}\leq \beta^{n+1}e^{Lr^2}$. If there exists $j\geq\ell_c+1$ such that $\sigma_n(j)=b$, we set $f_b=\min\{j\geq\ell_c+1|\sigma_n(j)=b\}$. Then we have
\begin{equation}
    \hat \varphi_2=\underbrace{U_rP_0\hat B_{0,c}^{f_b-\ell_c-1}e^{-L\Delta}B_{0,c}P_0U_r}_{=:\hat\varphi_2^1}\cdot \underbrace{U_r^{-1}P_0\tilde B_{0,c}\bigg[\prod_{i=f_b+1}^{n}K_{\sigma_n(i)}\bigg]e^{L\tilde\Delta}P_0U_r^{-1}}_{=:\hat\varphi_2^2},
\end{equation}
where we use the definition of $f_b,\ell_c$ and the identity $e^{-L\Delta}B_{0,c}\tilde B_{0,c}^{f_b-\ell_c-1}=\hat B_{0,c}^{f_b-\ell_c-1}e^{-L\Delta}B_{0,c}$ (by~\eqref{4a1}). Applying now $\|\hat\varphi_2^2\|_{\rm HS}\leq\beta^{n-f_b+1}e^{Lr^2}$ (induction assumption), $\|\hat\varphi_2^1\|_{\rm HS}\leq \beta^{f_b-\ell_c}e^{-\frac{4L^3}{3(f_b-\ell_c)^2}}e^{-2Lc}$ (by~\eqref{1b4}) and $r^2\leq 2c$, we have
\begin{equation}
    \begin{aligned}
        &\|\hat \Phi_{\sigma_n}\|_{\rm HS}\leq\|\hat\varphi_1\|_{\rm HS}\|\hat\varphi_2^1\|_{\rm HS}\|\hat\varphi_2^2\|_{\rm HS}\leq\beta^{n+1}e^{-\frac{4L^3}{3n^2}}.
    \end{aligned}
\end{equation}
\end{proof}

\begin{lem}\label{uwkey1}
For $\sigma_n\in\{a,b,c,d,e\}^n$ with $n\in\Z_{\geq 0}$, it holds $\|\hat \Phi_{\sigma_n}\|_{\rm HS}\leq\beta^{n+1}e^{-\frac{4L^3}{3(n+1)^2}}e^{-2Lc}$, where
\begin{equation}
\hat \Phi_{\sigma_n}=U_rP_0e^{-L\Delta}B_{0,c}\bigg[\prod_{i=1}^nK_{\sigma_n(i)}\bigg]P_0 U_r.
\end{equation}
\end{lem}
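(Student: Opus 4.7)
The plan is to prove the bound by strong induction on $n$, adapting the case analysis of Lemma~\ref{wkey} to the dual "$e^{-L\Delta}B_{0,c}$ on the left" structure. The base case $n=0$ is immediate: $\hat\Phi_{\sigma_0}=U_rP_0e^{-L\Delta}B_{0,c}P_0U_r$ is bounded by $\beta e^{-4L^3/3}e^{-2Lc}$ via \eqref{1b4} with exponent $1$ (noting $\hat B_{0,c}^0=\Id$).

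For the inductive step, the crucial structural observation is that each of $K_c$, $K_d$, $K_e$ ends with $P_0e^{-L\Delta}B_{0,c}$, which allows a natural "restart" of the left-hand structure demanded by the inductive hypothesis. If $\sigma_n$ uses only letters from $\{a,b\}$, I distinguish two subcases. When $\sigma_n\equiv a$, iterating \eqref{4a1} yields $e^{-L\Delta}B_{0,c}\tilde B_{0,c}^n=\hat B_{0,c}^ne^{-L\Delta}B_{0,c}$ and the claim follows from \eqref{1b4} with exponent $n+1$. When at least one $b$ appears, I split at the \emph{last} $b$-position $\ell_b$: using $K_b=P_0\tilde B_{0,c}$ and inserting $U_rU_r^{-1}$ at the $P_0$, the left factor becomes $\hat\Phi_{(\sigma_n(1),\ldots,\sigma_n(\ell_b-1))}$ (dispatched by induction on a word of length $\ell_b-1<n$) and the right factor is $U_r^{-1}P_0\tilde B_{0,c}^{n-\ell_b+1}P_0U_r$, whose operator norm is $\leq\beta^{n-\ell_b+1}$ by \eqref{2ab3}.

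If $\sigma_n$ contains at least one letter in $\{c,d,e\}$, let $\ell$ be the first such position; I process the initial $\{a,b\}$-prefix as above and then split at the $P_0$ sitting immediately to the left of the inner $e^{-L\Delta}B_{0,c}$ inside $K_{\sigma_n(\ell)}$. In every subcase the right piece produced by the split is exactly $\hat\Phi_{(\sigma_n(\ell+1),\ldots,\sigma_n(n))}$, handled by induction on a strictly shorter word. For $\sigma_n(\ell)=c$ the identity $e^{-L\Delta}B_{0,c}e^{L\tilde\Delta}=\hat B_{0,c}$ from \eqref{a102} collapses the left factor to $U_rP_0\hat B_{0,c}^\ell P_0U_r^{-1}$, bounded by $\beta^\ell$ via \eqref{3ab3}. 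For $\sigma_n(\ell)\in\{d,e\}$ an additional internal split isolates a factor $U_r^{-1}P_0e^{L\tilde\Delta}P_0U_r^{-1}$ (respectively with $e^{L\Delta}$) bounded by $1/\sqrt L\leq 1$ via \eqref{4b3}, leaving a $U_rP_0\hat B_{0,c}^{\ell-1}e^{-L\Delta}B_{0,c}P_0U_r$ block bounded by \eqref{1b4}. The partial bounds multiply to $\beta^{n+1}$, the exponential decay factors combine as $e^{-4L^3/(3\ell^2)-4L^3/(3(n-\ell+1)^2)}\leq e^{-4L^3/(3(n+1)^2)}$ (since both $\ell$ and $n-\ell+1$ are $\leq n+1$), and any duplicated $e^{-2Lc}$ is harmless because $c\geq r^2/2\geq 1/2>0$.

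The main technical obstacle is bookkeeping the conjugations: every split inserts either $U_rU_r^{-1}$ or $U_r^{-1}U_r$, so one of the two factors inevitably carries the "wrong" conjugation $U_r\cdot U_r^{-1}$ rather than the $U_r\cdot U_r$ that matches the inductive hypothesis. The case analysis must be arranged so that each mis-conjugated factor is precisely of a form already estimated in operator norm by the preliminary bounds in Section~\ref{sectAppB}, with only the correctly conjugated factor being passed to the inductive hypothesis on a strictly shorter word.
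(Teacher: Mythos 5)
Your proposal follows essentially the same route as the paper: strong induction on the word length, splitting the product at projections $P_0$ so that the factor of the form $U_rP_0e^{-L\Delta}B_{0,c}[\cdots]P_0U_r$ on a strictly shorter word is passed to the inductive hypothesis, while the mis-conjugated complementary factor is controlled by the preliminary bounds of the appendix; your case organization (first letter in $\{c,d,e\}$ versus the paper's maximal-letter cases) is only a cosmetic rearrangement. One detail is stated inaccurately, though it is fixable with tools you already invoke in spirit: in the case $\sigma_n(\ell)=c$, the collapse of the left factor to $U_rP_0\hat B_{0,c}^{\ell}P_0U_r^{-1}$ via \eqref{a103}/\eqref{a102} and the bound by \eqref{3ab3} is only valid when the $\{a,b\}$-prefix contains no $b$. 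If the prefix does contain a $b$ and you first split at its last $b$ (so that the leftmost piece, which carries the $e^{-L\Delta}B_{0,c}$, goes to the inductive hypothesis), the piece sitting between the two splits is $U_r^{-1}P_0\tilde B_{0,c}^{\,j}e^{L\tilde\Delta}P_0U_r^{-1}$, to which the identity $e^{-L\Delta}B_{0,c}e^{L\tilde\Delta}=\hat B_{0,c}$ cannot be applied; it must instead be bounded by \eqref{4bb4}, which costs a factor $e^{Lr^2}$ that has to be absorbed by one of the two $e^{-2Lc}$ factors produced by the double use of the induction, using $r^2\leq 2c$. This is precisely how the paper handles its corresponding case, and it is the real reason the ``duplicated'' $e^{-2Lc}$ matters (it is needed, not merely harmless); with this correction your argument goes through.
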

\begin{proof}

We show this via induction on $n$. The case $n=0$ follows from~\eqref{1b4}. For $n=1$, we have the following cases:
    \begin{enumerate}
    \item $\sigma_1(1)=a$: then $\hat \Phi_{\sigma_n}=U_rP_0\hat B_{0,c}e^{-L\Delta}B_{0,c}P_0U_r$ by~\eqref{4a1}. The results follows from~\eqref{1b4}.
        \item $\sigma_1(1)=b$: then $\hat \Phi_{\sigma_n}=U_rP_0e^{-L\Delta}B_{0,c}P_0\tilde B_{0,c}P_0U_r$. Applying \eqref{1b4} and \eqref{2ab3}, we have
 \begin{equation}
     \|\hat \Phi_{\sigma_n}\|_{\rm HS}\leq\|U_r P_0e^{-L\Delta}B_{0,c} P_0U_r\|_{\rm HS}\|U_r^{-1}P_0\tilde B_{0,c}P_0U_r\|_{\rm op}\leq\beta^{2}e^{-\frac{4L^3}{3}}e^{-2Lc},
 \end{equation}
  \item  $\sigma_1(1)=c$: then $ \hat \Phi_{\sigma_n}=U_rP_0\hat B_{0,c}P_0e^{-L\Delta}B_{0,c}P_0U_r$  (by~\eqref{a102}). Applying $\|U_rP_0\hat B_{0,c}P_0U_r^{-1}\|_{\rm op}\leq\beta$ (by~\eqref{3ab3}) and \eqref{1b4}, we have
 \begin{equation}
     \|\hat \Phi_{\sigma_n}\|_{\rm HS}\leq\|U_rP_0\hat B_{0,c}P_0U_r^{-1}\|_{\rm op}\|U_r P_0e^{-L\Delta}B_{0,c} P_0U_r\|_{\rm HS}\leq\beta^{2}e^{-\frac{4L^3}{3}}e^{-2Lc}.
 \end{equation}

  \item  $\sigma_1(1)=d$: then
  \begin{equation}
      \hat \Phi_{\sigma_n}=U_r P_0e^{-L\Delta}B_{0,c}P_0 U_r \cdot U_r^{-1}P_0e^{L\tilde\Delta}P_0U_r^{-1}\cdot U_r P_0e^{-L\Delta}B_{0,c}P_0U_r.
  \end{equation}
  Applying $\|U_r^{-1}P_0e^{L\tilde\Delta}P_0U_r^{-1}\|_{\rm HS}\leq\frac{1}{\sqrt{L}}$ (by~\eqref{4b3}), \eqref{1b4} and $L\geq 1$, we have
  $\|\hat \Phi_{\sigma_n}\|_{\rm HS}\leq\beta^{2}e^{-\frac{8L^3}{3}}e^{-4Lc}$.
  \item Similarly, we can also show the case for $\sigma_1(1)=e.$.
    \end{enumerate}
Now let's consider the induction step: $n-1\mapsto n$.
\\
    \textbf{Case I: $\sigma_{n}\equiv a$}, then $\hat \Phi_{\sigma_{n}}=U_rP_0\hat B_{0,c}^{n}e^{-L\Delta}B_{0,c} P_0U_r$ (by~\eqref{4a1}). The result follows from \eqref{1b4}.
\\
\textbf{Case II: $\sigma_{n}\in\{a,c\}^{n}\setminus\{a\}^{n}$}. Define $f_c=\min\{j| \sigma_{n+1}(j)=c\}\geq 1$, then
\begin{equation}
\hat \Phi_{\sigma_n} =\underbrace{U_rP_0e^{-L\Delta}B_{0,c}\tilde B_{0,c}^{f_c-1}e^{L\tilde\Delta}P_0U_r^{-1}}_{=:\hat\varphi_1} \underbrace{U_rP_0e^{-L\Delta}B_{0,c}\bigg[\prod_{j=f_c+1}^{n}K_{\sigma_{n}(i)}\bigg]P_0U_r}_{=:\hat\varphi_2}.
\end{equation}
Using \eqref{4a1} and~\eqref{a102}, we have $\hat\varphi_1=U_rP_0\hat B_{0,c}^{f_c}P_0U_r^{-1}.$ Applying $\|\hat\varphi_1\|_{\rm op}\leq\beta^{f_c}$ by~\eqref{3ab3} and $\|\hat\varphi_2\|_{\rm HS}\leq\beta^{n-f_c+1}e^{-\frac{4L^3}{3(n+1-f_c)^2}}e^{-2Lc}$ (induction hypothesis), we have
$\|\hat \Phi_{\sigma_{n}}\|\leq\|\hat\varphi_1\|_{\rm op}\|\hat\varphi_2\|_{\rm HS}\leq\beta^{n+1}e^{-\frac{4L^3}{3(n+1)^2}}e^{-2Lc}$.
\\
    \textbf{Case III: $\sigma_{n}\in\{a,b,c\}^{n}\setminus\{a,c\}^{n}$.} Define $\ell_b=\max\{j|\sigma_{n}(j)=b\}\geq 1$, then
    \begin{equation}
        \hat\Phi_{\sigma_n}=\underbrace{U_rP_0e^{-L\Delta}B_{0,c}\bigg[\prod_{i=1}^{\ell_b-1}K_{\sigma_{n}(i)}\bigg]P_0U_r}_{\hat\varphi_1}\underbrace{U_r^{-1}P_0\tilde B_{0,c}\bigg[\prod_{j=\ell_b+1}^{n}K_{\sigma_{n}(i)}\bigg]P_0U_r}_{=:\hat\varphi_2}.
    \end{equation}
    By induction we have
$\|\hat\varphi_1\|_{\rm HS}\leq \beta^{\ell_b}e^{-\frac{4L^3}{3\ell_b^2}}e^{-2Lc}$. For $\hat\varphi_2$, if $\sigma_{n}(i)=a$ for all $i>\ell_b$, then we have
        $\hat\varphi_2=U_r^{-1}P_0\tilde B_{0,c}^{n+1-\ell_b}P_0U_r$.
   Applying $\|\hat\varphi_2\|_{\rm op}\leq\beta^{n+1-\ell_b}$ by \eqref{2ab3}, we have
    \begin{equation}
        \|\hat \Phi_{\sigma_n}\|_{\rm HS}\leq\|\hat\varphi_1\|_{\rm HS}\|\hat\varphi_2\|_{\rm op}\leq \beta^{n+1}e^{-\frac{4L^3}{3(n+1)^2}}e^{-2Lc}.
    \end{equation}
    If it exists $j\in\{\ell_b+1,\ldots,n\}$ with $\sigma_{n}(j)=c$, define $f_c=\min\{j>\ell_b| \sigma_{n}(j)=c\}$, then
 \begin{equation}
     \begin{aligned}
            &\hat\varphi_2=\underbrace{U_r^{-1}P_0\tilde B_{0,c}^{f_c-\ell_b}e^{L\tilde\Delta}P_0U_r^{-1}}_{=\hat\varphi_2^1} \underbrace{U_rP_0e^{-L\Delta}B_{0,c}\bigg[\prod_{j=f_c+1}^{n}K_{\sigma_{n}(i)}\bigg]P_0U_r}_{=:\hat\varphi_2^2}.
        \end{aligned}
 \end{equation}
 Applying $\|\hat\varphi_2^2\|_{\rm HS}\leq\beta^{n+1-f_c}e^{-\frac{4L^3}{3(n+1-f_c)^2}}e^{-2Lc}$ (induction assumption), $\|\hat\varphi_2^1\|_{\rm HS}\leq\beta^{f_c-\ell_b}e^{Lr^2}$ (by~\eqref{4bb4}) and $r^2\leq 2c$, we obtain
    \begin{equation}
    \begin{aligned}
       & \|\hat \Phi_{\sigma_n}\|_{\rm HS}\leq\|\hat\varphi_1\|_{\rm HS}\|\hat\varphi_2^1\|_{\rm HS}\|\hat\varphi_2^2\|_{\rm HS}\leq \beta^{n+1}e^{-\frac{4L^3}{3(n+1)^2}}e^{-2Lc},
    \end{aligned}
    \end{equation}
\textbf{Case IV: $\sigma_{n}\in\{a,b,c,d\}^{n}\setminus\{a,b,c\}^{n}$.} Set $f_d=\min\{i|\sigma_{n}(i)=d\}\geq 1$. Then
\begin{equation}
\begin{aligned}
    \hat \Phi_{\sigma_n}&=\underbrace{U_r P_0e^{-L\Delta}B_{0,c}\bigg[\prod_{i=1}^{f_d-1}K_{\sigma_{n}(i)}\bigg]P_0U_r}_{=:\hat\varphi_1} \underbrace{U_r^{-1} P_0e^{L\tilde \Delta}P_0U_r^{-1}}_{=:\hat\varphi_2}\\
    &\times \underbrace{U_r P_0e^{-L\Delta}B_{0,c}\bigg[\prod_{i=f_d+1}^{n}K_{\sigma_{n}(i)}\bigg]P_0 U_r}_{=:\hat\varphi_3}.
\end{aligned}
\end{equation}
Applying $\|\hat\varphi_1\|_{\rm HS}\leq\beta^{f_d}e^{-\frac{4L^3}{3f_d^2}}e^{-2Lc}$, $\|\hat\varphi_3\|_{\rm HS}\leq\beta^{n+1-f_d}e^{-\frac{4L^3}{3(n-f_d+1)^2}}e^{-2Lc}$ (induction assumption) and $\|\hat\varphi_2\|_{\rm HS}\leq \tfrac{1}{\sqrt{L}}$ (by~\eqref{4b3}), we have
\begin{equation}
\|\hat \Phi_{\sigma_n}^{u,w}\|_{\rm HS}\leq\|\hat\varphi_1\|_{\rm HS}\|\hat\varphi_2\|_{\rm HS}\|\hat\varphi_3\|_{\rm HS}\leq\beta^{n+1}e^{-\frac{4L^3}{3(n+1)^2}}e^{-2Lc}.
\end{equation}
\textbf{Case V: $\sigma_{n}\in\{a,b,c,d,e\}^{n}\setminus\{a,b,c,d\}^{n}$}. Same as \textbf{Case IV}.
\end{proof}
\begin{lem}\label{uvwkey11}
    For $n\in\mathbb Z_{\geq 0}$ and $\sigma_n\in\{a,c\}^n$, it holds $\|\hat \Phi_{\sigma_n}\|_{\rm op}\leq \beta^{n+1}$, where
    \begin{equation}
        \hat\Phi_{\sigma_n}=U_r P_0e^{-L\Delta}B_{0,c}\bigg[\prod_{i=1}^nK_{\sigma_n(i)}\bigg]e^{L\Delta}P_0U_r^{-1}
    \end{equation}
\end{lem}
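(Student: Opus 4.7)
The plan is to prove the bound by induction on $n$, splitting the inductive step according to whether the letter $c$ appears in the word $\sigma_n$ or not. Throughout, we exploit submultiplicativity of the operator norm and the identities of Lemma~\ref{identity} to either collapse the operator $\hat\Phi_{\sigma_n}$ into a short product that is directly estimated by the bounds of Appendix~\ref{sectAppB}, or to factor it into two pieces so that the induction hypothesis applies.

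For the base case $n=0$, the product is empty and $\hat\Phi_{\sigma_0}=U_r P_0 e^{-L\Delta}B_{0,c}e^{L\Delta}P_0 U_r^{-1}$. Since $B_{0,c}$ commutes with $e^{L\Delta}$, this reduces to $U_r P_0 B_{0,c} P_0 U_r^{-1}$, whose Hilbert--Schmidt norm is bounded by $\beta$ via~\eqref{1ab3}, and the operator norm is in turn bounded by the HS norm by item~\ref{ts4} of Theorem~\ref{simon}.

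For the induction step, first consider the case $\sigma_n\equiv a$. Iterating the commutation relation~\eqref{4a1} gives $e^{-L\Delta}B_{0,c}\tilde B_{0,c}^n=\hat B_{0,c}^n e^{-L\Delta}B_{0,c}$, and then using $e^{-L\Delta}B_{0,c}e^{L\Delta}=B_{0,c}$ we obtain $\hat\Phi_{\sigma_n}=U_r P_0\hat B_{0,c}^n B_{0,c}P_0 U_r^{-1}$. The bound~\eqref{1cb3} (combined with item~\ref{ts4} of Theorem~\ref{simon}) gives $\|\hat\Phi_{\sigma_n}\|_{\rm op}\leq\beta^{n+1}$, as required.

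Now suppose $c\in\sigma_n$ and let $f_c=\min\{i\mid\sigma_n(i)=c\}\geq 1$. The first $f_c-1$ letters are $a$'s, followed by one $c$. Using identity~\eqref{a103} (with $n$ there replaced by $f_c$), the initial segment collapses as
\begin{equation}
e^{-L\Delta}B_{0,c}\tilde B_{0,c}^{f_c-1}e^{L\tilde\Delta}P_0 e^{-L\Delta}B_{0,c}=\hat B_{0,c}^{f_c}P_0 e^{-L\Delta}B_{0,c}.
\end{equation}
We thus factor
\begin{equation}
\hat\Phi_{\sigma_n}=\underbrace{U_r P_0\hat B_{0,c}^{f_c}P_0 U_r^{-1}}_{=:\hat\varphi_1}\cdot\underbrace{U_r P_0 e^{-L\Delta}B_{0,c}\bigg[\prod_{i=f_c+1}^n K_{\sigma_n(i)}\bigg]e^{L\Delta}P_0 U_r^{-1}}_{=:\hat\varphi_2},
\end{equation}
where $\hat\varphi_2$ is exactly $\hat\Phi_{\sigma'}$ for the word $\sigma'\in\{a,c\}^{n-f_c}$ obtained by removing the first $f_c$ letters of $\sigma_n$. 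By~\eqref{3ab3} we have $\|\hat\varphi_1\|_{\rm op}\leq\beta^{f_c}$, and by the induction hypothesis $\|\hat\varphi_2\|_{\rm op}\leq\beta^{n-f_c+1}$. Submultiplicativity of the operator norm then yields $\|\hat\Phi_{\sigma_n}\|_{\rm op}\leq\beta^{n+1}$, completing the induction. The main conceptual point is the identification $e^{-L\Delta}B_{0,c}\tilde B_{0,c}^{f_c-1}e^{L\tilde\Delta}=\hat B_{0,c}^{f_c}$, which is what allows the bookkeeping to collapse cleanly and avoids having to carry auxiliary $L$-dependent factors through the induction; once this is in place, the estimate is entirely routine.
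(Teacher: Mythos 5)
Your proposal is correct and follows essentially the same route as the paper: the same induction on $n$, collapsing the all-$a$ case to $U_rP_0\hat B_{0,c}^nB_{0,c}P_0U_r^{-1}$ via \eqref{4a1} and \eqref{1cb3}, and in the mixed case splitting at $f_c=\min\{i\mid\sigma_n(i)=c\}$, using \eqref{a103} to identify the first factor with $U_rP_0\hat B_{0,c}^{f_c}P_0U_r^{-1}$, bounding it by \eqref{3ab3}, and applying the induction hypothesis to the remaining factor. No gaps.
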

\begin{proof}The case $n=0$ follows from~\eqref{1ab3} and $\|\cdot\|_{\rm op}\leq\|\cdot\|_{\rm HS}$. We show this via induction on $n$. For $n\geq1$, we have a few cases. If $\sigma_1=a$, then $\hat \Phi_{\sigma_n}=U_r P_0\hat B_{0,c}B_{0,c}P_0U_r^{-1}$ by~\eqref{4a1}, the result follows from~\eqref{1cb3}. If $\sigma_1=c$, then $\hat \Phi_{\sigma_1}=U_rP_0\hat B_{0,c}P_0 B_{0,c}P_0U_r^{-1}$ by~\eqref{a102}. Applying $\|U_rP_0\hat B_{0,c}P_0 U_r^{-1}\|_{\rm op} \leq\beta$ (by~\eqref{3ab3}) and $ \| U_rP_0 B_{0,c}P_0 U_r^{-1}\|_{\rm op}\leq \| U_rP_0 B_{0,c}P_0 U_r^{-1}\|_{\rm HS}\leq \beta$ (by~\eqref{1ab3}), we then have
    \begin{equation}
    \|\hat \Phi_{\sigma_1}\|_{\rm op}\leq \|U_rP_0\hat B_{0,c}P_0 U_r^{-1}\|_{\rm op} \| U_rP_0 B_{0,c}P_0 U_r^{-1}\|_{\rm op}\leq \beta^2
    \end{equation}
    For induction step $n-1\mapsto n$, we consider the following cases.\\
    \textbf{Case I: $\sigma_{n}\equiv a$}: then
        $\hat \Phi_{\sigma_n}=U_r P_0\hat B_{0,c}^nB_{0,c}P_0U_r^{-1}$ by~\eqref{4a1}, the result is true by~\eqref{1cb3}. \\
\textbf{Case II: $\sigma_n\in\{a,c\}^n\setminus\{a\}^n$}: Defining $f_c=\min\{j|\sigma_n(j)=c\}\geq 1$ we have
\begin{equation}
    \hat \Phi_{\sigma_n}=\underbrace{U_r P_0e^{-L\Delta}B_{0,c}\tilde B_{0,c}^{f_c-1}e^{L\tilde\Delta}P_0U_r^{-1}}_{=:\hat\varphi_1} \underbrace{U_r P_0e^{-L\Delta}B_{0,c}\bigg[\prod_{i=f_c+1}^{n}K_{\sigma_n(i)}\bigg]e^{L\Delta}P_0U_r^{-1}}_{=:\hat\varphi_2}.
\end{equation}
By~\eqref{a103}, we have
    $\hat\varphi_1=U_r P_0\hat B_{0,c}^{f_c}P_0U_r^{-1}$. Applying $\|\hat\varphi_1\|_{\rm op}\leq \beta^{f_c}$ (by~\eqref{3ab3}) and $\|\hat\varphi_2\|_{\rm op}\leq \beta^{n-f_c+1}$ (induction assumption), we obtain $\|\hat \Phi_{\sigma_n}\|_{\rm op}\leq\|\hat\varphi_1\|_{\rm op}\|\hat\varphi_2\|_{\rm op}\leq\beta^{n+1}$.\\
\end{proof}

\begin{lem}\label{vwkey1}
   Let $n\in\mathbb Z_{\geq 0}$, $\sigma_n\in\{a,b,c\}^n$, then $\|\hat \Phi_{\sigma_n}\|_{\rm HS}\leq\beta^{n+1}e^{Lr^2}$, where
    \begin{equation}
        \hat \Phi_{\sigma_n}=U_r^{-1}P_0\tilde B_{0,c}\bigg[\prod_{j=1}^{n}K_{\sigma_n(j)}\bigg]e^{L\Delta}P_0U_r^{-1}
    \end{equation}
\end{lem}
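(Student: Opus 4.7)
My plan is to prove Lemma~\ref{vwkey1} by induction on $n$, mirroring essentially verbatim the case analysis of Lemma~\ref{wkey}, with the only structural difference that the right-hand end is $e^{L\Delta}$ rather than $e^{L\tilde\Delta}$. This swap means that the ``terminal'' bound I need is \eqref{5b4} in place of \eqref{4bb4}, and when the right block reduces to a pure product of $\tilde B_{0,c}$'s, I convert it using $e^{-L\Delta}B_{0,c} \tilde B_{0,c}=\hat B_{0,c}e^{-L\Delta}B_{0,c}$ (see \eqref{4a1}) and $B_{0,c}$ commuting with $e^{L\Delta}$, so that I end up with a $\hat B_{0,c}^k B_{0,c}$ block that is controlled by \eqref{1cb3} rather than $\hat B_{0,c}^k$ closed by a projection (which in Lemma~\ref{wkey} used \eqref{3ab3}).

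For the base case $n=0$, $\hat\Phi_{\sigma_0}=U_r^{-1}P_0\tilde B_{0,c}e^{L\Delta}P_0U_r^{-1}$ is bounded by $\beta\,e^{Lr^2}$ directly from \eqref{5b4}. For the induction step I split into three cases. \textbf{Case I} ($\sigma_n\equiv a$) gives $\hat\Phi_{\sigma_n}=U_r^{-1}P_0\tilde B_{0,c}^{n+1}e^{L\Delta}P_0U_r^{-1}$ and \eqref{5b4} closes it. \textbf{Case II} ($\sigma_n\in\{a,b\}^n\setminus\{a\}^n$): with $f_b=\min\{i:\sigma_n(i)=b\}\geq 1$ I insert $P_0U_rU_r^{-1}P_0$ at position $f_b$ to split
\[
\hat\Phi_{\sigma_n}=\underbrace{U_r^{-1}P_0\tilde B_{0,c}^{f_b}P_0U_r}_{\hat\varphi_1}\cdot\underbrace{U_r^{-1}P_0\tilde B_{0,c}\Big[\textstyle\prod_{i=f_b+1}^{n}K_{\sigma_n(i)}\Big]e^{L\Delta}P_0U_r^{-1}}_{\hat\varphi_2},
\]
bound $\|\hat\varphi_1\|_{\rm op}\leq\beta^{f_b}$ from \eqref{2ab3} and $\|\hat\varphi_2\|_{\rm HS}\leq\beta^{n-f_b+1}e^{Lr^2}$ by the induction hypothesis, and combine.

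\textbf{Case III} ($\sigma_n$ contains a $c$) is where the heat kernel on the right must be dealt with. Let $\ell_c=\max\{i:\sigma_n(i)=c\}\geq 1$, so only $a$'s and $b$'s appear after position $\ell_c$. I split
\[
\hat\Phi_{\sigma_n}=\underbrace{U_r^{-1}P_0\tilde B_{0,c}\Big[\textstyle\prod_{i=1}^{\ell_c-1}K_{\sigma_n(i)}\Big]e^{L\tilde\Delta}P_0U_r^{-1}}_{\hat\varphi_1}\cdot\underbrace{U_rP_0e^{-L\Delta}B_{0,c}\Big[\textstyle\prod_{i=\ell_c+1}^{n}K_{\sigma_n(i)}\Big]e^{L\Delta}P_0U_r^{-1}}_{\hat\varphi_2},
\]
using the decomposition $K_c=e^{L\tilde\Delta}P_0e^{-L\Delta}B_{0,c}$. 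The factor $\hat\varphi_1$ is exactly of the form treated by Lemma~\ref{wkey}, yielding $\|\hat\varphi_1\|_{\rm HS}\leq\beta^{\ell_c}e^{Lr^2}$. For $\hat\varphi_2$, if all letters after $\ell_c$ are $a$, iterating \eqref{4a1} together with $e^{-L\Delta}B_{0,c}e^{L\Delta}=B_{0,c}$ reduces it to $U_rP_0\hat B_{0,c}^{n-\ell_c}B_{0,c}P_0U_r^{-1}$, bounded by $\beta^{n-\ell_c+1}$ via \eqref{1cb3}; if there is a $b$ after $\ell_c$, I set $f_b=\min\{i>\ell_c:\sigma_n(i)=b\}$, use the same commutation to pull the $\tilde B_{0,c}^{f_b-\ell_c-1}$ block through and obtain
\[
\hat\varphi_2=\underbrace{U_rP_0\hat B_{0,c}^{f_b-\ell_c-1}e^{-L\Delta}B_{0,c}P_0U_r}_{\hat\varphi_2^1}\cdot\underbrace{U_r^{-1}P_0\tilde B_{0,c}\Big[\textstyle\prod_{i=f_b+1}^{n}K_{\sigma_n(i)}\Big]e^{L\Delta}P_0U_r^{-1}}_{\hat\varphi_2^2},
\]
with $\|\hat\varphi_2^1\|_{\rm HS}\leq\beta^{f_b-\ell_c}e^{-\frac{4L^3}{3(f_b-\ell_c)^2}}e^{-2Lc}$ from \eqref{1b4} and $\|\hat\varphi_2^2\|_{\rm HS}\leq\beta^{n-f_b+1}e^{Lr^2}$ by induction. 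Multiplying the three pieces and using $r^2\leq 2c$ to absorb $e^{-2Lc}$ against a harmless factor gives $\|\hat\Phi_{\sigma_n}\|_{\rm HS}\leq\beta^{n+1}e^{Lr^2}$ in every case.

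The only real pitfall is the last sub-case of Case III, where I have to be careful that the algebraic manipulations using \eqref{4a1} correctly convert the stretch of $a$'s immediately after $\ell_c$ into a $\hat B_{0,c}^{\,\cdot}e^{-L\Delta}B_{0,c}$ block so that \eqref{1b4} becomes applicable; the bookkeeping for the power of $\beta$ has to account for the $\tilde B_{0,c}$ on the left of $\hat\varphi_2^2$ (which shifts the induction input by one). Everything else is a direct transcription of the analogous estimates from Lemma~\ref{wkey}.
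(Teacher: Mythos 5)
Your proof is correct, but its decomposition is organised differently from the paper's. The paper's induction splits at the \emph{first} $c$ (for words in $\{a,c\}^n$, bounding the right factor in operator norm via Lemma~\ref{uvwkey11}) or at the \emph{first} $b$ (recursing on the suffix, which may still contain $c$'s, and controlling the prefix via \eqref{2ab3} or via \eqref{4bb4} together with Lemma~\ref{uwkey1}); it never invokes Lemma~\ref{wkey}. You instead split at the \emph{last} $c$, so the entire prefix is exactly an operator of the form treated in Lemma~\ref{wkey} and is disposed of in one stroke with the bound $\beta^{\ell_c}e^{Lr^2}$, while the suffix contains only $a$'s and $b$'s: either it collapses, via \eqref{4a1} and $e^{-L\Delta}B_{0,c}e^{L\Delta}=B_{0,c}$, to $U_rP_0\hat B_{0,c}^{\,n-\ell_c}B_{0,c}P_0U_r^{-1}$ controlled by \eqref{1cb3} (for $n=\ell_c$ one uses \eqref{1ab3} instead, since \eqref{1cb3} is stated for exponents $\geq 1$ --- a harmless adjustment), or you split once more at the first $b$ after $\ell_c$ and use \eqref{1b4} plus the induction hypothesis on an $\{a,b\}$-word. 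In effect you inline by hand the algebra that the paper packaged into Lemmas~\ref{uvwkey11} and~\ref{uwkey1}, and in exchange you get to reuse Lemma~\ref{wkey} as a black box so that the recursion only ever descends into $\{a,b\}$-words. The exponent bookkeeping ($\ell_c+(f_b-\ell_c)+(n-f_b+1)=n+1$) and the absorption of $e^{-2Lc}$ against one of the two $e^{Lr^2}$ factors using $r^2\leq 2c$ are handled correctly, so the argument goes through; both routes are of comparable length and rely on the same appendix estimates, differing only in where the word is cut and which prepackaged lemma carries the $e^{Lr^2}$ factor.
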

\begin{proof}
    We show this via induction on $n$. The case $n=0$ follows from~\eqref{5b4}. The case $n=1$ can be handled similarly as before, so we omit the proof here. Now we consider the induction step $n-1\mapsto n$.\\
    \textbf{Case I: $\sigma_{n}\equiv a$}: then $\hat \Phi_{\sigma_n}=U_r^{-1}P_0\tilde B_{0,c}^{n+1}e^{L\Delta}P_0U_r^{-1}$, the result follows from~\eqref{5b4}\\
    \textbf{Case II: $\sigma_{n}\in\{a,c\}^n\setminus\{a\}^n$}. We set $\ell_c=\min\{j|\sigma_{n}(j)=c\}\geq 1$. Then
    \begin{equation}
           \hat \Phi_{\sigma_n}=\underbrace{U_r^{-1}P_0\tilde B_{0,c}^{\ell_c}e^{L\tilde\Delta}P_0U_r^{-1}}_{=:\hat\varphi_1}\underbrace{ U_r P_0e^{-L\Delta}B_{0,c}\bigg[\prod_{j=\ell_c+1}^{n}K_{\sigma_{n}(j)}\bigg]e^{L\Delta}P_0U_r^{-1}}_{=:\hat\varphi_2}.
    \end{equation}
Applying $\|\hat\varphi_1\|_{\rm HS}\leq \beta^{\ell_c}e^{Lr^2}$ (by~\eqref{4bb4}) and $\|\hat\varphi_2\|_{\rm op}\leq\beta^{n-\ell_c+1}$ (by Lemma~\ref{uvwkey11}), we obtain $\|\hat\Phi_{\sigma_n}\|_{\rm HS}\leq \|\hat\varphi_1\|_{\rm HS}\|\hat\varphi_2\|_{\rm op}\leq\beta^{n+1}e^{Lr^2}$.\\
    \textbf{Case III: $\sigma_{n}\in\{a,b,c\}^{n}\setminus\{a,c\}^{n}$}. Define $f_b=\min\{j|\sigma_{n}(j)=b\}\geq 1$. Then
    \begin{equation}
        \hat\Phi_{\sigma_n}=\underbrace{U_r^{-1}P_0\tilde B_{0,c}\bigg[\prod_{j=1}^{f_b-1}K_{\sigma_{n}(j)}\bigg]P_0U_r}_{=:\hat\varphi_1} \underbrace{U_r^{-1}P_0\tilde B_{0,c}\bigg[\prod_{j=f_b+1}^{n}K_{\sigma_{n}(j)}\bigg]e^{L\Delta}P_0U_r^{-1}}_{=:\hat\varphi_2}.
    \end{equation}
    By induction assumption, $\|\hat \varphi_2\|_{\rm HS}\leq\beta^{n+1-f_b}e^{Lr^2}$. Next we deal with $\hat\varphi_1$. If $\sigma_{n}(j)=a$ for all $j\in\{1,\ldots,f_{b}-1\}$, then $\hat\varphi_1=U_r^{-1}P_0\tilde B_{0,c}^{f_b}P_0U_r$ and $\|\hat\varphi_1\|_{\rm op}\leq\beta^{f_b}$ (by~\eqref{2ab3}). Then $\|\hat\Phi_{\sigma_n}\|_{\rm HS}\leq\|\hat\varphi_1\|_{\rm op}\|\hat\varphi_2\|_{\rm HS}\leq \beta^{n+1}e^{Lr^2}$. If there exists $j\in\{1,\ldots,f_b-1\}$ such that $\sigma_{n}(j)=c$, then we define $f_c=\min\{j\leq f_{b}-1| \sigma_{n}(j)=c\}$ and decompose
\begin{equation}
        \hat \varphi_1=\underbrace{U_r^{-1}P_0\tilde B_{0,c}^{f_c}e^{L\tilde\Delta}P_0U_r^{-1}}_{=:\hat\varphi_1^1}\underbrace{ U_r P_0e^{-L\Delta}B_{0,c}\bigg[\prod^{f_b-1}_{f_c+1}K_{\sigma_n(j)}\bigg]P_0U_r}_{\hat\varphi_1^2}.
\end{equation}
Applying now $\|\hat\varphi_1^1\|_{\rm HS}\leq \beta^{f_c}e^{Lr^2}$ (by~\eqref{4bb4}), $\|\hat\varphi_1^2\|_{\rm HS}\leq\beta^{f_b-f_c}e^{-2cL}$ (by Lemma~\ref{uwkey1}) and $r^2\leq 2c$, we get $\|\hat \Phi_{\sigma_{n}}\|_{\rm HS}\leq \|\hat\varphi_1^1\|_{\rm HS}\|\hat\varphi_1^2\|_{\rm HS}\|\hat\varphi_2\|_{\rm HS}\leq \beta^{n+1}e^{Lr^2}$.
\end{proof}

\begin{lem}\label{uvwkey1}
    For $n\in\mathbb Z_{\geq 0}$ and $\sigma_n\in\{a,b,c,d,e,v\}^n$, the operator
    \begin{equation}
        \hat\Phi_{\sigma_n}=U_rP_0e^{-L\Delta}B_{0,c}\left[\prod_{i=1}^nK_{\sigma_n(i)}\right]e^{L\Delta}P_0U_r^{-1}
    \end{equation}
    satisfies the following bounds:
    \begin{enumerate}
        \item if $\sigma_n\in\{a,c,v\}^n$, then
            $\|\hat \Phi_{\sigma_n}\|_{\rm op}\leq \beta^{n+1}$,
        \item\label{a42} if $\sigma_n\in\{a,b,c,d,e,v\}\setminus\{a,c,v\}^n$, then
            $\|\hat \Phi_{\sigma_n}\|_{\rm op}\leq\|\hat \Phi_{\sigma_n}\|_{\rm HS}\leq \beta^{n+1}e^{-\frac{4L^3}{3n^2}}$.
    \end{enumerate}
\end{lem}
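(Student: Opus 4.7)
The plan is to establish both items simultaneously by induction on $n$, splitting $\hat\Phi_{\sigma_n}$ at an appropriate internal $P_0$ and invoking the estimates collected in Lemmas~\ref{bound}--\ref{vwkey1}, in the same spirit as the preceding lemmas of this appendix. The base case $n=0$ reduces via $e^{-L\Delta}B_{0,c}e^{L\Delta}=B_{0,c}$ to $\hat\Phi_{\sigma_0}=U_rP_0B_{0,c}P_0U_r^{-1}$, whose Hilbert--Schmidt (hence operator) norm is $\leq\beta$ by~\eqref{1ab3}.

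For part~1, when $\sigma_n\equiv a$ the identity~\eqref{4a1} collapses $\hat\Phi_{\sigma_n}$ to $U_rP_0\hat B_{0,c}^nB_{0,c}P_0U_r^{-1}$, whose operator norm is controlled by~\eqref{1cb3}. Otherwise I would set $f=\min\{j:\sigma_n(j)\in\{c,v\}\}$ and split at the inner $P_0$ of $K_{\sigma_n(f)}$. Using~\eqref{4a1},~\eqref{a102} and $e^{-L\Delta}B_{0,c}e^{L\Delta}=B_{0,c}$, the left factor simplifies either to $U_rP_0\hat B_{0,c}^fP_0U_r^{-1}$ (when $\sigma_n(f)=c$), bounded by $\beta^f$ via~\eqref{3ab3}, or to $U_rP_0\hat B_{0,c}^{f-1}B_{0,c}P_0U_r^{-1}$ (when $\sigma_n(f)=v$), bounded by $\beta^f$ via~\eqref{1cb3}; the right factor is $\hat\Phi_{\sigma'}$ for the shorter word $\sigma'=(\sigma_n(f+1),\ldots,\sigma_n(n))\in\{a,c,v\}^{n-f}$, controlled by $\beta^{n-f+1}$ via the induction hypothesis.

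For part~2, let $j_0=\min\{j:\sigma_n(j)\in\{b,d,e\}\}$; since each of $K_b,K_d,K_e$ begins with $P_0$, I would split $\hat\Phi_{\sigma_n}=\hat\varphi_1\cdot\hat\varphi_2$ at that leading $P_0$. Only letters from $\{a,c,v\}$ appear in $\hat\varphi_1$: if $v$ does not occur, Lemma~\ref{uwkey1} directly gives $\|\hat\varphi_1\|_{\rm HS}\leq\beta^{j_0}e^{-4L^3/(3j_0^2)}e^{-2Lc}$; if $v$ does occur, at a last position $f_v<j_0$, I would split $\hat\varphi_1$ once more at the middle $P_0$ of $K_v$, so that the left sub-piece is an instance of $\hat\Phi$ on a shorter $\{a,c,v\}$-word (handled by part~1 already proved for shorter~$n$) and the right sub-piece has only $\{a,c\}$-letters (handled by Lemma~\ref{uwkey1}), yielding the same bound. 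When $\sigma_n(j_0)\in\{d,e\}$ I would further split $\hat\varphi_2$ at the middle $P_0$ of $K_d$ or $K_e$, peeling off a factor with HS norm $\leq 1/\sqrt{L}$ via~\eqref{4b3} and a shorter instance $\hat\Phi_{\sigma''}$ controlled by the inductive hypothesis of the current lemma.

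The hard part will be the sub-case $\sigma_n(j_0)=b$, since no earlier lemma directly controls $\hat\varphi_2=U_r^{-1}P_0\tilde B_{0,c}\prod_{i>j_0}K_{\sigma_n(i)}\,e^{L\Delta}P_0U_r^{-1}$ for arbitrary tails in $\{a,b,c,d,e,v\}$. I would handle it by inserting, as an auxiliary nested induction on the tail length $m$, the bound
\begin{equation*}
\bigl\|U_r^{-1}P_0\tilde B_{0,c}\prod_{i=1}^{m}K_{\tau(i)}\,e^{L\Delta}P_0U_r^{-1}\bigr\|_{\rm op}\leq\beta^{m+1}e^{Lr^2},\qquad \tau\in\{a,b,c,d,e,v\}^m,
\end{equation*}
proved by the same first-non-$a$ splitting strategy as in Lemma~\ref{vwkey1}: the $b$-case reduces via~\eqref{2ab3} to a shorter instance of this same auxiliary estimate; the $c$- and $v$-cases split at the middle $P_0$ of $K_c$ or $K_v$ and reduce, using~\eqref{4bb4} or~\eqref{5b4}, to part~1 of the main lemma applied to a shorter word; and the $d,e$-cases split further, invoking~\eqref{4b3} together with the main inductive hypothesis. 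Inserting this bound into the decomposition and using $r^2\leq 2c$ to absorb $e^{Lr^2-2Lc}\leq 1$ yields $\|\hat\Phi_{\sigma_n}\|_{\rm HS}\leq\beta^{n+1}e^{-4L^3/(3j_0^2)}\leq\beta^{n+1}e^{-4L^3/(3n^2)}$, and item~\ref{ts4} of Theorem~\ref{simon} upgrades the Hilbert--Schmidt bound to the operator-norm bound.
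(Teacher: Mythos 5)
Your argument is correct, but it is organized differently from the paper's proof. The paper also inducts on $n$, but orders its case analysis by nested alphabets: words in $\{a,c\}^n$ are delegated to Lemma~\ref{uvwkey11}; a word in $\{a,b,c\}^n$ containing $b$ is split at the \emph{last} $b$, so the left piece is covered by Lemma~\ref{uwkey1} and the right piece by Lemma~\ref{vwkey1}, whose $\{a,b,c\}$ alphabet then suffices; $d$ and $e$ are removed by splitting at their \emph{first} occurrence (prefix again handled by Lemma~\ref{uwkey1}, middle factor by~\eqref{4b3}, tail by the induction hypothesis); and finally $v$ is removed by splitting at the \emph{last} $v$, which writes $\hat\Phi_{\sigma_n}$ as a product of two shorter instances of the lemma itself. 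You instead split at the \emph{first} letter from $\{b,d,e\}$, which makes the prefix estimate easy (Lemma~\ref{uwkey1}, plus your sub-splitting at the last $v$ handled by your part~1, itself a direct extension of Lemma~\ref{uvwkey11} to the alphabet $\{a,c,v\}$), but it forces you to control $U_r^{-1}P_0\tilde B_{0,c}[\cdots]e^{L\Delta}P_0U_r^{-1}$ with an arbitrary tail in $\{a,b,c,d,e,v\}$; since Lemma~\ref{vwkey1} covers only $\{a,b,c\}$ tails, you must introduce and prove your auxiliary estimate by a nested induction. That auxiliary bound is correct and provable exactly as you sketch (it is in the same spirit as Corollary~\ref{ca16} later in the appendix), and the double induction is not circular, because the auxiliary estimate at tail length $m\leq n-1$ only needs the main lemma at lengths $\leq m-1$. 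One small imprecision: in the $c$- and $v$-cases of your auxiliary induction the remaining tail need not lie in $\{a,c,v\}$, so you need the full inductive hypothesis of the main lemma (both items), not only part~1; this costs nothing since the bound in item~2 is stronger. In short, both proofs use the same toolbox of $U_r$-conjugations and Hilbert--Schmidt/operator-norm splittings; the paper's case ordering lets it reuse Lemmas~\ref{uwkey1} and~\ref{vwkey1} verbatim without any new estimate, while your first-bad-letter split is more uniform at the price of the extra auxiliary lemma.
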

\begin{proof}
The case $\sigma_n\in\{a,c\}^n$ is solved in Lemma~\ref{uvwkey11}. We show the rest via induction on $n$ and omit the details for $n=1$. For induction step $n-1\mapsto n$. \\
\textbf{Case I: $\sigma_n\in\{a,b,c\}^n\setminus\{a,c\}^n$.} Define $\ell_b=\max\{j|\sigma_n(j)=b\}\geq 1$,
    then we have
    \begin{equation}
        \hat \Phi_{\sigma_n}=\underbrace{U_r P_0e^{-L\Delta}B_{0,c}\bigg[\prod_{i=1}^{\ell_b-1}K_{\sigma_n(i)}\bigg]P_0U_r}_{=:\hat\varphi_1}\underbrace{U_r^{-1} P_0\tilde B_{0,c}\bigg[\prod_{i=\ell_b+1}^{n}K_{\sigma_n(i)}\bigg]e^{L\Delta}P_0U_r^{-1}}_{=:\hat\varphi_2}.
    \end{equation}
Since $\sigma_n\in\{a,b,c\}^n$, we have $\|\hat\varphi_1\|_{\rm HS}\leq \beta^{\ell_b}e^{-\frac{4L^3}{3\ell_b^2}}e^{-2Lc}$ (by Lemma~\ref{uwkey1}) and $\|\hat\varphi_2\|_{\rm HS}\leq\beta^{n-\ell_b+1}e^{Lr^2}$ (by Lemma~\ref{vwkey1}), which implies $\|\hat \Phi_{\sigma_n}\|_{\rm HS}\leq\|\hat \varphi_1\|_{\rm HS}\|\hat\varphi_2\|_{\rm HS}\leq \beta^{n+1}e^{-\frac{4L^3}{3\ell_b^2}}e^{Lr^2-2Lc}\leq\beta^{n+1}e^{-\frac{4L^3}{3n^2}}$, where we use the assumption $r^2\leq2c$.\\
\textbf{Case II: $\sigma_n\in\{a,b,c,d\}^n\setminus\{a,b,c\}^n$.} Setting $\ell_d=\min\{i|\sigma_n(i)=d\}\geq 1$, we have
    \begin{equation}
    \begin{aligned}
        \hat\Phi_{\sigma_n}&=\underbrace{U_r P_0e^{-L\Delta}B_{0,c}\bigg[\prod_{i=1}^{\ell_d-1}K_{\sigma_n(i)}\bigg]P_0U_r}_{=:\hat\varphi_1} \underbrace{U_r^{-1}P_0e^{L\Delta}P_0U_r^{-1}}_{=:\hat\varphi_2} \\
        &\times\underbrace{ U_r P_0e^{-L\Delta}B_{0,c}\bigg[\prod_{i=\ell_d+1}^{n}K_{\sigma_n(i)}\bigg]e^{L\Delta}P_0U_r^{-1}}_{=:\hat\varphi_3}.
    \end{aligned}
    \end{equation}
    Applying the bounds $\|\hat\varphi_1\|_{\rm HS}\leq\beta^{\ell_d} e^{-\frac{4L^3}{3\ell_d^2}}e^{-2Lc}$ (by Lemma~\ref{uwkey1}), $\|\hat\varphi_2\|_{\rm HS}\leq \tfrac{1}{\sqrt{L}}$ (by~\eqref{4b3}) and  $\|\hat\varphi_3\|_{\rm op}\leq \beta^{n-\ell_d+1}$ (induction assumption), we have $\|\hat\Phi_{\sigma_n}\|_{\rm HS}\leq \|\hat\varphi_1\|_{\rm HS}\|\hat\varphi_2\|_{\rm HS}\|\hat\varphi_3\|_{\rm op}\leq\beta^{n+1}e^{-\frac{4L^3}{3n^2}}.$\\
    \textbf{Case III: $\sigma_n\in\{a,b,c,d,e\}^n\setminus\{a,b,c,d\}^n$.} It is the same decomposition as in the previous case except that in $\hat\varphi_2$ there is $e^{L\tilde\Delta}$ instead of $e^{L\Delta}$ and we use~\eqref{4b3}.\\
     \textbf{Case IV: $\sigma_n\in\{a,b,c,d,e,v\}^n\setminus\{a,b,c,d,e\}^n$}. Define $\ell_v=\max\{i\mid\sigma_n(i)=v\}\leq n$, then
    \begin{equation}
       \hat \Phi_{\sigma_n}=\underbrace{U_r P_0e^{-L\Delta}B_{0,c}\bigg[\prod_{i=1}^{\ell_v-1}K_{\sigma_n(i)}\bigg]e^{L\Delta}P_0U_r^{-1}}_{=:\hat\varphi_1} \underbrace{U_r P_0e^{-L\Delta}B_{0,c}\bigg[\prod_{i=\ell_v+1}^{n}K_{\sigma_n(i)}\bigg]e^{L\Delta}P_0U_r^{-1}}_{=:\hat\varphi_2}.
    \end{equation}
    The results follows from induction assumption.
\end{proof}

\begin{cor}\label{ca10}
   Let $n\in\mathbb Z_{\geq 0}$, $\sigma_n\in\{a,b,c,d,e,v\}^n$, then $\|\hat \Phi_{\sigma_n}\|_{\rm HS}\leq\beta^{n+1}e^{-\frac{4L^3}{3(n+1)^2}}e^{-2Lc}$, where
    \begin{equation}
        \hat \Phi_{\sigma_n}=U_rP_0e^{-L\Delta}B_{0,c}\bigg[\prod_{i=1}^nK_{\sigma_n(i)}\bigg]P_0U_r.
    \end{equation}
\end{cor}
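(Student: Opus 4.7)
The plan is to reduce Corollary~\ref{ca10} to the two preceding lemmas: Lemma~\ref{uwkey1} (which already covers words in $\{a,b,c,d,e\}^n$, so it gives the conclusion when no $v$ appears) and Lemma~\ref{uvwkey1} (which already handles the letter $v$, but with $e^{L\Delta}P_0U_r^{-1}$ on the right rather than $P_0 U_r$). The whole point of the corollary is to allow $v$'s on the right flank ending in $P_0U_r$, so the argument just amounts to isolating the $v$'s and splitting the product at one of them.

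If $v\notin\sigma_n$, then $\sigma_n\in\{a,b,c,d,e\}^n$ and the claim is exactly Lemma~\ref{uwkey1}. Otherwise let $\ell_v=\max\{i\mid\sigma_n(i)=v\}\in\{1,\ldots,n\}$. The key algebraic observation is that $K_v=e^{L\Delta}P_0e^{-L\Delta}B_{0,c}$ contains a central projector, so using $P_0^2=P_0$ and inserting $U_r^{-1}U_r=\mathrm{Id}$ at the split point, I factor $\hat\Phi_{\sigma_n}=\hat\varphi_1\hat\varphi_2$ with
\begin{equation}
\hat\varphi_1=U_rP_0e^{-L\Delta}B_{0,c}\bigg[\prod_{i=1}^{\ell_v-1}K_{\sigma_n(i)}\bigg]e^{L\Delta}P_0U_r^{-1},
\end{equation}
\begin{equation}
\hat\varphi_2=U_rP_0e^{-L\Delta}B_{0,c}\bigg[\prod_{i=\ell_v+1}^{n}K_{\sigma_n(i)}\bigg]P_0U_r.
\end{equation}

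By the maximality of $\ell_v$, the tail word $(\sigma_n(\ell_v+1),\ldots,\sigma_n(n))$ has no $v$, so Lemma~\ref{uwkey1} applies verbatim and yields $\|\hat\varphi_2\|_{\rm HS}\leq\beta^{n-\ell_v+1}e^{-\frac{4L^3}{3(n-\ell_v+1)^2}}e^{-2Lc}$. The head $\hat\varphi_1$ is exactly of the form covered by Lemma~\ref{uvwkey1}, applied to a word of length $\ell_v-1$ in $\{a,b,c,d,e,v\}^{\ell_v-1}$; in the worst case (item (i)) this gives $\|\hat\varphi_1\|_{\rm op}\leq\beta^{\ell_v}$, and in the other case an even smaller Hilbert--Schmidt bound. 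Then $\|\hat\varphi_1\hat\varphi_2\|_{\rm HS}\leq\|\hat\varphi_1\|_{\rm op}\|\hat\varphi_2\|_{\rm HS}$ combined with $n-\ell_v+1\leq n+1$ in the Gaussian exponent produces $\|\hat\Phi_{\sigma_n}\|_{\rm HS}\leq\beta^{n+1}e^{-\frac{4L^3}{3(n+1)^2}}e^{-2Lc}$, as claimed.

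The edge cases $\ell_v=1$ (empty head) and $\ell_v=n$ (empty tail, where $\hat\varphi_2$ collapses to $U_rP_0e^{-L\Delta}B_{0,c}P_0U_r$ and is bounded by~\eqref{1b4} directly) are just the base cases of the two lemmas. There is really no serious obstacle here: once one notices that $K_v$ can be cleanly cut at its central $P_0$, the corollary falls out of the two preceding lemmas. The only point requiring care is bookkeeping the superexponential $e^{-4L^3/(3m^2)}$ factors across the split and verifying that their product dominates the target factor $e^{-4L^3/(3(n+1)^2)}$, which is immediate from monotonicity of $m\mapsto 1/m^2$.
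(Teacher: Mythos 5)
Your proposal is correct and is essentially the paper's own proof: the paper also splits at $\ell_v=\max\{i\mid\sigma_n(i)=v\}$ through the central $P_0$ of $K_v$, bounds the head $U_rP_0e^{-L\Delta}B_{0,c}[\cdots]e^{L\Delta}P_0U_r^{-1}$ in operator norm by $\beta^{\ell_v}$ via Lemma~\ref{uvwkey1}, and bounds the $v$-free tail in Hilbert--Schmidt norm via Lemma~\ref{uwkey1}, with the same monotonicity remark on the $e^{-4L^3/(3m^2)}$ factor.
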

\begin{proof}
    If $\sigma_n\in\{a,b,c,d,e\}^n$, the result follows from Lemma~\ref{uwkey1}. Consider $\sigma_n\in\{a,b,c,d,e,v\}^n$ with $v\in\sigma_n$,  we have
    \begin{equation}
        \hat \Phi_{\sigma_n}=\underbrace{U_rP_0e^{-L\Delta}B_{0,c}\bigg[\prod_{i=1}^{\ell_v-1}K_{\sigma_n(i)}\bigg]e^{L\Delta}P_0U_r^{-1}}_{=:\hat\varphi_1}\underbrace{U_r P_0e^{-L\Delta}B_{0,c}\bigg[\prod_{i=\ell_v+1}^nK_{\sigma_n(i)}\bigg]P_0U_r}_{=:\hat\varphi_2},
    \end{equation}
    where $\ell_v=\max\{i|\sigma_n(i)=v\}\geq 1$. Applying $\|\hat\varphi_1\|_{\rm op}\leq \beta^{\ell_v}$ (by Lemma~\ref{uvwkey1}) and $\|\hat\varphi_2\|_{\rm HS}\leq \beta^{n-\ell_v+1}e^{-\frac{4L^3}{(n-\ell_v+1)^2}}e^{-2Lc}$ (by definition of $\ell_v$, we can use Lemma~\ref{uwkey1}), we then have $\|\hat\Phi_{\sigma_n}\|_{\rm HS}\leq \|\hat\varphi_1\|_{\rm op}\|\hat\varphi_2\|_{\rm HS}\leq\beta^{n+1}e^{-\frac{4L^3}{3(n+1)^2}}e^{-2Lc}$.
\end{proof}

\begin{lem}\label{uvwkey2}
    For $n\in\mathbb Z_{\geq 0}$ and $\sigma_n\in\{a,b,c\}^n$, we have $\|\hat \Phi_{\sigma_n}\|_{\rm HS}\leq \beta^{n+1}e^{Lr^2}$, where
    \begin{equation}
        \hat \Phi_{\sigma_n}=U_r^{-1}P_0B_{0,c}\bigg[\prod_{i=1}^{n}K_{\sigma_n(i)}\bigg]e^{L\tilde\Delta}P_0U_r^{-1}.
    \end{equation}
\end{lem}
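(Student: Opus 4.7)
The plan is to prove Lemma~\ref{uvwkey2} by induction on $n$, mirroring the pattern of Lemmas~\ref{wkey} and~\ref{uwkey1} above. The base case $n=0$ reduces to $\|U_r^{-1} P_0 B_{0,c} e^{L\tilde\Delta} P_0 U_r^{-1}\|_{\rm HS} \leq \beta e^{Lr^2}$, which is exactly bound~\eqref{3b4}.

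For the inductive step I would split $\sigma_n \in \{a,b,c\}^n$ into three cases. First, if $\sigma_n \equiv a$, then $\hat\Phi_{\sigma_n} = U_r^{-1} P_0 B_{0,c} \tilde B_{0,c}^n e^{L\tilde\Delta} P_0 U_r^{-1}$ and~\eqref{6b4} immediately yields $\beta^{n+1} e^{Lr^2}$. Second, if $\sigma_n \in \{a,b\}^n$ contains at least one $b$, set $f_b = \min\{i : \sigma_n(i) = b\}$ and insert $U_r U_r^{-1}$ at the trailing $P_0$ of $K_b$ to factor
\begin{equation*}
\hat\Phi_{\sigma_n} = \bigl[U_r^{-1} P_0 B_{0,c} \tilde B_{0,c}^{f_b - 1} P_0 U_r\bigr] \cdot \bigl[U_r^{-1} P_0 \tilde B_{0,c} \textstyle\prod_{i=f_b+1}^{n} K_{\sigma_n(i)} \, e^{L\tilde\Delta} P_0 U_r^{-1}\bigr].
\end{equation*}
Bound~\eqref{1cb3} controls the first factor by $\beta^{f_b}$ and Lemma~\ref{wkey} controls the second by $\beta^{n-f_b+1} e^{Lr^2}$, giving the claim.

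The only substantive case is $c \in \sigma_n$. Setting $\ell_c = \max\{i : \sigma_n(i) = c\}$ and expanding $K_c = e^{L\tilde\Delta} P_0 e^{-L\Delta} B_{0,c}$, I would split at the middle $P_0$ in $K_c$ to write $\hat\Phi_{\sigma_n} = \hat\varphi_1 \hat\varphi_2$ with
\begin{align*}
\hat\varphi_1 &= U_r^{-1} P_0 B_{0,c} \textstyle\prod_{i=1}^{\ell_c - 1} K_{\sigma_n(i)} \, e^{L\tilde\Delta} P_0 U_r^{-1}, \\
\hat\varphi_2 &= U_r P_0 e^{-L\Delta} B_{0,c} \textstyle\prod_{i=\ell_c+1}^{n} K_{\sigma_n(i)} \, e^{L\tilde\Delta} P_0 U_r^{-1}.
\end{align*}
Since $\hat\varphi_1$ involves a word of length $\ell_c - 1 < n$ in $\{a,b,c\}$, the induction hypothesis gives $\|\hat\varphi_1\|_{\rm HS} \leq \beta^{\ell_c} e^{Lr^2}$. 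For $\hat\varphi_2$ only letters $a,b$ can appear; if all are $a$, identity~\eqref{a103} collapses $\hat\varphi_2$ to $U_r P_0 \hat B_{0,c}^{n-\ell_c+1} P_0 U_r^{-1}$, bounded by $\beta^{n-\ell_c+1}$ via~\eqref{3ab3}; if some $b$ appears, set $f_b^\ast = \min\{i > \ell_c : \sigma_n(i) = b\}$ and iterate~\eqref{4a1} to turn $e^{-L\Delta} B_{0,c} \tilde B_{0,c}^{f_b^\ast - \ell_c - 1}$ into $\hat B_{0,c}^{f_b^\ast - \ell_c - 1} e^{-L\Delta} B_{0,c}$, then split once more at the trailing $P_0$ of $K_b$ to apply~\eqref{1b4} on the left (yielding $\beta^{f_b^\ast - \ell_c} e^{-2Lc}$ up to super-exponential factors) and Lemma~\ref{wkey} on the right (yielding $\beta^{n - f_b^\ast + 1} e^{Lr^2}$).

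The main obstacle is the final accounting in this last sub-case: multiplying the three contributions produces $\beta^{n+1} e^{2Lr^2 - 2Lc}$, and the hypothesis $r^2 \leq 2c$ is used here in its sharpest form to conclude $e^{2Lr^2 - 2Lc} \leq e^{Lr^2}$, closing the induction. As highlighted in Remark~\ref{obstica}, this is precisely the structural reason the method cannot be pushed below $c = 0$.
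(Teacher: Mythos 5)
Your proof is correct and follows essentially the same inductive scheme as the paper: the same base case via \eqref{3b4}, the same treatment of pure-$a$ words via \eqref{6b4}, and in the crucial case $c\in\sigma_n$ the identical split at the middle $P_0$ of the last $K_c$, with the induction hypothesis on the left, \eqref{a103}/\eqref{3ab3} or \eqref{4a1}+\eqref{1b4} plus Lemma~\ref{wkey} on the right, and the same final accounting $e^{2Lr^2-2Lc}\leq e^{Lr^2}$ from $r^2\leq 2c$. The only deviations are cosmetic (splitting at the first rather than the last $b$ in the $\{a,b\}$ case, using \eqref{1cb3} and Lemma~\ref{wkey} in place of Lemma~\ref{la121} and \eqref{4bb4}, and the slight misnomer \emph{trailing} $P_0$ of $K_b=P_0\tilde B_{0,c}$), none of which affects validity.
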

\begin{proof}
We show this via induction on $n$ the case $n=0$ follows from~\eqref{3b4}. We omit details for $n=1$. For induction step $n-1\mapsto n$, consider the following cases.\\
\textbf{Case I: $\sigma_n\equiv a$}. Then  $\hat\Phi_{\sigma_n}=U_r^{-1}P_0B_{0,c}\tilde B_{0,c}^{n}e^{L\tilde\Delta}P_0U_r^{-1}$, the result is true by~\eqref{6b4}.\\
\textbf{Case II: $\sigma_n\in\{a,b\}^n\setminus\{a\}^n$}. Define $\ell_b=\max\{j|\sigma_n(j)=b\}\geq 1$, then
    \begin{equation}
        \hat \Phi_{\sigma_n}=\underbrace{U_r^{-1}P_0B_{0,c}\bigg[\prod_{i=1}^{\ell_b-1}K_{\sigma_n(i)}\bigg]P_0U_r}_{=:\hat\varphi_1}\underbrace{U_r^{-1} P_0\tilde B_{0,c}^{n-\ell_b+1}e^{L\tilde\Delta}P_0U_r^{-1}}_{=:\hat\varphi_2}.
    \end{equation}
Since $\sigma_n\in\{a,b\}^n$, we can apply Lemma~\ref{la121} to get $\|\hat\varphi_1\|_{\rm op}\leq \beta^{\ell_b}$, together with $\|\hat\varphi_2\|_{\rm HS}\leq\beta^{n-\ell_b+1}e^{Lr^2}$ (by~\eqref{4bb4}), we have $\|\hat \Phi_{\sigma_n}\|_{\rm HS}\leq \|\hat\varphi_1\|_{\rm op}\|\hat\varphi_2\|_{\rm HS}\leq\beta^{n+1}e^{Lr^2}.$\\
\textbf{Case III: $\sigma_n\in\{a,b,c\}^n\setminus\{a,b\}^n$}. Define
        $\ell_c=\max\{j|\sigma_n(j)=c\}\geq 1$, then
\begin{equation}
\hat\Phi_{\sigma_n}=\underbrace{U_r^{-1}P_0B_{0,c}\bigg[\prod_{i=1}^{\ell_c-1}K_{\sigma_n(i)}\bigg]e^{L\tilde\Delta}P_0U_r^{-1}}_{=:\hat\varphi_1}\underbrace{ U_rP_0e^{-L\Delta}B_{0,c}\bigg[\prod^{n}_{i=\ell_c+1}K_{\sigma_n(i)}\bigg]e^{L\tilde\Delta}P_0U_r^{-1}}_{=:\hat\varphi_2}.
\end{equation}
 By induction assumption $\|\hat\varphi_1\|_{\rm HS}\leq\beta^{\ell_c}e^{Lr^2}$. Now we need to deal with $\hat\varphi_2$. If $\sigma_n(i)=a$ for all $i\in\{\ell_c+1,\ldots,n\}$, applying~\eqref{a103}, we have
        $\hat\varphi_2=U_rP_0\hat B_{0,c}^{n-\ell_c+1}P_0U_r^{-1}$ and $\|\hat\varphi_2\|_{\rm op}\leq \beta^{n-\ell_c+1}$ (by~\eqref{3ab3}), so that
    $\|\hat\Phi_{\sigma_n}\|_{\rm HS}\leq\|\hat\varphi_1\|_{\rm HS}\|\hat\varphi_2\|_{\rm op}\leq\beta^{n+1}e^{Lr^2}$.
    If it exists $j>\ell_c+1$ with $\sigma_n(j)=b$, setting $f_b=\min\{j\geq\ell_c+1|\sigma_n(j)=b\}$, we get
    \begin{equation}
        \hat\varphi_2= \underbrace{U_r P_0e^{-L\Delta}B_{0,c}\bigg[\prod^{f_b-1}_{i=\ell_c+1}K_{\sigma_n(i)}\bigg]P_0U_r}_{=:\hat\varphi_2^1}\underbrace{U_r^{-1} P_0\tilde B_{0,c}\bigg[\prod_{i=f_b+1}^{n}K_{\sigma_n(i)}\bigg]e^{L\tilde\Delta}P_0U_r^{-1}}_{=:\hat\varphi_2^2}.
    \end{equation}
     Applying $\|\hat\varphi_2^1\|_{\rm HS}\leq \beta^{f_b-\ell_c}e^{-\frac{4L^3}{3(f_b-\ell_c)^2}}e^{-2Lc}$ (by Lemma~\ref{ca10}), $\|\hat\varphi_2^2\|_{\rm HS}\leq \beta^{n-f_b+1}e^{Lr^2}$ (induction assumption) and $r^2\leq 2c$, we have $\|\hat\Phi_{\sigma_n}\|_{\rm HS}\leq\|\hat\varphi_1\|_{\rm HS}\|\hat\varphi_2^1\|_{\rm HS}\|\hat\varphi_2^2\|_{\rm HS}\leq \beta^{n+1}e^{Lr^2}$.
\end{proof}

\begin{lem}\label{la12}
    For any $n\in\mathbb Z_{\geq 0}$ and $\sigma_n\in\{a,b,c,d,e\}^n$, the operator
    \begin{equation}
    \hat\Phi_{\sigma_n}=U_r^{-1}P_0B_{0,c}\bigg[\prod_{i=1}^nK_{\sigma_n(i)}\bigg]P_0U_r
    \end{equation}
satisfies the following bounds:
    \begin{enumerate}
        \item if $\sigma_n\in\{a,b\}^n$, then $\|\hat \Phi_{\sigma_n}\|_{\rm op}\leq \beta^{n+1}$,
        \item if $\sigma_n\in\{a,b,c,d,e\}^n\setminus\{a,b\}^n$, then $\|\hat \Phi_{\sigma_n}\|_{\rm op}\leq \|\hat \Phi_{\sigma_n}\|_{\rm HS}\leq \beta^{n+1}e^{-\frac{4L^3}{3n^2}}$.
    \end{enumerate}
\end{lem}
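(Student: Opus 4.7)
The plan is to avoid an induction and instead reduce the second bullet, via a single split of the product, to three already-established bounds: Lemma~\ref{la121}, Lemma~\ref{uvwkey2}, and Lemma~\ref{uwkey1}. The first bullet is exactly Lemma~\ref{la121} (the case $n=0$ being \eqref{1ab3}), so nothing further is required there.

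For $\sigma_n\in\{a,b,c,d,e\}^n\setminus\{a,b\}^n$, I set $\ell=\min\{i\mid\sigma_n(i)\in\{c,d,e\}\}$. The crucial structural observation is that, by minimality of $\ell$, the letters $\sigma_n(1),\ldots,\sigma_n(\ell-1)$ lie entirely in $\{a,b\}$, so the left portion of the product is covered by the strong operator-norm bound of Lemma~\ref{la121} (or trivially by the $\{a,b,c\}$-class bound of Lemma~\ref{uvwkey2}), while the right portion still lies in the full alphabet $\{a,b,c,d,e\}$ and is handled by Lemma~\ref{uwkey1}.

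If $\sigma_n(\ell)=c$, using $K_c=e^{L\tilde\Delta}P_0e^{-L\Delta}B_{0,c}$ and inserting $U_r^{-1}U_r$ at the interior projector, I would split $\hat\Phi_{\sigma_n}=\varphi_1\varphi_2$ with
\begin{equation*}
\varphi_1 = U_r^{-1}P_0 B_{0,c}\Bigl[\prod_{i=1}^{\ell-1}K_{\sigma_n(i)}\Bigr]e^{L\tilde\Delta}P_0 U_r^{-1},\qquad
\varphi_2 = U_r P_0 e^{-L\Delta}B_{0,c}\Bigl[\prod_{i=\ell+1}^n K_{\sigma_n(i)}\Bigr]P_0 U_r.
\end{equation*}
Lemma~\ref{uvwkey2} controls $\varphi_1$ by $\beta^\ell e^{Lr^2}$ and Lemma~\ref{uwkey1} controls $\varphi_2$ by $\beta^{n-\ell+1}e^{-4L^3/(3(n-\ell+1)^2)}e^{-2Lc}$; combining via Theorem~\ref{simon} and using $r^2\leq 2c$ together with $n-\ell+1\leq n$ yields the claimed bound $\beta^{n+1}e^{-4L^3/(3n^2)}$. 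If instead $\sigma_n(\ell)\in\{d,e\}$, the corresponding kernel has a pair of interior $P_0$'s flanking $e^{L\tilde\Delta}$ (respectively $e^{L\Delta}$), so I would split into three factors $\varphi_1\varphi_2\varphi_3$: the left one is bounded in operator norm by $\beta^\ell$ via Lemma~\ref{la121}, the middle one by $1/\sqrt{L}\leq 1$ via \eqref{4b3}, and the right one by Lemma~\ref{uwkey1}, and the same exponent bookkeeping closes the estimate.

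The only conceptual step requiring care is the choice of $\ell$ as the \emph{minimum} rather than the maximum of the $\{c,d,e\}$-positions: taking the maximum would leave the right piece in the full alphabet and force the very lemma we are proving to reappear on the right-hand side, creating circularity, whereas the minimum isolates a left piece consisting only of letters in $\{a,b\}$, which is handled by an already-proved, strictly sharper estimate. Apart from this observation, the proof is routine assembly of the bounds in Section~\ref{upperbounds}.
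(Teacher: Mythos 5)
Your proof is correct, and it takes a slightly different route from the paper's. The paper proves the second bullet by induction on $n$, with cases keyed to which letters occur: for a word in $\{a,b,c\}^n\setminus\{a,b\}^n$ it splits at the \emph{first} $c$ (exactly your $c$-case, using Lemma~\ref{uvwkey2} on the left and Lemma~\ref{uwkey1} on the right), but when $d$ or $e$ occurs it splits at the \emph{last} such letter, so the left factor lives in the larger alphabet and is bounded in operator norm by the induction hypothesis, with the middle factor controlled by \eqref{4b3} and the right factor by Lemma~\ref{uwkey1}. Your version replaces the induction by a single split at the first occurrence of any letter of $\{c,d,e\}$, so the left factor is always a pure $\{a,b\}$-word and is covered directly by Lemma~\ref{la121} (or by Lemma~\ref{uvwkey2} in the $c$-case, where it absorbs the $e^{L\tilde\Delta}$ of $K_c$); since Lemma~\ref{uwkey1} already covers the full alphabet $\{a,b,c,d,e\}$ for the right factor, no induction is needed, and the exponent bookkeeping ($r^2\le 2c$, $n-\ell+1\le n$, $e^{-2Lc}\le1$, $L\ge1$) closes exactly as you say. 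There is no circular dependence: Lemmas~\ref{la121}, \ref{uwkey1} and \ref{uvwkey2} are established before and independently of this statement. Your argument is a genuine streamlining; what the paper's inductive scheme buys is only uniformity with the template used throughout the appendix. One small quibble with your closing remark: splitting at the maximal $\{c,d,e\}$-position would put the \emph{left} factor (the one carrying $P_0B_{0,c}$), not the right one, in the full alphabet, and this is not truly circular — the paper handles it by induction on the word length — but your choice of the minimum is indeed what allows the induction to be dispensed with.
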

\begin{proof}
The case $\sigma_n\in\{a,b\}^n$ is solved in Lemma~\ref{la121}. We show the rest cases via induction on $n$ and  omit the details for $n=1$. Consider induction step $n-1\mapsto n$.\\
\textbf{Case I: $\sigma_n\in\{a,b,c\}^n\setminus\{a,b\}^n$.} Define $\ell_c=\min\{i|\sigma_n(i)=c\}\geq 1$, then
    \begin{equation}
    \hat \Phi_{\sigma_n}=\underbrace{U_r^{-1}P_0B_{0,c}\bigg[\prod_{i=1}^{\ell_c-1}K_{\sigma_n(i)}\bigg]e^{L\tilde\Delta}P_0U_r^{-1}}_{=:\hat\varphi_1} \underbrace{U_rP_0e^{-L\Delta}B_{0,c}\bigg[\prod_{i=\ell_c+1}^{n}K_{\sigma_n(i)}\bigg]P_0U_r}_{=:\hat\varphi_2}.
\end{equation}
Applying $\|\hat\varphi_1\|_{\rm HS}\leq \beta^{\ell_c}e^{Lr^2}$ (by Lemma~\ref{uvwkey2}), $\|\hat\varphi_2\|_{\rm HS}\leq\beta^{n-\ell_c+1}e^{-\frac{4L^3}{3(n-\ell_c)^2}}e^{-2Lc}$ (by Lemma~\ref{uwkey1}) and $r^2\leq 2c$, we get $\|\hat\Phi_{\sigma_n}\|_{\rm HS}\leq\|\hat\varphi_1\|_{\rm HS}\|\hat \varphi_2\|_{\rm HS}\leq e^{-\frac{4L^3}{3n^2}}\beta^{n+1}$.\\
\textbf{Case II: $\sigma_n\in\{a,b,c,d\}^n\setminus\{a,b,c\}^n$.} Set $\ell_d=\max\{i|\sigma_n(i)=d\}\geq 1$, then
\begin{equation}
\begin{aligned}
    \hat \Phi_{\sigma_n}&=\underbrace{U_r^{-1}P_0B_{0,c}\bigg[\prod_{i=1}^{\ell_d-1}K_{\sigma_n(i)}\bigg]P_0U_r}_{=:\hat\varphi_1} \underbrace{U_r^{-1}P_0e^{L\tilde\Delta}P_0U_r^{-1}}_{=:\hat\varphi_2}\\
    &\times\underbrace{U_rP_0e^{-L\Delta}B_{0,c}\bigg[\prod_{i=\ell_d+1}^{n}K_{\sigma_n(i)}\bigg]P_0U_r}_{=:\hat\varphi_3}.
\end{aligned}
\end{equation}
Applying $\|\hat\varphi_1\|_{\rm op}\leq \beta^{\ell_d}$ (induction assumption), $\|\hat\varphi_2\|_{\rm HS}\leq \frac{1}{\sqrt{L}}$ (by~\eqref{4b3}) and $\|\hat\varphi_3\|_{\rm HS}\leq \beta^{n-\ell_d+1}e^{-\frac{4L^3}{(n-\ell_d+1)^2}}e^{-2Lc}$ (by Lemma~\ref{uwkey1}), we have $\|\hat\Phi_{\sigma_n}\|_{\rm HS}\leq \|\hat\varphi_1\|_{\rm op}\|\hat\varphi_2\|_{\rm HS}\|\hat\varphi_3\|_{\rm HS}\leq\beta^{n+1}e^{-\frac{4L^3}{3n^2}}$.\\
\textbf{Case III: $\sigma_n\in\{a,b,c,d,e\}^n\setminus\{a,b,c,d\}^n$.} This is the same as the previous case, except that $e^{L\tilde\Delta}$ is replaced by $e^{L\Delta}$.
\end{proof}

\begin{lem}\label{uvwkey3}
    Let $n\in\mathbb Z_{\geq 0}$, $\sigma_n\in\{a,b,c,d,e,v\}^n$, then $\|\hat\Phi_{\sigma_n}\|_{\rm HS}\leq\beta^{n+1}e^{Lr^2}$, where
    \begin{equation}
        \hat\Phi_{\sigma_n}=U_r^{-1}P_0B_{0,c}\bigg[\prod_{i=1}^{n}K_{\sigma_n(i)}\bigg]e^{L\Delta}P_0U_r^{-1}.
    \end{equation}
\end{lem}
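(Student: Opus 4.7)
My plan is to establish the bound by induction on $n$, following the pattern of Lemma~\ref{uvwkey2} but enlarging the alphabet to include $d,e,v$ and replacing the terminal $e^{L\tilde\Delta}$ by $e^{L\Delta}$. The base case $n=0$ reduces, after commuting $B_{0,c}$ past $e^{L\Delta}$, to $\|U_r^{-1}P_0e^{L\Delta}B_{0,c}P_0U_r^{-1}\|_{\rm HS}\leq\beta e^{Lr^2}$, which is exactly~\eqref{2b4}.

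For the induction step I would perform a case distinction on the letters present in $\sigma_n$, splitting at a strategically chosen occurrence and combining previously established bounds multiplicatively. When $\sigma_n\in\{a,b\}^n$ with $b\in\sigma_n$, I split at the last $b$: the prefix is controlled in operator norm by Lemma~\ref{la121}, the suffix in Hilbert--Schmidt norm by~\eqref{5b4}. When $c$ also appears, I split inside the central $P_0$ of $K_c=e^{L\tilde\Delta}P_0e^{-L\Delta}B_{0,c}$ at the last $c$: the prefix then ends with $e^{L\tilde\Delta}P_0$ and is bounded by Lemma~\ref{uvwkey2}, while the suffix begins with $P_0e^{-L\Delta}B_{0,c}$ and terminates with $e^{L\Delta}P_0$, so Lemma~\ref{uvwkey1} applies to it in operator norm. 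When $d$ or $e$ is present, I split inside both $P_0$'s of $K_d$ (respectively $K_e$) to produce three factors bounded by Lemma~\ref{la12}, the heat-kernel bound~\eqref{4b3}, and Lemma~\ref{uvwkey1}. When a $v$ appears, I split inside the $P_0$ of $K_v=e^{L\Delta}P_0e^{-L\Delta}B_{0,c}$ at the last $v$: the prefix is of exactly the form covered by the induction hypothesis, and the suffix is again of the Lemma~\ref{uvwkey1} type. In all these cases the assumption $r^2\leq 2c$ absorbs any $e^{-2Lc}$ factor produced by the Lemma~\ref{uvwkey1} half against the $e^{Lr^2}$ factor produced by the other half.

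The real obstacle is the remaining case $\sigma_n\equiv a$, for which no earlier bound applies directly. My plan there is to derive the explicit kernel
\begin{equation}
B_{0,c}\tilde B_{0,c}^ne^{L\Delta}(x,y)=\frac{e^{2Lc+\frac{2L^3}{3(n+1)^2}+\frac{L(x+y)}{n+1}}}{(n+1)^{1/3}}\Ai\!\left(\frac{L^2}{(n+1)^{4/3}}+\frac{x+y}{(n+1)^{1/3}}+2(n+1)^{2/3}c\right),
\end{equation}
which is the natural analogue of~\eqref{a110}, by combining $\tilde B_{0,c}^ne^{L\Delta}=e^{L\tilde\Delta}B_{0,c}^n$ from~\eqref{a108} with~\eqref{a102} and the shift relation~\eqref{09a1}. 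Because this kernel depends only on $x+y$, the change of variables $u=x+y$, $v=x-y$ in $\|\hat\Phi_{\sigma_n}\|_{\rm HS}^2$ integrates $v$ out over $[-u,u]$ and leaves a Jacobian weight $u$, reducing the norm squared to the one-dimensional integral $e^B\int_0^\infty s\,e^{As}\Ai^2(C+s)\,ds$ with $A=\frac{2L}{(n+1)^{2/3}}-2r(n+1)^{1/3}$, $B=4Lc+\frac{4L^3}{3(n+1)^2}$ and $C=\frac{L^2}{(n+1)^{4/3}}+2(n+1)^{2/3}c$. Inserting the representation~\eqref{airysquare} of $\Ai^2$ and using $\int_0^\infty s\,e^{(A-w)s}\,ds=(w-A)^{-2}$, this collapses to a contour integral with a double pole at $w=A$; evaluated along the vertical line through $w_0=\frac{2L}{(n+1)^{2/3}}+2r(n+1)^{1/3}$, for which $w_0-A=4r(n+1)^{1/3}$, the cubic and linear terms telescope into $w_0^3/12-w_0C+B=(n+1)(\frac{2r^3}{3}-4rc)+2Lr^2$. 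Absorbing the polynomial prefactor via $r,L\geq 1$ and $\beta\geq 2e^{r^3/3-2rc}$ and taking square roots gives the claimed $\|\hat\Phi_{\sigma_n}\|_{\rm HS}\leq\beta^{n+1}e^{Lr^2}$.
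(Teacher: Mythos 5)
Your proposal is correct and follows essentially the same route as the paper: the same induction on $n$, the same base case via \eqref{2b4}, and the same splittings (last $b$ with Lemma~\ref{la121}/\eqref{5b4}, last $c$ with Lemma~\ref{uvwkey2} and Lemma~\ref{uvwkey1}, a three-factor split through \eqref{4b3} for $d$ or $e$, and the induction hypothesis plus Lemma~\ref{uvwkey1} for $v$; whether you split at the first or last occurrence of $v$ is immaterial). The one point where you genuinely diverge is the subcase $\sigma_n\equiv a$: the paper disposes of it by citing the appendix bound \eqref{6b4}, whereas you derive the explicit kernel of $B_{0,c}\tilde B_{0,c}^n e^{L\Delta}$ (which depends on $x+y$) and redo the \eqref{airysquare} contour estimate by hand; your exponent bookkeeping $w_0^3/12-w_0C+B=(n+1)\bigl(\tfrac{2r^3}{3}-4rc\bigr)+2Lr^2$ with $w_0-A=4r(n+1)^{1/3}$ is correct, and the factor $2$ in the definition of $\beta$ indeed absorbs the $O(1)$ prefactor. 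This extra computation makes the step self-contained and, incidentally, is slightly more careful than the paper's citation, since \eqref{6b4} is literally stated for $B_{0,c}\tilde B_{0,c}^{n-1}e^{L\tilde\Delta}$ (an $x-y$-dependent kernel) and only an analogous computation, namely the one you carry out, covers the $e^{L\Delta}$ variant needed here.
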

\begin{proof}
We show this via induction on $n$. The case $n=0$ follows from~\eqref{2b4}. We omit the details for $n=1$. Consider the induction step $n-1\mapsto n$.
\\
\textbf{Case I: $\sigma_n\in\{a,b\}^n$}. If $\sigma_n\equiv a$, then
$\hat \Phi_{\sigma_n}=U_r^{-1}P_0B_{0,c}\tilde B_{0,c}^ne^{L\Delta} P_0U_r^{-1}$, the result follows from~\eqref{6b4}. If $b\in\sigma_n$, define $\ell_b=\max\{i|\sigma_n(i)=b\}\geq 1$, then
\begin{equation}
 \hat\Phi_{\sigma_n}=\underbrace{U_r^{-1}P_0B_{0,c}\bigg[\prod_{i=1}^{\ell_b-1}K_{\sigma_n(i)}\bigg]P_0U_r}_{=:\hat\varphi_1}\underbrace{U_r^{-1} P_0\tilde B_{0,c}^{n-\ell_b+1}e^{L\Delta} P_0U_r^{-1}}_{=:\hat\varphi_2}.
\end{equation}
Applying $\|\hat\varphi_1\|_{\rm op}\leq\beta^{\ell_b}$ (by Lemma~\ref{la12}) and $\|\hat\varphi_2\|_{\rm HS}\leq \beta^{n-\ell_b+1}e^{Lr^2}$ (by~\eqref{5b4}), we have $\|\hat\Phi_{\sigma_n}\|_{\rm HS}\leq \|\hat\varphi_1\|_{\rm op}\|\hat\varphi_2\|_{\rm HS}\leq\beta^{n+1}e^{Lr^2}.$\\
\textbf{Case II: $\sigma_n\in\{a,b,c\}^n\setminus\{a,b\}^n$.} Define $\ell_c=\max\{i|\sigma_n(i)=c\}\geq 1$ and
\begin{equation}
 \hat\Phi_{\sigma_n}=\underbrace{U_r^{-1}P_0B_{0,c}\bigg[\prod_{i=1}^{\ell_c-1}K_{\sigma_n(i)}\bigg]e^{L\tilde\Delta}P_0U_r^{-1}}_{=:\hat\varphi_1}\underbrace{ U_r P_0e^{-L\Delta}B_{0,c}\bigg[\prod_{i=\ell_c+1}^{n}K_{\sigma_n(i)}\bigg]e^{L\Delta}P_0U_r^{-1}}_{=:\hat\varphi_2}.
\end{equation}
Applying $\|\hat\varphi_1\|_{\rm HS}\leq \beta^{\ell_c}e^{Lr^2}$ (by Lemma~\ref{uvwkey2}) and $\|\hat\varphi_2\|_{\rm op}\leq\beta^{n-\ell_c+1}$ (by Lemma~\ref{uvwkey1}), we have $\|\hat \Phi_{\sigma_n}\|_{\rm HS}\leq \|\hat\varphi_1\|_{\rm HS}\|\hat\varphi_2\|_{\rm op}\leq \beta^{n+1}e^{Lr^2}$.\\
\textbf{Case III: $\sigma_n\in\{a,b,c,d\}^n\setminus\{a,b,c\}^n$.} Define $\ell_d=\min\{i|\sigma_n(i)=d\}\geq 1$, then
\begin{equation}
\begin{aligned}
    \hat\Phi_{\sigma_n}&=\underbrace{U_r^{-1}P_0B_{0,c}\bigg[\prod_{i=1}^{\ell_d-1}K_{\sigma_n(i)}\bigg]P_0U_r}_{=:\hat\varphi_1}\underbrace{U_r^{-1}P_0e^{L\tilde\Delta}P_0U_r^{-1}}_{=:\hat\varphi_2}
    \\&\times\underbrace{U_r P_0e^{-L\Delta}B_{0,c}\bigg[\prod_{i=\ell_d+1}^{n}K_{\sigma_n(i)}\bigg]e^{L\Delta}P_0U_r^{-1}}_{=:\hat\varphi_3}.
\end{aligned}
\end{equation}
Applying $\|\hat\varphi_1\|_{\rm op}\leq\beta^{\ell_d}$ (by Lemma~\ref{la12}), $\|\hat\varphi_2\|_{\rm HS}\leq\frac{1}{\sqrt{L}}$ (by~\eqref{4b3}) and $\|\hat\varphi_3\|_{\rm op}\leq \beta^{n-\ell_d+1}$ (by Lemma~\ref{uvwkey1}), we have $\|\hat \Phi_{\sigma_n}\|_{\rm HS}\leq\|\hat\varphi_1\|_{\rm op}\|\hat\varphi_2\|_{\rm HS}\|\hat\varphi_3\|_{\rm op}\leq\beta^{n+1}\leq\beta^{n+1}e^{Lr^2}$.
\\
\textbf{Case IV: $\sigma_n\in\{a,b,c,d,e\}^n\setminus\{a,b,c,d\}^n$.} It is the same as the previous case.
\\
\textbf{Case V: $\sigma_n\in\{a,b,c,d,e,v\}^n\setminus\{a,b,c,d,e\}^n$.} Define $\ell_v=\min\{i|\sigma_n(i)=v\}\geq 1$, then
\begin{equation}
    \hat\Phi_{\sigma_n}=\underbrace{U_r^{-1}P_0B_{0,c}\bigg[\prod_{i=1}^{\ell_v-1}K_{\sigma_n(i)}\bigg]e^{L\Delta}P_0U_r^{-1}}_{=:\hat\varphi_1} \underbrace{U_r P_0e^{-L\Delta}B_{0,c}\bigg[\prod_{i=\ell_v+1}^{n}K_{\sigma_n(i)}\bigg]e^{L\Delta}P_0U_r^{-1}}_{=:\hat\varphi_2}.
\end{equation}
Applying $\|\hat\varphi_1\|_{\rm HS}\leq\beta^{\ell_v}e^{Lr^2}$ (induction assumption) and $\|\hat\varphi_2\|_{\rm op}\leq \beta^{n-\ell_v+1} $ (by Lemma~\ref{uvwkey1}), we have $\|\hat \Phi_{\sigma_n}\|_{\rm HS}\leq \|\hat\varphi_1\|_{\rm HS}\|\hat\varphi_2\|_{\rm op}\leq\beta^{n+1}e^{Lr^2}$.
\end{proof}

\begin{cor}\label{ca14}
    For any $n\in\mathbb Z_{\geq 0}$ and $\sigma_n\in\{a,b,c,d,e,v,u\}^n$, the operator
    \begin{equation}
        \hat\Phi_{\sigma_n}=U_r^{-1}P_0B_{0,c}\bigg[\prod_{i=1}^nK_{\sigma_n(i)}\bigg]P_0U_r
    \end{equation}
satisfies the following bounds:
    \begin{enumerate}
        \item if $\sigma_n\in\{a,b,u\}^n$, then $\|\hat \Phi_{\sigma_n}\|_{\rm op}\leq \beta^{n+1}$,
        \item if $\sigma_n\in\{a,b,c,d,e,v,u\}^n\setminus\{a,b,u\}^n$, then $\|\hat \Phi_{\sigma_n}\|_{\rm op}\leq\|\hat \Phi_{\sigma_n}\|_{\rm HS}\leq \beta^{n+1}e^{-\frac{4L^3}{3n^2}}$.
    \end{enumerate}
\end{cor}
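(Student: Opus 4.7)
The plan is to prove Corollary~\ref{ca14} by induction on $n$, with the case split dictated by whether the letter $u$ occurs in $\sigma_n$. The base case $n=0$ gives $\hat\Phi_\emptyset = U_r^{-1}P_0B_{0,c}P_0U_r$, whose operator norm is bounded by $\beta$ via~\eqref{1ab3}.

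Suppose first that $u \in \sigma_n$, and let $\ell_u = \max\{i : \sigma_n(i) = u\}$. Since $K_u = P_0 B_{0,c}$, inserting $U_r U_r^{-1}$ between the central $P_0$ and $B_{0,c}$ at position $\ell_u$ yields the factorisation $\hat\Phi_{\sigma_n} = \hat\varphi_1\,\hat\varphi_2$, where each $\hat\varphi_j$ has exactly the form covered by Corollary~\ref{ca14} but for the shorter sub-words $\sigma_n|_{i<\ell_u}$ and $\sigma_n|_{i>\ell_u}$ respectively. If $\sigma_n\in\{a,b,u\}^n$, both sub-words lie in $\{a,b,u\}^\ast$; the induction hypothesis yields op bounds $\beta^{\ell_u}$ and $\beta^{n-\ell_u+1}$, whose product is $\beta^{n+1}$, proving case~1. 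Otherwise at least one of the two sub-words contains a letter in $\{c,d,e,v\}$; induction gives that sub-word an HS bound $\beta^\ell e^{-4L^3/(3\ell^2)}$, while the other has op bound $\beta^{\ell'}$. Since $\ell,\ell'\leq n$, Theorem~\ref{simon}\ref{ts3} then delivers the HS bound $\beta^{n+1}e^{-4L^3/(3n^2)}$.

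If instead $u \notin \sigma_n$, then $\sigma_n \in \{a,b,c,d,e,v\}^n$. When $v\notin\sigma_n$, the claim follows directly from Lemma~\ref{la121} (if in addition $\sigma_n\in\{a,b\}^n$) or from Lemma~\ref{la12} (otherwise). The remaining sub-case is $v\in\sigma_n$; put $\ell_v = \max\{i : \sigma_n(i) = v\}$ and split at position $\ell_v$ by using $K_v = e^{L\Delta}P_0 e^{-L\Delta}B_{0,c}$, producing
\begin{equation*}
\hat\Phi_{\sigma_n} = \bigg(U_r^{-1}P_0B_{0,c}\Big[\prod_{i=1}^{\ell_v-1}K_{\sigma_n(i)}\Big]e^{L\Delta}P_0U_r^{-1}\bigg)\cdot\bigg(U_rP_0e^{-L\Delta}B_{0,c}\Big[\prod_{i=\ell_v+1}^{n}K_{\sigma_n(i)}\Big]P_0U_r\bigg).
\end{equation*}
Lemma~\ref{uvwkey3} controls the first factor by $\beta^{\ell_v}e^{Lr^2}$; since no $v$ appears after $\ell_v$, the suffix lies in $\{a,b,c,d,e\}^{n-\ell_v}$ and Corollary~\ref{ca10} bounds the second factor by $\beta^{n-\ell_v+1}e^{-4L^3/(3(n-\ell_v+1)^2)}e^{-2Lc}$. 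The constraint $r^2 \leq 2c$ absorbs $e^{Lr^2}$ into $e^{-2Lc}$, yielding the claimed HS bound.

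The principal obstacle is precisely this last sub-case: the split must be located so that the suffix is free of $v$ (so that Corollary~\ref{ca10}, which delivers the $e^{-2Lc}$ factor and thus the super-exponential decay, can be invoked), while the prefix terminates in $e^{L\Delta}P_0 U_r^{-1}$ via the left half of $K_v$, matching the structure required by Lemma~\ref{uvwkey3}. The hypothesis $r^2\leq 2c$ is essential here to convert the apparently harmful growth $e^{Lr^2}$ coming from the heat-kernel conjugation into the desired decay; this is precisely the same mechanism flagged in Remark~\ref{obstica} as the obstruction to extending the method to $c<0$.
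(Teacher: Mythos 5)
Your proof is correct and follows essentially the same route as the paper: the $v$-case is handled by splitting at the last $v$ through $K_v=e^{L\Delta}P_0e^{-L\Delta}B_{0,c}$ and invoking Lemma~\ref{uvwkey3} together with Corollary~\ref{ca10} (absorbing $e^{Lr^2}$ via $r^2\leq 2c$), and the $u$-case by splitting at an occurrence of $u$ (the paper takes the first, you take the last) and applying the induction hypothesis to both shorter words, with the $u$-free words covered by Lemmas~\ref{la121} and~\ref{la12}. The only cosmetic point is that after inserting $U_rU_r^{-1}$ inside $K_u$ you should also use $P_0^2=P_0$ so that both factors literally have the form of the corollary, which is immediate.
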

\begin{proof}
    The case for $\sigma_n\in\{a,b,c,d,e\}^n$ is proved in Lemma~\ref{la12}. We show rest cases via induction on $n$ and omit details for $n=1$. Consider induction step $n-1\mapsto n$.
\\
    \textbf{Case I: $\sigma_n\in\{a,b,c,d,e,v\}^n\setminus\{a,b,c,d,e\}^n$.} Set $\ell_v=\max\{i|\sigma_n(i)=v\}\geq 1$, then
    \begin{equation}
        \hat\Phi_{\sigma_n}=\underbrace{U_r^{-1}P_0B_{0,c}\bigg[\prod_{i=1}^{\ell_v-1}K_{\sigma_n(i)}\bigg]e^{L\Delta}P_0U_r^{-1}}_{=:\hat\varphi_1} \underbrace{U_rP_0e^{-L\Delta}B_{0,c}\bigg[\prod_{i=\ell_v+1}^{n}K_{\sigma_n(i)}\bigg]P_0U_r}_{=:\hat\varphi_2}.
    \end{equation}
    Applying $\|\hat\varphi_1\|_{\rm HS}\leq \beta^{\ell_v}e^{Lr^2}$ (by Lemma~\ref{uvwkey3}), $\|\hat\varphi_2\|_{\rm HS}\leq \beta^{n-\ell_v+1}e^{-\frac{4L^3}{3(n-\ell_v+1)^2}}e^{-2Lc}$ (by Corollary~\ref{ca10}) and $r^2\leq 2c$, we have $\|\hat \Phi_{\sigma_n}\|_{\rm HS}\leq \|\hat\varphi_1\|_{\rm HS}\|\hat\varphi_2\|_{\rm HS}\leq\beta^{n+1}e^{-\frac{4L^3}{3n^2}}$.
\\
    \textbf{Case II: $\sigma_n\in\{a,b,c,d,e,v,u\}^n\setminus\{a,b,c,d,e,v\}^n$.} Define $\ell_u=\min\{i|\sigma_n(i)=u\}\geq 1$, then
    \begin{equation}
        \hat\Phi_{\sigma_n}=\underbrace{U_r^{-1}P_0B_{0,c}\bigg[\prod_{i=1}^{\ell_u-1}K_{\sigma_n(i)}\bigg]P_0U_r^{-1}}_{=:\hat\varphi_1} \underbrace{U_rP_0B_{0,c}\bigg[\prod_{i=\ell_u+1}^{n}K_{\sigma_n(i)}\bigg]P_0U_r}_{=:\hat\varphi_2}.
    \end{equation}
    The result follows by induction assumption.
\end{proof}

\begin{cor}\label{ca17}
    For any $n\in\mathbb Z_{\geq 0}$ and $\sigma_n\in\{a,b,c,d,e,u,v\}^n$, the operator
    \begin{equation}
        \hat \Phi_{\sigma_n}=U_rP_0e^{-L\Delta}B_{0,c}\bigg[\prod_{i=1}^nK_{\sigma_n(i)}\bigg]P_0U_r
    \end{equation}
satisfies $\|\hat \Phi_{\sigma_n}\|_{\rm HS}\leq\beta^{n+1}e^{-\frac{4L^3}{3(n+1)^2}}e^{-2Lc}$.
\end{cor}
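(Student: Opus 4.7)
The goal is to upgrade Corollary~\ref{ca10}, which already handles $\sigma_n\in\{a,b,c,d,e,v\}^n$, by allowing the extra letter $u$ in the alphabet. The key observation is that $K_u=P_0B_{0,c}$ carries an explicit projector $P_0$ on the left, which supplies a natural place to cut the product and insert $U_rU_r^{-1}=\Id$. Every occurrence of $u$ can therefore be used as a splitting point between two sub-products that have already been bounded.

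If $\sigma_n\in\{a,b,c,d,e,v\}^n$, the estimate is exactly the content of Corollary~\ref{ca10} and nothing needs to be done. Otherwise I set $\ell_u=\min\{i\mid\sigma_n(i)=u\}\geq 1$, so that $\sigma_n(1),\ldots,\sigma_n(\ell_u-1)\in\{a,b,c,d,e,v\}$ while $\sigma_n(\ell_u+1),\ldots,\sigma_n(n)$ may still use the full alphabet $\{a,b,c,d,e,u,v\}$. Using $K_u=P_0\cdot P_0B_{0,c}$ together with $P_0U_rU_r^{-1}P_0=P_0$, I decompose
\begin{equation*}
\hat\Phi_{\sigma_n}=\underbrace{U_rP_0e^{-L\Delta}B_{0,c}\bigg[\prod_{i=1}^{\ell_u-1}K_{\sigma_n(i)}\bigg]P_0U_r}_{=:\hat\varphi_1}\cdot\underbrace{U_r^{-1}P_0B_{0,c}\bigg[\prod_{i=\ell_u+1}^{n}K_{\sigma_n(i)}\bigg]P_0U_r}_{=:\hat\varphi_2}.
\end{equation*}

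By the choice of $\ell_u$, Corollary~\ref{ca10} applies to $\hat\varphi_1$ and yields $\|\hat\varphi_1\|_{\rm HS}\leq\beta^{\ell_u}e^{-\frac{4L^3}{3\ell_u^2}}e^{-2Lc}$. The operator $\hat\varphi_2$ is exactly of the shape bounded by Corollary~\ref{ca14}, whose two sub-cases both give $\|\hat\varphi_2\|_{\rm op}\leq\beta^{n-\ell_u+1}$. Combining via $\|AB\|_{\rm HS}\leq\|A\|_{\rm HS}\|B\|_{\rm op}$ from Theorem~\ref{simon}, and using $\ell_u\leq n+1$ so that $-\frac{4L^3}{3\ell_u^2}\leq-\frac{4L^3}{3(n+1)^2}$, one obtains
\begin{equation*}
\|\hat\Phi_{\sigma_n}\|_{\rm HS}\leq\|\hat\varphi_1\|_{\rm HS}\|\hat\varphi_2\|_{\rm op}\leq\beta^{n+1}e^{-\frac{4L^3}{3(n+1)^2}}e^{-2Lc},
\end{equation*}
which is the claim. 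There is no substantive obstacle: all the analytic work is already stored in Corollaries~\ref{ca10} and~\ref{ca14}, and the only point that requires care is to take $\ell_u$ to be the \emph{first} occurrence of $u$, so that the left piece stays inside the alphabet for which Corollary~\ref{ca10} has been established.
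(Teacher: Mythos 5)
Your proof is correct and follows essentially the same route as the paper: split at the first occurrence of $u$ (using $K_u=P_0B_{0,c}$ and $P_0=P_0U_rU_r^{-1}P_0$), bound the left factor in Hilbert–Schmidt norm via Corollary~\ref{ca10} and the right factor in operator norm via Corollary~\ref{ca14}, then combine with $\|AB\|_{\rm HS}\leq\|A\|_{\rm HS}\|B\|_{\rm op}$. The only cosmetic difference is that the paper ends with the slightly stronger factor $e^{-\frac{4L^3}{3n^2}}$ (since $\ell_u\leq n$), while you relax to $e^{-\frac{4L^3}{3(n+1)^2}}$, which is exactly what the statement requires.
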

\begin{proof}
    The case for $\sigma_n\in\{a,b,c,d,e,v\}^n$ is already proved in Corollary~\ref{ca10}. We only need to consider the case $u\in\sigma_n$. Define     $\ell_u=\min\{i|\sigma_n(i)=u\}\geq 1$, then
    \begin{equation}
        \hat\Phi_{\sigma_n}=\underbrace{U_rP_0e^{-L\Delta}B_{0,c}\bigg[\prod_{i=1}^{\ell_u-1}K_{\sigma_n(i)}\bigg]P_0U_r}_{=:\hat\varphi_1}\underbrace{ U_r^{-1}P_0B_{0,c}\bigg[\prod_{i=\ell_u+1}^nK_{\sigma_n(i)}\bigg]P_0U_r}_{=:\hat\varphi_2}.
    \end{equation}
Applying $\|\hat\varphi_1\|_{\rm HS}\leq \beta^{\ell_u}e^{-\frac{4L^3}{3\ell_u^2}}e^{-2Lc}$ (by Corollary~\ref{ca10}) and $\|\hat\varphi_2\|_{\rm op}\leq\beta^{n-\ell_u+1} $ (by Lemma~\ref{ca14}), we have $\|\hat \Phi_{\sigma_n}\|_{\rm HS}\leq\|\hat\varphi_1\|_{\rm HS}\|\hat\varphi_2\|_{\rm op}\leq \beta^{n+1}e^{-\frac{4L^3}{3n^2}}e^{-2Lc}$.
    \end{proof}
\begin{cor}\label{uvwkey33}
    Let $n\in\mathbb Z_{\geq 0}$, $\sigma_n\in\{a,b,c,d,e,v,u\}^n$, then $\|\hat\Phi_{\sigma_n}\|_{\rm HS}\leq\beta^{n+1}e^{Lr^2}$, where
    \begin{equation}
        \hat\Phi_{\sigma_n}=U_r^{-1}P_0B_{0,c}\bigg[\prod_{i=1}^{n}K_{\sigma_n(i)}\bigg]e^{L\Delta}P_0U_r^{-1}
    \end{equation}
\end{cor}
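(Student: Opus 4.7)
The plan is to piggyback on Lemma~\ref{uvwkey3}, extending it from the alphabet $\{a,b,c,d,e,v\}$ to the larger alphabet $\{a,b,c,d,e,v,u\}$ by a single splitting argument. If $u\notin\sigma_n$, then Lemma~\ref{uvwkey3} applies verbatim and there is nothing to do; so I will focus on the case $u\in\sigma_n$ and exploit the fact that $K_u=P_0B_{0,c}$ carries a free $P_0$, providing a natural cut point.

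Concretely, I would set $\ell_u=\max\{i\mid\sigma_n(i)=u\}$ and write
\[
\hat\Phi_{\sigma_n}=\underbrace{U_r^{-1}P_0B_{0,c}\bigg[\prod_{i=1}^{\ell_u-1}K_{\sigma_n(i)}\bigg]P_0U_r}_{=:\hat\varphi_1}\cdot \underbrace{U_r^{-1}P_0B_{0,c}\bigg[\prod_{i=\ell_u+1}^{n}K_{\sigma_n(i)}\bigg]e^{L\Delta}P_0U_r^{-1}}_{=:\hat\varphi_2},
\]
where the insertion $K_u=P_0B_{0,c}=P_0U_r\cdot U_r^{-1}P_0B_{0,c}$ has been used to perform the split. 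For $\hat\varphi_1$, Corollary~\ref{ca14} directly applies, since its input sequence has letters in $\{a,b,c,d,e,v,u\}$; this gives $\|\hat\varphi_1\|_{\rm op}\leq\beta^{\ell_u}$ (the possible $e^{-4L^3/(3(\ell_u-1)^2)}$ decay factor is bounded by $1$). By the maximality of $\ell_u$, the second factor $\hat\varphi_2$ has letters only in $\{a,b,c,d,e,v\}^{n-\ell_u}$, so Lemma~\ref{uvwkey3} applies and yields $\|\hat\varphi_2\|_{\rm HS}\leq\beta^{n-\ell_u+1}e^{Lr^2}$. Combining via Theorem~\ref{operatorinequality} then produces the claimed bound $\|\hat\Phi_{\sigma_n}\|_{\rm HS}\leq\beta^{\ell_u}\cdot\beta^{n-\ell_u+1}e^{Lr^2}=\beta^{n+1}e^{Lr^2}$.

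I expect no real obstacle here: this corollary is the natural $u$-extension of Lemma~\ref{uvwkey3}, and the splitting trick at a $u$-position is exactly the same device already used to derive Corollary~\ref{ca14} from Lemma~\ref{la12} and Corollary~\ref{ca17} from Corollary~\ref{ca10}. The only things to double-check are that the exponents $\beta^{\ell_u}\cdot\beta^{n-\ell_u+1}$ assemble to $\beta^{n+1}$ (immediate), and that the $P_0$-sandwich introduced by the split is legitimate — which it is, because $K_u$ literally begins with $P_0$, so inserting $P_0U_r\cdot U_r^{-1}P_0$ at its position changes nothing. No induction is needed since Corollary~\ref{ca14} and Lemma~\ref{uvwkey3} already cover the two halves separately.
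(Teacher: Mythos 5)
Your proposal is correct and coincides with the paper's own proof: the same split at the last occurrence of $u$ (using that $K_u=P_0B_{0,c}$ supplies the $P_0$ to insert $P_0U_r\cdot U_r^{-1}P_0$), the same bound $\|\hat\varphi_1\|_{\rm op}\leq\beta^{\ell_u}$ from Corollary~\ref{ca14}, and the same bound $\|\hat\varphi_2\|_{\rm HS}\leq\beta^{n-\ell_u+1}e^{Lr^2}$ from Lemma~\ref{uvwkey3}, combined exactly as in the paper.
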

\begin{proof}
    The case for $\sigma_n\in\{a,b,c,d,e,v\}^n$ is already proved in Lemma~\ref{uvwkey3}. Now consider the case $u\in\sigma_n$, then we have $1\leq\ell_u=\max\{i\mid\sigma_n(i)=u\}\leq n$ and
        \begin{equation}
            \hat\Phi_{\sigma_n}=\underbrace{U_r^{-1}P_0B_{0,c}\bigg[\prod_{i=1}^{\ell_u-1}K_{\sigma_n(i)}\bigg]P_0U_r}_{=:\hat\varphi_1}\cdot \underbrace{U_r^{-1}P_0B_{0,c}\bigg[\prod_{i=\ell_u+1}^{n}K_{\sigma_n(i)}\bigg]e^{L\Delta}P_0U_r^{-1}}_{=:\hat\varphi_2}.
        \end{equation}
    By Corollary~\ref{ca14}, we have $\|\hat\varphi_1\|_{\rm op}\leq \beta^{\ell_u}$. By definition of $\ell_u$, we have $\sigma_n(i)\neq u$ for all $i\geq\ell_u+1$, hence, we can apply Lemma~\ref{uvwkey3} to deduce $\|\hat\varphi_2\|_{\rm HS}\leq\beta^{n-\ell_u+1}e^{Lr^2}.$ Combining together, we then have $
        \|\hat\Phi_{\sigma_n}\|_{\rm HS}\leq\|\hat\varphi_1\|_{\rm op}\|\hat\varphi_2\|_{\rm HS}\leq\beta^{n+1}e^{Lr^2}.$
\end{proof}
\begin{cor}\label{ca15}
     Let $n\in\mathbb Z_{\geq 0}$, $\sigma_n\in\{a,b,c,u\}^n$, then $\|\hat \Phi_{\sigma_n}\|_{\rm HS}\leq \beta^{1+n}e^{Lr^2}$, where
     \begin{equation}
         \hat \Phi_{\sigma_n}=U_r^{-1}P_0B_{0,c}\bigg[\prod_{i=1}^{n}K_{\sigma_n(i)}\bigg]e^{L\tilde\Delta}P_0U_r^{-1}.
     \end{equation}
\end{cor}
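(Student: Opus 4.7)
The plan is to reduce to Lemma~\ref{uvwkey2} by cutting the product at the last occurrence of the letter $u$, exactly in the same spirit as the previous corollaries in this appendix. If $u\notin\sigma_n$, then $\sigma_n\in\{a,b,c\}^n$ and the statement is exactly Lemma~\ref{uvwkey2}, so I may assume $u\in\sigma_n$ and set $\ell_u=\max\{i\mid\sigma_n(i)=u\}\in\{1,\ldots,n\}$. Since $K_u=P_0B_{0,c}$, inserting a resolution of the identity at the position $\ell_u$ produces a natural $P_0$ on both sides of the cut, which is what allows the two halves to fit into the existing lemmas.

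Concretely, I would write
\begin{equation*}
\hat\Phi_{\sigma_n}=\underbrace{U_r^{-1}P_0B_{0,c}\bigg[\prod_{i=1}^{\ell_u-1}K_{\sigma_n(i)}\bigg]P_0U_r}_{=:\hat\varphi_1}\cdot\underbrace{U_r^{-1}P_0B_{0,c}\bigg[\prod_{i=\ell_u+1}^{n}K_{\sigma_n(i)}\bigg]e^{L\tilde\Delta}P_0U_r^{-1}}_{=:\hat\varphi_2},
\end{equation*}
using that the factor $K_{\sigma_n(\ell_u)}=K_u=P_0B_{0,c}$ splits cleanly into the ending $P_0$ of $\hat\varphi_1$ and the leading $P_0B_{0,c}$ of $\hat\varphi_2$, and that the inserted $U_r U_r^{-1}$ is the identity. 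For $\hat\varphi_1$, the word $\sigma_n\restriction\{1,\ldots,\ell_u-1\}$ lies in $\{a,b,c,u\}^{\ell_u-1}$, so Corollary~\ref{ca14} gives $\|\hat\varphi_1\|_{\rm op}\leq\beta^{\ell_u}$. For $\hat\varphi_2$, the definition of $\ell_u$ as the maximal index ensures that the word $\sigma_n\restriction\{\ell_u+1,\ldots,n\}$ lies in $\{a,b,c\}^{n-\ell_u}$, so Lemma~\ref{uvwkey2} applies and gives $\|\hat\varphi_2\|_{\rm HS}\leq\beta^{n-\ell_u+1}e^{Lr^2}$. Combining these via items~\ref{ts3}--\ref{ts4} of Theorem~\ref{simon} yields $\|\hat\Phi_{\sigma_n}\|_{\rm HS}\leq\|\hat\varphi_1\|_{\rm op}\|\hat\varphi_2\|_{\rm HS}\leq\beta^{n+1}e^{Lr^2}$, as claimed.

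There is no genuine obstacle here beyond bookkeeping: the two auxiliary results from the same appendix have already been set up so that cutting at the last $u$ produces halves lying in their respective hypothesis classes (letters $\{a,b,c,u\}$ on the left for Corollary~\ref{ca14}, and the more restrictive $\{a,b,c\}$ on the right for Lemma~\ref{uvwkey2}). The only point to be careful about is choosing $\ell_u$ as the \emph{maximum} rather than the minimum: choosing the minimum would leave possible further $u$'s in the right half, which would not satisfy the hypothesis of Lemma~\ref{uvwkey2}. The unified bound $\beta^{n+1}e^{Lr^2}$ then arises because the two halves carry respective weights $\beta^{\ell_u}$ and $\beta^{n-\ell_u+1}e^{Lr^2}$ whose product telescopes to $\beta^{n+1}e^{Lr^2}$ independently of $\ell_u$.
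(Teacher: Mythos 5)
Your proof is correct and takes essentially the same route as the paper's: cut the word at an occurrence of $u$, absorb the $P_0$ of $K_u=P_0B_{0,c}$ into both factors, bound the left factor in operator norm by Corollary~\ref{ca14} and the right factor in Hilbert--Schmidt norm by Lemma~\ref{uvwkey2}, and multiply. Your choice of $\ell_u$ as the \emph{maximal} index is in fact the cleaner one: the paper's write-up declares $\ell_u=\min$, under which the right factor may still contain $u$'s so that Lemma~\ref{uvwkey2} only applies after an implicit induction, whereas with the maximum both halves land directly in the hypothesis classes of the cited results.
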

\begin{proof}
    The case for $\sigma_n\in\{a,b,c\}^n$ is already proved in Lemma~\ref{uvwkey2}. Let now $\sigma_n\in\{a,b,c,u\}^n$ with $u\in\sigma_n$, then
    \begin{equation}
     \hat\Phi_{\sigma_n}=\underbrace{U_r^{-1}P_0B_{0,c}\bigg[\prod_{i=1}^{\ell_u-1}K_{\sigma_n(i)}\bigg]P_0U_r}_{=:\hat\varphi_1}\underbrace{ U_r^{-1} P_0B_{0,c}\bigg[\prod_{i=\ell_u+1}^{n}K_{\sigma_n(i)}\bigg]e^{L\tilde\Delta}P_0U_r^{-1}}_{=:\hat\varphi_2},
    \end{equation}
    where $\ell_u=\min\{i|\sigma_n(i)=u\}\geq 1$. Since $\sigma_n(i)\neq u$ for $i<\ell_u$, we have $\|\hat\varphi_2\|_{\rm HS}\leq\beta^{n-\ell_u+1}e^{Lr^2}$ (by Lemma~\ref{uvwkey2}), together with $\|\hat\varphi_1\|_{\rm op}\leq \beta^{\ell_u}$ (by Corollary~\ref{ca14}), it holds
    $\|\hat\Phi_{\sigma_n}\|_{\rm HS}\leq \|\hat\varphi_1\|_{\rm op}\|\hat\varphi_2\|_{\rm HS}\leq \beta^{n+1}e^{Lr^2}$.
\end{proof}

\begin{lem}\label{wkey1111}
Let $n\in\mathbb Z_{\geq 0}$,  $\sigma_n\in\{a,b,c,v\}^n$, then $\|\hat \Phi_{\sigma_n}\|_{\rm op}\leq \beta^{n+1}$, where
\begin{equation}
    \hat \Phi_{\sigma_n}=U_r^{-1} P_0\tilde B_{0,c}\bigg[\prod_{i=1}^{n}K_{\sigma(i)}\bigg]P_0 U_r.
\end{equation}
\end{lem}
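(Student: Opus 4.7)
The plan is to mirror the structure of Lemmas~\ref{la121}, \ref{wkey}, and \ref{vwkey1}: induct on $n$, split $\sigma_n$ into exhaustive subcases according to which letters of $\{a,b,c,v\}$ appear, and in each case factor $\hat\Phi_{\sigma_n}$ as $\hat\varphi_1\hat\varphi_2$ by inserting an appropriate splitting of the form $U_s^{\pm 1}U_s^{\mp 1}$ at a ``separating'' $P_0$. Throughout we use that the multiplication operator $U_r$ commutes with the projection $P_0$, together with the submultiplicativity bounds of Theorem~\ref{operatorinequality} and the preliminary bounds of Lemmas~\ref{bound},~\ref{cobound}. The base case $n=0$ reduces to $\|U_r^{-1}P_0\tilde B_{0,c}P_0U_r\|_{\rm op}\leq\|U_r^{-1}\tilde B_{0,c}U_r\|_{\rm op}\leq\beta$, which is \eqref{2ab3}.

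For the induction step $n-1\mapsto n$, I would distinguish four cases. \textbf{Case I} ($\sigma_n\equiv a$): $\hat\Phi_{\sigma_n}=U_r^{-1}P_0\tilde B_{0,c}^{n+1}P_0U_r$, and the bound $\beta^{n+1}$ follows directly from \eqref{2ab3}. \textbf{Case II} ($\sigma_n\in\{a,b\}^n$ with $b\in\sigma_n$): setting $\ell_b=\max\{i\mid \sigma_n(i)=b\}\geq 1$, I split
\begin{equation*}
\hat\Phi_{\sigma_n}=\underbrace{U_r^{-1}P_0\tilde B_{0,c}\bigg[\prod_{i=1}^{\ell_b-1}K_{\sigma_n(i)}\bigg]P_0U_r}_{=:\hat\varphi_1}\cdot\underbrace{U_r^{-1}P_0\tilde B_{0,c}^{n-\ell_b+1}P_0U_r}_{=:\hat\varphi_2},
\end{equation*}
applying the induction hypothesis to $\hat\varphi_1$ (operator norm $\leq\beta^{\ell_b}$) and \eqref{2ab3} to $\hat\varphi_2$ (operator norm $\leq\beta^{n-\ell_b+1}$).

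\textbf{Case III} is the case $\sigma_n\in\{a,b,c\}^n$ with $c\in\sigma_n$. Setting $\ell_c=\max\{i\mid\sigma_n(i)=c\}\geq 1$ and using the internal $P_0$ of $K_c=e^{L\tilde\Delta}P_0 e^{-L\Delta}B_{0,c}$ as the splitting point, I obtain
\begin{equation*}
\hat\Phi_{\sigma_n}=\underbrace{U_r^{-1}P_0\tilde B_{0,c}\bigg[\prod_{i=1}^{\ell_c-1}K_{\sigma_n(i)}\bigg]e^{L\tilde\Delta}P_0U_r^{-1}}_{=:\hat\varphi_1}\cdot\underbrace{U_rP_0e^{-L\Delta}B_{0,c}\bigg[\prod_{i=\ell_c+1}^{n}K_{\sigma_n(i)}\bigg]P_0U_r}_{=:\hat\varphi_2}.
\end{equation*}
Since the middle alphabet of $\hat\varphi_1$ lies in $\{a,b,c\}$, Lemma~\ref{wkey} yields $\|\hat\varphi_1\|_{\rm HS}\leq\beta^{\ell_c}e^{Lr^2}$. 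By maximality of $\ell_c$ the middle alphabet of $\hat\varphi_2$ lies in $\{a,b\}\subset\{a,b,c,d,e\}$, so Lemma~\ref{uwkey1} gives $\|\hat\varphi_2\|_{\rm HS}\leq\beta^{n-\ell_c+1}e^{-4L^3/(3(n-\ell_c+1)^2)}e^{-2Lc}$. Combining via $\|\hat\Phi_{\sigma_n}\|_{\rm op}\leq\|\hat\varphi_1\|_{\rm HS}\|\hat\varphi_2\|_{\rm HS}$ and using $r^2\leq 2c$ produces $\beta^{n+1}$.

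\textbf{Case IV} is $\sigma_n\in\{a,b,c,v\}^n$ with $v\in\sigma_n$. Here I split at $f_v=\min\{i\mid\sigma_n(i)=v\}\geq 1$, again using the internal $P_0$ of $K_v=e^{L\Delta}P_0 e^{-L\Delta}B_{0,c}$:
\begin{equation*}
\hat\Phi_{\sigma_n}=\underbrace{U_r^{-1}P_0\tilde B_{0,c}\bigg[\prod_{i=1}^{f_v-1}K_{\sigma_n(i)}\bigg]e^{L\Delta}P_0U_r^{-1}}_{=:\hat\varphi_1}\cdot\underbrace{U_rP_0e^{-L\Delta}B_{0,c}\bigg[\prod_{i=f_v+1}^{n}K_{\sigma_n(i)}\bigg]P_0U_r}_{=:\hat\varphi_2}.
\end{equation*}
By minimality of $f_v$ the middle letters of $\hat\varphi_1$ lie in $\{a,b,c\}$, so Lemma~\ref{vwkey1} bounds $\|\hat\varphi_1\|_{\rm HS}\leq\beta^{f_v}e^{Lr^2}$. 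For $\hat\varphi_2$ the middle alphabet lies in $\{a,b,c,v\}$, hence Corollary~\ref{ca17} gives $\|\hat\varphi_2\|_{\rm HS}\leq\beta^{n-f_v+1}e^{-4L^3/(3(n-f_v+1)^2)}e^{-2Lc}$; invoking $r^2\leq 2c$ once more completes the bound. The main (only) technical obstacle is bookkeeping: one must be careful to split at the extremum of $c$ or $v$ on the correct side so that after splitting each factor has an alphabet already covered by a previously established lemma, and to keep track of the direction of the $U_r^{\pm 1}$ conjugation so that the $P_0$ inserted between the two factors absorbs into a known Hilbert--Schmidt bound.
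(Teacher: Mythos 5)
Your proposal is correct and follows essentially the same route as the paper: induction on $n$ with the same case split by which letters occur, splitting at the internal $P_0$ of $K_c$ or $K_v$, and invoking Lemma~\ref{wkey}, Lemma~\ref{vwkey1} and Corollary~\ref{ca17} (the paper splits at the \emph{first} occurrence of $b$ or $c$ and uses Corollary~\ref{ca17} for the suffix, whereas you split at the last occurrence and use Lemma~\ref{uwkey1}, but this is only a cosmetic difference). The bookkeeping of the alphabets and of the $U_r^{\pm 1}$ conjugations in each factor is handled correctly, so the bounds combine to $\beta^{n+1}$ exactly as in the paper.
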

\begin{proof}
The case $n=0$ follows from~\eqref{2ab3}. We show this via induction on $n\geq 1$ and omit the proof for $n=1$. Consider induction step $n-1\mapsto n.$\\
    \textbf{Case I: $\sigma_n\equiv a$.} Then $\hat\Phi_{\sigma_n}=U_r^{-1} P_0\tilde B_{0,c}^{n+1}P_0 U_r$, the result follows from~\eqref{2ab3}.\\
    \textbf{Case II: $\sigma_n\in\{a,b\}^n\setminus\{a\}^n$.} Define $1\leq \ell_b=\min\{i\mid\sigma_n(i)=b\}\leq n$, then
        \begin{equation}
            \hat \Phi_{\sigma_n}=\underbrace{U_r^{-1} P_0\tilde B_{0,c}^{\ell_b}P_0 U_r}_{=:\hat\varphi_1}\cdot \underbrace{U_r^{-1}P_0\tilde B_{0,c}\bigg[\prod_{i=\ell_b+1}^{n}K_{\sigma(i)}\bigg]P_0 U_r}_{=:\hat\varphi_2}.
        \end{equation}
    Applying now $\|\hat\varphi_2\|_{\rm op}\leq\beta^{\ell_b}$ (by~\eqref{2ab3}) and $\|\hat\varphi_2\|_{\rm op}\leq\beta^{n-\ell_b}$ (induction assumption), we have $\|\hat \Phi_{\sigma_n}\|_{\rm op}\leq\|\hat\varphi_1\|_{\rm op}\|\hat\varphi_2\|_{\rm op}\leq\beta^{n+1}.$\\
    \textbf{Case III: $\sigma_n\in\{a,b,c\}^n\setminus\{a,b\}^n$.} Define $\ell_c=\min\{i\mid\sigma_n(i)=c\}\leq n$, then
    \begin{equation}
            \hat \Phi_{\sigma_n}=\underbrace{U_r^{-1} P_0\tilde B_{0,c}\bigg[\prod_{i=1}^{\ell_c-1}K_{\sigma(i)}\bigg]e^{L\tilde\Delta}P_0U_r^{-1}}_{=:\hat\varphi_1}\cdot \underbrace{U_rP_0e^{-L\Delta}B_{0,c}\bigg[\prod_{i=\ell_c+1}^{n}K_{\sigma(i)}\bigg]P_0 U_r}_{=:\hat\varphi_2},
        \end{equation}
        Applying $\|\hat\varphi_1\|_{\rm HS}\leq\beta^{\ell_c}e^{Lr^2}$ (by Lemma~\ref{wkey}), $\|\hat\varphi_2\|_{\rm HS}\leq \beta^{n-\ell_c+1}e^{-\frac{4L^3}{3(n-\ell_c+1)^2}}e^{-2Lc}$ (by Lemma~\ref{ca17}) and $r^2\leq 2c$, we have
        \begin{equation}
            \| \hat \Phi_{\sigma_n}\|_{\rm op}\leq \| \hat \Phi_{\sigma_n}\|_{\rm HS}\leq\|\hat\varphi_1\|_{\rm HS}\|\hat\varphi_2\|_{\rm HS}\leq\beta^{n+1}.
        \end{equation}
    \textbf{Case IV: $\sigma_n\in\{a,b,c,v\}^n\setminus\{a,b,c\}^n.$} Define $\ell_v=\min\{i\mid\sigma_n(i)=v\}\leq n$, then
    \begin{equation}
        \hat \Phi_{\sigma_n}=\underbrace{U_r^{-1} P_0\tilde B_{0,c}\bigg[\prod_{i=1}^{\ell_v-1}K_{\sigma_n(i)}\bigg]e^{L\Delta}P_0U_r^{-1}}_{=:\hat\varphi_1}\cdot \underbrace{U_rP_0e^{-L\Delta}B_{0,c}\bigg[\prod_{i=\ell_v+1}^{n}K_{\sigma_n(i)}\bigg]P_0 U_r}_{=:\hat\varphi_2}.
    \end{equation}
    By definition of $\ell_v$, we have $\sigma_n(i)\in\{a,b,c\}$ for any $1\leq i\leq\ell_v-1$. Hence, we can apply Lemma~\ref{vwkey1} to deduce $\|\hat\varphi_1\|_{\rm HS}\leq\beta^{\ell_v}e^{Lr^2}$. Together with $\|\hat\varphi_2\|_{\rm HS}\leq\beta^{n-\ell_v+1}e^{-2Lc}.$ (by Corollary~\ref{ca17}) and $r^2\leq 2c$, we have
    \begin{equation}
        \|\hat \Phi_{\sigma_n}\|_{\rm op}\leq \|\hat \Phi_{\sigma_n}\|_{\rm HS}\leq \|\hat\varphi_1\|_{\rm HS}\|\hat\varphi_2\|_{\rm HS}\leq\beta^{n+1}.
    \end{equation}
\end{proof}

\begin{cor}\label{ca16}
     Let $n\in\mathbb Z_{\geq 0},\sigma_n\in\{a,b,c,u,v\}^n$, then $\|\hat \Phi_{\sigma_n}\|_{\rm HS}\leq\beta^{n+1}e^{Lr^2}$, where
     \begin{equation}
         \hat \Phi_{\sigma_n}=U_r^{-1}P_0\tilde B_{0,c}\bigg[\prod_{j=1}^{n}K_{\sigma_n(j)}\bigg]e^{L\Delta}P_0U_r^{-1}.
     \end{equation}
\end{cor}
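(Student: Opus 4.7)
}
The statement extends Lemma~\ref{vwkey1} (which handles $\sigma_n\in\{a,b,c\}^n$) by allowing the two extra letters $u$ and $v$, i.e.\ insertions of $K_u=P_0B_{0,c}$ and $K_v=e^{L\Delta}P_0e^{-L\Delta}B_{0,c}$. The plan is a case split on the ``worst'' letter appearing in $\sigma_n$, in the same spirit as Corollaries~\ref{ca14},~\ref{ca17},~\ref{uvwkey33}: at each occurrence of $u$ or $v$ there is a naturally placed $P_0$ (coming from $K_u$) or a pair $e^{L\Delta}P_0e^{-L\Delta}B_{0,c}$ (coming from $K_v$) that we exploit to factor $\hat\Phi_{\sigma_n}$ as a product of two blocks already estimated in the appendix.

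The plan is to distinguish three cases. First, if $\sigma_n\in\{a,b,c\}^n$, Lemma~\ref{vwkey1} gives the result directly. Second, suppose $u\in\sigma_n$, and set $\ell_u=\min\{i\mid\sigma_n(i)=u\}$. Because $K_u=P_0B_{0,c}=P_0\cdot P_0B_{0,c}$, one can split
\begin{equation*}
\hat\Phi_{\sigma_n}=\underbrace{U_r^{-1}P_0\tilde B_{0,c}\bigg[\prod_{i=1}^{\ell_u-1}K_{\sigma_n(i)}\bigg]P_0U_r}_{=:\hat\varphi_1}\cdot\underbrace{U_r^{-1}P_0B_{0,c}\bigg[\prod_{i=\ell_u+1}^{n}K_{\sigma_n(i)}\bigg]e^{L\Delta}P_0U_r^{-1}}_{=:\hat\varphi_2}.
\end{equation*}
Since $\sigma_n(i)\in\{a,b,c,v\}$ for $i<\ell_u$, Lemma~\ref{wkey1111} yields $\|\hat\varphi_1\|_{\rm op}\le\beta^{\ell_u}$, while Corollary~\ref{uvwkey33} applied to $\sigma_n(\ell_u+1),\ldots,\sigma_n(n)\in\{a,b,c,u,v\}$ yields $\|\hat\varphi_2\|_{\rm HS}\le\beta^{n-\ell_u+1}e^{Lr^2}$, and Theorem~\ref{simon} closes the bound by $\beta^{n+1}e^{Lr^2}$. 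Third, if $u\notin\sigma_n$ but $v\in\sigma_n$, set $\ell_v=\min\{i\mid\sigma_n(i)=v\}$ and split at $K_v=e^{L\Delta}P_0\cdot e^{-L\Delta}B_{0,c}$ as
\begin{equation*}
\hat\Phi_{\sigma_n}=\underbrace{U_r^{-1}P_0\tilde B_{0,c}\bigg[\prod_{i=1}^{\ell_v-1}K_{\sigma_n(i)}\bigg]e^{L\Delta}P_0U_r^{-1}}_{=:\hat\varphi_1}\cdot\underbrace{U_rP_0e^{-L\Delta}B_{0,c}\bigg[\prod_{i=\ell_v+1}^{n}K_{\sigma_n(i)}\bigg]e^{L\Delta}P_0U_r^{-1}}_{=:\hat\varphi_2},
\end{equation*}
so that $\sigma_n(i)\in\{a,b,c\}$ for $i<\ell_v$ and Lemma~\ref{vwkey1} gives $\|\hat\varphi_1\|_{\rm HS}\le\beta^{\ell_v}e^{Lr^2}$; for $i>\ell_v$ one has $\sigma_n(i)\in\{a,b,c,v\}$, hence Lemma~\ref{uvwkey1} gives $\|\hat\varphi_2\|_{\rm op}\le\beta^{n-\ell_v+1}$ (the weaker ``$\{a,c,v\}^n$'' bound suffices, any stronger off-diagonal factor is absorbed). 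Combining, $\|\hat\Phi_{\sigma_n}\|_{\rm HS}\le\beta^{n+1}e^{Lr^2}$.

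No real obstacle is anticipated: the proof is an assembly step, and the only thing to watch is the bookkeeping of which sub-alphabet each residual word lives in, so that one always lands in the hypothesis set of a previously established lemma. The choice $\ell_u=\min$ (respectively $\ell_v=\min$) is made precisely so that the \emph{left} factor $\hat\varphi_1$ is free of the letter being cut, pushing it into the regime covered by Lemma~\ref{wkey1111} or Lemma~\ref{vwkey1}; the right factor $\hat\varphi_2$ retains the full alphabet but is governed by the more general Corollary~\ref{uvwkey33} or Lemma~\ref{uvwkey1}. Since the constraint $r^2\le 2c$ has already been used inside those results, no further parameter optimisation is needed here.
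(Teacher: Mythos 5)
Your proposal is correct and follows essentially the same route as the paper: the paper likewise splits at the first occurrence of $u$ (bounding the left block in operator norm via Lemma~\ref{wkey1111} and the right block via Corollary~\ref{uvwkey33}) and, when only $v$ appears, at the first occurrence of $v$ (left block of Lemma~\ref{vwkey1} type, right block in operator norm via Lemma~\ref{uvwkey1}), combined with Theorem~\ref{simon} and $r^2\leq 2c$. The only cosmetic difference is that the paper packages the argument as an induction on $n$, which your direct appeal to the already-established appendix lemmas renders unnecessary.
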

\begin{proof}
    The case for $\sigma_n\in\{a,b,c\}^n$ is proved in Lemma~\ref{vwkey1}. We show the rest cases via induction and omit the details for $n=1$. Consider induction step $n-1\mapsto n$. \\
    \textbf{Case I: $\sigma_n\in\{a,b,c,v\}^n\setminus\{a,b,c\}^n.$} Define $\ell_v=\min\{i|\sigma_n(i)=v\}\geq 1$, then
    \begin{equation}
        \hat\Phi_{\sigma_n}=\underbrace{U_r^{-1}
        P_0\tilde B_{0,c}\bigg[\prod_{j=1}^{\ell_v-1}K_{\sigma_n(j)}\bigg]e^{L\Delta}P_0U_r^{-1}}_{=:\hat\varphi_1}  \underbrace{U_rP_0e^{-L\Delta}B_{0,c}\bigg[\prod_{j=\ell_v+1}^{n}K_{\sigma_n(j)}\bigg]e^{L\Delta}P_0U_r^{-1}}_{=:\hat\varphi_2}.
    \end{equation}
    Applying $\|\hat\varphi_1\|_{\rm HS}\leq \beta^{\ell_v}e^{Lr^2} $ (induction assumption) and $\|\hat\varphi_2\|_{\rm op}\leq\beta^{n-\ell_v+1}$ (by Lemma~\ref{uvwkey1}), we have $\|\hat \Phi_{\sigma_n}\|_{\rm HS}\leq\|\hat\varphi_1\|_{\rm HS}\|\hat\varphi_2\|_{\rm op}\leq\beta^{n+1}e^{Lr^2}$.\\
    \textbf{Case II: $\sigma_n\in\{a,b,c,v,u\}^n\setminus\{a,b,c,v\}^n.$} Define $\ell_u=\min\{i\mid\sigma_n(i)=u\}$, then
    \begin{equation}
         \hat \Phi_{\sigma_n}=\underbrace{U_r^{-1}P_0\tilde B_{0,c}\bigg[\prod_{j=1}^{\ell_u-1}K_{\sigma_n(j)}\bigg]P_0U_r}_{=:\hat\varphi_1}\cdot \underbrace{U_r^{-1}P_0B_{0,c}\bigg[\prod_{j=\ell_u+1}^{n}K_{\sigma_n(j)}\bigg]e^{L\Delta}P_0U_r^{-1}}_{=:\hat\varphi_2}.
    \end{equation}
    By definition of $\ell_u$, $\sigma_n(j)\in\{a,b,c,v\}$ for any $j\leq\ell_u-1$, we can then apply Lemma~\ref{wkey1111} to deduce $\|\hat\varphi_1\|_{\rm op}\leq\beta^{\ell_u}.$ Together with $\|\hat\varphi_2\|_{\rm HS}\leq\beta^{n-\ell_u+1}e^{Lr^2}$ (by Corollary~\ref{uvwkey33}), we have $\| \hat \Phi_{\sigma_n}\|_{\rm HS}\leq \|\hat\varphi_1\|_{\rm op}\|\hat\varphi_2\|_{\rm HS}\leq\beta^{n+1}e^{Lr^2}.$

\end{proof}


\begin{thebibliography}{10}

\bibitem{BKS12}
J.~Baik, K.~Liechty, and G.~Schehr, \emph{{On the joint distribution of the
  maximum and its position of the Airy$_2$ process minus a parabola}}, J. Math.
  Phys. \textbf{53} (2012), 083303.

\bibitem{BR99b}
J.~Baik and E.M. Rains, \emph{The asymptotics of monotone subsequences of
  involutions}, Duke Math. J. \textbf{109} (2001), 205--281.

\bibitem{BBF22}
R.~Basu, O.~Busani, and P.L. Ferrari, \emph{On the exponent governing the
  correlation decay of the airy$_1$ process}, Commun. Math. Phys. \textbf{398}
  (2023), 1171--1211.

\bibitem{Born08}
F.~Bornemann, \emph{{On the numerical evaluation of Fredholm determinants}},
  Math. Comput. \textbf{79} (2009), 871--915.

\bibitem{BFP08}
F.~Bornemann, P.L. Ferrari, and M.~Pr{\"a}hofer, \emph{{The Airy$_1$ process is
  not the limit of the largest eigenvalue in GOE matrix diffusion}}, J. Stat.
  Phys. \textbf{133} (2008), 405--415.

\bibitem{BCFV14}
A.~Borodin, I.~Corwin, P.L. Ferrari, and B.~Vet{\H o}, \emph{{Height
  fluctuations for the stationary KPZ equation}}, Mathematical Physics,
  Analysis and Geometry \textbf{December 2015} (2015).

\bibitem{BCR13}
A.~Borodin, I.~Corwin, and D.~Remenik, \emph{{Multiplicative functionals on
  ensembles of non-intersecting paths}}, Ann. Inst. H. Poincar\'e Probab.
  Statist. \textbf{51} (2015), 28--58.

\bibitem{BFP06}
A.~Borodin, P.L. Ferrari, and M.~Pr{\"a}hofer, \emph{{Fluctuations in the
  discrete TASEP with periodic initial configurations and the Airy$_1$
  process}}, Int. Math. Res. Papers \textbf{2007} (2007), rpm002.

\bibitem{BFPS06}
A.~Borodin, P.L. Ferrari, M.~Pr{\"a}hofer, and T.~Sasamoto, \emph{{Fluctuation
  properties of the TASEP with periodic initial configuration}}, J. Stat. Phys.
  \textbf{129} (2007), 1055--1080.

\bibitem{CFP10b}
I.~Corwin, P.L. Ferrari, and S.~P{\'e}ch{\'e}, \emph{{Universality of slow
  decorrelation in KPZ models}}, Ann. Inst. H. Poincar\'e Probab. Statist.
  \textbf{48} (2012), 134--150.

\bibitem{CQR11}
I.~Corwin, J.~Quastel, and D.~Remenik, \emph{{Continuum statistics of the
  Airy$_2$ process}}, Comm. Math. Phys. \textbf{317} (2013), 347--362.

\bibitem{Dim20}
E.~Dimitrov, \emph{{Two-point convergence of the stochastic six-vertex model to
  the Airy process}}, Commun. Math. Phys. \textbf{398} (2023), 925--1027.

\bibitem{NIST:DLMF}
\emph{{\it NIST Digital Library of Mathematical Functions}},
  \url{https://dlmf.nist.gov/}, Release 1.1.12 of 2023-12-15, F.~W.~J. Olver,
  A.~B. {Olde Daalhuis}, D.~W. Lozier, B.~I. Schneider, R.~F. Boisvert, C.~W.
  Clark, B.~R. Miller, B.~V. Saunders, H.~S. Cohl, and M.~A. McClain, eds.

\bibitem{Fer08}
P.L. Ferrari, \emph{{Slow decorrelations in KPZ growth}}, J. Stat. Mech.
  (2008), P07022.

\bibitem{FF12a}
P.L. Ferrari and R.~Frings, \emph{{On the spatial persistence for Airy
  processes}}, J. Stat. Mech. (2012), P02001.

\bibitem{FK2/ANX3PQ_2024}
P.L. Ferrari and M.~Liu, \emph{{Numerical calculation for persistence
  probability of Airy1 process}}, 2024, BonnData repository.

\bibitem{FS05b}
P.L. Ferrari and H.~Spohn, \emph{{A determinantal formula for the GOE
  Tracy-Widom distribution}}, J. Phys. A \textbf{38} (2005), L557--L561.

\bibitem{FV19}
P.L. Ferrari and B.~Vet{\H o}, \emph{{Fluctuations of the arctic curve in the
  tilings of the Aztec diamond on restricted domains}}, Ann. Appl. Probab.
  \textbf{31} (2021), 284--320.

\bibitem{HP48}
E.~Hille and R.S. Phillips, \emph{Functional analysis and semi-groups},
  American Mathematical Society New York, 1948.

\bibitem{Jo03b}
K.~Johansson, \emph{Discrete polynuclear growth and determinantal processes},
  Comm. Math. Phys. \textbf{242} (2003), 277--329.

\bibitem{Kesten03}
H.~Kesten, \emph{First-passage percolation}, From Classical to Modern
  Probability: CIMPA Summer School 2001 (P.~Picco and J.~San Martin, eds.),
  Birkh{\"a}user Basel, Basel, 2003, pp.~93--143.

\bibitem{Li85b}
T.M. Liggett, \emph{Interacting particle systems}, Springer Verlag, Berlin,
  1985.

\bibitem{MQR17}
K.~Matetski, J.~Quastel, and D.~Remenik, \emph{{The KPZ fixed point}}, Acta
  Math. \textbf{227} (2021), 115--203.

\bibitem{MR23}
K.~Matetski and D.~Remenik, \emph{{Exact solution of TASEP and variants with
  inhomogeneous speeds and memory lengths}}, arXiv:2301.13739 (2023).

\bibitem{Ok02}
A.~Okounkov, \emph{Generating functions for intersection numbers on moduli
  spaces of curves}, Int. Math. Res. Not. \textbf{2002} (2002), no.~18,
  933--957.

\bibitem{Pim17}
L.P.R. Pimentel, \emph{{Local Behavior of Airy Processes}}, J. Stat. Phys.
  \textbf{173} (2018), 1614--1638.

\bibitem{PS02}
M.~Pr{\"a}hofer and H.~Spohn, \emph{Scale invariance of the {PNG} droplet and
  the {A}iry process}, J. Stat. Phys. \textbf{108} (2002), 1071--1106.

\bibitem{Pu23}
F.~Pu, \emph{{On the association of the Airy$_1$ process}}, arXiv:2311.11217
  (2023).

\bibitem{QR12}
J.~Quastel and D.~Remenik, \emph{{Local behavior and hitting probabilities of
  the Airy$_1$ process}}, Prob. Theory Relat. Fields \textbf{157} (2013),
  605--634.

\bibitem{QR13b}
J.~Quastel and D.~Remenik, \emph{Supremum of the {Airy}$_2$ process minus a
  parabola on a half line}, J. Stat. Phys. \textbf{150} (2013), 442--456.

\bibitem{QR12b}
J.~Quastel and D.~Remenik, \emph{{Tails of the endpoint distribution of
  directed polymers}}, Ann. Inst. H. Poincar\'e Probab. Statist. \textbf{51}
  (2015), 1--17.

\bibitem{QR16}
J.~Quastel and D.~Remenik, \emph{{How flat is flat in a random interface
  growth?}}, Trans. Amer. Math. Soc. \textbf{371} (2019), 6047--6085.

\bibitem{QS20}
J.~Quastel and S.~Sarkar, \emph{{Convergence of exclusion processes and KPZ
  equation to the KPZ fixed point}}, J. Amer. Math. Soc. \textbf{36} (2022),
  251--289.

\bibitem{Rei95}
W.H. Reid, \emph{{Integral representations for products of Airy functions}}, Z.
  Angew. Math. Phys. \textbf{46} (1995), no.~2, 159--170.

\bibitem{Sas05}
T.~Sasamoto, \emph{Spatial correlations of the {1D KPZ} surface on a flat
  substrate}, J. Phys. A \textbf{38} (2005), L549--L556.

\bibitem{Sas07}
T.~Sasamoto, \emph{Fluctuations of the one-dimensional asymmetric exclusion
  process using random matrix techniques}, J. Stat. Mech. \textbf{P07007}
  (2007).

\bibitem{Sim00}
B.~Simon, \emph{Trace ideals and their applications}, second ed., American
  Mathematical Society, 2000.

\bibitem{TS10}
K.A. Takeuchi and M.~Sano, \emph{{Growing interfaces of liquid crystal
  turbulence: universal scaling and fluctuations}}, Phys. Rev. Lett.
  \textbf{104} (2010), 230601.

\bibitem{TS12}
K.A. Takeuchi and M.~Sano, \emph{{Evidence for geometry-dependent universal
  fluctuations of the Kardar-Parisi-Zhang interfaces in liquid-crystal
  turbulence}}, J. Stat. Phys. \textbf{147} (2012), 853--890.

\bibitem{TSSS11}
K.A. Takeuchi, M.~Sano, T.~Sasamoto, and H.~Spohn, \emph{Growing interfaces
  uncover universal fluctuations behind scale invariance}, Sci. Rep. \textbf{1}
  (2011), 34.

\bibitem{TW96}
C.A. Tracy and H.~Widom, \emph{On orthogonal and symplectic matrix ensembles},
  Comm. Math. Phys. \textbf{177} (1996), 727--754.

\bibitem{Vir20}
B.~Vir{\`a}g, \emph{{The heat and the landscape I}}, arXiv:2008.07241 (2020).

\end{thebibliography}

\end{document}